\DeclareMathOperator{\Tr}{Tr}
\newcommand{\nocontentsline}[3]{}
\newcommand{\tocless}[2]{\bgroup\let\addcontentsline=\nocontentsline#1{#2}\egroup}
\newcommand{\be}{\begin{equation}}
\newcommand{\ee}{\end{equation}}
\newcommand{\bea}{\begin{eqnarray}}
\newcommand{\eea}{\end{eqnarray}}
\newcommand{\cB}{\mathcal{B}}
\newcommand{\cI}{\mathcal{I}}
\newcommand{\cJ}{\mathcal{J}}
\newcommand{\cK}{\mathcal{K}}
\newcommand{\cM}{\mathcal{M}}
\newcommand{\cT}{\mathcal{T}}
\newcommand{\cW}{\mathcal{W}}
\newcommand{\bff}{\mathbf{f}}
 \newcommand{\alt}{\mathsf{alt}}
  \newcommand{\pure}{\mathsf{pure}}
\newcommand{\NJ}{N_J}
\newcommand{\bT}{\mathbb{T}}
\newcommand{\bM}{\mathbb{M}}
\newcommand{\bE}{\mathbb{E}}
\newcommand{\bC}{\mathbb{C}}
\newcommand{\bN}{\mathbb{N}}
\newcommand{\cS}{\mathcal{S}}
\newcommand{\un}{\mathbbm{1}}
\newcommand{\fsig}{\bm{\sigma}}
\newcommand{\Kc}{K}
\newtheorem{theorem}{Theorem}[section]
\newtheorem{lemma}[theorem]{Lemma}
\newtheorem{definition}[theorem]{Definition}
\newtheorem{notation}[theorem]{Notation}
\newtheorem*{theorem*}{Theorem}
\newtheorem*{theo31*}{Theorem \ref{thm:moments-fixed-N} in the maps formulation}
\newtheorem{corollary}[theorem]{Corollary}
\newtheorem{proposition}[theorem]{Proposition}
\newtheorem{remark}[theorem]{Remark}
\begin{document}

\title[The joint distribution of marginals of multipartite random quantum states]{
	On the joint distribution of the marginals\\of multipartite random quantum states}
\date{\today}
\author{{Stephane~Dartois$^1$}}
\address{$^1$School of Mathematics and Statistics, University of Melbourne, Parkville, Australia}
\email{stephane.dartois@unimelb.edu.au}

\author{{Luca~Lionni$^{2,3}$}}
\address{$^{2}$Yukawa Institute for Theoretical Physics, Kyoto University, Japan}
\email{luca.lionni@yukawa.kyoto-u.ac.jp}
\address{$^{3}$Departement of Physics, Chulalongkorn University, Bangkok, Thailand}

\author{{Ion~Nechita$^4$}}
\address{$^4$Laboratoire de Physique Th\'eorique, Universit\'e de Toulouse, CNRS, UPS, France}
\email{nechita@irsamc.ups-tlse.fr}

\begin{abstract}
We study the joint distribution of the set of all marginals of a random Wishart matrix acting on a tensor product Hilbert space. We compute the limiting free mixed cumulants of the marginals, and we show that in the balanced asymptotical regime, the marginals are asymptotically free. We connect the matrix integrals relevant to the study of operators on tensor product spaces with the corresponding classes of combinatorial maps, for which we develop the combinatorial machinery necessary for the asymptotic study. Finally, we present some applications to the theory of random quantum states in quantum information theory.
\end{abstract}

\keywords{Wishart ensemble, random quantum state, random tensor, marginal of a quantum state, combinatorial map, planar map}

\maketitle

\tableofcontents

\newpage

\section{Introduction}

The Wishart ensemble was historically the first probability distribution on matrices which was studied \cite{wishart1928generalised}. The main motivation for Wishart was statistics; later, Wigner \cite{wigner1955characteristic} modelled complex, analytically intractable Hamiltonians in nuclear physics with Hermitian random matrices, and the field of Random Matrix Theory \cite{mehta2004random,anderson2010introduction} was born. Nowadays, random matrices play a significant role in many sub-fields of mathematics, such as  operator algebras, combinatorics and algebraic geometry, integrable systems and partial differential equations, as well as in other disciplines such as theoretical physics or telecommunication. In this paper, we are motivated by a recent application of random matrices to Quantum Information Theory \cite{nielsen2010quantum}. 

The mathematical formalism of Quantum Information Theory is constructed upon the central notion of \emph{quantum states}, also known in the physical literature as density matrices. These are positive semidefinite $d \times d$ complex matrices, normalized to have unit trace; here, $d$ is the number of degrees of freedom of the quantum system under consideration. Density matrices model ``open quantum systems'', that is quantum systems which interact with an environment (which, most of the times, is too complicated to be taken under consideration). Isolated systems, (which are called ``closed'') are modeled traditionally by unit vectors in $\mathbb C^d$, which we choose to identify with the rank-one projections on the corresponding vector space; these projections are the extremal points of the convex set of density matrices. 

One might want to study \emph{random quantum states} for several different reasons. Foremost, we would like to understand what are the typical mathematical (or physical, or even information-theoretical) properties of a \emph{typical} state, where typical should be understood as randomly distributed with respect to some natural (or physically relevant) probability distribution. Another reason one would like to understand random density matrices comes from the empirical observation that, in situations where explicit examples satisfying some desired properties are hard to come by, one should simply pick the sought-for object at random; in many cases, with large probability, the random sample will have the desired properties. Random quantum states (and random quantum channels) have been a valuable source of (counter-)examples in Quantum Information Theory (see, e.g.~the recent review paper \cite{collins2016random}).

There is a large literature on random density matrices and their applications to quantum information theory, most of it focusing on spectral properties of one random matrix. In particular, the focus was on random states of single quantum systems and bipartite quantum systems (mostly related to the study of entanglement). In this work, we tackle a fundamentally different question:

\medskip

\emph{Given a multipartite random density operator, what is the \emph{joint probability distribution} of its different marginals?}

\medskip

Recall that, for a quantum state of a multi-partite system, the marginal of a subset of systems is the partial trace with respect to the complementary set of systems. We shall study the question above in different settings, of increasing generality, first with just 4-partite states, and then for quantum states with arbitrarily many subsystems. Depending on the relative sizes of the subsystems and on their rates of asymptotic growth, we shall exhibit two types of behavior. In one situation, where the growth rates of the system dimensions are the same, we prove that the whole set of (balanced) marginals are \emph{asymptotically free}, meaning, in broad terms, that they behave like independent random matrices (although they might share one or more subsystems). In a different regime, where the dimensions of some of the subsystems are being kept fixed, we do not have asymptotic freeness, but we provide exact formulas for the limiting joint free cumulants, in terms of the types of marginals involved. We state next, informally, two of the main results of this paper, corresponding to the asymptotic regimes described above; for the more general and precise results, we refer the reader to Theorem~\ref{thm:asymptotic-freeness-balanced-general} and, respectively, Theorem~\ref{thm:2-marginals-fixed-BC}. 

\begin{theorem*}
	Let $\rho^{(N)} \in \mathcal M_{N^{2r}}(\mathbb C)$ be a random pure state of $2r$-partite quantum system, where each subsystem is $N$-dimensional. Then, the $\binom{2r}{r}$ marginals $\rho_S^{(N)}$, with $S \subseteq \{1,2, \ldots, 2r\}$ a set of cardinality $r$, are \emph{asymptotically free}: their (rescaled) joint distribution converges in moments to that of $\binom{2r}{r}$ free Mar{\v{c}}enko-Pastur elements. Equivalently, their limiting joint distribution is the same as that of $\binom{2r}{r}$ \emph{independent copies} of, say, $\rho_{\{1,2,\ldots, r\}}^{(N)}$.
\end{theorem*}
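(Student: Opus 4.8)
The plan is to prove the statement by the moment method combined with Gaussian (Wick/Weingarten) integration, reducing asymptotic freeness to a genus-type expansion indexed by combinatorial maps and then isolating the dominant maps. Realize the random pure state as $\rho^{(N)}=GG^{*}/\Tr(GG^{*})$ with $G$ a standard complex Gaussian vector in $(\mathbb{C}^{N})^{\otimes 2r}\cong\mathbb{C}^{N^{2r}}$ --- this is the balanced regime, all $2r$ subsystems having the same dimension $N$ --- and set $X_{S}^{(N)}:=N^{r}\rho_{S}^{(N)}$ for each $S\subseteq\{1,\dots,2r\}$ with $|S|=r$. Since the Mar\v{c}enko--Pastur law $\mathrm{MP}(1)$ is determined by its moments (the Catalan numbers), it suffices to show that for every word $(S_{1},\dots,S_{k})$,
\[
\lim_{N\to\infty}\mathbb{E}\!\left[\tfrac{1}{N^{r}}\Tr\!\big(X_{S_{1}}^{(N)}\cdots X_{S_{k}}^{(N)}\big)\right]=\tau\big(x_{S_{1}}\cdots x_{S_{k}}\big),
\]
where $(x_{S})_{|S|=r}$ is a free family of $\mathrm{MP}(1)$ elements in some noncommutative probability space $(\mathcal{A},\tau)$; equivalently, that the limiting mixed free cumulants $\kappa_{n}(X_{S_{a_{1}}}^{(N)},\dots,X_{S_{a_{n}}}^{(N)})$ converge to $1$ when the $S_{a_{i}}$ all coincide and to $0$ otherwise.

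The second step is the moment computation. The normalization $\Tr(GG^{*})/N^{2r}\to 1$ concentrates with fluctuations of order $N^{-2r}$ and uniformly controlled negative moments, so it can be pulled out of the expectation at the cost of corrections that vanish in the limit, reducing the problem to the asymptotics of $\mathbb{E}[\Tr(Y_{S_{1}}\cdots Y_{S_{k}})]$ with $Y_{S}=\Tr_{S^{c}}(GG^{*})$. Expanding $G$ in coordinates and applying Wick's theorem produces a sum over permutations $\sigma\in\mathfrak{S}_{k}$ in which, after all index summations, each $\sigma$ contributes $N^{F(\sigma;S_{1},\dots,S_{k})}$, where $F$ counts the index loops of the combinatorial map obtained by superimposing $\sigma$, the cyclic word-permutation $\gamma=(1\,2\,\cdots\,k)$, and the data recording, leg by leg $\ell\in\{1,\dots,2r\}$, whether $\ell$ is contracted locally at step $a$ (when $\ell\notin S_{a}$) or carried along the word (when $\ell\in S_{a}$). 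The crucial structural feature is that $F$ splits as a sum of $2r$ Euler-characteristic-type contributions, one per leg, each bounded above with equality characterizing the dominant maps; this is exactly where the combinatorial-maps machinery developed earlier in the paper is brought to bear.

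The third step matches the leading order with the free product. Combining the overall prefactor $N^{-r}\cdot N^{rk}\cdot(N^{2r})^{-k}=N^{-r(k+1)}$ with the top value of $F$, one finds that the permutations surviving in the limit are exactly those whose associated non-crossing partition of $[k]$ refines the partition recording which of the $S_{a}$ coincide: when all $S_{a}=S$ one gets all of $\mathrm{NC}(k)$, counted by the Catalan number $C_{k}$, which is the $k$-th moment of $\mathrm{MP}(1)$; when the $S_{a}$ are not all equal, a leg $\ell$ lying in some $S_{a}$ but not in some $S_{b}$ strictly lowers the associated Euler characteristic of any $\sigma$ that would group position $a$ with position $b$, killing those terms. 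By Speicher's moment--cumulant formula this is precisely the mixed-moment expansion of a free family of free Poisson elements of rate $1$, so the limiting joint distribution of the $X_{S}^{(N)}$ is that of $(x_{S})$. The ``equivalently'' clause is then immediate: a single marginal $X_{S}^{(N)}$ converges to $\mathrm{MP}(1)$, so by Voiculescu's asymptotic freeness of independent random matrices, $\binom{2r}{r}$ independent copies of $\rho_{\{1,\dots,r\}}^{(N)}$ have the same limiting joint distribution as the free family $(x_{S})$.

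The main obstacle is the leg-by-leg Euler-characteristic bookkeeping of the third step: one must establish a genus-type inequality holding \emph{simultaneously} across all $2r$ legs and pin down exactly when it is saturated, even though the $r$ legs in $S_{a}$ and the $r$ legs in $S_{a}^{c}$ play asymmetric roles that interchange as $a$ runs along the word. Proving that no permutation can be dominant unless it factorizes along the maximal constant runs of $(S_{1},\dots,S_{k})$ --- the step from which freeness emerges --- is precisely what requires the asymptotic analysis of the relevant maps. By comparison, removing the random normalization $(\Tr GG^{*})^{-k}$ from inside the expectation is routine, but it still has to be carried out via control of the negative moments of $\Tr(GG^{*})$.
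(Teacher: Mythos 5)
Your plan follows the same general path as the paper: realize the state as a normalized Wishart tensor, expand moments via Wick's formula into a sum over permutations / combinatorial maps, isolate the dominant terms by a genus-type bound, and read off freeness from the surviving non-crossing partitions. You also correctly identify the target characterization (dominant permutations are non-crossing and refine the kernel of the word $(S_1,\dots,S_k)$), the rescaling $X_S = N^r\rho_S$, the removal of the trace normalization, and the ``equivalently'' clause via Voiculescu. However, there is a genuine gap in the central combinatorial argument, which you yourself flag as ``the main obstacle,'' and the specific structure you propose for that argument is incorrect. You assert that the exponent $F(\sigma;S_1,\dots,S_k)$ decomposes as a sum of $2r$ per-leg Euler-characteristic contributions, each separately bounded. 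That decomposition fails: the matrix product $W_{S_a}W_{S_{a+1}}$ contracts its indices by identifying the $j$-th tensor factor of $S_a$ with the $j$-th tensor factor of $S_{a+1}$, and when $S_a\neq S_{a+1}$ these are \emph{different} physical legs. A single index loop therefore migrates between legs as it travels around the word, so the loop count is not additive over legs, and ``the Euler characteristic of leg $\ell$'' is not even well-defined. Your key inference --- that a leg in some $S_a$ but not some $S_b$ ``strictly lowers the associated Euler characteristic of any $\sigma$ that groups $a$ with $b$'' --- has no per-leg object to apply to.

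This is precisely the difficulty the paper handles by passing to the auxiliary \emph{unfolded map} $\cM_{\bff,\fsig}$, whose edges are pairs (word position, moving color) and whose black vertices are the cycles of $\Gamma_{\bff,\fsig}$. The exponent of $N$ in Theorem~\ref{thm:TheoremGen} is then controlled \emph{jointly} by the genus $g(\cM_{\bff,\fsig})$, the component defect $\Sigma(\cM_{\bff,\fsig})$, and the vertex-duplication defect $\Delta_{\bff,\fsig}(\cM)$; the nontrivial content of the balanced case is showing that all three vanish exactly when $\cM$ is planar with $\boldsymbol f$ (and $\boldsymbol\pi$) constant on every white vertex, which is Propositions~\ref{prop:LargeNinTheLarge1nRegime} and~\ref{prop:ShorterFreeMap} together with Lemmas~\ref{lemma:BlackVerticesForWPi}, \ref{lem:free-implies-pi-constant}, and~\ref{lem:rCopiesMap}, culminating in Theorem~\ref{thm:asymptotic-freeness-balanced-general}. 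Your sketch assumes the conclusion of this analysis without supplying it, and the per-leg decomposition it rests on would not produce a valid proof.
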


In the unbalanced case, we have, informally, for 4-partite systems, the following result. 

\begin{theorem*}
	Let $\rho_N \in \mathcal M_{Nm^2M}(\mathbb C)$ be a random pure state of $4$-partite quantum system $\mathcal H_{ABCD}$, where $\dim \mathcal H_A= N$,  $\dim \mathcal H_B = \dim \mathcal H_C= m$ and $\dim \mathcal H_D= M \sim cN$. Assuming $m,c$ are fixed constants, the (rescaled) joint distribution of the marginals $(\rho_{AB}, \rho_{BC})$	converges, in moments, 
	as $N \to \infty$, to a pair of non-commutative random variables $(x_{AB},x_{AC})$ having the following free cumulants:
	$$\kappa(x_{f(1)}, x_{f(2)}, \ldots, x_{f(p)}) = cm^{-\alt(f)},$$
	where $f \in \{AB, AC\}^p$ is an arbitrary word in the letters $AB, AC$, and $\alt(f)$ is the number of different consecutive values of $f$, counted cyclically:
	$$\alt(f) := |\{a \, : \, f(a) \neq f(a+1)\}|,$$
	where $f(p+1):=f(1)$.
\end{theorem*}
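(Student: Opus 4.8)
The plan is to compute every limiting mixed moment of the (rescaled) marginals by a Gaussian/Wick expansion organised through the combinatorial-maps formalism of the paper, and then to read off the claimed free cumulants from the leading ``planar'' contributions via the moment--cumulant relation. First I would realise the random pure state as $\rho_N=|\psi\rangle\langle\psi|$ with $\psi$ a normalised complex Ginibre tensor in $\bC^N\otimes\bC^m\otimes\bC^m\otimes\bC^M$. Reshaping $\psi$ along the bipartitions $(AB\mid CD)$ and $(AC\mid BD)$ exhibits the two marginals, before normalisation, as $R_{AB}=GG^{*}$ and $R_{AC}=HH^{*}$, where $G,H$ are two reshapings of the same random Gaussian tensor into $Nm\times mM$ matrices, so that $\rho^{(N)}_f=R_f/\|\psi\|^{2}$. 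Using the concentration of $\|\psi\|^{2}$ around $Nm^2M$, for every $p\ge1$ and every word $f\in\{AB,AC\}^{p}$ the limit of $\frac1{Nm}\,\bE\,\Tr\bigl(x^{(N)}_{f(1)}\cdots x^{(N)}_{f(p)}\bigr)$ --- with $x^{(N)}_f$ the marginal $\rho^{(N)}_f$ rescaled by the dimension $mM$ of the traced-out systems --- equals the limit of $N^{-(p+1)}m^{-(p+1)}\,\bE\,\Tr\bigl(R_{f(1)}\cdots R_{f(p)}\bigr)$. (A variance estimate of the same combinatorial nature upgrades convergence in moments to almost sure convergence, as usual.)

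Next I would invoke the exact moment formula, Theorem~\ref{thm:moments-fixed-N} in its maps formulation, which expresses this as a sum over permutations,
\[
\bE\,\Tr\bigl(R_{f(1)}\cdots R_{f(p)}\bigr)=\sum_{\sigma\in S_p} N^{c(\delta\sigma)}\,M^{c(\sigma)}\,m^{F(\sigma,f)},
\]
where $\delta=(1\,2\,\cdots\,p)$, $c(\cdot)$ is the number of cycles, and $F(\sigma,f)$ is an explicit count --- the number of faces of the colored map associated with $\sigma$ and $f$, equivalently the number of orbits of the identifications among the $m$-valued $B$- and $C$-indices carried by the Wick pairing $\sigma$. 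With $M\sim cN$ and $m$ fixed the $\sigma$-term is of order $c^{c(\sigma)}m^{F(\sigma,f)}N^{c(\delta\sigma)+c(\sigma)}$, and the geodesic inequality for the full cycle $\delta$ gives $c(\delta\sigma)+c(\sigma)\le p+1$, with equality exactly on the non-crossing permutations (those on a geodesic from $\mathrm{id}$ to $\delta$), which are identified with $\mathrm{NC}(p)$ via $\sigma\mapsto\pi=\{\text{cycles of }\sigma\}$, so $c(\sigma)=|\pi|$ and $\sigma$ rotates each block of $\pi$ along its circular order (up to reversal). Hence
\[
\lim_{N\to\infty}\tfrac1{Nm}\,\bE\,\Tr\bigl(x^{(N)}_{f(1)}\cdots x^{(N)}_{f(p)}\bigr)=\sum_{\sigma\ \text{non-crossing}}c^{\,c(\sigma)}\,m^{\,F(\sigma,f)-p-1}.
\]

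The decisive step is to evaluate $F$ on the non-crossing locus: I would prove that $F(\sigma,f)=(p+1)-\#\{\,i:\ f(i)\neq f(\sigma(i))\,\}$ and that, since $\sigma$ rotates each block $V$ of $\pi$ along $V$'s circular order, the number of such ``defects'' inside $V$ equals $\alt(f|_V)$, the cyclic alternation number of $f$ restricted to $V$ (this is reversal-invariant, so the sense of rotation is immaterial). The limit then becomes $\sum_{\pi\in\mathrm{NC}(p)}\prod_{V\in\pi}c\,m^{-\alt(f|_V)}$, which is exactly the non-crossing moment--cumulant expansion of a pair of non-commutative variables with block weight $c\,m^{-\alt(\cdot)}$; an induction on $p$ --- using the case of proper sub-blocks to identify this block weight with $\kappa(x_{f(i)}:i\in V)$, followed by M\"obius inversion on $\mathrm{NC}(p)$ --- then yields $\kappa(x_{f(1)},\dots,x_{f(p)})=c\,m^{-\alt(f)}$. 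Positivity of the limiting state, hence the existence of the limiting pair $(x_{AB},x_{AC})$, is inherited from that of the $\rho^{(N)}_f$; and the diagonal values $\kappa_p(x_{AB},\dots,x_{AB})=c$ recover the familiar fact that each marginal is asymptotically a free Poisson (rescaled Mar\v{c}enko--Pastur) element of rate $c$.

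The main obstacle is that face-count identity on the non-crossing locus --- that every word-defect destroys exactly one $m$-orbit. I would argue it by starting from the fully matched configuration (all defects erased), in which the $m$-orbits split cleanly into $c(\delta\sigma)$ orbits supported on trace-indices and $c(\sigma)$ orbits supported on contracted indices, a total of $p+1$, and reinstating the defects one at a time in an order compatible with the nesting of $\pi$. Each such step is a local rewiring of two edges; I would show it always merges one trace-orbit with one contracted-orbit, which at that stage are necessarily distinct (the four incident labels being distinct and of opposite type), so the orbit count drops by exactly one. Non-crossingness of $\sigma$ is precisely what excludes the competing configuration in which both rewired edges already lie in a single orbit, which would instead preserve or raise the count. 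An equivalent, more structural argument reduces to the one-block case $\sigma=\delta$, where $F(\delta,f)=p+1-\alt(f)$ can be computed directly, and propagates through the recursive interval-block decomposition of $\mathrm{NC}(p)$; either way one must keep track of degenerate cycles (fixed points of $\sigma$ or of $\delta\sigma$, which contribute loop-orbits).
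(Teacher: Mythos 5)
Your proposal follows the paper's argument essentially verbatim in its overall architecture: pass from density matrices to Wishart matrices by concentration of the norm (the paper handles this normalisation in Remark~\ref{rem:4-partite-balanced-QIT} and the end of Section~\ref{sec:ABCD}), apply the exact Wick/moment formula of Theorem~\ref{thm:moments-fixed-N}, note that for fixed $m$ the selection of dominant permutations is governed only by $\#\alpha+\#(\gamma\alpha)$ and hence restricts to the non-crossing ones, and then use the identity $L(f,\alpha)=p+1-\alt(f,\alpha)$ on the non-crossing locus --- the paper's Proposition~\ref{prop:L-and-alt} --- to read the free cumulants off Speicher's moment--cumulant relation. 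Your one genuine deviation is in how you would establish that identity. The paper gives two proofs: a permutation-metric one, via the factorisation $\hat\gamma_f=\delta_C(\gamma^B\oplus\mathrm{id}^C)\delta_C$, a saturated triangle inequality, and the Kreweras complement; and a map-genus one that localises to a single white vertex (legitimate because $\cM$ is planar) and runs an edge-deletion recursion. Your primary sketch --- reinstating defects one transposition at a time and claiming each step merges exactly one trace-orbit with one contracted-orbit --- is in the spirit of the second proof but leaves its crux unargued: it is asserted, not shown, that at every intermediate step the two rewired strands lie in distinct orbits, and the intermediate wirings (where a white vertex may carry an odd number of alternations, so the configuration no longer corresponds to any word $f'$) would need to be managed explicitly. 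Your fallback --- reduce to the one-block case $\sigma$ a full cycle and propagate through the interval-block decomposition of $\mathrm{NC}(p)$ --- is essentially the paper's map-genus argument and would close the gap. In short: same route and same key lemma; the defect-reinstatement variant would need the monotonicity claim proved rather than invoked before it could replace the paper's argument for Proposition~\ref{prop:L-and-alt}.
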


The contribution of our paper is threefold. First, we introduce, in full detail, the notions of \emph{combinatorial maps} relevant for the random matrix computations we perform, and we extend them to the case of matrices having a {tensor product structure}. Our presentation starts at a basic level, gradually adding layers of complexity, and can be used by readers with a quantum information background as an introduction to the subject. We develop the necessary combinatorial techniques to deal with the types of maps appearing in our study (combinatorial maps with vertices of two colors and edges of $2r$ colors). Secondly, we contribute to the theory of random matrices and free probability by computing the limiting distribution of a family of random matrices obtained as marginals of a unique random object. Although, globally, the random matrix model is standard (Wishart matrices), taking (intersecting) marginals (i.e.~partial traces) and considering their joint distribution is new; for this reason, in order to emphasize the importance of the tensor product structure of the Hilbert space, we shall call the random matrices we study \emph{Wishart tensors}. We prove asymptotic freeness in the balanced case, in a very general setting, and obtain the limiting free cumulants in the unbalanced setting; the explicit form of the free cumulants (see the second informal theorem above) is very interesting, involving a parameter counting the number of different consecutive letters appearing in the respective word. Thirdly, from the point of view of quantum information theory, our study shows that the marginals of a random pure quantum state behave independently in the balanced case and in the large $N$ limit: the moment statistics of the whole set of marginals are the same as if the marginals were independent. The situation is different in the unbalanced case: there is a strong correlation between, say, the marginals $\rho_{AB}$ and $\rho_{AC}$ of a pure random 4-partite quantum state $\psi_{ABCD}$ when $\dim \mathcal H_A \gg \dim \mathcal H_B = \dim \mathcal H_C$. 

Since the main focus of our paper is on random quantum states over Hilbert spaces with a tensor product structure, let us give now a brief survey of the literature on the subject, emphasizing the point of contact with our work. Physicists started working on ensembles of quantum states in the early '90s, when Page computed the average entropy of entanglement of a bipartite random pure state \cite{page1993average}. The study of probability measures induced by metrics in the one-party case was initiated by Hall in \cite{hall1998random} and developed by Sommers and {\.Z}yczkowski in \cite{zyczkowski2001induced,zyczkowski2003hilbert,sommers2003bures,sommers2004statistical}; see also \cite{osipov2010random} for the limiting eigenvalue distribution of the Bures ensemble. In the multi-partite case, the study of random tensors in quantum information theory was initiated \cite{ambainis2012random}, where superpositions of random product states were investigated. Later, specific models of randomness were studied in \cite{perez-garcia2006matrix} in the case of random matrix product states and in \cite{collins2010randoma,collins2013area} for random graph states. In \cite{christandl2014eigenvalue}, Christandl, Doran, Kousidis, and Walter studied the joint distribution of all the single particle marginals of a multi-partite quantum state in a very general setting, allowing for different distributions of the global state, and making use of the \emph{Duistermaat-Heckman measures} from Lie theory. The current work is, to our knowledge, the first instance where the question of the \emph{joint distribution of the possibly overlapping marginals} of a random quantum state is considered; we do so in the simplest framework, that of the \emph{Wishart ensemble}. In the framework of quantum information theory, this corresponds to considering a random pure quantum state on a multipartite Hilbert space, and tracing out some of the subsystems to obtain the marginals. 

\medskip

The paper is organized as follows. In Section \ref{sec:FirstSec} we recall some well-known results about Wishart matrices and random density matrices, which can be seen as marginals of bipartite Wishart tensors. This simple situation is also the occasion to introduce the machinery of combinatorial maps. We provide two proofs of the classical Mar{\v{c}}enko-Pastur theorem, one using the language of permutations and their metric properties, and another one using combinatorial maps; the reader can see from this example how the two approaches mirror each other. In Section \ref{sec:ABCD}, we study in full detail the case of 4-partite Wishart tensors. We consider two different asymptotical regimes: a balanced regime, where the dimension of all the spaces are equal, and an unbalanced regime, where two of the four spaces have fixed dimension. We compute the limiting joint distribution of the two 2-marginals in both regimes: in the balanced case, we show that the marginals are asymptotically free, while in the unbalanced case we compute the (non-trivial) limiting mixed free cumulants. Finally, in Section \ref{sec:general-multipartite}, we study the general multipartite case. In the balanced case, we show again that the marginals are asymptotically free; in the other asymptotic regimes, we only have partial results: we list the different regimes that take place, but leave their detailed description for future work.

\medskip

\noindent {\it Acknowledgments.} L.L. is a JSPS International Research Fellow. The work of S.D. was partially supported by the Australian Research Council grant DP170102028. I.N.'s research has been supported by the ANR projects {StoQ} (grant number ANR-14-CE25-0003-01) and {NEXT} (grant number ANR-10-LABX-0037-NEXT), and by the PHC Sakura program (grant number 38615VA). I.N.~also acknowledges the hospitality of the Technische Universit\"at M\"unchen. The authors would like to thank the Institut Henri Poincar{\'e} in Paris for its hospitality and for hosting the trimester on ``\href{https://sites.google.com/site/analysisqit2017/}{Analysis in Quantum Information Theory}'', during which part of this work was undertaken. S.D.~and I.N.~would also like to thank the organizers of the ``\href{https://crei.skoltech.ru/cas/calendar/conf170619/}{QUATR-17}'' conference in Skoltech/Moscow, and especially Leonid Chekhov, for bringing together researchers in random tensor theory and quantum information theory. 

\section{The limiting eigenvalue distribution of random density matrices}
\label{sec:FirstSec}

In this section, we discuss the different ensembles of random density matrices from the literature, focusing on the \emph{induced ensemble}, which will be the one we shall study in the later sections. We also compute the limiting eigenvalue distribution of (rescaled) random density matrices, using two different formalisms: an algebraic one, emphasizing the role of permutations, and a combinatorial one, featuring the theory of combinatorial maps. Although the two proofs given will be equivalent, we present both in full detail in order to introduce the main objects and to prepare the reader for the more complicated situations discussed in the later sections.

\subsection{Random density matrices}
To start, let us fix some notation. Density matrices with $N$ degrees of freedom are represented by unit trace, positive semidefinite $N \times N$ matrices:
$$\mathcal M_N^{1,+} := \{\rho \in \mathcal M_N(\mathbb C) \, : \, \rho \geq 0 \text{ and } \operatorname{Tr} \rho = 1\}.$$
This is a convex body, whose extreme points are rank one projections which we identify (up to a phase) with vectors $x \in \mathbb C^N$, $\|x\|=1$, called \emph{pure states}.

We consider first the canonical distribution on pure states, that is the Lebesgue measure on the unit sphere of $\mathbb C^N$. Integrating polynomials in the state's coordinates with respect to this measure is quite straightforward, see \textit{e.g.}~\cite{folland2013real}. The idea is to relate the spherical integral to a Gaussian one with the help of a change of variable to the polar coordinates, and then use Wick's (or Isserlis' \cite{isserlis1918formula}) formula to evaluate the Gaussian integral; we recall this result next. 

\begin{proposition}\label{prop:Wick}
Let $X_1, \ldots, X_k$ be a $k$-tuple of random variables having a joint (complex) Gaussian distribution. If $k$ is odd, then $\mathbb E[X_1 \cdots X_k] = 0$. If $k=2l$ is even, then
$$\mathbb E[X_1 \cdots X_k] = \sum_{\substack{p=\{\{i_1,j_1\},\ldots , \{i_l,j_l\}\} \\ \text{pairing of }\{1,\ldots ,k\} }} \quad \prod_{s=1}^l \mathbb E[X_{i_s} X_{j_s}].$$
\end{proposition}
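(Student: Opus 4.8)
The plan is to reduce everything to the real centered case and then read off the moments from the characteristic function of a multivariate Gaussian. First I would observe that, by the very definition of a joint complex Gaussian family, the real and imaginary parts of $X_1, \ldots, X_k$ together form a real Gaussian vector $(Y_1, \ldots, Y_n)$ with $n \leq 2k$; since each $X_j$ is an $\mathbb{R}$-linear combination of the $Y_a$'s and both sides of the asserted identity are multilinear in $X_1, \ldots, X_k$, it is enough to prove the formula when the $X_j$ are taken among the coordinates of a real centered Gaussian vector. (If the family is not centered one replaces $X_j$ by $X_j - \mathbb{E}[X_j]$; in all our applications the Gaussians are centered, so I will simply assume this.)

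Next, for a centered real Gaussian vector $Y = (Y_1, \ldots, Y_k)$ with covariance $\Sigma_{ab} = \mathbb{E}[Y_a Y_b]$, recall the characteristic function $\varphi(t) = \mathbb{E}\!\left[ e^{i \langle t, Y\rangle}\right] = \exp\big( -\tfrac{1}{2} \sum_{a,b} \Sigma_{ab} t_a t_b \big)$. Since $Y$ has moments of all orders, $\mathbb{E}[Y_1 \cdots Y_k] = (-i)^k\, \partial_{t_1} \cdots \partial_{t_k} \varphi(t)\big|_{t=0}$. Writing $\varphi = e^{Q}$ with $Q$ quadratic, the higher-order chain rule gives $\partial_{t_1}\cdots\partial_{t_k} e^{Q} = e^{Q} \sum_{\pi} \prod_{B \in \pi} (\partial_B Q)$, the sum running over set partitions $\pi$ of $\{1, \ldots, k\}$ and $\partial_B Q := \big(\prod_{m \in B} \partial_{t_m}\big) Q$. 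As $Q$ is quadratic, $\partial_B Q$ vanishes for $|B| \geq 3$; for $|B| = 1$ it is linear in $t$, hence zero at $t = 0$; for $|B| = \{i,j\}$ it equals the constant $-\Sigma_{ij}$. Therefore, at $t = 0$, only partitions into blocks of size exactly two survive — which forces $k$ to be even — and each such partition is precisely a pairing $p$ of $\{1,\ldots,k\}$ contributing $\prod_{\{i,j\} \in p}(-\Sigma_{ij})$. Collecting signs, $(-i)^k = (-1)^{k/2}$ cancels the $(-1)^{k/2}$ coming from the $k/2$ factors $-\Sigma_{ij}$, and one is left with $\mathbb{E}[Y_1 \cdots Y_k] = \sum_p \prod_{\{i,j\} \in p} \Sigma_{ij}$; for $k$ odd there is no such $p$ and the moment vanishes. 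Tracing back through the multilinear expansion, $\Sigma_{ij}$ becomes $\mathbb{E}[X_i X_j]$, which is the claimed formula.

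An alternative, purely inductive route uses Gaussian integration by parts: for a centered real Gaussian vector one has $\mathbb{E}[Y_1\, g(Y)] = \sum_j \mathbb{E}[Y_1 Y_j]\, \mathbb{E}[\partial_j g(Y)]$; applying this with $g(Y) = Y_2 \cdots Y_k$ yields $\mathbb{E}[Y_1 \cdots Y_k] = \sum_{j=2}^k \mathbb{E}[Y_1 Y_j]\, \mathbb{E}[Y_2 \cdots \widehat{Y_j} \cdots Y_k]$, and induction on $k$ (base case $\mathbb{E}[Y_1] = 0$) converts the right-hand side into the sum over pairings, the outer sum over $j$ being the choice of the partner of the index $1$. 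I do not expect a genuine obstacle here, as the statement is classical; the only points needing a little care are the combinatorial bookkeeping in the derivative expansion (checking that exactly the pair partitions survive and the signs cancel), and the reduction from the complex to the real case, where one must use the convention that ``joint complex Gaussian'' means the real and imaginary parts are jointly real Gaussian, so that the multilinear reduction is legitimate.
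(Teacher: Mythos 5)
The paper does not give a proof of Proposition~\ref{prop:Wick}: it is recalled as a classical fact (Isserlis' theorem) with a citation, so there is no paper argument to compare yours against. Your proposal is a correct and standard derivation. The characteristic-function route is sound: the exponential-derivative expansion over set partitions is exactly Fa\`a di Bruno for $e^Q$, and your bookkeeping is right — blocks of size $\geq 3$ die because $Q$ is quadratic, singletons die at $t=0$ because $\partial_j Q$ is linear, each pair block contributes $-\Sigma_{ij}$, and $(-i)^{2l}=(-1)^l$ cancels the $l$ minus signs. The reduction from the complex to the centered real case by multilinearity in each $X_j$ is the standard device and is legitimate precisely because both sides of the Wick identity are multilinear and the formula distributes over the pairings. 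Your alternative via Gaussian integration by parts ($\mathbb{E}[Y_1\,g(Y)]=\sum_j\mathbb{E}[Y_1Y_j]\,\mathbb{E}[\partial_j g(Y)]$) with induction on $k$ is the other standard proof, and the outer sum over $j$ indeed corresponds to choosing the partner of $1$ in the pairing. The one caveat you already flag is real and worth stating: as written, the identity holds for a \emph{centered} jointly Gaussian family; for non-centered variables one must first subtract means, which is harmless here since all Gaussians in the paper are centered.
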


Let us move now to ensembles on the whole set of $N \times N$ density matrices, $\mathcal M_N^{1,+}$. Again, there is a natural candidate here, the normalization of the Lebesgue measure on the ambient space. It turns out however, that this measure is just a specialization of a 1-parameter family of probability distributions, called the \emph{induced measures}. Introduced by {\.Z}yczkowski and Sommers in \cite{zyczkowski2001induced}, these measures have the advantage of being interesting and natural both from the physical and the mathematical perspectives. Let us start with some motivating consideration from quantum physics. Assume the physical system we are interested in (which has $N$ degrees of freedom) is not isolated, but coupled to an \emph{environment}, having $M$ degrees of freedom. In most physical applications, the environment is big and inaccessible, so we choose not to model it; in other words, if $\Psi \in \mathbb C^N \otimes \mathbb C^M$ is the (pure) quantum state describing jointly the system and the environment, we only have access to the state of the system
$$\rho = [\operatorname{id}_N \otimes \operatorname{Tr}_M](\Psi \Psi^*) \in \mathcal M_N^{1,+}.$$
In the equation above, we assume that the vector $\Psi$ is normalized, $\|\Psi\|=1$. The main idea of \cite{zyczkowski2001induced} is to consider $\Psi$ uniformly distributed on the unit sphere of $\mathbb C^N \otimes \mathbb C^M$; this leads to the following definition. 

\begin{definition}\label{def:induced-measure}
The \emph{induced measure of parameters $(N,M)$} is the image measure of the uniform probability distribution on the unit sphere of $\mathbb C^N \otimes \mathbb C^M$ through the map
\begin{align*}
\mathbb C^N \otimes \mathbb C^M & \to \mathcal M_N^{1,+}\\
\Psi & \mapsto [\operatorname{id}_N \otimes \operatorname{Tr}_M](\Psi \Psi^*),
\end{align*}
where $\Psi \Psi^*$ denotes the self-adjoint rank one projection on $\mathbb C \Psi$.
\end{definition}
Importantly, the uniform (Lebesgue) probability measure on the convex body $\mathcal M_N^{1,+}$ is exactly the induced measure with parameters $(N,N)$ \cite[Section 3]{zyczkowski2001induced}.

A detailed mathematical analysis of the induced measures defined above was performed in \cite{nechita2007asymptotics}, where it was emphasized that quantum states distributed along the induced measures are just \emph{normalized Wishart random matrices}. To make this observation more precise, let us briefly remind the reader the definition of the Wishart ensemble (we refer the reader to \cite[Section 4]{hiai2000semicircle}, \cite[Chapter 3]{bai2010spectral}, \cite{haagerup2003random} for detailed treatments of the Wishart ensemble from a random matrix theory perspective, and to the excellent book \cite[Section 6.2.3]{aubrun2017alice} for a quantum information theory point of view). Let $X \in \mathcal M_{N \times M}(\mathbb C)$ be a \emph{Ginibre} random matrix, that is a matrix having i.i.d.~complex standard Gaussian entries (no symmetry is assumed here). For completeness, we rewrite formally what we mean by standard i.i.d.~complex Gaussian entries. The entries of $X$ form a set of $NM$ complex numbers $x_{i,j}$ each with density
\begin{equation}
\frac1{2i\pi}e^{-|x_{i,j}|^2} \mathrm{d}\bar{x}_{i,j}\mathrm{d}x_{i,j},
\end{equation}
which rewrites in terms of the real part $r_{i,j}$ and imaginary part $s_{i,j}$ of $x_{i,j}$ as
\begin{equation}
\frac1{\pi}e^{-(r_{i,j}^2+s_{i,j}^2)}\mathrm{d}r_{i,j}\mathrm{d}s_{i,j}.
\end{equation}
A Wishart matrix of parameters $(N,M)$ is defined as $W = XX^*$. The relation to random density matrices from the induced ensemble has been made mathematically rigorous in \cite[Lemma 1]{nechita2007asymptotics}.

\begin{proposition}
Let $W$ be a random Wishart matrix of parameters $(N,M)$. Then,
\begin{equation}\label{eq:rho-from-W}
\rho = \frac{W}{\operatorname{Tr}W}
\end{equation}
is a random density matrix distributed along the induced measure from Definition~\ref{def:induced-measure} of parameter $(N,M)$. 
\end{proposition}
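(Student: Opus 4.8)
The plan is to exhibit an explicit measure-preserving dictionary between the Ginibre matrix $X$ and a vector $\Psi$ on the unit sphere, and then to check that the two constructions --- partial trace of a rank-one projection on one side, $XX^*$ on the other --- coincide under this dictionary.

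First I would \emph{vectorize}: identify $\mathcal M_{N\times M}(\mathbb C)$ with $\mathbb C^N \otimes \mathbb C^M$ via $X \mapsto \Psi_X := \sum_{i,j} X_{ij}\, e_i \otimes f_j$, where $(e_i)_i$ and $(f_j)_j$ are the canonical bases. This is a linear isometry from the Hilbert--Schmidt norm on the left to the Euclidean norm on the right, so $\|\Psi_X\|^2 = \operatorname{Tr}(XX^*)$. A short index computation then shows that $[\operatorname{id}_N \otimes \operatorname{Tr}_M](\Psi_X \Psi_X^*) = XX^* = W$: the $(i,i')$ entry of the partial trace is $\sum_j X_{ij}\overline{X_{i'j}} = (XX^*)_{i i'}$.

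Second, I would invoke the rotational invariance of the Gaussian. The density of $X$ is proportional to $\exp(-\operatorname{Tr}(XX^*)) = \exp(-\|\Psi_X\|^2)$, so under the identification above $\Psi := \Psi_X$ is a standard complex Gaussian vector in $\mathbb C^{NM}$. Writing $\Psi = R\,U$ with $R = \|\Psi\| \in (0,\infty)$ and $U = \Psi/\|\Psi\|$ on the unit sphere of $\mathbb C^{NM} \cong \mathbb C^N \otimes \mathbb C^M$, the invariance of the Gaussian law under the unitary group gives that $R$ and $U$ are independent and that $U$ is uniformly (Lebesgue-)distributed on the sphere --- this is exactly the polar-coordinates change of variables mentioned before Proposition~\ref{prop:Wick}.

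Finally I would combine the two steps: $\rho = W/\operatorname{Tr}W = \dfrac{[\operatorname{id}_N \otimes \operatorname{Tr}_M](\Psi\Psi^*)}{\|\Psi\|^2} = [\operatorname{id}_N \otimes \operatorname{Tr}_M](UU^*)$, since both the partial trace and the projection are quadratic in $\Psi$, so the factor $R^2$ cancels. The right-hand side depends only on $U$, which is uniform on the unit sphere; hence the law of $\rho$ is precisely the push-forward of the uniform measure through the map of Definition~\ref{def:induced-measure}, i.e.~the induced measure of parameters $(N,M)$. There is no genuine obstacle here; the only points needing a little care are the index bookkeeping identifying $[\operatorname{id}_N\otimes\operatorname{Tr}_M](\Psi\Psi^*)$ with $XX^*$, and the (standard, but worth stating) fact that normalizing a complex Gaussian vector produces the uniform measure on the sphere \emph{independently} of its norm --- this independence is exactly what ensures that dividing $W$ by $\operatorname{Tr}W$ introduces no spurious correlation.
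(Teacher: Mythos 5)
Your argument is correct and complete. The paper does not actually prove this proposition in-line --- it only cites \cite[Lemma 1]{nechita2007asymptotics} --- but your proof is precisely the standard argument that reference gives: vectorize $X$ to a Gaussian vector $\Psi \in \mathbb C^N \otimes \mathbb C^M$, check by an index computation that $[\operatorname{id}_N \otimes \operatorname{Tr}_M](\Psi\Psi^*) = XX^*$ and $\|\Psi\|^2 = \operatorname{Tr}(XX^*)$, and then use the polar decomposition of a standard Gaussian (norm and direction independent, direction uniform on the sphere) to conclude that the $R^2$ factor cancels and the law of $\rho$ is exactly the push-forward of the uniform spherical measure. You have correctly identified the one nontrivial point --- the independence of $R$ and $U$, which is what makes the normalization by $\operatorname{Tr}W$ harmless --- and this is the same point the paper emphasizes in the surrounding discussion.
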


Although the normalization by the trace is a highly non-trivial (and non-linear) operation, in practice, for large random matrices, it does not pose technical difficulties. There are two reasons for this: first, in equation \eqref{eq:rho-from-W}, the trace $\operatorname{Tr} W$ and the normalized density matrix $\rho$ are independent random variables; this fact is similar to the result in classical probability which states that the norm and the direction of a (standard) Gaussian vector are independent random variables. The second reason which allows us to deal in a simple manner with the trace normalization is that the trace of a Wishart random matrix is a chi-squared random variable and thus concentrates very well around its average $\mathbb E \operatorname{Tr} W = NM$ (see \cite[Exercise 6.40]{aubrun2017alice})
\begin{equation}\label{eq:convergence-Tr-W}
\forall t>0, \qquad \mathbb P\left[ |\operatorname{Tr} W - NM| > tNM \right] \leq 2 \exp \left(- \frac{t^2NM}{2+4t/3} \right).
\end{equation}

\subsection{The limiting eigenvalue distribution}
We compute in this section the eigenvalue distribution of Wishart matrices, and thus of random density matrices, in the large $N$ limit. We shall work with the simpler model of Wishart matrices, and then translate the results to quantum states in Corollary \ref{cor:MP-convergence-random-density-matrices}. We shall present two proofs of the well-known convergence to the Mar{\v{c}}enko-Pastur distribution, one using permutations and the other one using combinatorial maps; this will be the occasion to introduce these two proof techniques and to familiarize the reader with the main objects appearing in the respective theories.\\

In the limit of large matrix dimension ($N \to \infty$), the behavior of Wishart matrices depends on the asymptotic ratio $M/N$. The most common situation is when $M/N \to c \in (0,\infty)$, in which case the matrix converges to the well-known Mar{\v{c}}enko-Pastur distribution \cite{marcenko1967distribution}. Although this result is straightforward and very well-known, we provide a self-contained proof in order to compare the approach of this section with the one in the next section. 

\begin{proposition}\label{prop:MP}
Let $W_N$ be a sequence of random Wishart matrices of parameters $(N,M_N)$, where $M_N$ is an integer sequence with the property that $M_N \sim cN$ as $N \to \infty$, where $c \in (0,\infty)$ is a constant. The sequence $N^{-1}W_N$ converges, in moments, towards the Mar{\v{c}}enko-Pastur distribution
$$\forall p \geq 1, \qquad \lim_{N \to \infty} \mathbb E \frac{1}{N} \operatorname{Tr}\left(\frac{W_N}{N}\right)^p = \int x^p \mathrm{d}\mathrm{MP}_c(x),$$
where 
\begin{equation}\label{eq:Marchenko-Pastur}
\mathrm{d}\mathrm{MP}_c=\max (1-c,0)\delta_0+\frac{\sqrt{(b-x)(x-a)}}{2\pi x} \; \mathbf{1}_{[a,b]}(x) \, \mathrm{d}x,
\end{equation}
with $a = (1-\sqrt c)^2$ and $b=(1+\sqrt c)^2$.
\end{proposition}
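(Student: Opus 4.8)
The plan is to use the method of moments. Writing $W_N = XX^*$ with $X \in \mathcal M_{N\times M_N}(\mathbb C)$ a Ginibre matrix, I first expand
\[
\operatorname{Tr}(W_N^p) = \sum_{i_1,\dots,i_p=1}^N\ \sum_{j_1,\dots,j_p=1}^{M_N} X_{i_1 j_1}\overline{X_{i_2 j_1}}\, X_{i_2 j_2}\overline{X_{i_3 j_2}}\cdots X_{i_p j_p}\overline{X_{i_1 j_p}},
\]
where the indices are read cyclically. Since the entries of $X$ are i.i.d.\ standard complex Gaussians, $\mathbb E[X_{ab}X_{cd}] = \mathbb E[\overline{X_{ab}}\,\overline{X_{cd}}] = 0$ and $\mathbb E[X_{ab}\overline{X_{cd}}] = \delta_{ac}\delta_{bd}$, so Proposition~\ref{prop:Wick} reduces $\mathbb E\operatorname{Tr}(W_N^p)$ to a sum over pairings matching each factor $X_{i_a j_a}$ with one factor $\overline{X_{i_{b+1} j_b}}$; such a pairing is encoded by a permutation $\alpha \in S_p$. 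A routine bookkeeping of the Kronecker-$\delta$ constraints shows that the summation over the column indices $j$ produces a factor $M_N^{\#\alpha}$ and the summation over the row indices $i$ a factor $N^{\#(\gamma\alpha)}$, where $\gamma = (1\,2\,\cdots\,p) \in S_p$ is the full cycle and $\#\sigma$ denotes the number of cycles of $\sigma$. Hence
\[
\mathbb E\, \operatorname{Tr}(W_N^p) = \sum_{\alpha \in S_p} N^{\#(\gamma\alpha)}\, M_N^{\#\alpha},
\qquad\text{so}\qquad
\mathbb E\,\frac1N\operatorname{Tr}\!\Big(\frac{W_N}{N}\Big)^p = \sum_{\alpha\in S_p} N^{\#(\gamma\alpha)+\#\alpha - p - 1}\Big(\frac{M_N}{N}\Big)^{\#\alpha}.
\]

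The core of the argument is then a genus-type inequality: for every $\alpha\in S_p$ one has $\#(\gamma\alpha)+\#\alpha \le p+1$. I would deduce this from the fact that $|\sigma| := p - \#\sigma$ is the word length of $\sigma$ with respect to transpositions and defines a bi-invariant metric on $S_p$; applying the triangle inequality to the triple $(e,\alpha^{-1},\gamma)$ and using $\#(\alpha\gamma)=\#(\gamma\alpha)$ (conjugate permutations) gives $|\gamma|\le|\alpha| + |\gamma\alpha|$, i.e.\ $p-1 \le (p-\#\alpha) + (p-\#(\gamma\alpha))$. Moreover, equality holds precisely when $\alpha$ lies on a geodesic from $e$ to $\gamma$, and by Biane's characterization these $\alpha$ are exactly the ones corresponding to the non-crossing partitions of $\{1,\dots,p\}$, with $\#\alpha$ equal to the number of blocks. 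Consequently every exponent of $N$ above is $\le 0$, and, using $M_N/N \to c$, only the non-crossing permutations contribute in the limit:
\[
\lim_{N\to\infty}\mathbb E\,\frac1N\operatorname{Tr}\!\Big(\frac{W_N}{N}\Big)^p = \sum_{\pi \in NC(p)} c^{\,\#\pi},
\]
where $NC(p)$ is the set of non-crossing partitions of $\{1,\dots,p\}$ and $\#\pi$ is the number of blocks of $\pi$.

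Finally, I would identify the right-hand side with $\int x^p\,\mathrm d\mathrm{MP}_c(x)$. The quickest route is to recall that $\mathrm{MP}_c$ is the free Poisson law of rate $c$, whose free cumulants all equal $c$; the moment--cumulant relation then gives $\int x^p\,\mathrm d\mathrm{MP}_c = \sum_{\pi\in NC(p)}\prod_{V\in\pi}c = \sum_{\pi\in NC(p)}c^{\#\pi}$. Alternatively, and more self-containedly, one checks by a residue computation that the Cauchy transform $G$ of the measure in \eqref{eq:Marchenko-Pastur} satisfies $z\,G(z)^2 - (z + 1 - c)G(z) + 1 = 0$, which is precisely the functional equation equivalent to the moment recursion produced by the sum over non-crossing partitions; since $\mathrm{MP}_c$ is compactly supported it is determined by its moments, completing the proof.

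I expect the main obstacle to be the inequality $\#(\gamma\alpha)+\#\alpha\le p+1$ together with the precise description of its equality case as the non-crossing partitions — this is where the metric properties of permutations do the real work; the Gaussian expansion and the final matching with the Mar{\v c}enko--Pastur moments are comparatively routine. (The companion proof announced in the text re-derives the same dichotomy geometrically, by associating to each $\alpha$ a combinatorial map and observing that $\#(\gamma\alpha)+\#\alpha = p+1$ singles out the planar ones.)
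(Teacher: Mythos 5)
Your proposal is correct and follows essentially the same three-step strategy as the paper: (i) expand $\operatorname{Tr}(W_N^p)$ via Wick's theorem to obtain $\sum_{\alpha\in S_p}N^{\#(\gamma\alpha)}M_N^{\#\alpha}$ (you do direct index bookkeeping where the paper uses the graphical calculus of \cite{collins2011gaussianization}, but the content is identical); (ii) bound the exponent by the triangle inequality for the Cayley metric on $S_p$ and invoke Biane's characterization of geodesic permutations as non-crossing partitions, which is precisely Lemma~\ref{lem:geodesic-permutations}; and (iii) match the resulting sum over $NC(p)$ with the moments of $\mathrm{MP}_c$. The only divergence is in step (iii): the paper goes \emph{forward}, reading off the constant free cumulants, computing the $\mathcal R$-transform, solving for the Cauchy transform and applying Stieltjes inversion to derive the density \eqref{eq:Marchenko-Pastur}, whereas you propose to go \emph{backward}, starting from the stated density and either citing the free-Poisson identification or verifying the quadratic equation for its Cauchy transform; both directions are routine once one knows the moments, so this is a matter of presentation rather than a genuinely different argument.
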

We plot the density of the  Mar{\v{c}}enko-Pastur distribution, along with Monte-Carlo simulations in Figure~\ref{fig:MP}; for other regimes, see \cite[Theorem 6.27]{aubrun2017alice}. The mass term for $c<1$ is easily explained by the fact that in this case $M<N$ and the rank of the $N\times N$ matrix $W$ is $M<N$ thus $W$ shall have $N-M\sim_{N\rightarrow \infty}N(1-c)$ null eigenvalues.

\begin{figure}[!ht]
	\centering
	\includegraphics[width=0.45\textwidth]{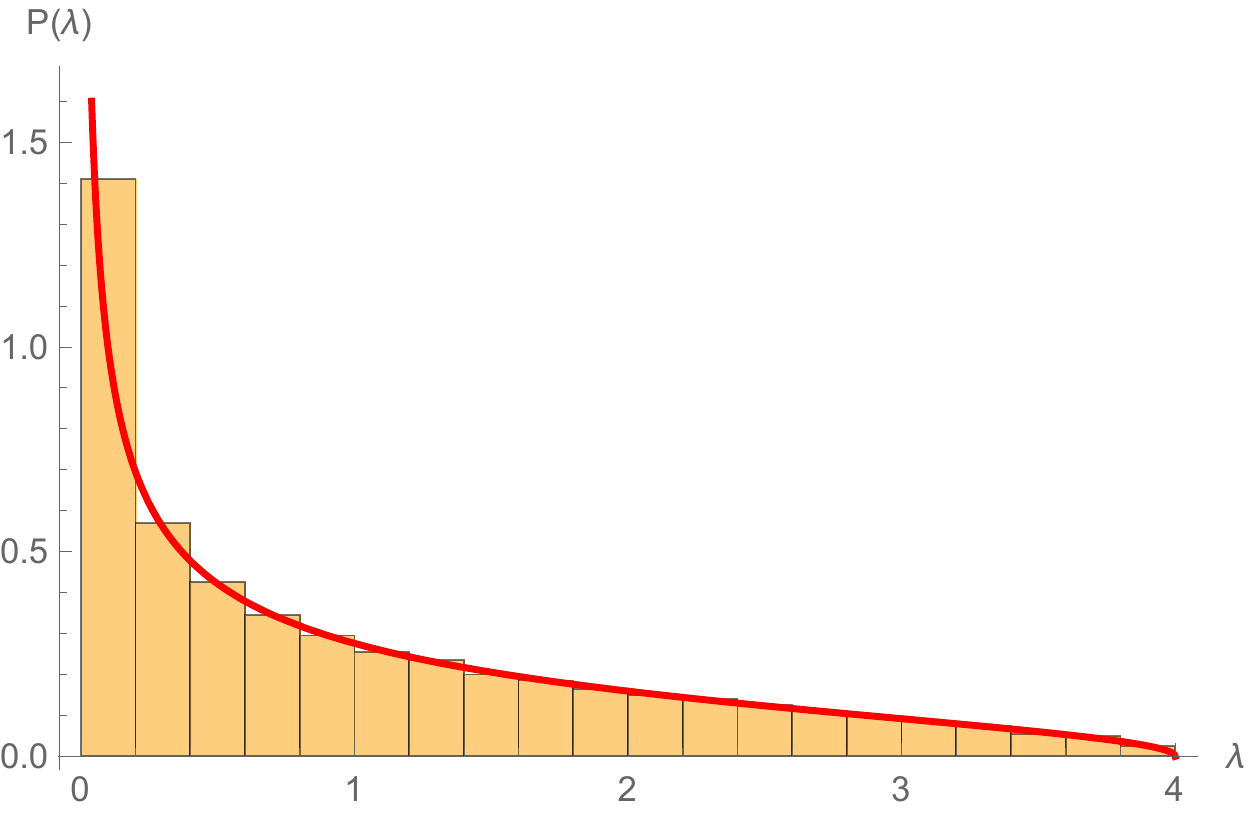} \qquad
	\includegraphics[width=0.45\textwidth]{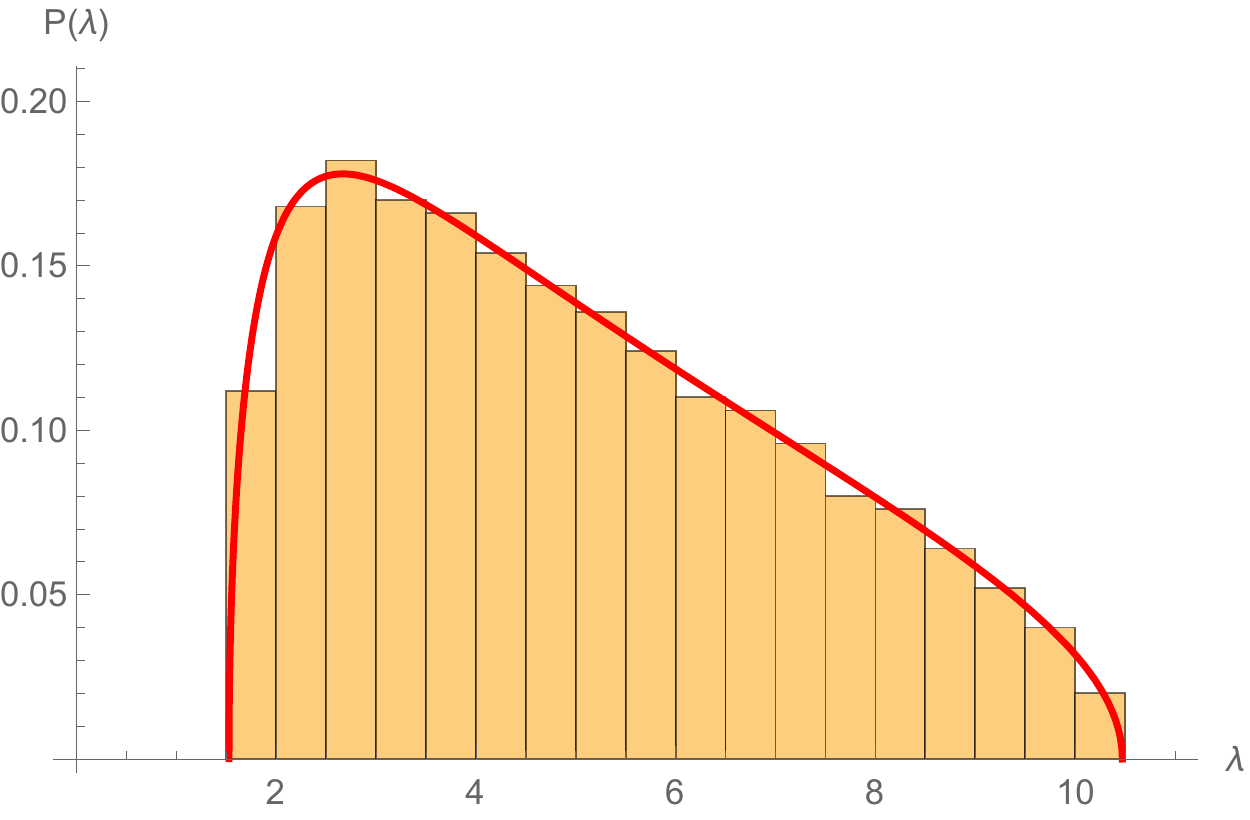}
	\caption{The density of the Mar{\v{c}}enko-Pastur distribution $\mathrm{MP}_c$ for $c=1$ (left) and $c=5$ (right).}
	\label{fig:MP}
\end{figure}

\begin{proof}
We shall use the method of moments, i.e.~compute  Gaussian expectations of the form 
	\be
	\mathbb E \Tr(W^{p})= \int_{(N,M)} \Tr\bigl((XX^*)^{p}\bigr) \mathrm{d}\mu(X),
	\ee
	with 
	$$\mathrm{d}\mu(X)=\frac1{(2i\pi)^{NM}}e^{-\Tr(XX^*)}\mathrm{d}X^*\mathrm{d}X, \ \textrm{ and } \ \mathrm{d}X^*\mathrm{d}X=\prod_{\substack{1\le i\le N \\ 1\le j\le M}}\mathrm{d}\bar x_{i,j}\mathrm{d}x_{i,j}.$$
The proof consists of three steps. First, we show the exact formula
\begin{equation}\label{eq:exact-formula-moment-bipartite}
\mathbb E N^{-1} \operatorname{Tr}\left[(N^{-1}W_N)^p\right] = \sum_{\alpha \in \mathcal S_p} N^{\#(\gamma\alpha)} M^{\#\alpha},
\end{equation}
where $\gamma \in \mathcal S_p$ is the full cycle permutation $\gamma = (1, 2, 3,  \ldots p)$  and $\#\alpha$ denotes the number of cycles of the permutation $\alpha$. Note that we dropped the dependence on $N$ of the parameter $M$, in order to keep the notation light; the reader should keep in mind that $M = M_N$ is a function of $N$ which grows as $M \sim cN$. The second step, Lemma~\ref{lemma:NonCrossingPart}, will consist in analyzing the dominating terms in \eqref{eq:exact-formula-moment-bipartite}. We will show that the surviving permutations are in bijection with non-crossing partitions, recovering the moments of the Mar{\v{c}}enko-Pastur distribution
$$\lim_{N \to \infty} \mathbb E N^{-1} \operatorname{Tr}\left[(N^{-1}W_N)^p\right] = \sum_{\alpha \in \mathrm{NC}(p)} c^{\#\alpha}.$$
Using Voiculescu's $R$-transform, we compute in a third step the Cauchy transform of the probability measure having the moments above, and then, by Stieltjes inversion, we recover the exact expression of the density \eqref{eq:Marchenko-Pastur}.

\

\noindent{\it First step. }To show \eqref{eq:exact-formula-moment-bipartite}, we need to perform the integration on the left-hand-side with the help of the Wick formula from Proposition~\ref{prop:Wick}. We are going to use a graphical reading of the Wick formula introduced in \cite{collins2011gaussianization}. In this framework, matrices (and more generally, tensors) are represented by boxes having decorations corresponding to the vector spaces the matrix is acting on. The decorations have two attributes: shape, distinguishing vector spaces of various dimensions, and shading, distinguishing primal (filled symbols) from dual (empty symbols) spaces. We depict in Figure~\ref{fig:moments-Wishart} (from left to right) the diagram of a Wishart matrix $W=XX^*$, then, in the center, the same diagram, with $X^*$ replaced by the transpose of $\bar X$, and then the diagram for the second moment $\operatorname{Tr}(W^2)$. 
\begin{figure}[!ht]
\raisebox{+0.5cm}{\includegraphics[scale=.8]{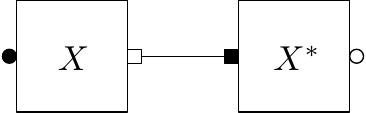} \qquad\qquad\includegraphics[scale=.8]{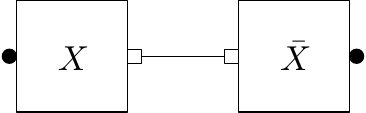} }\qquad\qquad\includegraphics[scale=.8]{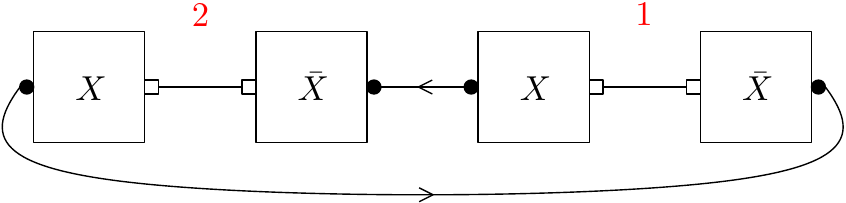} 
\caption{Diagrams for Wishart matrices. On the left, the diagram for $W=XX^*$, with $X \in \mathcal M_{N \times M}$. The Hilbert space $\mathbb C^N$ is depicted by round decorations, while $\mathbb C^M$ is depicted by square decorations. The tensor contraction between the two square decorations corresponds to the matrix product $X \cdot X^*$. In the center panel, we have the same diagram, after replacing $X^*$ by $(\bar X)^\top$; notice that taking the transposition amounts to inverting the shading of the decorations of the $\bar X$ box. In the last panel, we depict the scalar $\operatorname{Tr}(W^2) = \operatorname{Tr}(XX^*XX^*)$.}
\label{fig:moments-Wishart}
\end{figure}

In order to establish the formula \eqref{eq:exact-formula-moment-bipartite}, we need to apply the Wick formula to the quantity $\operatorname{Tr}(W^p)$, for an arbitrary $p \geq 1$. In \cite[Section 3.3]{collins2011gaussianization} (see also \cite[Section III.C]{collins2016random}), it has been shown that computing a Gaussian expectation can be done in a graphical way, as follows. Given a diagram containing boxes $X$ and $\bar X$ corresponding to random matrices (or tensors) with i.i.d.~standard complex Gaussian entries, if the number of $X$-boxes is different than the number of $\bar X$-boxes, the expectation (over the randomness in $X$) is zero. If one has, say, $p$ $X$-boxes and $p$ $\bar X$-boxes, the expectation of the diagram is a sum indexed by permutations $\alpha \in \mathcal S_p$, where the terms are obtained by deleting the $X$ and the $\bar X$ boxes, and connecting the corresponding attached decorations with the permutation $\alpha$: the decorations of the $i$-th $\bar X$-box are to be connected to the corresponding decorations of the $\alpha(i)$-th $X$ box. We would like to warn the reader at this point that the above convention is opposite from the one used in \cite[Section 3.3]{collins2011gaussianization}, where $\alpha$ was connecting $X$-boxes to $\bar X$-boxes. It turns out that the current convention makes the connection between the Wick graphical calculus and the theory of combinatorial maps more transparent, justifying our choice.

As an example, in our moment problem, the two diagrams appearing when computing $\mathbb E \operatorname{Tr}(XX^*XX^*)$ are depicted in Figure~\ref{fig:expectation-moment-2-Wishart}. Notice that the new diagrams are entirely made out of loops, so their (scalar) values are given by $N^aM^b$, where $a$ (resp.~$b$) is the number of loops corresponding to $\mathbb C^N$, that is to round decorations (resp.~$\mathbb C^M$, \textit{i.e.}~square decorations). 
\begin{figure}[!ht]
\includegraphics[scale=.8]{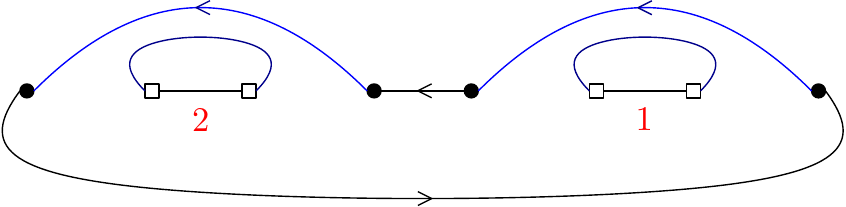} \qquad\qquad\includegraphics[scale=.8]{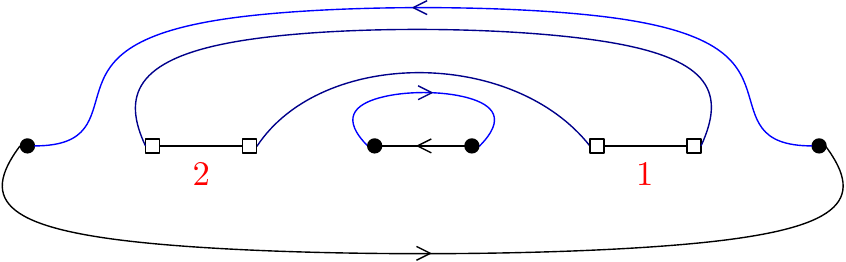} 
\caption{The two diagrams corresponding to the graphical application of the Wick formula for $\mathbb E \operatorname{Tr}(W^2)$, see Figure~\ref{fig:moments-Wishart}, rightmost panels. On the left, the diagram corresponding to the identity permutation (the blue wires connect the decorations of each $X$-box to  the corresponding ones of the $\bar X$-box directly following it). On the right is the diagram corresponding to the permutation $\alpha = (12)$. The two diagrams contain only loops, so their values are $NM^2$,respectively $N^2M$.}
\label{fig:expectation-moment-2-Wishart}
\end{figure}

Moving to the general case of an arbitrary $p$, we write $\mathbb E \operatorname{Tr}(W^p) = \sum_{\alpha \in \mathcal S_p} \mathcal D_\alpha$,
where $\mathcal D_\alpha$ is the diagram obtained by deleting the $p$ $X$- and $\bar X$-boxes and by connecting the corresponding decorations according to the permutation $\alpha$. It is clear that $\mathcal D_\alpha$ consists only of loops corresponding to the Hilbert spaces $\mathbb C^N$ and $\mathbb C^M$; it follows that in order to evaluate such a diagram, one has to count the number of loops of each type. Let us start by counting the loops attached to square decorations, each giving a contribution of $M$. Note that in the original diagram (before taking the expectation), the square decoration of the $i$-th $X$-box is connected to the square decoration of the $\bar X$-box belonging to the same, $i$-th, group. It is then easy to see that each distinct cycle of the permutation $\alpha$ gives rise to a loop, then the number of $M$-loops is $\#\alpha$, thus giving a total contribution of $M^{\#\alpha}$. The same reasoning can be applied when counting the contribution of loops attached to round decorations (which correspond to the Hilbert space $\mathbb C^N$), with one difference: in the initial wiring of the diagram (before taking the expectation), the $i$-th $X$-box is connected to the $(i-1)$-th $\bar X$-box (where the subtraction operation is understood cyclically, modulo $p$). In other words, the initial wiring is given by the full-cycle permutation $\gamma$, with $\gamma(i) = i+1$;  note that we are numbering the boxes $1, 2, \ldots, p$ from right to left. A similar combinatorial argument shows that the number of loops is, in this case, $\#(\gamma \alpha)$, for a final contribution of $N^{\#(\gamma \alpha)}$. We conclude that $\mathcal D_\alpha = N^{\#(\gamma \alpha)}M^{\# \alpha}$, proving \eqref{eq:exact-formula-moment-bipartite}.

\

\noindent{\it Second step. }We now move to the second step of the proof, which is computing the limit $N \to \infty$ of the moment formula \eqref{eq:exact-formula-moment-bipartite}.
\begin{lemma}
\label{lemma:NonCrossingPart}
The asymptotic moments of the (normalized) random matrices $W_N$ are given by a sum over non-crossing partitions
\begin{equation}\label{eq:MP-moments-free-cumulants}
\lim_{N \to \infty}\mathbb E N^{-1} \operatorname{Tr}\left[(N{^{-1}}W_N)^p\right] =\sum_{\tilde \alpha \in \mathrm{NC}(p)}c^{\#\tilde \alpha}.
\end{equation}
\end{lemma}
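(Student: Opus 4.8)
The plan is to extract the leading-order behavior of the exact formula \eqref{eq:exact-formula-moment-bipartite}. Substituting the hypothesis $M = M_N \sim cN$ and collecting the powers of $N$, one obtains
\[
\mathbb{E} N^{-1}\operatorname{Tr}\bigl[(N^{-1}W_N)^p\bigr] = \sum_{\alpha \in \mathcal{S}_p} c^{\#\alpha}\, N^{\#(\gamma\alpha)+\#\alpha-p-1}\,(1+o(1)),
\]
where the $o(1)$ is uniform over the finite index set $\mathcal{S}_p$. So the entire problem reduces to controlling, for each $\alpha$, the exponent $\#(\gamma\alpha)+\#\alpha-p-1$, and to identifying the permutations for which it vanishes.

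First I would invoke the standard metric structure on the symmetric group. For $\sigma \in \mathcal{S}_p$, set $|\sigma| := p - \#\sigma$; this is the minimal number of transpositions whose product is $\sigma$, and $d(\sigma,\tau) := |\sigma^{-1}\tau|$ is a left-invariant distance on $\mathcal{S}_p$. The triangle inequality $d(e,\gamma) \le d(e,\alpha^{-1}) + d(\alpha^{-1},\gamma)$ then reads $|\gamma| \le |\alpha| + |\alpha\gamma|$, i.e., using $|\gamma| = p-1$ and $\#(\alpha\gamma) = \#(\gamma\alpha)$ (conjugate permutations),
\[
\#\alpha + \#(\gamma\alpha) \le p+1,
\]
with equality exactly when $\alpha$ lies on a geodesic from $e$ to $\gamma$ in the Cayley graph generated by all transpositions. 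Hence every exponent in the sum above is $\le 0$, the limit exists, and only the geodesic permutations survive, each contributing $c^{\#\alpha}$.

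It then remains to identify the set of geodesic permutations and to read off $\#\alpha$ on it. Here I would use the classical bijection (Biane; see also Nica--Speicher) between $\{\alpha \in \mathcal{S}_p : \#\alpha + \#(\gamma\alpha) = p+1\}$ and the lattice $\mathrm{NC}(p)$ of non-crossing partitions of $\{1,\dots,p\}$: a permutation saturating the inequality has the property that its cycles, placed as blocks on the disk with $1,\dots,p$ arranged cyclically in the order prescribed by $\gamma$, form a non-crossing partition, and $\alpha$ acts on each cycle as the induced sub-cycle of $\gamma$; the map $\alpha \mapsto \tilde\alpha := \{\text{cycles of }\alpha\}$ is then a bijection onto $\mathrm{NC}(p)$ under which $\#\alpha = \#\tilde\alpha$. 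Substituting this into the surviving terms yields exactly $\sum_{\tilde\alpha \in \mathrm{NC}(p)} c^{\#\tilde\alpha}$, which is \eqref{eq:MP-moments-free-cumulants}.

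The main obstacle is this last step: the geodesic inequality itself is a one-line application of the triangle inequality, but the identification of the saturating permutations with non-crossing partitions — and in particular the verification that a permutation with $\#\alpha+\#(\gamma\alpha)=p+1$ genuinely has non-crossing cycles acting as sub-cycles of $\gamma$ — carries the real combinatorial weight. If a self-contained argument is preferred, it can be obtained by induction on $p$: in any saturating $\alpha$ one finds a $\gamma$-interval that is a single cycle of $\alpha$ (equivalently, a point fixed by $\alpha$ or a suitable pair $\{i,i+1\}$), contracts it, and applies the induction hypothesis to $\mathcal{S}_{p-1}$. Alternatively, one could bypass the bijection and check that both sides of \eqref{eq:MP-moments-free-cumulants} satisfy the same recursion obtained by conditioning on the block of $\mathrm{NC}(p)$ containing $1$; but the geodesic/non-crossing description is the one that will be reused repeatedly in the later sections, so it is the natural choice here.
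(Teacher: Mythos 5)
Your proof is correct and follows essentially the same route as the paper: substitute $M_N \sim cN$ into the exact formula \eqref{eq:exact-formula-moment-bipartite}, bound $\#\alpha + \#(\gamma\alpha) \le p+1$ by the triangle inequality for the transposition metric on $\mathcal S_p$ (the paper's Lemma~\ref{lem:geodesic-permutations}), and identify the saturating permutations with $\mathrm{NC}(p)$ via the Biane bijection, under which $\#\alpha = \#\tilde\alpha$. The only cosmetic difference is that you phrase the triangle inequality as $|\gamma| \le |\alpha| + |\alpha\gamma|$ while the paper uses $|\gamma^{-1}| \le |\alpha| + |\alpha^{-1}\gamma^{-1}|$; these are the same statement after $|\sigma|=|\sigma^{-1}|$ and conjugation-invariance of $\#$.
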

\noindent{\it Proof of Lemma~\ref{lemma:NonCrossingPart}. }Since we need to find the dominating terms in the sum, we have to maximize the function  $\mathcal S_p \ni \alpha \mapsto \#\alpha + \#(\gamma \alpha)$. The following lemma contains the key combinatorial insight which allows us to perform this task. The result below is contained in \cite{biane1997some} (see also \cite[Lecture 23]{nica2006lectures} for a textbook presentation). 

\begin{lemma}\label{lem:geodesic-permutations}
For a permutation $\alpha \in \mathcal S_p$, let $|\alpha|$ denote the minimum number of transpositions that multiply to $\alpha$; $|\alpha|$ is called the \emph{length} of the permutation $\alpha$ and satisfies the relations
$$ |\alpha| + \#\alpha = p \qquad\qquad |\alpha| = |\alpha^{-1}| \qquad\qquad |\alpha\beta| = |\beta\alpha|$$
for all permutations $\alpha,\beta\in \mathcal S_p$. The mapping
$d(\alpha, \beta) := |\alpha^{-1} \beta|$ defines a distance on $\mathcal S_p$. For two fixed permutations $\alpha, \beta \in \mathcal S_p$, the permutations $\chi \in \mathcal S_p$ saturating the triangle inequality
$$d(\alpha, \chi) + d(\chi, \beta) \ge d(\alpha, \beta)$$
are called \emph{geodesic}; we write $\alpha - \chi - \beta$. The set of geodesic permutations between the identity permutation $\mathrm{id}$ and the full cycle permutation $\gamma \in \mathcal S_p$ is in bijection with the set of non-crossing partitions $\mathrm{NC}(p)$: a partition $\tilde \alpha \in \mathrm{NC}(p)$ encodes the cycle structure of $\chi$, and the elements inside a given cycle of $\chi$ have the same cyclic ordering as in $\gamma$.
\end{lemma}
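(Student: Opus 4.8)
The plan is to establish the stated relations about the word length function $|\cdot|$ first, then reduce the maximization problem to a geodesic condition, and finally identify the saturating permutations with non-crossing partitions via a Cayley-graph argument.

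\textbf{Step 1: The length function and its basic properties.} First I would recall that the transpositions generate $\mathcal S_p$, so $|\alpha|$ is well-defined. The identity $|\alpha| + \#\alpha = p$ follows by induction on the number of transpositions: multiplying a permutation by a transposition either merges two cycles (decreasing $\#$ by one) or splits one cycle (increasing $\#$ by one), so a minimal factorization must make every step merge a cycle, starting from the identity which has $p$ cycles. The relation $|\alpha| = |\alpha^{-1}|$ is immediate since the inverse of a product of transpositions is the product of the same transpositions in reverse order. The conjugation-invariance $|\alpha\beta| = |\beta\alpha|$ follows because $\beta\alpha = \beta(\alpha\beta)\beta^{-1}$ is conjugate to $\alpha\beta$, and conjugation preserves cycle type, hence preserves $\#$ and therefore $|\cdot|$. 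From these, $d(\alpha,\beta) = |\alpha^{-1}\beta|$ is symmetric (using $|\alpha^{-1}\beta| = |(\alpha^{-1}\beta)^{-1}| = |\beta^{-1}\alpha|$), vanishes only when $\alpha=\beta$, and satisfies the triangle inequality because $\alpha^{-1}\gamma = (\alpha^{-1}\beta)(\beta^{-1}\gamma)$ and $|\cdot|$ is subadditive under products (a factorization of each factor concatenates to a factorization of the product).

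\textbf{Step 2: Reformulating the maximization as a geodesic condition.} The quantity to maximize is $\#\alpha + \#(\gamma\alpha)$. Using Step 1, write $\#\alpha = p - |\alpha| = p - d(\mathrm{id},\alpha)$ and $\#(\gamma\alpha) = p - |\gamma\alpha| = p - |\alpha\gamma| = p - |\alpha^{-1}\cdot\gamma^{-1}|^{\!}$; more cleanly, $|\gamma\alpha| = |\alpha^{-1}\gamma^{-1}| = d(\gamma,\alpha^{-1})$, but the symmetric route is to note $d(\alpha,\gamma) = |\alpha^{-1}\gamma|$ and $\#(\gamma^{-1}\alpha^{-1}\cdot\text{---})$; I would instead simply observe $|\gamma\alpha|=|\alpha\gamma|$ and relate $\#(\gamma\alpha)$ to $d(\alpha^{-1},\gamma)$, or work directly with $d(\mathrm{id},\alpha)+d(\alpha,\gamma)$ after replacing $\alpha$ by $\alpha^{-1}$ if needed. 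Either way, $\#\alpha + \#(\gamma\alpha) = 2p - \big(d(\mathrm{id},\alpha) + d(\alpha,\gamma)\big) \le 2p - d(\mathrm{id},\gamma) = 2p - (p-1) = p+1$, with equality exactly when $\alpha$ lies on a geodesic from $\mathrm{id}$ to $\gamma$. So the dominating terms in \eqref{eq:exact-formula-moment-bipartite} are precisely those with $\#\alpha + \#(\gamma\alpha) = p+1$, contributing $N^{p+1}$ and hence, after the $N^{-p-1}$ normalization, surviving in the limit with weight $c^{\#\alpha}$ (since $M \sim cN$ and $\#\alpha = p - |\alpha|$ while the total power of $N$ is fixed at $p+1$).

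\textbf{Step 3: Geodesics from $\mathrm{id}$ to $\gamma$ are non-crossing partitions.} The last and most delicate step is the bijection. Given a geodesic permutation $\chi$, I would show its cycles, when drawn on $p$ points arranged on a circle in the cyclic order induced by $\gamma$, are non-crossing, and conversely that every non-crossing partition, given the induced cyclic orientation inside each block, yields a geodesic. One direction: if two cycles of $\chi$ interleaved, one can exhibit a shorter factorization (or show that $d(\mathrm{id},\chi) + d(\chi,\gamma)$ strictly exceeds $p-1$), using the fact that a crossing forces an ``extra'' transposition. The converse is an explicit count: for a non-crossing partition with blocks of sizes $b_1,\ldots,b_k$, the natural $\chi$ has $|\chi| = \sum (b_i - 1) = p - k$, and one checks $|\gamma\chi^{-1}|$ or $|\chi^{-1}\gamma| = k - 1$ by an induction that peels off an interval block (an inner block all of whose elements are consecutive on the circle), reducing $p$ and $k$ simultaneously. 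I expect \textbf{this geodesic-to-non-crossing identification to be the main obstacle}: the subadditivity and cycle-counting arguments of Steps 1–2 are routine, but proving that interleaving cycles genuinely cost length — and setting up the induction cleanly in the converse — requires the careful combinatorial bookkeeping that is the actual content of \cite{biane1997some}. Since the statement cites that reference and \cite[Lecture 23]{nica2006lectures} for a textbook treatment, in the write-up I would present the key idea (the interval-block peeling) and refer to those sources for the full verification.
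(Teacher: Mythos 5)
The paper does not actually prove this lemma: it is stated as a known result with a citation to Biane's paper and to \cite[Lecture 23]{nica2006lectures}, and no in-text proof is given. Your sketch therefore supplies more detail than the paper itself does; it follows the standard route, and you correctly isolate Step~3 (the geodesic--to--non-crossing bijection) as the actual content, which you, like the paper, ultimately defer to the cited references. That choice is reasonable and consistent with how the paper handles it.

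Two small points on the parts you do develop. First, in Step~1 the argument for $|\alpha|+\#\alpha=p$ is not quite a proof as written: observing that each transposition changes $\#$ by one gives only the lower bound $|\alpha|\ge p-\#\alpha$; you also need to note that each cycle of length $\ell$ factors into $\ell-1$ transpositions, giving a factorization of length exactly $p-\#\alpha$ and hence the matching upper bound. Saying ``a minimal factorization must make every step merge a cycle'' asserts the conclusion rather than proving it. Second, in Step~2 there is a sign issue you noticed but left unresolved: from $\#(\gamma\alpha)=p-|\gamma\alpha|$ and $|\gamma\alpha|=|\alpha^{-1}\gamma^{-1}|=d(\alpha,\gamma^{-1})$, the quantity being minimized is $d(\mathrm{id},\alpha)+d(\alpha,\gamma^{-1})$, so the relevant geodesics run from $\mathrm{id}$ to $\gamma^{-1}$, not to $\gamma$. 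These two geodesic sets are genuinely different (they are exchanged by $\alpha\mapsto\alpha^{-1}$, using $|\alpha^{-1}\gamma^{-1}|=|\alpha\gamma|$), although both are in bijection with $\mathrm{NC}(p)$. The paper writes $\mathrm{id}-\alpha-\gamma$ at the analogous point, which is the same imprecision; it is more careful later (e.g., Proposition~\ref{prop:L-and-alt}) where it speaks of permutations ``geodesic with respect to $\gamma^{-1}$.'' When you flesh out Step~3, make sure the cyclic orientation you impose on each block is that of whichever full cycle you are actually measuring distance to, or the interval-peeling induction will fail to close.
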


Using the lemma above and the asymptotic relation $M_N \sim cN$, the exponent of $N$ in the general term of \eqref{eq:exact-formula-moment-bipartite} can be bounded as follows
\begin{align}     
\label{eq:BoundCycles}
\nonumber\#\alpha + \#(\gamma \alpha) &= p-|\alpha| + p-|\gamma\alpha|\\
&= 2p-(|\alpha| + |\alpha^{-1}\gamma^{-1}|)\\
\nonumber&\leq 2p -  |\gamma^{-1}| = p+1,
\end{align}
where we have used the triangle inequality. The permutations $\alpha$ saturating this inequality are precisely the geodesic ones, \textit{i.e.}~the ones satisfying $\mathrm{id} - \alpha - \gamma$. These are in bijection with non-crossing partitions $\tilde \alpha \in \mathrm{NC}(p)$, and one has $\# \alpha = \# \tilde \alpha$, where the $\#$ notation denotes at the same time the number of cycles of a permutation $\alpha$ and the number of blocks of the corresponding non-crossing partition $\tilde \alpha$. We have shown
$$\mathbb E \operatorname{Tr}(W^p) \sim N^{p+1}\sum_{\tilde \alpha \in \mathrm{NC}(p)}c^{\#\tilde \alpha}.$$

\noindent{\it Third step. }The general term in the sum in the right hand side of \eqref{eq:MP-moments-free-cumulants} is a multiplicative function over the blocks of non-crossing partition $\tilde \alpha$: the contribution of each cycle is $c$, independently of the length of the block. We have thus identified the \emph{free cumulants} of the limiting distribution of the random matrices $W_N$: $\kappa_n = c$, for all $n \geq 1$ (we refer the reader to \cite[Lecture 11]{nica2006lectures} for the definition and the basic properties of free cumulants). In order to obtain the density of the probability distribution having the moments above, we use Voiculescu's $\mathcal R$-transform machinery. We have
$$\mathcal R(z) = \sum_{n=0}^\infty \kappa_{n+1} z^n = \sum_{n=0}^\infty c z^n = \frac{c}{1-z}.$$
The Cauchy transform and the $\mathcal R$-transform are related by the implicit equation $G(\mathcal R(z) + 1/z) = z$, which we solve for $G$, obtaining
$$G(w) = \frac{1-c+w-\sqrt{(w-a)(w-b)}}{2w}.$$
Above, we have chosen the solution of the second degree equation in $G$ such that $G(w) \sim w^{-1}$ as $w \to \infty$, and we have set $a = (1-\sqrt c)^2$ and $b = (1+\sqrt c )^2$.
Note that the function $G$ can have poles only at $w=0$, with residue $\max(1-c,0)$, explaining the atom at 0, when $0<c<1$. The expression for the density is obtained using the Stieltjes inversion formula
$$\frac{\mathrm{d} \mathrm{MP}_c}{\mathrm{d}x} = - \frac 1 \pi  \lim_{\varepsilon \to 0} \Im G(x+i \varepsilon). $$

\end{proof}

For random density matrices, one has to simply take into account the trace normalization: if $\rho_N$ is a sequence of random density matrices from the induced ensemble with parameters $(N, M_N)$, we can write $\rho_N = W_N / \operatorname{Tr} W_N$, for $W_N$ a sequence of random Wishart matrices of parameters $(N, M_N)$. We have then the following corollary.   
\begin{corollary}\label{cor:MP-convergence-random-density-matrices}
Let $\rho_N \in \mathcal M_N^{1,+}$ be a sequence of random density matrices from the \emph{induced ensemble} of parameters $(N,M_N)$, where $M_N$ is an integer sequence with the property that $M_N \sim cN$ as $N \to \infty$, with $c \in (0,\infty)$ a constant. The sequence $cN \rho_N$ converges, in moments, towards the Mar{\v{c}}enko-Pastur distribution $\mathrm{MP}_c$.
\end{corollary}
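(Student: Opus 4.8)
The plan is to reduce the statement to Proposition~\ref{prop:MP} by exploiting the two structural facts about the induced ensemble recalled above: the decomposition $\rho_N = W_N/\operatorname{Tr} W_N$ from \eqref{eq:rho-from-W}, and the independence of the random variables $\operatorname{Tr} W_N$ and $\rho_N$. First I would record the elementary identities obtained by homogeneity of the trace. Since $\rho_N = W_N/\operatorname{Tr} W_N$, for every $p\ge 1$,
\[
\frac1N\operatorname{Tr}\bigl((N^{-1}W_N)^p\bigr) = \Bigl(\frac{\operatorname{Tr} W_N}{N}\Bigr)^{p}\,\frac1N\operatorname{Tr}(\rho_N^p), \qquad \frac1N\operatorname{Tr}\bigl((cN\rho_N)^p\bigr) = (cN)^{p}\,\frac1N\operatorname{Tr}(\rho_N^p),
\]
so that $\tfrac1N\operatorname{Tr}((cN\rho_N)^p)$ equals $\bigl(cN^2/\operatorname{Tr} W_N\bigr)^{p}$ times $\tfrac1N\operatorname{Tr}((N^{-1}W_N)^p)$.

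Next I would take expectations and use that $\operatorname{Tr} W_N$ is independent of $\rho_N$, hence $(\operatorname{Tr} W_N/N)^p$ is independent of $\tfrac1N\operatorname{Tr}(\rho_N^p)$, so that the first of the two identities above factorizes:
\[
\mathbb E\Bigl[\frac1N\operatorname{Tr}\bigl((N^{-1}W_N)^p\bigr)\Bigr] = \mathbb E\Bigl[\Bigl(\frac{\operatorname{Tr} W_N}{N}\Bigr)^{p}\Bigr]\cdot \mathbb E\Bigl[\frac1N\operatorname{Tr}(\rho_N^p)\Bigr].
\]
Combining this with the second identity gives the clean relation
\[
\mathbb E\Bigl[\frac1N\operatorname{Tr}\bigl((cN\rho_N)^p\bigr)\Bigr] = \frac{(cN)^{p}}{\,\mathbb E\bigl[(\operatorname{Tr} W_N/N)^{p}\bigr]\,}\;\mathbb E\Bigl[\frac1N\operatorname{Tr}\bigl((N^{-1}W_N)^p\bigr)\Bigr],
\]
which reduces the problem to understanding the scalar prefactor.

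To control it, I would use that each entry $x_{i,j}$ of the Ginibre matrix is a standard complex Gaussian, so $|x_{i,j}|^2$ is a unit-rate exponential variable and $\operatorname{Tr} W_N = \sum_{i,j}|x_{i,j}|^2$ is a $\mathrm{Gamma}(NM_N,1)$ random variable. Hence $\mathbb E\bigl[(\operatorname{Tr} W_N)^p\bigr] = \Gamma(NM_N+p)/\Gamma(NM_N) = NM_N(NM_N+1)\cdots(NM_N+p-1)$, which gives $\mathbb E\bigl[(\operatorname{Tr} W_N/N)^p\bigr] = M_N^{p}\bigl(1+O(1/N^2)\bigr) = (cN)^p\bigl(1+o(1)\bigr)$ since $M_N\sim cN$. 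Plugging this into the displayed relation and invoking Proposition~\ref{prop:MP} for the remaining factor yields
\[
\mathbb E\Bigl[\frac1N\operatorname{Tr}\bigl((cN\rho_N)^p\bigr)\Bigr] = \bigl(1+o(1)\bigr)\,\mathbb E\Bigl[\frac1N\operatorname{Tr}\bigl((N^{-1}W_N)^p\bigr)\Bigr] \xrightarrow[N\to\infty]{}\ \int x^p\,\mathrm d\mathrm{MP}_c(x),
\]
which is the asserted convergence in moments.

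I do not expect a genuine obstacle here: the only point needing minor care is that the ratio of expectations is well defined and that the asymptotics of $\mathbb E[(\operatorname{Tr} W_N/N)^p]$ come with an explicit $1+o(1)$ so that one may pass to the limit, and the exact Gamma moment formula settles this. An alternative route, should one prefer not to invoke the independence factorization, is to write $cN\rho_N = (cN^2/\operatorname{Tr} W_N)\,N^{-1}W_N$, observe that $cN^2/\operatorname{Tr} W_N \to 1$ in probability by the concentration estimate \eqref{eq:convergence-Tr-W}, and upgrade this to convergence of the normalized traces via a Cauchy--Schwarz bound using a second-moment estimate for $\tfrac1N\operatorname{Tr}((N^{-1}W_N)^p)$; the independence argument above is, however, shorter and entirely self-contained.
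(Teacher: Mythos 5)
Your proof is correct, but it follows a genuinely different route from the paper's. The paper simply writes $cN\rho_N = (W_N/N)\cdot\bigl(cN^2/\operatorname{Tr}W_N\bigr)$, deduces from the concentration bound \eqref{eq:convergence-Tr-W} that the scalar factor converges almost surely to $1$, and invokes Proposition~\ref{prop:MP} — a two-line argument which, as you point out in your closing paragraph, implicitly requires a further step (uniform integrability or a Cauchy--Schwarz second-moment bound) to convert the almost-sure convergence of the scalar prefactor into convergence of the \emph{expected} normalized traces. Your main argument avoids this interchange altogether: using the independence of $\operatorname{Tr}W_N$ and $\rho_N$ (stated in the paper, proved in \cite[Lemma 1]{nechita2007asymptotics}) you factorize $\mathbb E\bigl[(\operatorname{Tr}W_N/N)^p\,\tfrac1N\operatorname{Tr}(\rho_N^p)\bigr]$ exactly, and the Gamma$(NM_N,1)$ law of $\operatorname{Tr}W_N$ gives the exact falling-factorial moments $\mathbb E[(\operatorname{Tr}W_N)^p]=NM_N(NM_N+1)\cdots(NM_N+p-1)=(cN^2)^p(1+o(1))$, so the expected moments of $cN\rho_N$ and of $N^{-1}W_N$ differ by an explicit $1+o(1)$ factor and Proposition~\ref{prop:MP} finishes the job. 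What each approach buys: the paper's version is shorter, uses only the displayed concentration inequality, and extends naturally to almost-sure statements about the empirical spectral distribution; yours stays entirely at the level of expectations and is fully self-contained as a moment-method proof (no limit interchange to justify), at the price of invoking the independence of norm and direction and the exact Gamma computation. Both are valid; your identities, the factorization, and the asymptotics of the prefactor all check out.
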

\begin{proof}
Write
$$cN \rho_N = cN \frac{W_N}{\operatorname{Tr}W_N} = \frac{W_N}{N} \cdot \frac{cN^2}{\operatorname{Tr}W_N}.$$
From equation \eqref{eq:convergence-Tr-W}, it follows that $cN^2/(\operatorname{Tr}W_N)$ converges, almost surely, to 1. Together with Proposition~\ref{prop:MP}, this proves the claim.
\end{proof}
Note that one can prove much stronger statements of convergence than the ones we cited; importantly, one can show that the largest eigenvalue of (properly normalized) Wishart and random density matrices converges, almost surely, towards the right edge of the support of the limiting Mar{\v{c}}enko-Pastur distribution \cite{bai1988note}.

\begin{remark}\label{rem:partial-trace-Wishart}
Note that if $W \in \mathcal M_{N_1N_2}(\mathbb C)$ is a Wishart \emph{tensor} of parameters $(N_1N_2,M)$, then $W' = \operatorname{Tr}_2(W)$ is a Wishart matrix of parameters $(N_1, N_2M)$. Indeed, if $W = XX^*$, with $X \in \mathcal M_{N_1N_2 \times M}(\mathbb C)$ a Gaussian matrix, then $W' = YY^*$, where $Y \in \mathcal M_{N_1 \times N_2M}(\mathbb C)$ is the matrix obtained by ``reshaping'' $X$ into a matrix of appropriate dimensions. This equivalence comes from the fact that both the partial trace and the matrix multiplication correspond to tensor contractions. In the random density matrix picture, we have, for a random vector $\Psi \in \mathbb C^{N_1} \otimes \mathbb C^{N_2} \otimes \mathbb C^{M}$, 
$$\rho_1 = [\operatorname{id}_{N_1} \otimes \operatorname{Tr}_{N_2}](\rho_{23}) = [\operatorname{id}_{N_1} \otimes \operatorname{Tr}_{N_2}\otimes \operatorname{Tr}_{M}](\Psi\Psi^*).$$
\end{remark}

\subsection{A combinatorial map version of the proof}

In this subsection we re-prove the formulas \eqref{eq:MP-moments-free-cumulants} and \eqref{eq:BoundCycles} using combinatorial map methods instead of the results on distances between permutations (Lemma~\ref{lem:geodesic-permutations}). Let us first provide a few definitions.

\begin{definition}\label{def:comb-maps} A \emph{connected labeled bicolored combinatorial map}, or simply a \emph{bicolored map}, is a triplet $\cM=(E, \sigma_\circ, \sigma_\bullet)$ where  
\begin{itemize}
\item $E$ is a set of edges labeled from 1 to $p$
\item $\sigma_\circ$ and $\sigma_\bullet$ are  permutations on $E$,
\item the group $\langle\sigma_\circ,\sigma_\bullet
\rangle$ generated by $ \sigma_\circ$ and $\sigma_\bullet$ 
acts transitively on $E$.
\end{itemize}
\end{definition}

We call white $vertices$ (resp.~black $vertices$, resp.~$faces$) the elements of the unique decomposition of $\sigma_\circ$ (resp.~$\sigma_\bullet$, resp.~$\sigma_\bullet\sigma_\circ$) in disjoint cycles.   A  bicolored map $\cM=(E, \sigma_\circ, \sigma_\bullet)$ satisfies the following classical result:
\be
\label{eq:Euler}
\#\sigma_\circ + \#\sigma_\bullet + \#(\sigma_\bullet\sigma_\circ)  - p  = 2 - 2g(\cM),
\ee
where $g(\cM)$ is a non-negative integer. 
A map can also be seen as a graph drawn on a two-dimensional surface, up to continuous deformations of the edges: each disjoint cycle of $\sigma_\circ$ (resp.~$\sigma_\bullet$) is a white (resp.~black) point (vertex) of that surface, each element of $E$ is an arc between a black and a white vertex, and the ordering of the cycles of $ \sigma_\circ$, and $\sigma_\bullet$ define an ordering of the half-edges incident to the vertices. The graph is said to be cellularly embedded (or simply embedded, in the context of this paper) on that surface if the connected components of the complement of the graph on that surface are homeomorphic to discs, in which case these components correspond to the faces, the disjoint cycles of $\sigma_\bullet\sigma_\circ$. The genus (number of holes) of a surface on which the underlying graph of a map can be embedded is precisely the integer $g(\cM)$ in \eqref{eq:Euler}. A genus 0 map can therefore be drawn on the plane without crossing. Finally, the transitivity condition forces the combinatorial map to be connected.

\

\begin{figure}[!ht]
\raisebox{+0.8cm}{\includegraphics[scale=1]{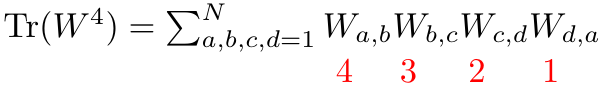}}\qquad \raisebox{+1.2cm}{$\leftrightarrow\qquad$}\includegraphics[scale=1]{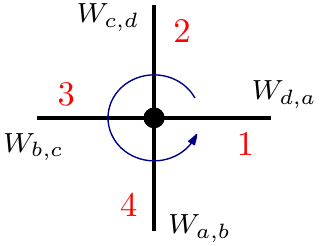}
\caption{\label{fig:BlackVertexBip} A trace of $p$ matrices can be represented as a vertex of valency $p$ with a cyclic ordering of incident half-edges. }
\end{figure}
With the notations of the proof of Prop.~\ref{prop:MP}, we define a bicolored map on  $E=\{1,\ldots,p\}$ by setting  $\sigma_\bullet := \gamma$, and $\sigma_\circ := \alpha$. As illustrated in Fig.~\ref{fig:BlackVertexBip}, in this representation,    a single  black vertex of valency $p$ is associated to $\Tr (W^p)$, each matrix $W$ corresponding to an edge incident to it. The trace induces a cyclic counter-clockwise ordering of these half-edges around the vertex\footnote{As will appear in the following, the moments are expressed in terms of labeled maps. This is because the trace corresponding to the only black vertex does not come with a factor $1/p!$.} (the matrices are labeled growingly from right to left, and the corresponding half-edges are labeled growingly when going counter-clockwise around the black vertex). The free index corresponding to the matrix $X$ (resp $X^\ast$) is associated to the left (resp. right) side of the half-edge $ XX^*$. Each Wick pairing induces  a permutation $\sigma_\circ$ which defines the white vertices. The maps corresponding to the examples of Figure~\ref{fig:expectation-moment-2-Wishart} are shown in Figure~\ref{fig:MapExTrW2}.

\begin{figure}[!ht]
\includegraphics[scale=1.2]{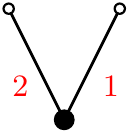}\hspace{2cm}\includegraphics[scale=1.2]{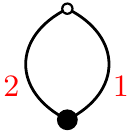}
\caption{\label{fig:MapExTrW2} Combinatorial maps for the example of Figure~\ref{fig:expectation-moment-2-Wishart}. }
\end{figure}
\begin{notation}
\label{not:SetsOfMaps}
 Throughout the text, we will denote $\bM_{p}$ the set of labeled connected bicolored maps with a single black vertex and $p$ edges, and $\bM^g_{p}$ its subset of maps having genus $g$. 
\end{notation}
The two elements of $\bM_{2} = \bM^0_{2}$ are shown in Fig.~\ref{fig:MapExTrW2}, and an example of map in $\bM^0_{7}$ is shown on the left of Fig.~\ref{fig:Bij1}. Denoting $F(\cM)$ the number of faces of a map $\cM$, and $V(\cM)$ its total number of vertices,
we can therefore restate the key relation \eqref{eq:exact-formula-moment-bipartite} as follows: 
\be
\label{eq:MomMapsFacesVert}
\mathbb E \Tr\bigl(W^{p}\bigr) = \sum_{\cM\in \bM_{p}} N^{F(\cM)} M^ {V(\cM) - 1}.
\ee

\

The genus \eqref{eq:Euler} of a connected map in $\bM_{p}$ writes
\be
\label{eq:EulerCharacSec1}
2-2g(\cM)
= V(\cM) + F(\cM) -p , 
\ee 
 so that, if we assume $M\sim cN$,
\be
\label{eq:MomentMapsc}
\mathbb E \Tr\bigl(W^{p}\bigr) = (1+o(1))\sum_{\cM\in \bM_{p}} N^{p + 1 -2g(\cM)} c^ {V(\cM) - 1}.
\ee
Since $g\ge 0$,  the bound \eqref{eq:BoundCycles} on the number of cycles $\#\alpha + \#(\gamma \alpha)$ is now obtained using the positivity of the genus instead of the distances between permutations, and the leading terms in $N$ correspond to planar maps. Therefore, with the Notation \ref{not:SetsOfMaps},
\be\label{eq:MP-limit-with-comb-maps}
\lim _{N\rightarrow + \infty}\frac 1 {N^{p+1}} \mathbb E \Tr\bigl(W^{p}\bigr) = \sum_{\cM\in\bM_{p}^0}c^ {V(\cM) - 1}.
\ee
It is easily seen by studying the permutations $\sigma_\bullet = \gamma$ and $\sigma_\circ = \alpha$, that elements of $\bM_{p}^0$ with $V(\cM)$ vertices correspond to non-crossing partitions with $V(\cM)-1$ disjoint cycles. Another simple way of seeing this is to notice that maps in  $\bM_{p}^0$ with $V(\cM)$ vertices and $F(\cM)$ faces are in bijection with bicolored labeled plane trees with $p$ edges, $V(\cM)-1$ white vertices, and $F(\cM)$ black vertices (plane trees are all bicolored), as explained below. Such trees are themselves in bijection with non-crossing partitions with $V(\cM)-1$ disjoint cycles throughout a dual mapping.

 Starting from a map in $\bM_{p}^0$, a vertex is added in each face and an edge is added between each corner of each white vertex and the newly added vertices (the ordering of edges around vertices is given by the clockwise ordering of appearance of corners around faces)\footnote{Note that by linking the newly added vertices to the corners around the black vertex instead of the white vertices, we have a duality between elements of $\bM_{p}^0$, see Def.~\ref{def:TutteDual}.}. This procedure is illustrated in Fig.~\ref{fig:Bij1}. It is a particular case of Tutte's bijection for bicolored maps \cite{TutteTrinity} and is known to be bijective.
\begin{figure}[!ht]
\raisebox{0.7cm}{\includegraphics[scale=0.7]{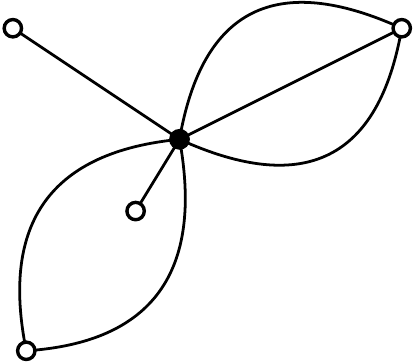}}\hspace{1.2cm}\includegraphics[scale=0.7]{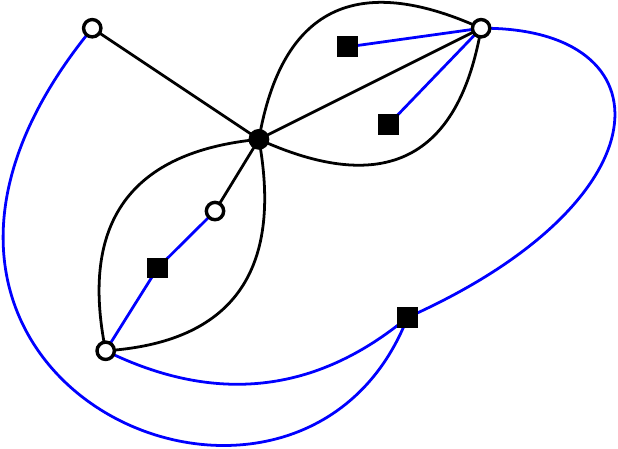}\hspace{1cm}\includegraphics[scale=0.7]{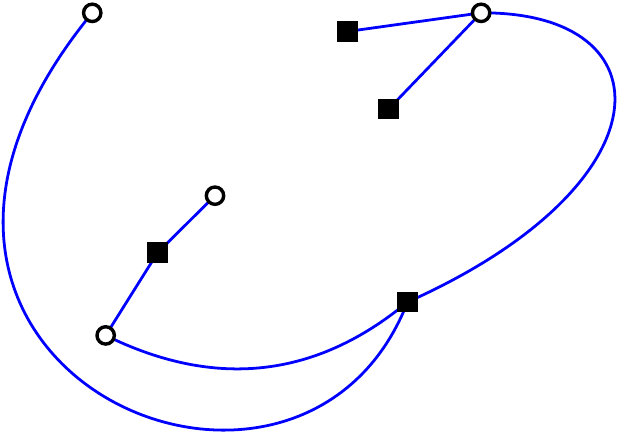}
\caption{\label{fig:Bij1} Bijection between elements of $\bM_p^0$ and labeled plane trees. }
\end{figure}
The map obtained from $M\in \bM_{p}^0$ is connected, has $V(M) - 1 +F(M) = p+1$ vertices, and its number of edges is the number of corners of $M$ around white vertices, which is $p$, and is therefore a tree.

In the case where $c=1$, $\lim _{N\rightarrow + \infty}\frac 1 {N^{p+1}} \mathbb E \Tr\bigl(W^{p}\bigr)$ is just the cardinal of $\bM_{p}^0$, namely the number of plane trees with $p$ edges, which is well-known \cite{harary1964number} to be the Catalan number $C_{p}= \frac1{p+1}\binom {2p} p $, 
\be
\lim _{N\rightarrow + \infty}\frac 1 {N^{p+1}} \mathbb E \Tr\bigl(W_{c=1}^{p}\bigr)= C_{p}.
\ee

At finite $N$ and for $c=1$, the contribution of order $N^{p+1-2g}$ is given by the number of maps in $\bM_{p}$ whose genus is $g$. The very same transformation which mapped maps in $\bM_{p}^0$ to plane trees can be applied. It maps elements of $\bM_{p}^g$ to bicolored maps of genus $g$ with a single face. Maps with a single face are called unicellular maps \cite{Chapuy2010}. Conversely, the inverse transformation maps bicolored unicellular maps of genus $g$ to genus $g$ bicolored maps with a single black vertex. Unicellular maps of genus $g$ are the object of many studies, and they can be counted exactly using various recursive formulas (see for instance the Harer-Zagier formula \cite{Zagier1986}, Lehman-Walsh formula \cite{walsh1972counting}, Goupil-Schaeffer formula \cite{Goupil98},
and Chapuy's formula \cite{Chapuy11, Chapuy13, Carrell15}). Using such results, \eqref{eq:MomentMapsc} can be expressed exactly  for any $p$.

\subsection{Free cumulants and maps}

We elaborate here on the relation between moments and free cumulants in the case of a Wishart distribution. As was pointed out in the proof of Proposition~\ref{prop:MP} (third step), the free cumulants of the limiting distribution are  $\kappa_n=c$ for all $n\ge 1$. We consider again the expectation value of the traces of powers of the matrix $W$
\begin{align}
\label{eq:ClassMomCumulForm}
\bE \Tr(W^p)&=\sum_{i_1,\ldots,i_p=1}^N\bE(W_{i_1i_p}\ldots W_{i_3i_2}W_{i_2i_1}),\\
&=\sum_{i_1,\ldots,i_p=1}^N\sum_{\cJ\vdash \{1,\ldots,p\}}\prod_{J_r \in \cJ}\cK(W_{J_r}), \label{eq:DecCumClass}
\end{align}
where the second sum is taken over all partitions $\cJ$ of $\{1,\ldots,p\}$, and we define 
\begin{equation}
\cK(W_{J_r})=\cK(\{W_{i_{k+1}i_k}\}_{\substack{k\in J_r \\ k\in \mathbb{Z}_p}})
\end{equation}
as the classical cumulants of the family of random variables $\{W_{i_{k+1}i_k}\}_{\substack{k\in J_r \\ k\in \mathbb{Z}_p}}$.

 The claim \eqref{eq:DecCumClass} can be obtained from the definition of the generating function of classical cumulants. Depending on conventions, this cumulant generating function can be defined either as the logarithm of the characteristic function or of the moment generating function of the corresponding probability density. For reference on classical cumulants, see for instance \cite[Lecture 11, Appendix]{nica2006lectures}, \cite{novak2014three} or \cite{Lukacs_book}.
 As moments, these classical cumulants can be represented graphically. In the mathematical-physics literature, the graphical objects representing them are often described as connected combinatorial maps (or ribbon graphs) with half-edges or connected opened combinatorial maps, see Remark~\ref{rem:on_cumulants-mathphys} for more details. In our case, as is shown in the next paragraphs, they will be represented as usual combinatorial maps with one white vertex\footnote{The fact that there is only one white vertex is the direct translation of the ``connected'' conditions introduced in the mathematical-physics literature.} and one black vertex. The fact that combinatorial maps with \emph{one} white vertex and one black vertex represent classical cumulants is the translation of the decomposition of classical cumulants in terms of classical moments for some random variables $X_1,\ldots, X_s$, which is the cumulant-moment formula in classical probability \cite[Definition 11.28]{nica2006lectures}, making use of the M\"obius function on the lattice of partitions:
\begin{equation}
\label{eq:CumulInMoments}
\cK(X_{\lvert J_r \rvert},\ldots,X_1 )= \sum_{\cI\vdash \{1,\dots, {\lvert J_r \rvert}\}}(-1)^{|\cI|-1} (|\cI|-1)! \prod_{I_m\in \cI}\bE\left(\prod_{k\in I_m}X_k\right),
\end{equation}
for the random variables $X_k = W_{i_{k+1}i_k}$. The moments $\bE\left(\prod_{k\in I_m}X_k\right)$ are then computed using Wick's theorem (Prop.~\ref{prop:Wick}). As detailed previously, these Wick pairings can be represented graphically, either using $X-$ and $X^*-$boxes, or using the combinatorial map representation. In this last representation, identifying $W_{i_{k+1}i_k}$ with the integer $k\in I_m$, the Wick pairings induce a permutation $\alpha_m$ on the set $I_m$ (see the proof of Prop.~\ref{prop:MP}). Each cycle of  $\alpha_m$ contributes with $\lfloor N c \rfloor$. 
As before, we can define a bicolored map in $\bM_{\lvert I_m \rvert}$ by setting $\sigma_\circ = \alpha_m$, and $\sigma_\bullet = \gamma_m$, where $\gamma_m$ is the cycle $(x_1, \ldots, x_{\lvert I_m \rvert})$, in which $x_1<x_2< \ldots <x_{\lvert I_m \rvert}$ are all the elements of $I_m$.

The difference with what has been done before, where we were considering the expectation of a trace, is that here, the indices $\{i_k, i_{k+1}\}$ are not summed. This means that the faces of the map $(I_m,  \alpha_m,  \gamma_m)$ (the disjoint cycles of $\gamma_m\alpha_m$) do not factorize in the expression of $\bE\left(\prod_{k\in I_m}X_k\right)$, and instead we have a Kronecker delta for each corner of each white vertex:
\begin{equation}
\bE\left(\prod_{k\in I_m}W_{i_{k+1}i_k}\right)= \sum_{\alpha_m \in \cS_{\lvert I_m \rvert}}\lfloor Nc \rfloor^{(\#\alpha_m)} \prod_{k\in I_m} \delta_{i_k,i_{\alpha_m(k) + 1}}.
\end{equation}

Furthermore, because of the factors $(-1)^{|\cI|-1} (|\cI|-1)!$ in the expression of the cumulants \eqref{eq:CumulInMoments}, and because the products of Kronecker deltas factorize, the terms corresponding to maps which have more than one white vertex will be canceled by products of terms corresponding to maps with fewer white vertices. The only non-vanishing terms are those for which $\cI$ is reduced to $\{1,\ldots, \lvert J_r\rvert \}$, and the cumulants can therefore be expressed in terms of cycles of $J_r$ of length $\lvert J_r\rvert$, or equivalently using  a graphical expansion over bicolored maps with two vertices,
\begin{equation}
\label{eq:CumulInMaps}
\cK(W_{J_r})= \lfloor Nc \rfloor  \sum_{ \substack{{\cM\in\bM_{\lvert J_r \rvert}}\\{V(\cM) = 2}} }\     \prod_{k\in J_r} \delta_{i_k,i_{\alpha_\cM(k) + 1}},
\end{equation}
where we have denoted $\alpha_\cM$ the cycle $\sigma_\circ$ for the map $\cM$.

We do not prove these statements here, however we give what we hope is an illuminating example in the present case. Consider the cumulant of three matrix elements ${W_{i_6i_5}, W_{i_4i_3}, W_{i_2i_1}}$, 
\begin{multline}
\label{eq:cumulant_example} \cK(W_{i_6i_5}, W_{i_4i_3}, W_{i_2i_1})=\bE(W_{i_6i_5}W_{i_4i_3}W_{i_2i_1}) 
-\bE(W_{i_6i_5}W_{i_4i_3})\bE(W_{i_2i_1}) \\
-\bE(W_{i_6i_5})\bE(W_{i_4i_3}W_{i_2i_1}) - \bE(W_{i_6i_5}W_{i_2i_1})\bE(W_{i_4i_3}) 
+2\bE(W_{i_6i_5})\bE(W_{i_4i_3})\bE(W_{i_2i_1}).
\end{multline}

\begin{figure}
 \includegraphics[scale=0.65]{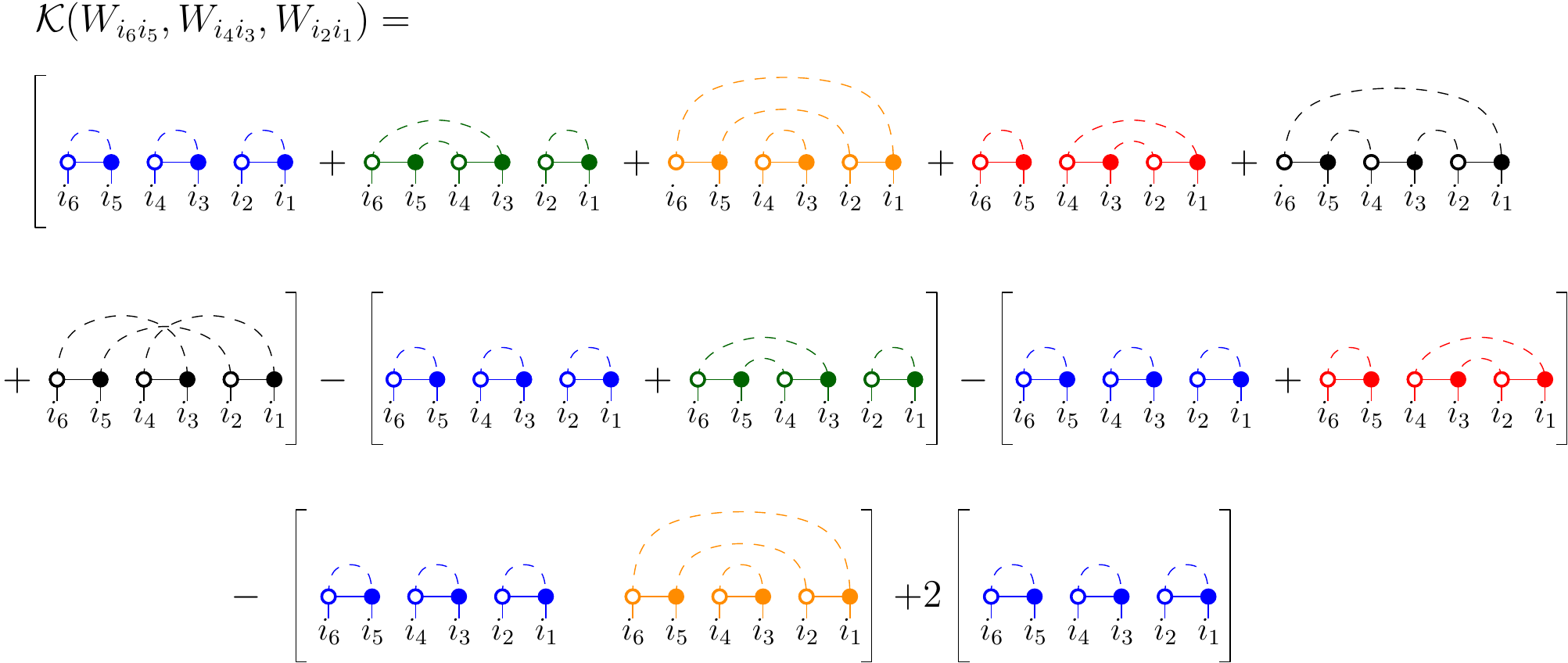}
 \caption{Graphical expansion of the cumulant $\cK(W_{i_6i_5}, W_{i_4i_3}, W_{i_2i_1})$ from its moments decomposition and the Wick theorem. Identical terms are shown in the same color.}\label{fig:cumulant_example_graphical}
\end{figure}
The graphical representation of the Wick pairings of equation \eqref{eq:cumulant_example} is shown in Fig.~\ref{fig:cumulant_example_graphical}. 
The dashed edges indicate which $X,X^*$ are paired together. In order to simplify (and compactify) the drawings, the vertices of empty types represent the $X$-boxes, while the filled vertices represent the $X^*$-boxes. In this representation, it is the disconnected parts of the graphical expansion of $\bE(W_{i_6i_5}W_{i_4i_3}W_{i_2i_1})$ which are suppressed by the remaining terms of the right hand side of \eqref{eq:cumulant_example}, that are all disconnected in nature. The weights associated to the only remaining terms are $\lfloor Nc \rfloor  \delta_{i_1i_6}\delta_{i_2i_3}\delta_{i_4i_5}$ and $\lfloor Nc \rfloor  \delta_{i_1i_4}\delta_{i_2i_5}\delta_{i_3i_6}$. 
In Fig.~\ref{fig:cumulant_example_maps}, we represent the same expansion using the representation in terms of combinatorial maps.

\begin{figure}
 \includegraphics[scale=0.65]{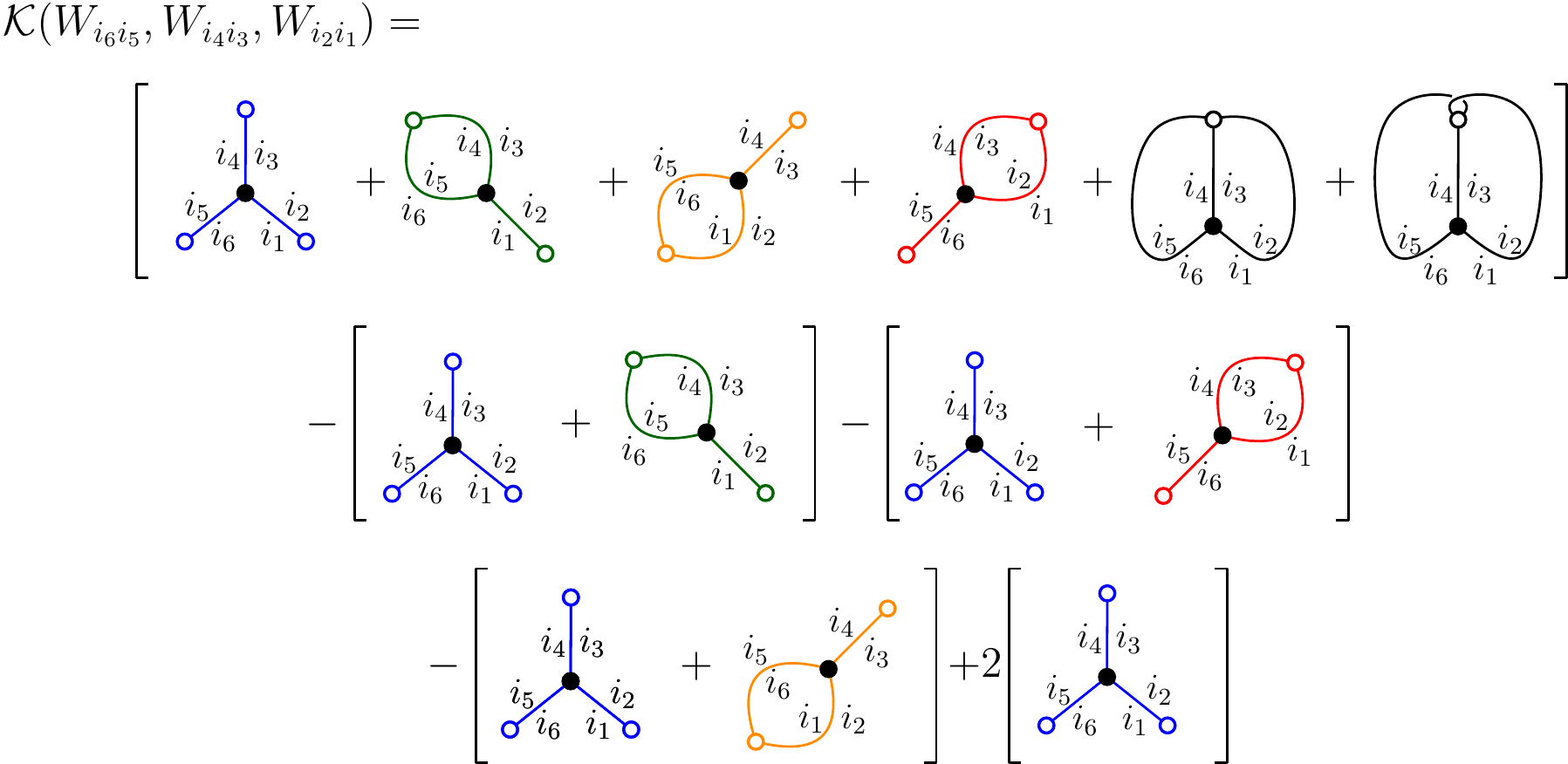}
 \caption{This is the translation of the previous Figure~\ref{fig:cumulant_example_graphical} in the language of combinatorial maps. Every corner on a white vertex carries a $\delta_{i_k, i_{k'}}$, where $i_k$ and $i_{k'}$ are the incident indices on the drawing. The only graphs which do not cancel are the ones corresponding to  $\lfloor Nc \rfloor  \delta_{i_1i_6}\delta_{i_2i_3}\delta_{i_4i_5}$ and $\lfloor Nc \rfloor  \delta_{i_1i_4}\delta_{i_2i_5}\delta_{i_3i_6}$ (shown in black). \label{fig:cumulant_example_maps}}
\end{figure}

Using the expression \eqref{eq:CumulInMaps} in the classical moment-cumulant formula \eqref{eq:ClassMomCumulForm} above, we have
\begin{align}
\bE \Tr(W^p)&=\sum_{i_1,\ldots,i_p=1}^N\sum_{\cJ\vdash \{1,\ldots,p\}}\prod_{J_r\in \cJ}    \lfloor Nc \rfloor  \sum_{ \substack{{\cM\in\bM_{\lvert J_r \rvert}}\\{V(\cM) = 2}} }\     \prod_{k\in J_r} \delta_{i_k,i_{\alpha_\cM(k) + 1}}\\
&= \sum_{\cJ\vdash \{1,\ldots,p\}}\lfloor N c\rfloor^{|\cJ|}\sum_{i_1,\ldots,i_p=1}^N \prod_{J_r\in \cJ}  \sum_{ \substack{{\cM\in\bM_{\lvert J_r \rvert}}\\{V(\cM) = 2}} }\     \prod_{k\in J_r} \delta_{i_k,i_{\alpha_\cM(k) + 1}}.
\end{align}

We can rewrite the terms on the second line in terms of bicolored maps in $\bM_p$. The partition $\cJ$ divides the edges incident to its black vertex in groups $J_r$ which are all linked to the same white vertex, one for each group. For each one of these groups, a sum is taken over all possible maps with two vertices, of the product of Kronecker deltas over the corners around the white vertex. Any choice of $|\cJ|$ two-vertices maps provides a different map in $\bM_p$. Denoting $\bM_p[\cJ]$ the subset of $\bM_p$ of maps for which for every $J_r\in \cJ$, $(\alpha_\cM)_{|J_r}$ is a unique cycle of length $|J_r|$, we can therefore rewrite  this sum as
\be
 \bE \Tr(W^p) =  \sum_{\cJ\vdash \{1,\ldots,p\}}\lfloor N c\rfloor^{|\cJ|}\sum_{i_1,\ldots,i_p=1}^N \,  \sum_{ \cM\in\bM_{p}[\cJ]} \, \prod_{k\in \{ 1,\ldots, p\}} \delta_{i_k,i_{\alpha_\cM(k) + 1}}.
\ee

Now, we have precisely one index per corner around the black vertex, and we still have one Kronecker delta per corner around a white vertex, so the sum over  $i_1,\ldots, i_p$ yields a factor $N^{F(\cM)}$, so that 
\be
 \bE \Tr(W^p) =  \sum_{\cJ\vdash \{1,\ldots,p\}}\lfloor N c\rfloor^{|\cJ|}  \sum_{ \cM\in\bM_{p}[\cJ]} N^{F(\cM)} = \sum_{ \cM\in\bM_p} \lfloor N c\rfloor^{V(\cM)-1} N^{F(\cM)},
\ee
and we indeed recover \eqref{eq:MomMapsFacesVert}, and therefore the results of Lemma~\ref{lemma:NonCrossingPart} which state that the leading terms correspond to the planar maps.

However, we can now  pull back the planarity result to the level of the cumulants. From the calculation above, we see that the cumulants are the two-vertices maps which are then re-arranged (their black vertices are crushed into a single one, and the edges are naturally ordered around this single vertex as they are labeled from 1 to $p$).
 In order for the resulting map to contribute to the large $N$ limit, it has to be planar, thus the collection of two-vertices maps we start from all have to be planar. Indeed it is a classical result that the genus of a map is greater or equal to that of a submap (see e.g.~\cite[Prop.~4.1.5, Sec.~4.1, p.~102]{GraphsOnSurfaces}). We reprove it in our case in Prop.~\ref{prop:GenusSubmap}. If all the two-vertices maps are planar, in addition, the partition $\cJ$ has to be non-crossing  for the map to be planar.  
\begin{proposition}
\label{prop:GenusSubmap}
Consider a map $\cM\in\bM_p$, and the submap $\cM'$ obtained by keeping the black vertex, a single white vertex, and all the edges between them. If $\cM'$ is non-planar, then $\cM$ is also non-planar.
\end{proposition}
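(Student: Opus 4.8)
The plan is to prove the contrapositive: assuming $\cM=(E,\sigma_\circ,\sigma_\bullet)\in\bM_p$ is planar, I will show the submap $\cM'$ is planar too. Write $\sigma_\bullet=\gamma$ for the single $p$-cycle (the unique black vertex) and $\alpha:=\sigma_\circ$; let $c$ be the cycle of $\alpha$ corresponding to the chosen white vertex, put $E':=\mathrm{supp}(c)$ and $\ell:=|E'|$. By construction $\cM'=(E',\sigma_\circ',\sigma_\bullet')$ with $\sigma_\circ'=c$ and $\sigma_\bullet'=\gamma|_{E'}$, the cyclic sub-word of $\gamma$ on $E'$, which is again a single $\ell$-cycle; in particular $\cM'$ is a connected bicolored map with one white and one black vertex. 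Applying the genus relation \eqref{eq:Euler} to $\cM'$ gives $1+1+\#(\sigma_\bullet'\sigma_\circ')-\ell=2-2g(\cM')$, so that (as $\sigma_\bullet'\sigma_\circ'$ is a permutation of an $\ell$-element set) $g(\cM')=0\iff\#(\sigma_\bullet'\sigma_\circ')=\ell\iff\sigma_\bullet'\sigma_\circ'=\mathrm{id}_{E'}\iff c=(\gamma|_{E'})^{-1}$. Thus the whole statement reduces to the claim that, when $\cM$ is planar, every cycle $c$ of $\alpha$ is the \emph{reverse} of the cyclic order that $\gamma$ induces on $\mathrm{supp}(c)$.

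To establish this I would invoke Lemma~\ref{lem:geodesic-permutations}. By \eqref{eq:Euler} (equivalently \eqref{eq:BoundCycles}), planarity of $\cM$ means $\#\alpha+\#(\gamma\alpha)=p+1$; using $\#\beta=p-|\beta|$ and $|\gamma\alpha|=|\alpha^{-1}\gamma^{-1}|$ this is exactly $|\alpha|+|\alpha^{-1}\gamma^{-1}|=|\gamma^{-1}|$, i.e.\ $\alpha$ lies on a geodesic from $\mathrm{id}$ to the full cycle $\gamma^{-1}$ in the metric of Lemma~\ref{lem:geodesic-permutations}. That lemma — stated there for $\gamma$, but valid verbatim for $\gamma^{-1}$, all $p$-cycles being conjugate — then says that inside each cycle of $\alpha$ the elements appear in the same cyclic order as in $\gamma^{-1}$, which in permutation terms means $c=\gamma^{-1}|_{E'}$, the first-return map of $\gamma^{-1}$ to $E'$. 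Since restricting a cycle to a subset commutes with taking the inverse, $\gamma^{-1}|_{E'}=(\gamma|_{E'})^{-1}=(\sigma_\bullet')^{-1}$, hence $\sigma_\bullet'\sigma_\circ'=\mathrm{id}_{E'}$ and $g(\cM')=0$, which is the contrapositive of the Proposition.

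The one delicate point is the orientation bookkeeping: one must notice that it is the \emph{inverse} full cycle $\gamma^{-1}$, not $\gamma$, that appears when planarity of $\cM$ is written as a saturated triangle inequality, and that ``same cyclic order as $\gamma^{-1}$'' translates to $c=\gamma^{-1}|_{E'}=(\gamma|_{E'})^{-1}$ rather than $c=\gamma|_{E'}$ (the distinction is invisible for $\ell\le 2$, a handy sanity check). If one prefers an argument not relying on the metric structure of $\mathcal S_p$, the statement also follows topologically: delete from $\cM$, one edge at a time, every edge not incident to the chosen white vertex $w$; because all edges touch the unique black vertex, no deletion disconnects the map (it can only isolate a valency-one white vertex, which is then discarded), and from \eqref{eq:Euler} together with the standard fact that deleting an edge changes the number of faces by at most one, one checks that the genus never increases along the sequence, whose final term is precisely $\cM'$. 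In that approach the main obstacle is pinning down the face-count change ($-1$ for a non-leaf edge, $0$ for a leaf edge), which is exactly where $g(\cM)$ may strictly drop.
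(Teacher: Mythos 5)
Your main argument is correct, but it takes a genuinely different route from the paper's. The paper proves the result topologically: assuming $g(\cM')>0$, it deletes the edges of $\cM$ not belonging to $\cM'$ one at a time (stripping pendant edges and the isolated white vertices they leave behind) and checks that each deletion either keeps the genus fixed or strictly decreases it, depending on whether the edge separates two distinct faces or borders a single face on both sides; since the sequence terminates at $\cM'$, one gets $g(\cM)\geq g(\cM')>0$, i.e.\ the classical submap-monotonicity of genus mentioned just above the Proposition. You instead prove the contrapositive through the permutation dictionary: planarity of $\cM$ saturates the triangle inequality $|\alpha|+|\alpha^{-1}\gamma^{-1}|\geq|\gamma^{-1}|$, so $\alpha$ is geodesic from $\mathrm{id}$ to $\gamma^{-1}$, and Lemma~\ref{lem:geodesic-permutations} (applied to the $p$-cycle $\gamma^{-1}$, which is legitimate by conjugacy) forces each cycle $c$ of $\alpha$ to be the first-return permutation $\gamma^{-1}|_{E'}=(\gamma|_{E'})^{-1}$, whence $\sigma_\bullet'\sigma_\circ'=\mathrm{id}_{E'}$ and $g(\cM')=0$.

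Your orientation remark is not pedantic: the proof of Lemma~\ref{lemma:NonCrossingPart} writes ``$\mathrm{id}-\alpha-\gamma$'' whereas the computation in \eqref{eq:BoundCycles} really produces a geodesic to $\gamma^{-1}$ (for $p\geq 3$, $\alpha=\gamma$ is geodesic to $\gamma$, yet the map with $\sigma_\circ=\sigma_\bullet=\gamma$ has genus $\lfloor(p-1)/2\rfloor\neq 0$), so your careful translation into $c=(\gamma|_{E'})^{-1}$ rather than $\gamma|_{E'}$ is exactly the check that keeps the argument honest. The trade-off between the two approaches is worth noting. The paper's edge-deletion proof yields the stronger, genus-arbitrary inequality $g(\cM')\leq g(\cM)$ and stays entirely inside the map-surgery toolbox that Section~2.3 is deliberately assembling; your geodesic proof only gives the genus-zero implication (Lemma~\ref{lem:geodesic-permutations} describes geodesics, not higher-genus configurations), but it is shorter and makes the mechanism transparent. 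Your closing sketch of the topological alternative is essentially the paper's proof, up to one imprecision in the parenthetical ``($-1$ for a non-leaf edge, $0$ for a leaf edge)'': when both sides of a non-leaf edge border the same face, deleting it \emph{raises} the face count by $+1$, and that is precisely the case where the genus drops, so the parenthetical is not exhaustive --- although, as you observe, the integrality of $g$ already forces $\Delta g\in\{0,-1\}$, so the conclusion that the genus never increases along the deletion sequence still stands.
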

\begin{proof}
Suppose that $g(\cM')>0$, and consider the set of edges of the combinatorial map that do not belong to $\cM'$. Among these edges, we first remove all the ones adjacent to univalent white vertices, and we remove the isolated white vertices created in this process. The obtained map has the same genus as the initial map. For each of the remaining edge we proceed as follows. If the edge is adjacent to only one face and to a white vertex which is not univalent, then the map has a non zero genus and the proposition is true. If the edge is adjacent to two different faces, withdrawing it does not change the genus. We do so, and then repeat this process. At each step, we may create univalent white vertices, which we remove, together with the isolated white vertices thus created. Either we find  during the process an edge which is not a bridge and is adjacent to only one face  (and the proposition is proved), or we have erased all the edges that did not belong to $\cM'$, without changing the genus of the map. We are thus left with a map with non vanishing genus $g(\cM)=g(\cM')>0$. 
\end{proof} 
We can now claim that \emph{the free cumulant of order $p$ is represented by the unique planar map with one white vertex, one black vertex and $p$ edges}. The interest of this remark lies in the fact that it gives a heuristic for reading freeness directly out of the combinatorial maps expression of moments (intuition which will be used in Sections \ref{subsec:Large-HBC} and \ref{sec:BalancedGeneral}). Indeed looking at moments of alternating products of different random matrices, one expects to express these moments using colored combinatorial maps\footnote{See later sections.} (where the colors label different types of edges indexing Wick pairing between different matrices). If one believes\footnote{This can actually be shown, however this lies out of the main scope of this paper.} that white vertices with adjacent edges of different colors correspond to mixed cumulants, it is easy to guess if in the large $N$ limit the different matrices converge to different free elements, just by confronting the weights of the white vertices with only one adjacent color to those of white vertices with strictly more than one adjacent color.

\begin{remark}
We notice here that the following expectation values of products of matrix elements such that $i_k\neq i_q, \ \forall k\neq q$ (indices are fixed and not summed)
\begin{equation}
\bE(W_{i_1i_p}\ldots W_{i_3i_2}W_{i_2i_1})=M=\lfloor Nc \rfloor,
\end{equation}
thus are equal up to a rescaling in $N$ to the free cumulants $\kappa_n=c$. A similar fact has been used in the study of Hermitian (formal) matrix integrals and simple combinatorial maps in \cite{Gaetan-Elba}. In this reference a combinatorial maps interpretation of the relation between moments and classical cumulants is given by bijective means valid at all orders in the large $N$ asymptotic expansion. In particular the bijective method is valid at the first order of the expansion and thus can be interpreted in the context of free probability and the corresponding free cumulants. 
\end{remark}

\begin{remark}\label{rem:on_cumulants-mathphys}
The cumulants can also be expressed in terms of the white vertices with incident-pending half-edges, whose both sides are labeled by pairs $(i_k,i_{k+1})$, while the half-edges themselves are labeled by the first index of the pair $k\in I_r$.  In this representation, the moments are obtained by gluing the cumulants around the black vertex while respecting the natural order of natural integers, and the free cumulants are the connected terms, for which the indices of the half-edges satisfy a non-crossing condition. 
\end{remark}

\bigskip

\section{The four-partite case: joint distribution of two marginals}
\label{sec:ABCD}

In this section, we describe the joint distribution of two marginals, $\rho_{AB}$ and $\rho_{AC}$, of a random pure quantum state $\rho_{ABCD} = |\psi\rangle\langle\psi|_{ABCD}$. We prove all results both using metric properties of the permutation groups and the combinatorial maps approach with the underlying discrete geometry, in parallel.  As before, we state our results for Wishart tensors; the corresponding results for random density matrices can be obtained by renormalizing the Wishart matrices (see Remark~\ref{rem:4-partite-balanced-QIT}, as well as the very last paragraph of this section). 

Let $X = X_{ABCD} \in \mathcal H_A \otimes \mathcal H_B \otimes \mathcal H_C \otimes \mathcal H_D$ be a random Gaussian tensor, i.e.~a random tensor with i.i.d.~standard complex Gaussian entries. We are going to study here the \emph{joint distribution} of the two marginals
\begin{equation}\label{eq:def-2-marginals}
W_{AB} := [\operatorname{id} \otimes \operatorname{id} \otimes \operatorname{Tr} \otimes \operatorname{Tr}](XX^*) \qquad \text{ and } \qquad  W_{AC}:=[\operatorname{id} \otimes \operatorname{Tr} \otimes \operatorname{id} \otimes \operatorname{Tr}](XX^*),
\end{equation}
in some asymptotic regime, where the dimension of some of the Hilbert spaces grow to infinity. In order to study mixed moments of the two random matrices $W_{AB}$ and $W_{AC}$, we are going to assume from now on that 
$$\dim \mathcal H_B = \dim \mathcal H_C.$$
We consider the following two asymptotic regimes:
\begin{itemize}

	\item the \emph{balanced regime}: $\dim \mathcal H_B = \dim \mathcal H_C = N \to \infty$, $\dim \mathcal H_D /  \dim \mathcal H_A \to c$, for some constant $c \in (0,\infty)$;

	\item the \emph{unbalanced regime}: $\dim \mathcal H_B = \dim \mathcal H_C = m$, $\dim H_A \sim N$, $\dim H_D \sim cN$, $N \to \infty$, for some constants $m \geq 1$ and $c \in (0, \infty)$.
\end{itemize}
As special cases of the balanced asymptotic regime, we shall emphasize the following two sub-cases (see Remark~\ref{rem:balanced-4-partite-subcases}):
\begin{itemize}
\item $\dim \mathcal H_B = \dim \mathcal H_C = N$, $\dim H_A \sim c_1N$, $\dim H_D \sim c_4N$, $N \to \infty$, for some constants $c_{1,4} \in (0, \infty)$;
\item  
$\dim \mathcal H_B = \dim \mathcal H_C = N$, $\dim H_A = k$, $\dim H_D = l$, $N \to \infty$, for some integer constants $k,l \geq 1$.
\end{itemize}

The balanced regime will be studied in Section \ref{sec:2-marginals-large-BC}, while the unbalanced regime will be studied in Section \ref{sec:2-marginals-fixed-BC}. We first compute the exact combinatorial expression for the moments.

\smallskip

\subsection{Exact expression for the moments}
\label{sec:ExactMomentsABCD}

\

\medskip

In this section, we prove a non-asymptotical result, describing the mixed moments of the marginals $W_{AB}$, $W_{AC}$ as a combinatorial sum. This result is the starting point for the asymptotic computations in Sections \ref{sec:2-marginals-large-BC} and \ref{sec:2-marginals-fixed-BC}. In order to describe the mixed moments of the two marginals, we write
$$W_f :=  \prod_{1 \leq a \leq p}^{\longleftarrow} W_{f(a)},$$
for a word $f \in \{AB,AC\}^p$ of length $p$ in the letters $(AB), (AC)$. Here again, the matrices are labeled from right to left. We also denote $N_X:=\dim \mathcal H_X$, for $X=A,B,C,D$; note that we have $N_B=N_C =:N_{B,C}$. 
\begin{theorem}\label{thm:moments-fixed-N}
The mixed moments of the two marginals $W_{AB},W_{AC}$ defined in \eqref{eq:def-2-marginals} are given by the following exact formula:
\begin{equation}\label{eq:exact-moments-2-marginals}
\mathbb E \operatorname{Tr} W_f = \sum_{\alpha \in \mathcal S_p} N_A^{\#(\gamma\alpha)} N_D^{\#\alpha} N_{B,C}^{L(f,\alpha)},
\end{equation}
where $\gamma = (1 2 3 \ldots p )$ is the full cycle permutation and $L(f,\alpha)$ is a combinatorial function described in equations \eqref{eq:L-f-alpha}-\eqref{eq:hat-alpha}.
\end{theorem}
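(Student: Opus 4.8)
The plan is to mimic the first step of the proof of Proposition~\ref{prop:MP}, but carefully tracking the tensor-product structure of $\mathcal H_A \otimes \mathcal H_B \otimes \mathcal H_C \otimes \mathcal H_D$ and the fact that the marginals $W_{AB}$ and $W_{AC}$ trace out \emph{different} subsystems. First I would draw the diagram for $\operatorname{Tr} W_f$ using the Wick graphical calculus of \cite{collins2011gaussianization}: each factor $W_{f(a)}$ is a box $XX^*$ with four pairs of decorations (one per Hilbert space $A,B,C,D$), where the decorations for the traced-out spaces (i.e.~$\mathcal H_C,\mathcal H_D$ for $W_{AB}$, and $\mathcal H_B,\mathcal H_D$ for $W_{AC}$) are looped back internally within the box, while the $A,B$ (resp.\ $A,C$) decorations are left free and then contracted cyclically according to $\gamma$ to form the trace. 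The $\mathcal H_D$ space is always traced out, so it plays exactly the role that $\mathbb C^M$ played in Proposition~\ref{prop:MP}.

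Next I would apply the graphical Wick formula: $\mathbb E \operatorname{Tr} W_f = \sum_{\alpha \in \mathcal S_p} \mathcal D_\alpha$, where $\mathcal D_\alpha$ is obtained by deleting the $X$- and $\bar X$-boxes and reconnecting the four groups of decorations according to $\alpha$ (the $i$-th $\bar X$-box decorations joined to the $\alpha(i)$-th $X$-box decorations). Since each resulting diagram is a disjoint union of loops, its value is $N_A^{a} N_B^{b} N_C^{c} N_D^{d}$ where $a,b,c,d$ count the loops of each color. The $\mathcal H_D$ loops: the $D$-decoration of each $X$-box is wired to the $D$-decoration of the $\bar X$-box in the same group (it is traced out in every $W_{f(a)}$), so the $D$-loops are exactly the cycles of $\alpha$, giving $N_D^{\#\alpha}$. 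The $\mathcal H_A$ loops: $A$ is never traced out, so (as in Proposition~\ref{prop:MP}) the pre-Wick wiring of the $A$-decorations is the full cycle $\gamma$, and the number of $A$-loops is $\#(\gamma\alpha)$, giving $N_A^{\#(\gamma\alpha)}$. The interesting part is the $\mathcal H_B$ and $\mathcal H_C$ loops: whether the $B$-decoration (resp.\ $C$-decoration) of the $a$-th box is traced out internally or passed along to the neighboring box in the trace depends on whether $f(a)=AB$ or $f(a)=AC$. This is precisely where the word $f$ enters, and the combined count of $B$- and $C$-loops is what the paper calls $L(f,\alpha)$; since $N_B = N_C = N_{B,C}$, the two contributions merge into $N_{B,C}^{L(f,\alpha)}$.

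I would then make $L(f,\alpha)$ explicit, matching equations \eqref{eq:L-f-alpha}--\eqref{eq:hat-alpha}. The idea is that the free $B$/$C$ half-edges are only contracted between consecutive boxes $a,a+1$ (cyclically) when $f(a)$ and $f(a+1)$ agree on the relevant letter — i.e.\ when the corresponding block of $\gamma$ is ``active'' — while at positions where the letters disagree, the half-edge gets capped off inside a box. One encodes this by replacing $\gamma$ with a modified permutation $\hat\alpha$ (or a pair of permutations, one reading off the $B$-structure and one the $C$-structure of $f$) that records which consecutive slots are linked, and then $L(f,\alpha)$ is the total number of cycles of the resulting products; the precise bookkeeping is exactly the content of \eqref{eq:L-f-alpha}--\eqref{eq:hat-alpha}, so I would simply verify that the loop-count from the diagram agrees with that definition. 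Assembling the three factors gives $\mathcal D_\alpha = N_A^{\#(\gamma\alpha)} N_D^{\#\alpha} N_{B,C}^{L(f,\alpha)}$ and hence \eqref{eq:exact-moments-2-marginals}.

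The main obstacle is purely bookkeeping: correctly tracking, for a given word $f$ and permutation $\alpha$, how the partially-traced $B$ and $C$ indices flow through the cyclic product — in particular getting the cyclic boundary convention right and confirming that the internal caps inside the boxes do not accidentally merge or split loops in an unexpected way. Everything else (the $A$- and $D$-counts) is a direct transcription of the proof of Proposition~\ref{prop:MP}, and the combinatorial map reformulation (Theorem~\ref{thm:moments-fixed-N} in the maps formulation) would then follow by the same dictionary $\sigma_\bullet = \gamma$, $\sigma_\circ = \alpha$ as before, now with edges carrying the two colors $AB$, $AC$.
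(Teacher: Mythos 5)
Your overall strategy is the same as the paper's: apply the graphical Wick calculus of \cite{collins2011gaussianization} to the diagram for $\operatorname{Tr} W_f$, sum over $\alpha\in\mathcal S_p$, and read off $N_A^{\#(\gamma\alpha)}$, $N_D^{\#\alpha}$ and $N_{B,C}^{L(f,\alpha)}$ as loop counts color by color, with $L(f,\alpha)=\#(\hat\gamma_f\hat\alpha)$ encoding the $B,C$ pre-Wick wiring.

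There is, however, one substantive error in your description of that $B,C$ wiring which would lead you to a wrong $\hat\gamma_f$ if you implemented it. You claim the free $B$/$C$ half-edge between boxes $a$ and $a+1$ is ``only contracted when $f(a)$ and $f(a+1)$ agree'' and that ``at positions where the letters disagree, the half-edge gets capped off inside a box.'' In fact the free leg of box $a$ (which is $B$ if $f(a)=AB$, $C$ if $f(a)=AC$) is \emph{always} contracted to the free leg of box $a+1$: the matrix product $W_{f(a+1)}W_{f(a)}$ silently identifies $\mathcal H_B\cong\mathcal H_C$, so when $f(a)=AB$ and $f(a+1)=AC$ the $B$-decoration of box $a$ is wired to the $C$-decoration of box $a+1$, which is precisely the case $\hat\gamma_f(a,B)=(a+1,C)$ in \eqref{eq:hat-f-B}. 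The only decorations that are capped inside a box are those of the color traced out \emph{at that box} ($C$ when $f(a)=AB$, $B$ when $f(a)=AC$), giving the fixed points $\hat\gamma_f(a,C)=(a,C)$ resp.\ $\hat\gamma_f(a,B)=(a,B)$ of \eqref{eq:hat-f-B}--\eqref{eq:hat-f-C}; this depends only on $f(a)$, not on whether $f(a)$ agrees with $f(a+1)$. Once the pre-Wick wiring is corrected along these lines your loop count recovers exactly $L(f,\alpha)$ as defined in \eqref{eq:L-f-alpha}--\eqref{eq:hat-alpha}, and the rest of the argument coincides with the paper's proof.
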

\begin{proof}
The proof is a standard application of the graphical Wick formalism from \cite{collins2011gaussianization}, similar to the proof of the moment formula in Proposition~\ref{prop:MP}. First, consider the diagram associated to the trace of the operator $W_f$ (see Figure~\ref{fig:2-marginals-moment} for a simple example): each of the $p$ $X$ (or $\bar X$) boxes has 4 decorations, corresponding to the 4 Hilbert spaces $\mathcal H_{A,B,C,D}$. The sum over permutations $\alpha \in \mathcal S_p$ comes from the application of the Wick formula, and the different factors in the general term count the different types of loops, as follows:
\begin{itemize}
\item There are $\#(\gamma\alpha)$ loops associated to the Hilbert space $\mathbb C^{N_A}$, since the initial wiring of the boxes is given by the cyclic permutation $\gamma$, corresponding to matrix multiplication. The total number of loops is the number of cycles of the permutation obtained as the product of the permutation describing the initial wiring (here, $\gamma$, connecting $X$ boxes to $\bar X$ boxes) and the permutation coming from the graphical Wick formula ($\alpha$, connecting $\bar X$ boxes to $X$ boxes).
\item There are $\#\alpha$ loops associated to the Hilbert space $\mathbb C^{N_D}$, since the initial wiring of the boxes is given by the identity, corresponding to the partial trace over the fourth tensor factor (which appears in all the $W_{f(i)}$). 
\item We denote by $L(f,\alpha)$ the number of loops corresponding to the second ($B$) and the third ($C$) tensor factors. This quantity is less obvious to evaluate, since it depends in a non-trivial way on both the specific word $f$ and on the permutation $\alpha$. These loops contribute each a factor of $N_{B,C}$.  
\end{itemize}
\begin{figure}[!ht]
	\centering
	\includegraphics[width=0.45\textwidth]{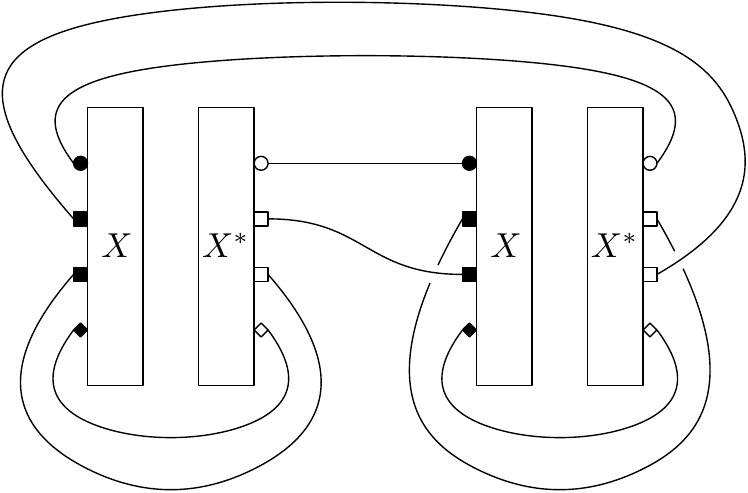}
	\caption{The diagram of the random variable $\operatorname{Tr}(W_{AB}W_{AC})$. Each tensor box has 4 decorations corresponding to $\mathcal H_A$ (round shape), $\mathcal H_B, \mathcal H_C$ (square shapes), and $\mathcal H_D$ (diamond shape), displayed in this order from top to bottom.}
	\label{fig:2-marginals-moment}
\end{figure}

We discuss next the exponent $L(f,\alpha)$, counting the number of loops associated to systems $B$ and $C$. As in the other cases, we have 
\begin{equation}\label{eq:L-f-alpha}
L(f,\alpha) = \#(\hat\gamma_f\, \hat \alpha),
\end{equation}
where $\hat\gamma_f, \hat \alpha \in \mathcal S_{2p}$ encode, respectively, the initial and the Wick wirings of the $B,C$ decorations. In order to define properly these permutations, let us relabel the $2p$ decorations of the $X$ boxes by $\bigl\{(1,B), (2,B), \ldots, (p, B), (1,C), (2,C), \ldots, (p, C)\bigr\}$ to keep track of the system they refer to (as usual, the numbering is increasing from right to left). Then, using cyclical notation (\textit{i.e.}~$p+1 \equiv 1$), we have 
\begin{align}
\label{eq:hat-f-B} \hat\gamma_f(a,B)&  = \begin{cases}
(a,B) & \qquad \text{ if } f(a) = AC \\
(a+1,B) & \qquad \text{ if } f(a) = AB \text{ and } f(a+1) = AB \\
(a+1, C) & \qquad \text{ if } f(a) = AB \text{ and } f(a+1) = AC
\end{cases} \\
\label{eq:hat-f-C} \hat\gamma_f(a,C)&  = \begin{cases}
(a,C) & \qquad \text{ if } f(a) = AB \\
(a+1, B) & \qquad \text{ if } f(a) = AC \text{ and } f(a+1) = AB \\
(a+1, C) & \qquad \text{ if } f(a) = AC \text{ and } f(a+1) = AC
\end{cases} \\
\label{eq:hat-alpha} \hat \alpha(i, Z)&  = (\alpha(i), Z), \quad \text{ for } Z=B,C.
\end{align}
\end{proof}

As a direct application of \eqref{eq:exact-moments-2-marginals}, we have, for the example in Fig.~\ref{fig:2-marginals-moment},
$$\mathbb E \operatorname{Tr}(W_{AB}W_{AC}) = N_A N_{B,C}^3 N_D^2 + N_A^2N_{B,C}N_D,$$
where the two terms in the right hand side correspond respectively to the identity permutation and to the transposition $(12)$.

\bigskip

We shall now present a different point of view on Theorem~\ref{thm:moments-fixed-N}, using combinatorial maps. First let us introduces some important (and somehow non-standard) notation for integer intervals: for integers $a,b$, we denote
\begin{equation}\label{eq:def-integer-interval}
\llbracket a, b\rrbracket := \{a, a+1, \ldots, b-1, b\}.
\end{equation}

\noindent{\bf Orbits.} 
Before going to the combinatorial maps version of this result, we provide a more formal definition of the objects we count, in terms of the permutation. If $I\subset \{A,B,C,D\}$, we denote 
\be 
\label{eq:LabellingEdgesABCD}
[p]^I := \bigsqcup_{i\in I}\bigl\{ (a,i)\bigr\}_{a\in\llbracket 1,p\rrbracket},
\ee 
so that for instance, $[p]^{B,C} = \bigl\{(a,B) \bigr\}_{a\in\llbracket 1,p\rrbracket} \sqcup\bigl\{(a,C) \bigr\}_{a\in\llbracket 1,p\rrbracket}$, where $a$ runs through the set of edges\footnote{The letter $a$ stands for \textit{ar\^etes}, which means edges in French.}. In some sense, $[p]^I$ is the set of all potential colored edges with color set $I$ that can be obtained from the $p$ edges of a map. 

\begin{definition}[Orbits]
\label{def:Orbits}
We call orbits for $f$ and $\alpha$, the cycles respectively induced by $\gamma\alpha$,  $\hat\gamma_f\, \hat \alpha$,  and $\alpha$  on the sets $[p]^A$, $[p]^{B,C}$, and $[p]^D$.\footnote{Where we canonically identified $[p]^A$ and $[p]^D$ with $\llbracket 1, p\rrbracket$.}
\end{definition}
Of course, these are in bijection with the loops in the box representation.
Note that the orbits involving the colors $A$ and $D$ are already understood from Section \ref{sec:FirstSec} (corresponding to the bipartite quantum system case), and only the orbits involving the colors $B$ and $C$ present a new behavior.

\

\noindent{\bf Combinatorial maps.} 
As just mentioned, as far as the colors $A$ and $D$ are concerned, the behavior is unchanged from Section~\ref{sec:FirstSec}. So, naturally, if we  define the map $\cM = (\gamma, \alpha)\in\bM_p$, the orbits corresponding to the color $A$ are simply the faces of $\cM$, while there is one orbit of color $D$ per white vertex of the map. Colors $A$ and $D$ therefore contribute with a factor $N_A^{F(\cM)}N_D^{V(\cM) - 1}$. 

We now focus on the orbits involving colors $B$ and $C$.
We can provide each edge  $a\in\llbracket 1,p\rrbracket$  of the map with a color $j(a)\in\{B,C\}$: the non traced color in $\{B,C\}$ for the matrix number $a$ of the word $f$ (\textit{i.e.} the color different from $A$ in $f(a)$). For instance, the moment $\bE(\Tr W_{AB}W_{AC})$ of the trace of Fig.~\ref{fig:2-marginals-moment} involves the two maps in $\bM_2$ (Fig.~\ref{fig:MapExTrW2}), but in addition, one edge carries the color $B$, the other the color $C$. We consider the set of the colored edges,
\be 
[p]^{B,C}_f = \bigl\{\bigl(a,j(a)\bigl) \bigr\}_{a\in\llbracket 1,p\rrbracket}.
\ee 
It is the subset of  $[p]^{B,C}$ corresponding to the colors which are not traced.

Some orbits contain at least one element of $[p]^{B,C}_f$, or equivalently, follow at least one edge of the map $\cM$. In the map language, when such an orbit reaches a corner on the black vertex after having followed a colored edge $\bigl(a, j(a)\bigr)$, $a\in\llbracket 1,p\rrbracket$ and $j(a)\in \{B, C\}$, it will go to the colored edge $\bigl(a+1, j(a+1)\bigr)$. Apart from the information on the color, the orbit locally behaves, around the black vertex,
as the face of the map $\cM$, which goes from $a$ to $a+1$. 

However, the orbits and the faces of the map do not behave alike around white vertices. Indeed, arriving from a colored edge $(a, j(a))$ at a white vertex $v_\circ$, the orbit goes to the first edge around $v_\circ$ counterclockwise that also carries the color $j(a)$, namely
\be 
\label{eq:FirstEdgeSameColorWhiteVert}
 \alpha^q(a),\quad \text{where} \quad q=\min \bigl\{s\in\bN^\ast \mid j(a) =j( \alpha^s(a))\bigr\}.
\ee 
If the integer $q$ is greater than one, the behavior differs from that of the face, which goes to $\alpha(a)$. A simple way of solving this issue is to locally split the white vertex into two vertices, one for each color $B$ and $C$, as illustrated in Fig.~\ref{fig:M23}.
\begin{figure}[!ht]
\includegraphics[scale=0.9]{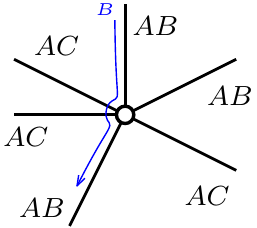}\hspace{1cm}\raisebox{
1cm}{$\rightarrow$}\hspace{1cm}\includegraphics[scale=0.9]{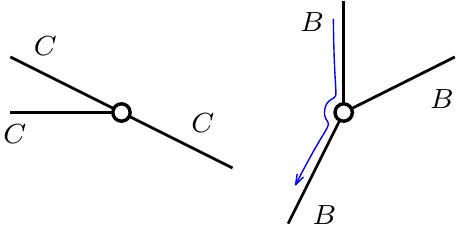}
\caption{\label{fig:M23} Local duplication of white vertices to obtain the map $\cM_f$. }
\end{figure}
Note that this operation has no effect if all the edges incident to the white vertex have the same color.
Otherwise, duplicating as in Fig.~\ref{fig:M23} a white vertex $v_\circ$ with both incident colors $B$ and $C$, we obtain a new map in $\bM_p$ with one additional white vertex, and the number of orbits remains unchanged. However, now, around the two vertices of color $B$ and $C$ resulting from splitting $v_\circ$, the orbits and the faces behave alike. 

Performing this operation on all the white vertices, we obtain a map $\cM_f \in \bM_p$, with the same black vertex  $\gamma$ as $\cM$, but with two permutations $\alpha_B$ and $\alpha_C$, which are simply the restrictions of $\alpha$ to the two sets of edges that respectively carry the colors $B$ and $C$. This defines a tricolored map $\cM_{f} = (\sigma_\bullet, \alpha_B, \alpha_C)$, where $\sigma_\bullet=\gamma$, and such that edges only link white vertices and edges of color $B$ or $C$ (\textit{i.e.} it is also a bicolored map $(\sigma_\bullet, \alpha_B \alpha_C)$ in the sense of Def.~\ref{def:comb-maps}). Because if $j(a)$ is \textit{e.g.}~$B$, $\alpha_B(a)= \alpha^q(a)$ defined in \eqref{eq:FirstEdgeSameColorWhiteVert}, the faces of $\cM_f$ are precisely the orbits, and therefore the colors $B$ and $C$ contribute with a factor $N_{B,C}^{F(\cM_f)}$. 
\smallskip

And lastly, some orbits do not follow any edge of the map. They correspond to the restriction of $\hat \gamma_f \hat \alpha$ to the set 
\be
[p]^{B,C} \setminus [p]^{B,C}_f,
\ee
and in the map language, to white vertices of $\cM$ whose incident edges all share the same color. Such white vertices are those left invariant by the operation of  Fig.~\ref{fig:M23}, and therefore their number is just 
\be 
\label{eq:PureMf}
2V(\cM)-V(\cM_f)-1.
\ee 
Indeed, $2(V(\cM)-1)-(V(\cM_f)-1)$ counts one for each white vertex with only adjacent edges of a single color, while it counts zero for each white vertex with both adjacent colors. 

We have therefore the following formulation of Thm.~\ref{thm:moments-fixed-N}.

\begin{theo31*}
 For $f$ a word of length $p$ in the alphabet $\{AB,AC\}$, the mixed moments of the two marginals $W_{AB}, W_{AC}$ are expressed exactly using a sum over combinatorial maps
 \begin{equation}
 \label{eq:general-4partite-combinatorics}
 \bE\Tr(W_f)= \sum_{\cM\in \bM_p} N_A^{F(\cM)}N_D^{V(\cM)-1}N_{B,C}^{L(f,\, \cM)},
 \end{equation}
where $L(f, \cM)$ coincides with $L(f, \alpha)$ and can be expressed as
\be 
\label{eq:LinMap}
L(f,\cM) = F(\cM_f)+ 2V(\cM)-V(\cM_f)-1.
\ee
\end{theo31*}

\

\smallskip

\noindent{\bf A bound on the exponent $L$.} 
We establish now some key results that will be used in the following sections, regarding the range of the functional $L(f, \cdot)$. Note that the other exponents in \eqref{eq:exact-moments-2-marginals} or \eqref{eq:general-4partite-combinatorics} have been dealt with in Lemma~\ref{lem:geodesic-permutations} in terms of permutations, where it has been shown that
$$|\alpha| + |\alpha^{-1}\gamma^{-1}| \geq |\gamma^{-1}| = p-1,$$
with equality if the permutation $\alpha$ is \emph{geodesic with respect to $\gamma^{-1}$}, \textit{i.e.}~it is associated to a non-crossing partition (the cycle structure of $\alpha$ is non-crossing and inside each cycle the elements have the same cyclic ordering as $\gamma^{-1}$, or in terms of maps, using the genus \eqref{eq:EulerCharacSec1}.   The next results are a generalization of the above fact, which corresponds to the situation where $f$ is constant. 

\begin{proposition}
\label{prop:BoundOn_L}
 For $f$ a word of length $p$ in the alphabet $\{AB,AC\}$ and $\alpha\in \cS_p$, we have  the bound
\be 
 L(f,\cM) \le p+1,
\ee
with equality iff the map $\cM$ is planar, and all the edges of $\cM$ incident to a common white vertex have the same color.

In terms of the permutation $\alpha$, $L(f, \alpha) = p+1$ with equality iff $\alpha$ is geodesic \emph{and} $\alpha \leq \ker f$ (for the usual partial order on partitions), where $\ker f$ is the partition having (at most) two blocks, $f^{-1}(\{AB\})$ and $f^{-1}(\{AC\})$.
\end{proposition}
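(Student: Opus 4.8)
The plan is to read off the bound directly from the combinatorial identity \eqref{eq:LinMap}, by feeding it the Euler relation \eqref{eq:EulerCharacSec1} applied to the auxiliary map $\cM_f$, and then to rephrase the equality condition in permutation language via Lemma~\ref{lem:geodesic-permutations}.

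First I would record two facts about $\cM_f$. On the one hand, $\cM_f$ is a connected bicolored map on the same set of $p$ edges as $\cM$: its black-vertex permutation is the full cycle $\sigma_\bullet = \gamma$, so the group $\langle\sigma_\bullet,\alpha_B\alpha_C\rangle$ acts transitively, hence $\cM_f\in\bM_p$ and \eqref{eq:EulerCharacSec1} gives $V(\cM_f)+F(\cM_f)-p = 2-2g(\cM_f)$ with $g(\cM_f)\ge 0$. On the other hand, the splitting operation of Fig.~\ref{fig:M23} leaves the black vertex alone and replaces each white vertex of $\cM$ either by one white vertex (when all its incident edges carry the same color, in which case the operation does nothing) or by two white vertices (when both colors $B$ and $C$ occur); hence $V(\cM_f)\ge V(\cM)$, with equality precisely when every white vertex of $\cM$ is monochromatic, and in that equality case $\cM_f=\cM$ as bicolored maps, so $g(\cM_f)=g(\cM)$.

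Then I would substitute $F(\cM_f)=p+2-2g(\cM_f)-V(\cM_f)$ into \eqref{eq:LinMap}, obtaining
\[
L(f,\cM)\;=\;p+1\;-\;2\,g(\cM_f)\;-\;2\bigl(V(\cM_f)-V(\cM)\bigr),
\]
in which both corrections are non-negative; this gives $L(f,\cM)\le p+1$ at once. Equality forces $g(\cM_f)=0$ and $V(\cM_f)=V(\cM)$; the latter says every white vertex of $\cM$ has all its incident edges of a single color, and under it $g(\cM_f)=g(\cM)$, so $g(\cM_f)=0$ is equivalent to $\cM$ being planar. This is the first formulation. For the permutation version I would use that the white vertices of $\cM=(\gamma,\alpha)$ are the cycles of $\alpha$, and that an edge $a$ belongs to $f^{-1}(\{AB\})$ or to $f^{-1}(\{AC\})$ according to its color: thus ``all edges at a common white vertex share a color'' says exactly that $f$ is constant on each cycle of $\alpha$, i.e.\ $\alpha\le\ker f$. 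Planarity of $\cM$, by \eqref{eq:EulerCharacSec1}, is $\#\alpha+\#(\gamma\alpha)=p+1$, equivalently the saturation $|\alpha|+|\alpha^{-1}\gamma^{-1}|=|\gamma^{-1}|$, which is the statement that $\alpha$ is geodesic in the sense of Lemma~\ref{lem:geodesic-permutations}.

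There is no genuinely hard step here: once \eqref{eq:LinMap} is granted (it is part of the maps formulation of Theorem~\ref{thm:moments-fixed-N}), the bound is a one-line consequence of $g(\cM_f)\ge 0$ and $V(\cM_f)\ge V(\cM)$. The only point needing care is the equality analysis — checking that $V(\cM_f)=V(\cM)$ is exactly ``no bichromatic white vertex'', and that under this condition $\cM_f$ and $\cM$ genuinely coincide as maps so that their genera agree; this uses the precise effect of the splitting operation, not merely the count \eqref{eq:PureMf} of the unchanged white vertices.
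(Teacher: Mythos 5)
Your proof is correct and, for the bound and the map-language equality condition, is essentially identical to the paper's ``Proof in the map language'': substitute the Euler relation for $\cM_f$ into \eqref{eq:LinMap} to get $L(f,\cM)=p+1-2g(\cM_f)-2\bigl(V(\cM_f)-V(\cM)\bigr)$, use non-negativity of both corrections, and for equality note that $V(\cM_f)=V(\cM)$ forces $\cM_f=\cM$ so that $g(\cM_f)=g(\cM)$.

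Where you diverge slightly is the permutation formulation. The paper proves it independently via Lemma~\ref{lem:L-f-alpha}, which is genuinely a different argument: it applies the semicircular-distance inequality of Collins--\'Sniady to the permutations $\hat\gamma_f,\hat\alpha\in\mathcal S_{2p}$, rewrites $\hat\gamma_f$ as a conjugate by $\delta_C$, and tracks the join $\hat\gamma_f\vee\hat\alpha$. You instead deduce the permutation statement as a pure dictionary translation of the map one (white vertices $=$ cycles of $\alpha$, planarity of $\cM$ $\Leftrightarrow$ saturation of the triangle inequality from Lemma~\ref{lem:geodesic-permutations}). Your route is logically sufficient and shorter; the paper carries both proofs in parallel deliberately as a pedagogical device, and its permutation argument also prepares the ground for the stronger Proposition~\ref{prop:L-and-alt}, where the identity $L(f,\alpha)=p+1-\alt(f,\alpha)$ on geodesics is established by a refinement of the very same $\delta_C$-conjugation computation — a refinement that is not available from the map-only bound alone.
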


\noindent{\it \underline{Proof in the map language.}\ } 
The relation of the Euler characteristic for the map $\cM_f$ writes 
\be 
2-2g(\cM_f)=V(\cM_f) - p + F(\cM_f).
\ee
we can therefore rewrite \eqref{eq:LinMap} as
\begin{equation}
 L(f,\cM) = p + 1 -2g(\cM_f)-2\Delta_f(\cM),
\end{equation}
where we have denoted $\Delta_f(\cM)$ the number of white vertices reached by both colors $B$ and $C$,
\be
\Delta_f(\cM)=V(\cM_f)-V(\cM).
\ee 
In particular, this vanishes iff  in $\cM$, all the edges incident to a common white vertex have the same color, and is positive otherwise. Therefore,  $L(f,\cM) = p + 1$ iff $\Delta_f(\cM)= g(\cM_f)=0$. Moreover, if $\Delta_f(\cM)=0$, the maps $\cM_f$ and $\cM$ coincide, so that  $L(f,\cM) = p + 1$ iff $\Delta_f(\cM)= g(\cM)=0$. 
\qed

\

Note that, using the again the genus formula for $\cM$, we obtain the following exact expression for the moments
 \begin{equation}
 \label{eq:ExactExpressionMomentsUsing-Delta-g}
\bE\Tr(W_f)= \sum_{\cM\in \bM_p} N_A^{p+2-2g(\cM)-V(\cM)}N_D^{V(\cM)-1}N_{B,C}^{p+1-2g(\cM_f)-2\Delta_f(\cM)}.
 \end{equation}

\

\noindent{\it \underline{Proof in terms of permutations.}\ } Let us first restate the proposition in the form of a lemma. To recover Prop.~\ref{prop:BoundOn_L}, it suffices to use the fact that 
\be 
\label{eq:CyclesvsDistancePerm2p}
\#(\hat \gamma_f \hat\alpha) + |\hat \gamma_f \hat\alpha|= 2p.
\ee
\begin{lemma}
\label{lem:L-f-alpha}
For any $f \in \{AB, AC\}^p$ and $\alpha \in \mathcal S_p$, we have
$$|\hat \gamma_f \hat\alpha | \geq p-1,$$
with equality iff $\alpha$ is geodesic \emph{and} $\alpha \leq \ker f$ (for the usual partial order on partitions), where $\ker f$ is the partition having (at most) two blocks, $f^{-1}(\{AB\})$ and $f^{-1}(\{AC\})$.
\end{lemma}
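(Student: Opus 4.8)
The plan is to work entirely with the permutation $\hat\gamma_f\hat\alpha \in \mathcal{S}_{2p}$ and exploit the length function $|\cdot|$ on $\mathcal{S}_{2p}$ together with the triangle inequality from Lemma~\ref{lem:geodesic-permutations}. The key observation is that $\hat\alpha$ acts ``diagonally'' — it preserves the two sets $[p]^B := \{(a,B)\}_a$ and $[p]^C := \{(a,C)\}_a$ — whereas $\hat\gamma_f$ may move points between these two sets, but only according to the color pattern dictated by $f$. I would first record the elementary fact that $|\hat\alpha| = 2|\alpha|$, since $\hat\alpha$ is the ``doubling'' of $\alpha$ acting independently on two disjoint copies of $\llbracket 1,p\rrbracket$. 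Similarly I would analyze $\hat\gamma_f$: I claim $|\hat\gamma_f| = 2(p-1)$, i.e.\ $\hat\gamma_f$ has exactly two cycles on the $2p$ points. Indeed, following the orbit of a point $(a, j)$ under $\hat\gamma_f$ one moves to $(a+1, j')$ where $j'$ is whichever of $B,C$ is \emph{not} traced out by the matrix $f(a+1)$; tracing this around shows $\hat\gamma_f$ has two $p$-cycles (one can see this concretely: the two cycles correspond to the two ``strands'' $B$ and $C$ that get permuted among the $p$ slots). So $\#\hat\gamma_f = 2$ and $|\hat\gamma_f| = 2p - 2$.

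**The inequality.** Granting those two computations, the triangle inequality gives
\begin{equation*}
|\hat\gamma_f \hat\alpha| \geq |\hat\gamma_f^{-1}| - |\hat\alpha| = (2p-2) - 2|\alpha|,
\end{equation*}
which is useless on its own; instead I would apply it the other way, bounding $|\hat\alpha|$ from $|\hat\gamma_f\hat\alpha|$ and $|\hat\gamma_f|$ — but actually the clean route is: we want a \emph{lower} bound on $|\hat\gamma_f\hat\alpha|$, and this does not follow from the triangle inequality alone (it would give $|\hat\gamma_f\hat\alpha| \geq |\,|\hat\gamma_f| - |\hat\alpha|\,|$, too weak). So the real argument must be combinatorial. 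The cleanest path: interpret $\#(\hat\gamma_f\hat\alpha)$ as the number of faces of the bicolored map $\cM_f = (\hat\gamma_f, \hat\alpha)$ — which is exactly the map $\cM_f$ constructed in the text with $F(\cM_f)$ faces and $V(\cM_f)$ white vertices — and invoke the Euler relation $F(\cM_f) + V(\cM_f) - p = 2 - 2g(\cM_f) \leq 2$, wait, that's the map-language proof which is already given. For the permutation proof I would instead argue directly: $|\hat\gamma_f \hat\alpha| \geq p - 1$ is equivalent to $\#(\hat\gamma_f\hat\alpha) \leq p+1$ via \eqref{eq:CyclesvsDistancePerm2p}, and I would prove this by first showing that restricting $\hat\alpha$ to preserve colors can only \emph{decrease} the number of cycles relative to the ``genuine'' face count of the unrestricted map. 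Concretely: since $\alpha = \alpha_B \alpha_C \cdot (\text{mixing on vertices of both colors})$ and $\hat\alpha$ discards the mixing, one has $\hat\alpha = \widehat{\alpha_B} \, \widehat{\alpha_C}$ thought of properly, and I would compare $\#(\hat\gamma_f \hat\alpha)$ with $\#(\gamma\alpha)$. Here the bijection-with-faces picture is unavoidable: $\hat\gamma_f\hat\alpha$ on $[p]^{B,C}$ has the same cycle count as the faces of $\cM_f$, and since $\cM_f$ is a connected bicolored map with $p$ edges and $V(\cM_f) + 1$ total vertices (counting the single black vertex $\gamma$, which indeed remains a single cycle because $\hat\gamma_f$ restricted appropriately... ) — hmm, here I must be careful that $\cM_f$ is connected, which follows from connectedness of $\cM$.

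**The equality case — the main obstacle.** I expect the equality analysis to be the crux. Equality $|\hat\gamma_f\hat\alpha| = p-1$ forces, via the Euler formula for $\cM_f$, that $g(\cM_f) = 0$ \emph{and} that $\cM_f$ has exactly the ``right'' number of vertices, which after unwinding \eqref{eq:LinMap} means $V(\cM_f) = V(\cM)$ and $g(\cM) = 0$; the first of these says $\Delta_f(\cM) = 0$, i.e.\ no white vertex is touched by both colors, i.e.\ $\alpha \leq \ker f$, and the second says $\alpha$ is geodesic (associated to a non-crossing partition). To make this airtight in permutation language without secretly re-deriving the map picture, I would: (i) show $|\hat\gamma_f\hat\alpha| \geq |\hat\gamma_f\hat\beta| + |\hat\beta\hat\alpha|$-type decompositions where $\beta$ is the ``color-respecting projection'' of $\alpha$ onto $\ker f$; (ii) observe $\hat\beta = \widehat{\beta_B}\widehat{\beta_C}$ splits as a product over the two color-blocks so that $|\hat\gamma_f\hat\beta| = |\gamma_B\beta_B| + |\gamma_C\beta_C|$ where $\gamma_B, \gamma_C$ are the induced cycles on the two blocks of $\ker f$; (iii) apply the bipartite (constant-$f$) result on each block to get $|\gamma_B\beta_B| + |\gamma_C\beta_C| \geq (p_B - 1) + (p_C - 1) = p - 2$, wait that's off by one, so the ``gluing'' of the two blocks around the cyclic word $f$ must recover the missing $+1$ — this is precisely where $\alt(f)$-type bookkeeping enters, and handling the boundary between consecutive same-color runs cleanly is the delicate point. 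I would therefore actually prefer to route the equality case through the map-language proof already presented (which the paper gives in parallel) and treat the permutation proof as establishing only the inequality plus the characterization $\Delta_f = 0 \Leftrightarrow \alpha \leq \ker f$, which is immediate: $\Delta_f(\cM) > 0$ iff some white vertex (cycle of $\alpha$) contains indices $a, a'$ with $j(a) \neq j(a')$, iff that cycle is not contained in a single block of $\ker f$, iff $\alpha \not\leq \ker f$. The honest summary: the inequality reduces to the positivity of the genus of $\cM_f$ (equivalently, iterated triangle inequality after splitting along colors); the equality case reduces to simultaneously saturating planarity of $\cM$ and killing $\Delta_f$, and the one genuinely fiddly step is checking that the color-splitting operation $\cM \mapsto \cM_f$ does not change the orbit count — which the text establishes by the local vertex-duplication argument and which I would simply cite.
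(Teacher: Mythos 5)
There is a genuine gap, and it starts with a concrete miscalculation: $\hat\gamma_f$ does \emph{not} consist of two $p$-cycles. For each $a$, exactly one of $(a,B),(a,C)$ — the traced color, i.e.\ the one absent from $f(a)$ — is a fixed point of $\hat\gamma_f$, and the other moves to $(a+1,\cdot)$; so $\hat\gamma_f$ has $p$ fixed points and a single $p$-cycle, whence $|\hat\gamma_f|=p-1$, not $2(p-1)$. More importantly, your proposal never actually establishes the inequality: you correctly concede that the triangle inequality in $\mathcal S_{2p}$ is too weak, your block-splitting plan (i)--(iii) is admitted to be off by one and is left unresolved, and the fallback identification of $\#(\hat\gamma_f\hat\alpha)$ with ``the number of faces of $\cM_f=(\hat\gamma_f,\hat\alpha)$'' conflates two different objects. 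The paper's $\cM_f$ is a map on $p$ edges obtained by duplicating the bichromatic white vertices, whereas the pair $(\hat\gamma_f,\hat\alpha)$ acts on $2p$ points and is in general not transitive: the ``pure'' orbits sitting on fixed points of $\hat\gamma_f$ form separate components, and indeed $\#(\hat\gamma_f\hat\alpha)=F(\cM_f)+2V(\cM)-V(\cM_f)-1$ rather than a plain face count. An Euler-formula bound for the $2p$-point object would require controlling its number of connected components (which turns out to equal $1+\pure(\alpha,f)$), a step you never carry out; and the equality case is explicitly deferred to the paper's parallel map-language proof of Proposition~\ref{prop:BoundOn_L}, so nothing new is proved there either.

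For comparison, the paper's proof of this lemma is purely permutation-theoretic and rests on two ingredients absent from your proposal: the factorization $\hat\gamma_f=\delta_C\,(\gamma^B\oplus\mathrm{id}^C)\,\delta_C$, where $\delta_C$ is the product of the transpositions $\bigl((a,B)(a,C)\bigr)$ over $a\in f^{-1}(\{AC\})$, and the inequality $|\hat\gamma_f\hat\alpha|+|\hat\gamma_f|+|\hat\alpha|\ge 2|\hat\gamma_f\vee\hat\alpha|$ from \cite{collins2007second}, combined with the identification $\#(\hat\gamma_f\vee\hat\alpha)=1+\pure(\alpha,f)$. This yields $|\hat\gamma_f\hat\alpha|\ge p-1+2(\#\alpha-\pure(\alpha,f))\ge p-1$; equality forces $\alpha\le\ker f$, which is shown to be equivalent to $\delta_C$ commuting with $\hat\alpha$, and then conjugation by $\delta_C$ gives $|\hat\gamma_f\hat\alpha|=|\alpha|+|\alpha\gamma|$, so that saturation is exactly geodesicity. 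If you want to salvage your approach, the honest route is to make the $2p$-point map picture precise (count its components via the pure cycles, apply Euler's formula per component) — but as written, the inequality is unproved, the cycle structure of $\hat\gamma_f$ is wrong, and the equality characterization is borrowed rather than derived.
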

\begin{proof}
Let us start by re-writing
the permutation $\hat \gamma_f \in \mathcal S_{2p}$ (we are using the labeling of \eqref{eq:LabellingEdgesABCD}, $[p]^{B,C} = \{(1,B), \ldots, (p, B),(1,C), \ldots, (p, C)\}$)
\begin{equation}\label{eq:hat-f-delta}
\hat \gamma_f = \delta_C \left( \gamma^B \oplus \mathrm{id}^C \right) \delta_C,
\end{equation}
where $\gamma^B$ (resp.~$\mathrm{id}^C$) denotes the permutation $\gamma$ (resp.~$\mathrm{id}$) acting on $[p]^B=\{(1,B), \ldots, (p, B)\}$ (resp.~on $[p]^C=\{(1,C), \ldots, (p, C)\}$), and $\delta_C$ is a product of transpositions,
$$\mathcal S_{2p} \ni \delta_C := \prod_{a \in f^{-1}(\{AC\})} \bigl((a,B) (a,C)\bigr).$$
Indeed, one can check formula \eqref{eq:hat-f-delta} by comparing it with equations \eqref{eq:hat-f-B} and \eqref{eq:hat-f-C}.

We now apply \cite[Lemma 5.5]{collins2007second}	to the permutations $\hat \gamma_f,\hat \alpha \in \mathcal S_{2p}$:
\begin{equation}\label{eq:inequality-alpha-beta-vee}
|\hat \gamma_f\hat \alpha| + |\hat \gamma_f| + |\hat \alpha| \geq 2|\hat \gamma_f \vee \hat \alpha|,
\end{equation}
where the join operation $\vee$ on the right hand side should be understood as acting on the partitions induced by the cycles of the two permutations (the join of two partitions is their least upper bound). The notation $| \cdot |$ is extended to partitions as $|\pi| := 2p - \#\pi$, for a partition $\pi$ of $[2p]$, where $\#\pi$ denotes the number of blocks of $\pi$. Since $\hat \gamma_f$ has one cycle of length $p$ and $p$ fixed points, and $\hat \alpha = \alpha^B \oplus \alpha^C$ (using the same notation as above), the inequality becomes
\begin{align*}
|\hat \gamma_f\hat \alpha| &\geq 4p - 2\#(\hat \gamma_f \vee \hat \alpha) - (p-1) - (2p - 2\#\alpha)\\
&= p+1 + 2(\#\alpha - \#(\hat \gamma_f \vee \hat \alpha)).
\end{align*}
We claim that $\#(\hat \gamma_f \vee \hat \alpha) = 1 + \pure(\alpha, f)$, where $\pure(\alpha,f)$ denotes the number of cycles of $\alpha$ on which the restriction of $f$ is constant, i.e.~in the map language, the white vertices whose incident edges all carry the same colors, so that $\pure(\alpha,f)$ is given by \eqref{eq:PureMf}. Indeed, the blocks of the partition $\hat \gamma_f \vee \hat \alpha$ correspond to the blocks of $\alpha^B$ and $\alpha^C$ merged by the block of length $p$ of $\hat \gamma_f$, together with each block of $\alpha^{B,C}$ matching only fixed points of $\hat \gamma_f$. It is now clear that the latter are exactly blocks of $\alpha^B$ on which $f \equiv AC$ or blocks of $\alpha^C$ on which $f \equiv AB$; we conclude that there are exactly $\pure(\alpha,f)$ of those, proving the claim.  Hence, 
$$|\hat \gamma_f\hat \alpha| \geq p-1 + 2(\#\alpha - \pure(\alpha,f)) \geq p-1,$$
proving the inequality. 

Let us now characterize the permutations $\alpha$ which saturate the inequality (assuming $f$ is fixed). First, the last inequality above should be an equality, so $\alpha = \pure(\alpha,f)$; in other words, $f$ should be constant on the blocks of $\alpha$, which is the condition $\alpha \leq \ker f$ appearing in the statement of the lemma. Moreover, we can easily see that this condition is equivalent to the fact that the permutations $\hat \alpha$ and $\delta_C$ commute. Indeed, by direct computation, we can see that, for any $a \in \llbracket 1, p\rrbracket$ and any $Z=B,C$, we have 
\begin{equation}\label{eq:delta-C-hat-alpha}
[\delta_C \hat \alpha \delta_C](a,Z) = \begin{cases}
(\alpha(a),Z) &\quad \text{ if } f(a) = f(\alpha(a))\\
(\alpha(a),{\bar Z}) &\quad \text{ if } f(a) \neq f(\alpha(a)),
\end{cases}
\end{equation}
where we denote by $\bar Z$ the complement of $Z$ in $\{B,C\}$. Hence, $\delta_C \hat \alpha \delta_C = \hat \alpha$ iff $f$ is constant on the cycles of $\alpha$, which is the claimed statement. Using \eqref{eq:hat-f-delta}, we have
$$|\hat \gamma_f\hat \alpha| = |\delta_C \left( \gamma^B \oplus \mathrm{id}^C \right) \delta_C \hat \alpha | = |\left( \gamma^B \oplus \mathrm{id}^C \right) \hat \alpha | = |(\gamma\alpha)^B \oplus \alpha^C| = |\alpha| + |\alpha\gamma|\geq p-1,$$
with equality iff $\alpha$ is geodesic, proving the other equality condition in the statement and finishing the proof.
\end{proof}

\

\smallskip

\noindent{\bf Reformulation of the exponent $L$.}  In this paragraph, we give a more intuitive expression for the exponent $L$. We define $\alt(f,\alpha)$ as the total number of changes of colors around the cycles of $\alpha$
 \be 
 \label{eq: Definition-Alt}
 \alt(f,\alpha) = \bigl \lvert \{a \in \llbracket 1, p \rrbracket \, : \, f(a) \neq f(\alpha(a))\} \bigr \rvert. 
 \ee 
In the map language, $\alt(f,\cM)$ is the total number of corners around white vertices of $\cM$ whose incident edges $a$ and $\alpha(a)$ have different colors $j(a) \neq j(\alpha(a))$.

\begin{proposition}
\label{prop:L-and-alt}
For any $f \in \{AB, AC\}^p$ and $\alpha \in \mathcal S_p$, where $\alpha$ is geodesic (or equivalently, where $\cM=(\gamma, \alpha)$ is planar), we have
\be 
\label{eq:L-is-alt-Plan}
L(f,\alpha) = p+1 - \alt(f,\alpha).
\ee 

\end{proposition}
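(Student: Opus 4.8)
The plan is to derive \eqref{eq:L-is-alt-Plan} from the exact expression for $L$ obtained in the proof of Proposition~\ref{prop:BoundOn_L}, namely
\[
L(f,\cM) = p + 1 - 2g(\cM_f) - 2\Delta_f(\cM),
\]
valid for every $\cM = (\gamma,\alpha) \in \bM_p$. Since $\alpha$ is geodesic exactly when $\cM$ is planar, we have $g(\cM)=0$, and the statement to be proven becomes the identity $2g(\cM_f) + 2\Delta_f(\cM) = \alt(f,\alpha)$ for planar $\cM$. The first step is to localize $\alt$ at the white vertices: writing $\alt(f,\alpha) = \sum_v \alt_v$, with $\alt_v$ the number of corners at the white vertex $v$ joining two edges of different colors, one notes that going once around the cyclic order at $v$ the color changes an even number of times, so $\alt_v = 0$ when $v$ is monochromatic and $\alt_v = 2r_v$ when $v$ carries both colors, where $r_v\ge 1$ is the common number of maximal monochromatic runs of each color around $v$. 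Since $\Delta_f(\cM)$ counts exactly the bichromatic white vertices, the identity reduces to $g(\cM_f) = \sum_{v\ \text{bichromatic}}(r_v-1)$; note that $\cM_f$ is in general \emph{not} planar, consistently with this.

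The efficient route to the lower bound $L(f,\alpha)\ge p+1-\alt(f,\alpha)$ goes through permutations. Conjugating $\hat\gamma_f\hat\alpha$ by $\delta_C$ and combining the factorization \eqref{eq:hat-f-delta} with the computation \eqref{eq:delta-C-hat-alpha}, one finds that $\hat\gamma_f\hat\alpha$ is conjugate to $\bigl((\gamma\alpha)^B\oplus\alpha^C\bigr)\,\delta_{\mathrm{alt}}$, where $(\gamma\alpha)^B$ acts on $[p]^B$, $\alpha^C$ on $[p]^C$, and $\delta_{\mathrm{alt}}$ is the product of the $\alt(f,\alpha)$ disjoint transpositions $\bigl((a,B)(a,C)\bigr)$ indexed by the color-change positions $a$, i.e.\ those with $f(a)\neq f(\alpha(a))$. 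Since $\alpha$ is geodesic, $|\gamma\alpha|+|\alpha| = p-1$, so $|(\gamma\alpha)^B\oplus\alpha^C| = p-1$ and the triangle inequality yields $|\hat\gamma_f\hat\alpha|\le (p-1)+\alt(f,\alpha)$, that is, $L(f,\alpha)\ge p+1-\alt(f,\alpha)$. The same content can be read geometrically from the vertex-splittings of Figure~\ref{fig:M23}, performed one bichromatic white vertex at a time: each split fixes the edge set and adds one vertex, so by \eqref{eq:Euler} the genus goes up by $r_v-1$ exactly when the face count drops by $2r_v-1$, and one then sums these increments starting from $g(\cM)=0$.

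The main obstacle is the matching upper bound $L(f,\alpha)\le p+1-\alt(f,\alpha)$, equivalently the fact that the triangle inequality above is saturated: composing $(\gamma\alpha)^B\oplus\alpha^C$ with the disjoint transpositions of $\delta_{\mathrm{alt}}$ must \emph{merge} cycles at every step, which amounts to showing that the bipartite graph with vertex set $\{\text{faces of }\cM\}\sqcup\{\text{white vertices of }\cM\}$ and one edge joining $\mathrm{face}(a)$ to $\mathrm{white}(a)$ for each color-change edge $a$ is a forest. This is precisely where geodesicity of $\alpha$ (planarity of $\cM$) is indispensable, and where I expect the real work to lie. A convenient starting point for the equivalent local genus statement is the elementary identity that a single cycle split at one color boundary, $c=(x_1\cdots x_s\,y_1\cdots y_t)$, satisfies $c = (x_1\,y_1)\,c_B\,c_C$ with $c_B=(x_1\cdots x_s)$ and $c_C=(y_1\cdots y_t)$, which settles the case $r_v=1$; the general case should follow by inducting on $r_v$, peeling off one monochromatic run at a time, each peeling costing two transpositions and, thanks to planarity, exactly two genuine cycle merges.
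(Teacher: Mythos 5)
Your reduction is correct: starting from $L(f,\cM) = p+1-2g(\cM_f)-2\Delta_f(\cM)$ and using that $\alt(f,\alpha)=\sum_v\alt_v$ with $\alt_v=2r_v$ at bichromatic vertices, the proposition becomes $g(\cM_f)=\sum_{v\text{ bichromatic}}(r_v-1)$ for planar $\cM$; your factorization $\delta_C(\hat\gamma_f\hat\alpha)\delta_C=\bigl((\gamma\alpha)^B\oplus\alpha^C\bigr)\delta_{\mathrm{alt}}$ is also right (it is the mirror image of the paper's $\varepsilon$, obtained by absorbing the transpositions on the other side), and the triangle-inequality direction $L\ge p+1-\alt$ is settled exactly as in the paper. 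Your restatement of the hard direction $L\le p+1-\alt$ as a forest condition on the bipartite graph of faces and white vertices joined by color-change transpositions is likewise a correct and clean equivalent formulation.

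The gap is that you never actually prove that forest/local-genus claim, and your sketch does not make clear where planarity enters. Saying ``thanks to planarity, exactly two genuine cycle merges'' is precisely the content that must be argued. Your peeling induction mirrors the paper's map-language proof, but that proof works in two stages you have collapsed: first, the additivity $g(\cM_f)=\sum_{v}g(\cM_f^{v_\circ})$ over white vertices (the paper's equation \eqref{eq:GenusSumWhite}) — this is where planarity of $\cM$ is indispensable, because one must know that splitting one white vertex does not disturb the faces created around another, and this step requires its own argument; second, the local computation $g(\cM_f^{v_\circ})=r_v-1$ by edge-deletion induction. Your draft goes straight to the local induction and ``sums these increments'' without justifying that the increments are independent. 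Alternatively, the paper's permutation-language proof closes the hard direction by showing $(a,B)$ and $(a,C)$ lie on the same cycle of $\varepsilon$ whenever $f(a)\neq f(\alpha^{-1}(a))$, using that $\alpha^{\mathrm{Kr}}$ is non-crossing (Figure~\ref{fig:alpha-alpha-Kr}); this is a concrete way to verify your forest claim, but it too requires an argument, not just the elementary cycle-splitting identity you record for $r_v=1$. As it stands, the proposal correctly identifies but does not close the step that carries the actual content of the proposition.
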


We also provide two proofs, first in terms of permutations, and then in terms of maps. In the following, for $\cM=(\gamma, \alpha)$, we identify $L(f,\alpha)$ and $L(f,\cM)$, as well as $\alt(f,\alpha)$ and $\alt(f,\cM)$. We stress that \eqref{eq:L-is-alt-Plan} is true only for a geodesic permutation $\alpha$, or equivalently a planar map $\cM$. In general, for non-planar maps, $2(g(\cM_f) + \Delta_f(\cM)) \neq \alt(f,\cM)$. For instance, if $\Delta_f(\cM) = 0$, $\cM_f = \cM$, so that $2(g(\cM_f) + \Delta_f(\cM))= 2g(\cM)$, but $\alt(f,\cM) = 0$. 

\

\noindent{\it \underline{Proof in terms of permutations.}\ }
Let us assume that $\alpha$ is geodesic (w.r.t.~$\gamma^{-1}$, i.e.~it satisfies $|\alpha|+|\alpha \gamma| = |\gamma^{-1}| = p-1$) and prove
\begin{equation}\label{eq:distance-hat-f-hat-alpha-geodesic}
|\hat \gamma_f \hat\alpha | = p-1 + \alt(f,\alpha), 
\end{equation}
which proves the proposition using \eqref{eq:CyclesvsDistancePerm2p}. We write, as before, 
\begin{align*}
|\hat \gamma_f\hat \alpha| &= |\delta_C \left( \gamma^B \oplus \mathrm{id}^C \right) \delta_C \hat \alpha |\\
&= |\delta_C \hat \alpha \delta_C ( \gamma^B \oplus \mathrm{id}^C )| \\
&\leq |\delta_C \hat \alpha \delta_C \hat \alpha^{-1}| + | \hat \alpha ( \gamma^B \oplus \mathrm{id}^C )| \\
&= |\alpha| + |\alpha\gamma| + |\delta_C \hat \alpha \delta_C \hat \alpha^{-1}| \\
&= p-1 + \alt(f,\alpha),
\end{align*}
where we have used the triangle inequality and the fact that 
$$\delta_C \hat \alpha \delta_C \hat \alpha^{-1} = \prod_{a \, : \, f(a) \neq f(\alpha^{-1}(a))} \bigl((a,B) (a,C)\bigr),$$
which follows easily from \eqref{eq:delta-C-hat-alpha}. To  conclude, we need to show that the triangle inequality used above is saturated, which is equivalent to the three permutations
$$\delta_C \hat \alpha \delta_C - \hat \alpha - (\gamma^B)^{-1} \oplus \mathrm{id}^C$$
lying on a geodesic. Since $\delta_C \hat \alpha \delta_C \hat \alpha^{-1}$ is a product of disjoint transpositions, this is in turn equivalent to the fact that, for all $a$ such that $f(a) \neq f(\alpha^{-1}(a))$, the elements $(a,B)$ and $(a,C)$ are contained in the same cycle of the permutation $\varepsilon :=\delta_C \hat \alpha \delta_C(\gamma^B \oplus \mathrm{id}^C)$. This permutation acts in the following way:
\begin{align*}
(a,B) &\mapsto \begin{cases}
(\alpha(a+1), B) &\quad \text{ if } \qquad f(\alpha(a+1)) = f(a+1) \\
(\alpha(a+1),C) &\quad \text{ if } \qquad f(\alpha(a+1)) \neq f(a+1) 
\end{cases}\\
(a,C) &\mapsto \begin{cases}
(\alpha(a),C) &\quad \text{ if } \qquad f(\alpha(a)) = f(a) \\
(\alpha(a), B) &\quad \text{ if } \qquad f(\alpha(a)) \neq f(a).
\end{cases}
\end{align*}
In other words, on the $B$-level, $[p]^B$, $\varepsilon$ acts as $\alpha\gamma$, which is the permutation associated to the non-crossing partition $\alpha^{\mathrm{Kr}}$ (see \cite[Exercise 18.25 and Remark 23.24]{nica2006lectures}), while on the $C$-level, $[p]^C$, $\varepsilon$ acts as $\alpha$. 
Using the fact that $\alpha$ is non-crossing and the definition of $\alpha^{Kr}$ (see \cite[Definition 9.21]{nica2006lectures}, and note that in our notation, we also have $\bar a > a$), we can easily see that $(a,B)$ and $(a,C)$ belong to the same cycle of $\varepsilon$, whenever $f(a) \neq f(\alpha^{-1}(a))$; we refer the reader to Figure~\ref{fig:alpha-alpha-Kr} for a graphical illustration of this fact. \qed
\begin{figure}[!ht]
	\centering
	\includegraphics[width=0.45\textwidth]{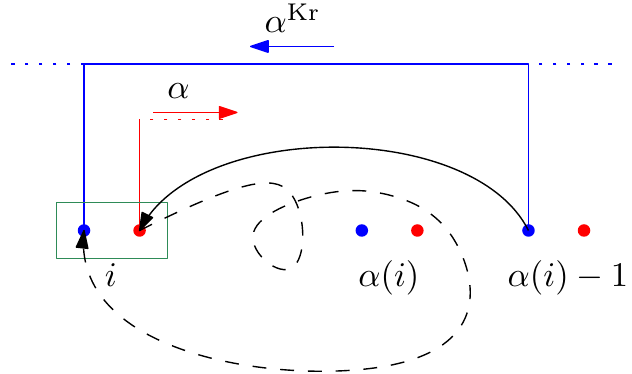}
	\caption{Diagram showing that whenever $f(a) \neq f(\alpha^{-1}(a))$, $(a,B)$ and $(a,C)$ belong to the same cycle of $\varepsilon =\delta_C \hat \alpha \delta_C(\gamma^B \oplus \mathrm{id}^C)$. Since the partition $\alpha^{\operatorname{Kr}}$ is non-crossing (blue partition), the element $(a,C)$ (in red) must ``escape'' the interval $[a,\alpha(a)]$ through $(a,B)$ (in blue).}
	\label{fig:alpha-alpha-Kr}
\end{figure}

\

\noindent{\it \underline{Proof in the map language.}\ } 
We assume that the map $\cM\in\bM_p$ is planar, and prove by induction that 
\be
\alt(f, \cM) = 2g(\cM_f)+2\Delta_f(\cM).
\ee
If $v_\circ$ is a white vertex and $v_\bullet$ is the only black vertex, we denote $\cM^{v_\circ }$  the submap obtained from $\cM$ by keeping only $v_\circ$ and  $v_\bullet$ and the edges linking them. We can apply the operation of Fig.~\ref{fig:M23}  to the white vertex of $\cM^{v_\circ }$, thus obtaining a map $\cM_f^{v_\circ }$.
Because the map $\cM$ is planar, the map $\cM_f$ can be constructed by recursively inserting the maps $\cM_f^{v_\circ }$ in the corners in the appropriate way (inverse operation of that shown in Fig.~\ref{fig:VertexSplitting}), where $v_\circ$ spans the white vertices of $\cM$. Therefore, any operation on the edges of a given $\cM_f^{v_\circ }$ does not affect the faces incident to other $\cM_f^{v'_\circ }$, and the genus of $\cM_f$ is the sum of the genera of all the $\cM_f^{v_\circ }$
\be
\label{eq:GenusSumWhite}
g(\cM_f) = \sum_{\substack{{v_\circ \in \cM}\\{\text{white vertex}}}} g(\cM_f^{v_\circ }).
\ee
Note that this is not true for a non-planar map $\cM$.
We can therefore study what happens locally, for a single $\cM^{v_\circ }$. The second step is to notice that if two consecutive edges around a $v_\circ$ have the same color,  then removing one or the other will not affect the genus $g(\cM_f^{v_\circ })$. We can therefore consider that there are no two consecutive edges of the same color. The case with 3 edges of each color is shown in Fig.~\ref{fig:MWhiteBlack} {\bf a)} below. 
\begin{figure}[!ht]
\includegraphics[scale=0.8]{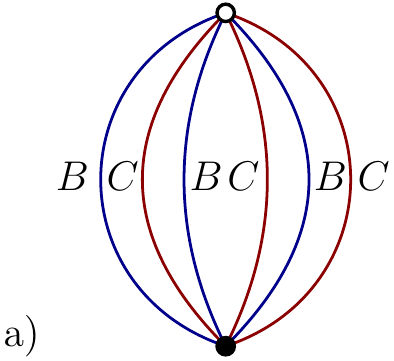}\hspace{1cm}
\includegraphics[scale=0.8]{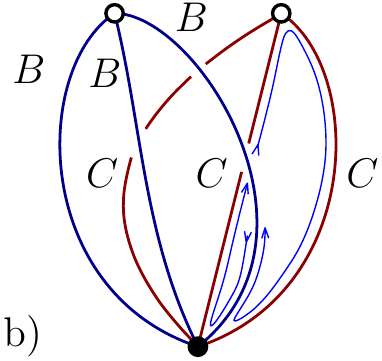}\hspace{1cm}
\includegraphics[scale=0.8]{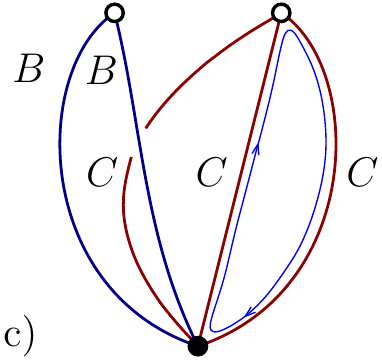}\hspace{1cm}
\includegraphics[scale=0.8]{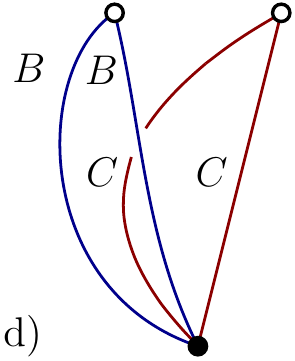}
\caption{\label{fig:MWhiteBlack} a) A six edges $\cM^{v_\circ }$. b-d) Recursive operations on $\cM_f^{v_\circ }$.  }
\end{figure}

Firstly, in the case where $\cM^{v_\circ }$ is made of only two edges of two different colors, we indeed have $2=\alt(f, \cM^{v_\circ}) = 2g(\cM_f^{v_\circ})+2\Delta_f(\cM^{v_\circ})=2\cdot 0+2\cdot(3-2)=2$.
If now there are more than two edges, it is easy to see that a single face visits all the edges twice in $\cM_f^{v_\circ }$ (see the dotted face in  Fig.~\ref{fig:MWhiteBlack} {\bf b)}). Furthermore, these edges are not bridges, and therefore, deleting any edge in $\cM_f^{v_\circ }$, the genus decreases by one (there is one more face and one less edge), and the number of corners incident to edges of different colors around $v_\circ $ in $\cM^{v_\circ }$ decreases by two. In the resulting map (Fig.~\ref{fig:MWhiteBlack} {\bf c)}), two edges have two incident faces. Deleting one of them, the genus does not vary, nor the number of corners incident to edges of different colors, and we recover a map with less edges and with the same property (Fig.~\ref{fig:MWhiteBlack} {\bf d)}). By induction we deduce that the number of corners incident to edges of different colors around $v_\circ $ in $\cM^{v_\circ }$ is twice the genus of $\cM_f^{v_\circ }$ plus two. Summing over white vertices \eqref{eq:GenusSumWhite}, we obtain the sought relation $\alt(f, \cM) = 2g(\cM_f)+2\Delta_f(\cM)$. \qed

\smallskip

\subsection{The balanced asymptotical regime}\label{sec:2-marginals-large-BC}
\label{subsec:Large-HBC}

\

\medskip

We focus in this section on the asymptotic regime where $N:=\dim \mathcal H_B = \dim \mathcal H_C \to \infty$, which we call balanced; for the case when the size of the $B,C$ subsystems stays bounded, see Section \ref{sec:2-marginals-fixed-BC}. In this regime, where $\mathcal H_{B,C}$ grow, it turns out that the asymptotic behavior of $\mathcal H_{A,D}$ is not so important, as long as the ratio $\dim \mathcal H_D /  \dim \mathcal H_A$ converges to a positive constant as $N \to \infty$. We present next the main result of this section, and discuss several particular asymptotic scenarios later. 

\begin{theorem}\label{thm:2-marginals-large-BC}
Let $X_N \in \mathbb C^{N_A} \otimes \mathbb C^N \otimes \mathbb C^{N} \otimes \mathbb C^{N_D}$ be a sequence of random Gaussian tensors, where $N_{A,D}$ are arbitrary functions of $N$ satisfying $N_D \sim cN_A$ as $N \to \infty$, for some constant $c \in (0,\infty)$. Then, the normalized marginals $(N_A^{-1}N^{-1}W_{AB}^{(N)}, N_A^{-1}N^{-1}W_{AC}^{(N)})$ defined in \eqref{eq:def-2-marginals} converge in distribution, as $N \to \infty$, to a pair of identically distributed and free elements $(x_{AB},x_{AC})$, where $x_{AB}$ and $x_{AC}$  have a  $\mathrm{MP}_{c}$ distribution. Equivalently, for any word in the two marginals $f \in \{AB, AC\}^p$, we have
\begin{equation}\label{eq:moments-marginals-AB-AC-balanced}
\lim_{N \to \infty} \mathbb E (N_A N)^{-p-1} \operatorname{Tr} \prod_{1 \leq i \leq p}^{\longleftarrow} W_{f(i)}^{(N)} = \sum_{\alpha \in \mathrm{NC}(p), \, \alpha \leq \ker f} c^{\#\alpha},
\end{equation}
where $\ker f$ is the partition having two blocks corresponding to the occurrences of $AB$ (resp.~$AC$) in the word $f$. 

This rewrites in terms of planar bicolored maps with one white vertex as  
\begin{equation}\label{eq:moments-marginals-AB-AC-balanced-MAPS}
\lim_{N \to \infty} \mathbb E (N_A N)^{-p-1} \operatorname{Tr} \prod_{1 \leq i \leq p}^{\longleftarrow} W_{f(i)}^{(N)} = \sum_{\cM \in \bM_p^{0}, \, \Delta_f(\cM) =0} c^{V(\cM) - 1},
\end{equation}
where the sum is restricted to maps whose white vertices only have incident edges of the same color, and we recall that $\bM_p^{0}$ is the subset of the elements of $\bM_p$ of vanishing genus.
\end{theorem}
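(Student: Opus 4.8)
The plan is to derive both \eqref{eq:moments-marginals-AB-AC-balanced} and \eqref{eq:moments-marginals-AB-AC-balanced-MAPS} from the exact formula \eqref{eq:exact-moments-2-marginals} of Theorem~\ref{thm:moments-fixed-N}, by isolating the dominant terms as $N \to \infty$. First I would write, using $N_D \sim cN_A$,
\begin{equation*}
\mathbb E \operatorname{Tr} W_f = \sum_{\alpha \in \mathcal S_p} N_A^{\#(\gamma\alpha) + \#\alpha} (1+o(1))\, c^{\#\alpha}\, N^{L(f,\alpha)},
\end{equation*}
and observe that, after dividing by $(N_AN)^{p+1}$, the exponent of $N_A$ is $\#(\gamma\alpha) + \#\alpha - (p+1) \le 0$ by the geodesic bound \eqref{eq:BoundCycles} (Lemma~\ref{lem:geodesic-permutations}), with equality iff $\alpha$ is geodesic between $\mathrm{id}$ and $\gamma$, i.e.\ $\alpha \in \mathrm{NC}(p)$; and the exponent of $N$ is $L(f,\alpha) - (p+1) \le 0$ by Proposition~\ref{prop:BoundOn_L}, with equality iff $\alpha$ is geodesic \emph{and} $\alpha \le \ker f$. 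Hence a term survives in the limit precisely when both bounds are saturated simultaneously, that is when $\alpha \in \mathrm{NC}(p)$ and $\alpha \le \ker f$; each such term contributes $c^{\#\alpha}$, which yields \eqref{eq:moments-marginals-AB-AC-balanced}.

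The only subtlety to check carefully is that the two saturation conditions are \emph{compatible} and \emph{jointly} achieved — i.e.\ that the bound on $L$ and the bound on $\#\alpha + \#(\gamma\alpha)$ cannot partially compensate each other. This is immediate here because the $N_A$-exponent and the $N$-exponent are controlled by disjoint bounds (there is no trade-off: each is separately $\le 0$), so the maximal total power $N_A^{p+1}N^{p+1}$ is attained on exactly the intersection of the two extremal sets. I would also note that $\#\alpha$ as a permutation equals the number of blocks of the associated non-crossing partition, so the notation $c^{\#\alpha}$ in \eqref{eq:moments-marginals-AB-AC-balanced} is consistent.

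For the map reformulation \eqref{eq:moments-marginals-AB-AC-balanced-MAPS}, I would simply translate via the dictionary established in Section~\ref{sec:FirstSec} and in the ``Theorem~\ref{thm:moments-fixed-N} in the maps formulation'': setting $\cM = (\gamma,\alpha) \in \bM_p$, the condition ``$\alpha$ geodesic'' is ``$g(\cM)=0$'', i.e.\ $\cM \in \bM_p^0$; the condition ``$\alpha \le \ker f$'' is exactly $\Delta_f(\cM) = 0$ (every white vertex of $\cM$ has all incident edges of a single color), as recorded in the statement and proof of Proposition~\ref{prop:BoundOn_L}; and $\#\alpha = V(\cM)-1$. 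Substituting into \eqref{eq:moments-marginals-AB-AC-balanced} gives \eqref{eq:moments-marginals-AB-AC-balanced-MAPS}.

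Finally, to obtain the probabilistic statement — that $(x_{AB},x_{AC})$ are free, each $\mathrm{MP}_c$-distributed — I would read off the limiting mixed free cumulants from \eqref{eq:moments-marginals-AB-AC-balanced}. The moment–cumulant formula in free probability expresses the limiting mixed moment $\lim \mathbb E(N_AN)^{-p-1}\operatorname{Tr}(x_{f(1)}\cdots x_{f(p)})$ as a sum over \emph{all} $\pi \in \mathrm{NC}(p)$ of products of free cumulants indexed by the blocks of $\pi$; comparing with the right-hand side of \eqref{eq:moments-marginals-AB-AC-balanced}, which is $\sum_{\pi \in \mathrm{NC}(p),\,\pi\le\ker f} c^{\#\pi}$, forces the cumulant attached to a block $B$ to equal $c$ when $f$ is constant on $B$ and $0$ otherwise. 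The vanishing of all mixed cumulants is precisely the combinatorial characterization of freeness (see \cite[Theorem 11.16]{nica2006lectures}), and the surviving cumulants $\kappa_n(x_{AB},\ldots,x_{AB}) = \kappa_n(x_{AC},\ldots,x_{AC}) = c$ are those of the Mar\v{c}enko–Pastur law $\mathrm{MP}_c$, as computed in the third step of the proof of Proposition~\ref{prop:MP}. I expect the main obstacle to be purely expository: making the joint-saturation argument airtight and clearly invoking Proposition~\ref{prop:BoundOn_L} so that the reader sees both extremal conditions must hold at once; the free-probability translation is then routine.
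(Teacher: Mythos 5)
Your proof is correct and follows exactly the same route as the paper: apply Proposition~\ref{prop:BoundOn_L} to the exact moment formula \eqref{eq:exact-moments-2-marginals} to isolate the dominant terms, then read off the free cumulants via the moment--cumulant formula. The ``subtlety'' you flag is already resolved inside Proposition~\ref{prop:BoundOn_L}: $L(f,\alpha)=p+1$ by itself forces $\alpha$ to be geodesic, so the $N$-exponent alone governs which terms survive (this matters since $N_A$ need not tend to infinity, only $N$ does), and the $N_A$-exponent is then automatically saturated on that set --- no separate ``no-trade-off'' step is required.
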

\begin{proof}
We first prove formula \eqref{eq:moments-marginals-AB-AC-balanced}. Starting from the exact moment formula of Theorem~\ref{thm:moments-fixed-N}, let us analyze the contribution of $N$, through its exponent $L(f,\alpha)$. From Proposition~\ref{prop:BoundOn_L}, 
$L(f,\alpha) \le  p+1,$
with equality iff $\alpha$ is geodesic and $\alpha \leq \ker f$. Hence, we have
\begin{align*}
\mathbb E  \operatorname{Tr} W^{(N)}_f &= (1+o(1))N^{p+1} \sum_{\alpha \in \operatorname{NC}(p),\, \alpha \leq \ker f} N_A^{\#(\gamma\alpha)}N_D^{\#\alpha} \\
&=  (1+o(1))(N_AN)^{p+1} \sum_{\alpha \in \operatorname{NC}(p),\, \alpha \leq \ker f} \left( \frac{N_D}{N_A}\right)^{\#\alpha} \\
&= (1+o(1))(N_AN)^{p+1} \sum_{\alpha \in \operatorname{NC}(p),\, \alpha \leq \ker f} c^{\#\alpha},
\end{align*}
proving the claimed formula. Above, we have used the key fact that, for the surviving $\alpha$ terms (\textit{i.e.}~the permutations which are geodesic w.r.t.~$\gamma^{-1}$), $\#(\gamma\alpha) = p+1 - \#\alpha$.

We now show how the moment formula \eqref{eq:moments-marginals-AB-AC-balanced} implies the main claim. Using the moment-cumulant formula \cite[Proposition 11.4]{nica2006lectures}, one can read the asymptotic free cumulants directly off the moment formula:
$$\lim_{N \to \infty} \mathbb E (N_A N)^{-p-1} \operatorname{Tr} \prod_{1 \leq i \leq p}^{\longleftarrow} W_{f(i)}^{(N)} = \sum_{\alpha \in \mathrm{NC}(p)} \quad  \prod_{b \text{ block of }\alpha} c\mathbf{1}_{f \text{ is constant on } b} .$$
Hence, mixed cumulants vanish (implying freeness, see \cite[Theorem 11.16]{nica2006lectures}), and the distribution of the limiting variables $x_{AB}$, $x_{AC}$ is Mar{\v{c}}enko-Pastur of parameter $c$, ending the proof.
\end{proof}

\begin{remark}\label{rem:balanced-4-partite-subcases}
As a special case of the result above, one can consider the case
where all the Hilbert spaces have, up to constants, the same dimension $\dim \mathcal H_A = \lfloor c_1N\rfloor$, $\dim \mathcal H_D = \lfloor c_4N \rfloor$ and $\dim \mathcal H_{B} = \dim \mathcal H_C = N$. Then, the normalized marginals $(c_1N^2)^{-1} (W_{AB},W_{AC})$ converge in moments, as $N \to \infty$, towards two free elements having $\mathrm{MP}_{c_4/c_1}$ distribution.  The multi-partite equivalent of this result will be considered in Section \ref{sec:BalancedGeneral}.

Similarly, when the subsystems $A$ and $D$ have fixed dimension $\dim \mathcal H_A = k$, $\dim \mathcal H_D = l$ and $\dim \mathcal H_{B} = \dim \mathcal H_C = N$ the normalized marginals $(kN)^{-1} (W_{AB},W_{AC})$ converge in moments, as $N \to \infty$, towards two free elements having $\mathrm{MP}_{l/k}$ distribution. 
\end{remark}

\begin{remark}
            One can interpret the asymptotical freeness of the two marginals $W_{AB}$ and $W_{AC}$ in the following way: the two marginals behave as if they come from \emph{independent} random tensors $X$ and $Y$: 
$$W_{AB} = [\operatorname{id} \otimes \operatorname{id} \otimes \operatorname{Tr} \otimes \operatorname{Tr}](XX^*) \quad \text{ and } \qquad  \tilde W_{AC}=[\operatorname{id} \otimes \operatorname{Tr} \otimes \operatorname{id} \otimes \operatorname{Tr}](YY^*).$$
Indeed, for the random matrices above, the conclusion of the theorem above follows from the very general asymptotic freeness results of Voiculescu: the random matrices $W_{AB}$ and $\tilde W_{AC}$ are independent and unitarily invariant, and they converge to Mar{\v{c}}enko-Pastur elements. 

One can understand this parallel using the fact that, in the asymptotical regime under consideration here, the amount of fresh randomness (the Hilbert spaces $\mathcal H_{B,C}$) is growing. This situation is to be contrasted with the behavior of the marginals in the fixed $B,C$ regime discussed in Section~\ref{sec:2-marginals-fixed-BC}.
\end{remark}

\begin{remark}\label{rem:4-partite-balanced-QIT}
The remark above has interesting applications to quantum information theory. As the quantum marginals $\rho_{AB}$ and $\rho_{AC}$ are rescaled versions of the Wishart matrices $W_{AB,AC}$, the same asymptotic freeness result holds, with a different scaling (precisely, it is the random matrices $N_D N \rho_{AB}, N_D N\rho_{AC}$ which are asymptotically free). Thus, the previous remark implies that, in the large $N$ limit, the quantum marginals $\rho_{AB}$ and $\rho_{AC}$ ``forget'' that they are marginals of the same quantum state $|\psi\rangle_{ABCD}$ and behave like independent random density matrices from the induced ensemble with parameter $c = \lim N_D/N_A$. In particular, these marginals become \emph{uncorrelated} asymptotically, the intuition for this fact being that the amount of ``fresh randomness'' from the systems $\mathcal H_B$ and $\mathcal H_C$, which have dimension growing to infinity, is enough to erase the correlations from system $\mathcal H_A$, and this independently on the ratio $\dim \mathcal H_A$ vs.~$\dim \mathcal H_{B,C}$. 
\end{remark}

As an application of Theorem~\ref{thm:2-marginals-large-BC}, let us consider the product of the two marginals $W_{AB}$ and $W_{AC}$, or, to be exact, its self-adjoint version $P:=W_{AB}^{1/2} W_{AC} W_{AB}^{1/2}$. Applying Theorem~\ref{thm:2-marginals-large-BC}, the random matrix $P$ converges in moments to the element $x_{AB}^{1/2}x_{AC}x_{AB}^{1/2}$, where $x_{AB}$, $x_{AC}$ are two free elements having distribution $\mathrm{MP}_{c}$. In free probability theory, the (self-adjoint) multiplication operation of free elements is known as the \emph{free multiplicative convolution} (denoted by $\boxtimes$), see \cite[Lecture 14]{nica2006lectures}. In our case, we are interested in the probability measure $\mathrm{MP}_{c} \boxtimes \mathrm{MP}_{c} = \mathrm{MP}_{c}^{\boxtimes 2}$. Exact formulas for the densities of the above distributions have been computed in \cite{penson2011product} in the case $c=1$ and in \cite{dupic2014spectral} in the general case. We compare Monte Carlo simulations to the exact densities in Figure~\ref{fig:2-marginals-balanced-1-2}.

\begin{figure}[!ht]
	\centering
	\includegraphics[width=0.45\textwidth]{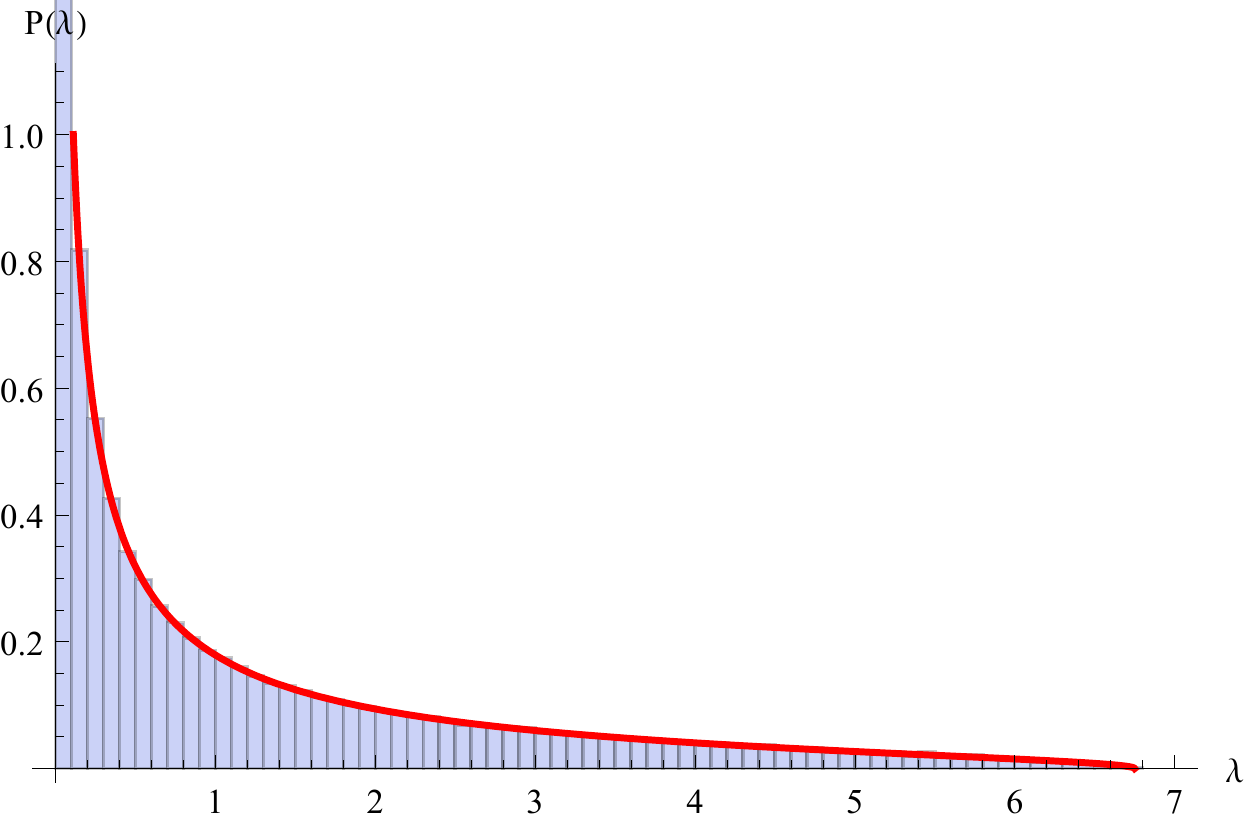} \qquad
	\includegraphics[width=0.45\textwidth]{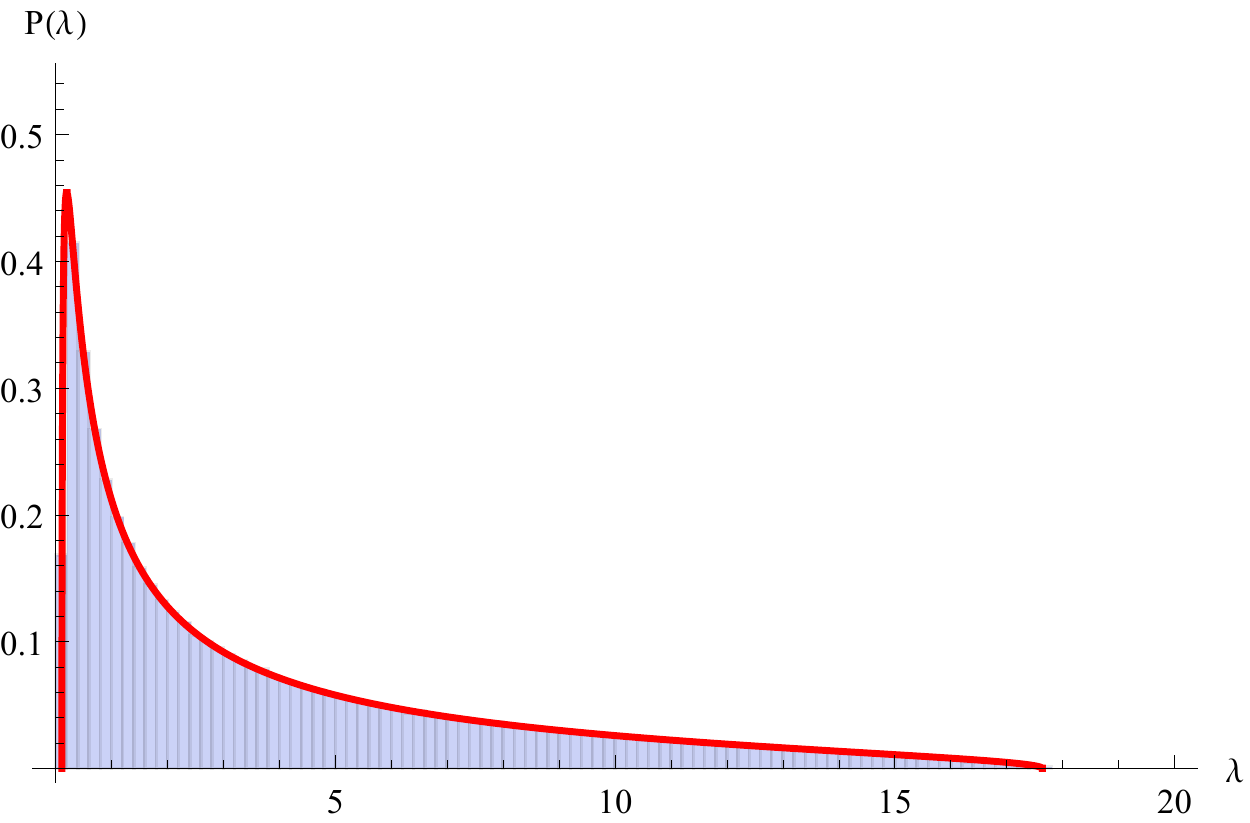}
	\caption{The density of the free multiplicative square of the Mar{\v{c}}enko-Pastur distribution $\mathrm{MP}_c$ for $c=1$ (left) and $c=2$ (right) versus Monte-Carlo simulations. The supports of the two probability measures are $[0, 27/4]$ (left) and $[(71-17 \sqrt{17})/8,(71+17 \sqrt{17})/8] \approx [0.11, 17.64]$ (right).}
	\label{fig:2-marginals-balanced-1-2}
\end{figure}
We discuss next another formulation of the asymptotic moment formula \eqref{eq:moments-marginals-AB-AC-balanced}, useful in practice when one has to evaluate specific mixed moments in the $AB$/$AC$ marginals. 

\begin{proposition}
\label{prop:ABCD-case-c1}
In the same setting as Theorem~\ref{thm:2-marginals-large-BC} with $c=1$, the asymptotical mixed moments of the marginals $W_{AB}$, $W_{AC}$ are indexed by arbitrary words $f \in \{AB, AC\}^p$, or, equivalently, by two integer vectors $r,s$:
$$\prod_{1 \leq i \leq p}^{\longleftarrow} W_{f(i)} = W_{AB}^{r_m} W_{AC}^{s_m} \cdots W_{AB}^{r_2} W_{AC}^{s_2} W_{AB}^{r_1} W_{AC}^{s_1}.$$
Asymptotically, we have
\begin{equation}\label{eq:moments-2-marginals-balanced-sigma-leq-pi}
\lim_{N \to \infty}(N_A N)^{-p-1} \mathbb E  \operatorname{Tr}W_{AB}^{r_m} W_{AC}^{s_m}\cdots W_{AB}^{r_1} W_{AC}^{s_1} = \sum_{\sigma, \pi \in \mathrm{NC}(m),\, \sigma \leq \pi}\mathrm{Cat}_\sigma(r) \mathrm{Cat}_{\pi^\mathrm{Kr}}(s) \operatorname{Mob}(\sigma, \pi),
\end{equation}
where $\operatorname{Cat}_\sigma$ is the multiplicative extension of Catalan numbers
$$\operatorname{Cat}_\sigma(r):=\prod_{b \text{ cycle of } \sigma} \mathrm{Cat}_{\sum_{i \in b}r_i},$$
$\mathrm{Kr}$ denotes the Kreweras complementation (see \cite{kreweras1972partitions} or \cite[Lecture 9]{nica2006lectures}) and $\operatorname{Mob}$ is the M\"obius function on the non-crossing partition lattice (see \cite[Lecture 10]{nica2006lectures}), defined for $\sigma \leq \pi$ by
$$\operatorname{Mob}(\sigma,\pi) := \operatorname{Mob}(\sigma^{-1}\pi) = \prod_{b \text{ cycle of }\sigma^{-1}\pi} (-1)^{|b|-1} \mathrm{Cat}_{|b|-1}.$$
\end{proposition}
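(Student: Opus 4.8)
The starting point is formula \eqref{eq:moments-marginals-AB-AC-balanced} from Theorem~\ref{thm:2-marginals-large-BC}, specialized to $c=1$: the limiting moment is $|\{\alpha \in \mathrm{NC}(p)\,:\,\alpha \le \ker f\}|$, where now $\ker f$ has two blocks, the positions carrying $AB$ and the positions carrying $AC$. When $f$ is the word encoded by the vectors $r,s$, these two blocks are, respectively, $U := \bigsqcup_j [\![\, \cdot\,]\!]$ the union of the $AB$-intervals (of sizes $r_1,\dots,r_m$) and $V$ the union of the $AC$-intervals (of sizes $s_1,\dots,s_m$), interleaved cyclically around $[\![1,p]\!]$. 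So the whole task is a purely combinatorial identity: count non-crossing partitions of $[\![1,p]\!]$ refining the two-block partition $\{U,V\}$, and rewrite the count as the right-hand side of \eqref{eq:moments-2-marginals-balanced-sigma-leq-pi}. The plan is to organize such an $\alpha$ by its ``trace'' on the $AB$-block and on the $AC$-block separately, and to glue them using the Kreweras complement together with a M\"obius inversion.

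Concretely, I would proceed as follows. First, since $\alpha \le \{U,V\}$, write $\alpha = \alpha_U \sqcup \alpha_V$ where $\alpha_U$ is a non-crossing partition of $U$ and $\alpha_V$ a non-crossing partition of $V$. Because the $AB$-intervals and the $AC$-intervals alternate cyclically, a non-crossing partition of $U$ (inside the circle $[\![1,p]\!]$) is naturally the same datum as a non-crossing partition $\sigma$ of $[\![1,m]\!]$ together with, on the block obtained by merging intervals $i_1<\dots<i_k$, any non-crossing partition of $r_{i_1}+\dots+r_{i_k}$ points on a line; but for the \emph{count} one only needs that the number of NC partitions of $U$ ``of type $\sigma$'' (i.e.\ whose blocks group the $m$ intervals according to $\sigma$) is $\mathrm{Cat}_\sigma(r)$, by the standard product formula for $|\mathrm{NC}(n)|=\mathrm{Cat}_n$ applied block-by-block. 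The second, and real, step is the compatibility constraint: given $\alpha_U$ of type $\sigma$, for which NC partitions $\alpha_V$ of $V$ is $\alpha_U \sqcup \alpha_V$ non-crossing as a partition of the whole circle? Here the Kreweras complement enters: $\alpha_V$ must be non-crossing ``relative to'' $\alpha_U$, and because the $AC$-points sit precisely in the gaps of the $AB$-points around the circle, this is equivalent to requiring that the induced partition $\tau$ of $[\![1,m]\!]$ (the type of $\alpha_V$) satisfies $\tau \le \sigma^{\mathrm{Kr}}$. Counting the $\alpha_V$ of type $\tau$ gives $\mathrm{Cat}_\tau(s)$, so the raw count becomes $\sum_{\sigma}\mathrm{Cat}_\sigma(r)\sum_{\tau \le \sigma^{\mathrm{Kr}}}\mathrm{Cat}_\tau(s)$. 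The last step is to turn the inner $\sum_{\tau \le \sigma^{\mathrm{Kr}}}$ into $\mathrm{Cat}_{\pi^{\mathrm{Kr}}}(s)\,\mathrm{Mob}(\sigma,\pi)$ summed over $\pi \ge \sigma$: set $\pi := (\sigma^{\mathrm{Kr}})$-side bookkeeping via the order-reversing bijection $\sigma \mapsto \sigma^{\mathrm{Kr}}$ on $\mathrm{NC}(m)$, so that $\tau \le \sigma^{\mathrm{Kr}} \iff \pi \le \tau^{\mathrm{Kr}}$ — no, more directly: use $\sum_{\tau\le\rho}\mathrm{Cat}_\tau(s) = \sum_{\pi \le \rho}\big(\sum_{\tau\le\pi}\mathrm{Mob}(\tau,\pi)^{-1}\ldots\big)$ is the wrong direction; instead invoke the identity $\sum_{\tau\le\rho}\mathrm{Cat}_\tau(s)=\sum_{\pi\ge\hat 0,\ \pi\le\rho}$ ... the clean route is: $\mathrm{Cat}_{\pi^{\mathrm{Kr}}}(s)=\sum_{\tau\le\pi}f(\tau)$ for a suitable $f$, so that M\"obius inversion on $\mathrm{NC}(m)$ gives $f=\sum \mathrm{Mob}$, and substituting recovers exactly the stated double sum over $\sigma\le\pi$ after re-indexing $\rho=\sigma^{\mathrm{Kr}}$ and using the complementation identity $\mathrm{Cat}_\rho(s)\leftrightarrow\mathrm{Cat}_{\rho^{\mathrm{Kr}}}(s)$ encoded in \cite[Exercise 9.42 / Lecture 14]{nica2006lectures}. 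I would carry out this re-indexing carefully and quote the relevant lemmas from \cite{nica2006lectures} rather than re-deriving them.

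The step I expect to be the main obstacle is the second one: making precise, and proving, that ``$\alpha_U \sqcup \alpha_V$ non-crossing on the circle'' translates into ``$\tau \le \sigma^{\mathrm{Kr}}$'' at the level of the reduced partitions on $[\![1,m]\!]$. This is intuitively clear from the picture — the $AC$-intervals occupy exactly the cyclic gaps between consecutive $AB$-intervals, which is the defining configuration for the Kreweras complement — but one must check it allowing blocks of $\alpha_U$ and of $\alpha_V$ to be large and to interleave in complicated ways, and one must verify that the extra ``internal'' non-crossing structure inside each fused interval-block never interferes with the cross-compatibility (it does not, precisely because each $r_i$-interval or $s_j$-interval is a contiguous arc). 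Once this lemma is established, the remaining algebra is a routine application of M\"obius inversion and the multiplicativity of $\mathrm{Cat}_\bullet$ over blocks, together with the known behaviour of $\mathrm{Cat}_\bullet$ under Kreweras complementation, so I would present those as short computations citing \cite{kreweras1972partitions} and \cite[Lectures 9--10, 14]{nica2006lectures}.
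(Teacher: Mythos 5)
Your overall idea --- to evaluate the right-hand side of \eqref{eq:moments-marginals-AB-AC-balanced} at $c=1$ by decomposing $\alpha = \alpha_U \sqcup \alpha_V$ and organizing the count by ``types'' --- is a legitimate alternative to the paper's route, which instead invokes asymptotic freeness (Theorem~\ref{thm:2-marginals-large-BC}), substitutes the known moments $\operatorname{tr}(x^n)=\mathrm{Cat}_n$ of $\mathrm{MP}_1$, applies the product formula for moments of free variables \cite[Theorem~14.4]{nica2006lectures} and then unfolds $\kappa_{\sigma^{\mathrm{Kr}}}$ by the moment--cumulant/M\"obius relation. Your approach amounts to re-proving that product formula from scratch in this special case, which is considerably more delicate than the three-line calculation the paper carries out; as written it contains two genuine errors.

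First, the quantity $\mathrm{Cat}_\sigma(r)$ is not the number of non-crossing partitions $\alpha_U$ of $U$ whose block structure merges the $m$ intervals \emph{exactly} according to $\sigma$; it is the number of those whose induced partition on $\{1,\dots,m\}$ is $\leq \sigma$. As a consequence, your ``raw count'' $\sum_{\sigma}\mathrm{Cat}_\sigma(r)\sum_{\tau\leq\sigma^{\mathrm{Kr}}}\mathrm{Cat}_\tau(s)$ overcounts: for $m=2$, $r=s=(1,1)$ ($p=4$, word $AB,AC,AB,AC$) the left side of \eqref{eq:moments-2-marginals-balanced-sigma-leq-pi} equals $3$, and the paper's formula also gives $3$, but your raw count gives $1\cdot 3 + 2\cdot 1 = 5$. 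To fix this you must first pass to the exact-type counts $N^U_{=\sigma}=\sum_{\sigma'\leq\sigma}\operatorname{Mob}(\sigma',\sigma)\mathrm{Cat}_{\sigma'}(r)$ and \emph{then} carry out the re-indexing; your M\"obius inversion is applied at the wrong place.

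Second, the key compatibility lemma you flag as ``the main obstacle'' --- that $\alpha_U\sqcup\alpha_V$ is non-crossing on the circle if and only if the type $\tau$ of $\alpha_V$ satisfies $\tau\leq\sigma^{\mathrm{Kr}}$, where $\sigma$ is the type of $\alpha_U$ --- is false. Take $m=3$, $r_i=s_i=1$ ($p=6$, positions $2,4,6$ for $U$ and $1,3,5$ for $V$), $\alpha_U=\{\{2,4\},\{6\}\}$ of type $\sigma=\{\{1,2\},\{3\}\}$, so $\sigma^{\mathrm{Kr}}=\{\{1\},\{2,3\}\}$; the partition $\alpha_V=\{\{1\},\{3,5\}\}$ has type $\{\{1\},\{2,3\}\}\leq\sigma^{\mathrm{Kr}}$, yet $\{2,4\}$ and $\{3,5\}$ cross on the $6$-cycle, so $\alpha_U\sqcup\alpha_V$ is \emph{not} non-crossing. (The actual constraint depends on the specific $\alpha_U$, not just its type, and even the admissible set of types carries a cyclic shift relative to $\sigma^{\mathrm{Kr}}$, because the $AC$-interval that sits in the gap after $AB$-interval $i$ is $s_{i+1}$, not $s_i$; this bookkeeping is exactly what the free-probability formulation of \cite[Theorem~14.4]{nica2006lectures} packages for you.) To salvage the direct combinatorial proof one would have to formulate the compatibility in terms of the Kreweras complement of $\alpha_U$ itself (not its type), sum over $\alpha_U$, and only then M\"obius-invert --- at which point the argument essentially reproduces the proof of \cite[Theorem~14.4]{nica2006lectures}. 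The paper's proof avoids all of this by quoting that theorem.
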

\begin{proof}
The result follows from Theorem~\ref{thm:2-marginals-large-BC} and from the formula for the moments of a product of free random variables \cite[Theorem 14.4]{nica2006lectures}: denoting by $x_{AB}$ and $x_{AC}$ the limits in distribution of $W_{AB}$, resp.~$W_{AC}$ and by $\operatorname{tr}$ the expectation in the non-commutative probability space where $x_{AB}$ and $x_{AC}$ live, we have
\begin{align*}
\operatorname{tr}(x_{AB}^{r_{q}} x_{AC}^{s_q}\cdots x_{AB}^{r_1} x_{AC}^{s_1}) &= \sum_{\sigma \in \operatorname{NC}(q)} \operatorname{tr}_\sigma(x_{AB}^{r_q}, \ldots, x_{AB}^{r_1}) \kappa_{\sigma^{\mathrm{Kr}}}(x_{AC}^{s_q}, \ldots, x_{AC}^{s_1}) \\
&= \sum_{\sigma \in \operatorname{NC}(q)} \operatorname{Cat}_\sigma(r) \sum_{\pi' \leq \sigma^{\mathrm{Kr}}}\operatorname{tr}_{\pi'}(x_{AC}^{s_m}, \ldots, x_{AC}^{s_1}) \operatorname{Mob}(\pi',\sigma^{\mathrm{Kr}})\\
&= \sum_{\sigma \leq \pi} \operatorname{Cat}_\sigma(r) \operatorname{Cat}_{\pi^{\mathrm{Kr}}}(s)\operatorname{Mob}(\sigma,\pi),
\end{align*}
where we have used $\pi' = \pi^{\mathrm{Kr}}$.
\end{proof}

As an application of the proposition above, we give below the explicit moments in the two simplest cases. When $q=1$, writing $p = r_1 + s_1$, we have just one term in the sum (corresponding to $\sigma = \pi = \includegraphics[scale=.7, trim=0 5px 0 0]{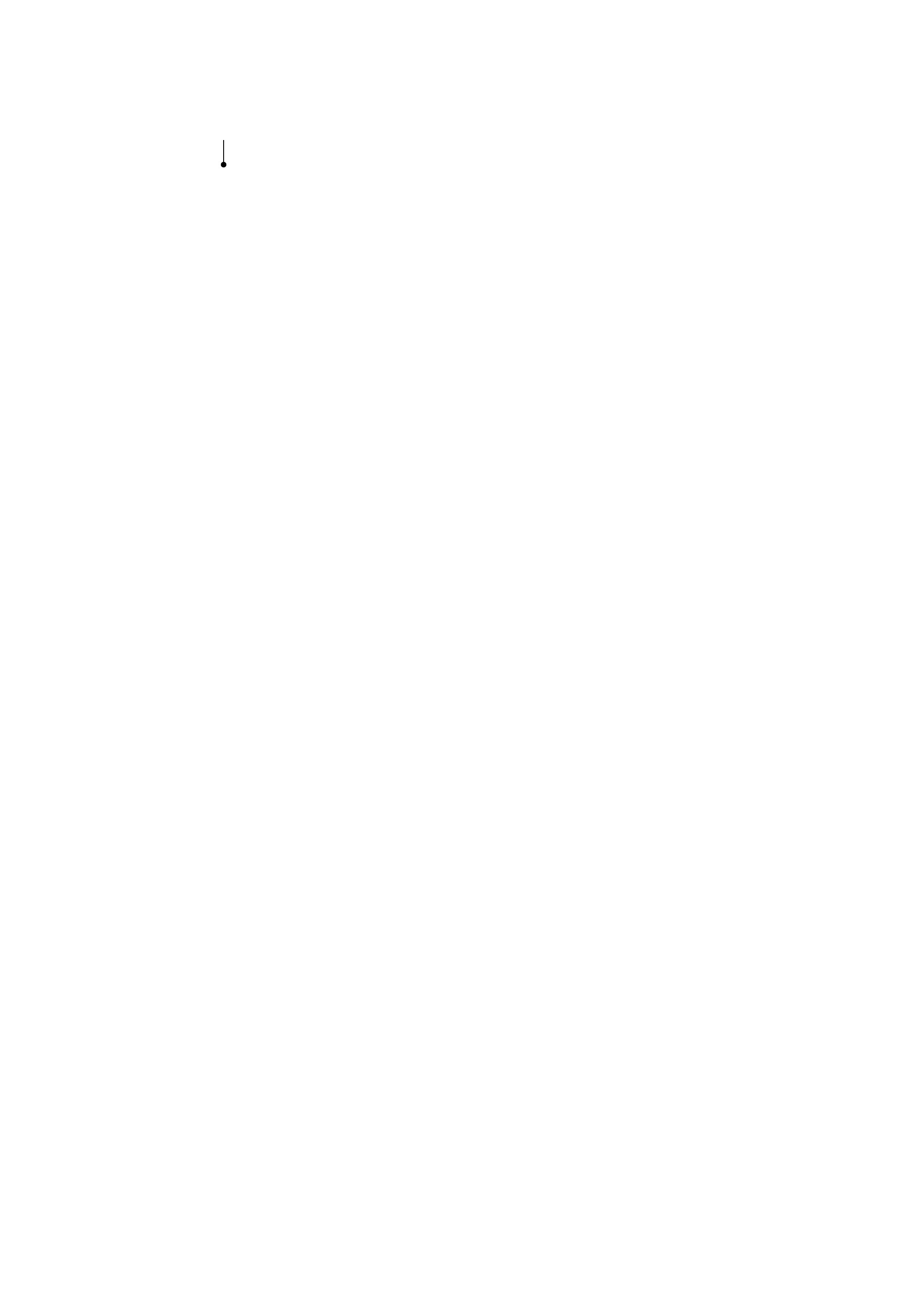}$)
$$\lim_{N \to \infty} (c_1N^2)^{-p-1} \mathbb E  \operatorname{Tr}W_{AB}^{r_1} W_{AC}^{s_1} = \mathrm{Cat}_{r_1}\mathrm{Cat}_{s_1}.$$
When $q=2$, write similarly $p=r_1+s_1+r_2+s_2$; this time, the sum contains three terms, corresponding respectively to $\sigma = \pi = \includegraphics[scale=.7, trim=0 5px 0 0]{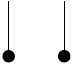}$, $\{\sigma=\includegraphics[scale=.7, trim=0 5px 0 0]{part-12}, \pi = \includegraphics[scale=.7, trim=0 5px 0 0]{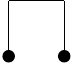}\}$, and $\sigma = \pi = \includegraphics[scale=.7, trim=0 5px 0 0]{part-11}$:
\begin{align*}
\lim_{N \to \infty} (c_1N^2)^{-p-1} \mathbb E \operatorname{Tr}W_{AB}^{r_2} W_{AC}^{s_2}W_{AB}^{r_1} W_{AC}^{s_1} &= \mathrm{Cat}_{r_1}\mathrm{Cat}_{r_2}\mathrm{Cat}_{s_1+s_2} \\
&- \mathrm{Cat}_{r_1}\mathrm{Cat}_{r_2}\mathrm{Cat}_{s_1}\mathrm{Cat}_{s_2}\\
&+\mathrm{Cat}_{r_1+r_2}\mathrm{Cat}_{s_1}\mathrm{Cat}_{s_2}.
\end{align*}

\begin{remark}
The number of terms in equation \eqref{eq:moments-2-marginals-balanced-sigma-leq-pi} is given by the number of pairs $(\sigma, \pi)\in \mathrm{NC}(m)^2$ such that $\sigma \leq \pi$. The set of all such pairs is known as the set of $2$-chains (or intervals) in the lattice of non-crossing partitions and has been enumerated by Kreweras in \cite{kreweras1972partitions}: their number is given by the Fuss-Catalan numbers of order 2 (sequence \href{https://oeis.org/A001764}{A001764} in \cite{oeis})
$$\mathrm{FC}_2(q):= \frac{1}{2q+1} \binom{3q}{q}.$$
Remarkably, these numbers are also the $q$-th moments of the free multiplicative square of the Mar{\v{c}}enko-Pastur distribution of parameter 1:
$$\forall q \geq 1, \qquad \int x^q \mathrm{d} \mathrm{MP}_1^{\boxtimes 2}(x) =   \frac{1}{2q+1} \binom{3q}{q}.$$
\end{remark}

\

\bigskip

\noindent{\bf Bijection with trees.}  
At the end of Section~\ref{sec:FirstSec} (Fig.~\ref{fig:Bij1}), we described a bijective mapping between planar bicolored maps in $\bM_p^{0}$. Naturally, this bijection still applies in the present context, as the sum in \eqref{eq:moments-marginals-AB-AC-balanced-MAPS} still involves elements of $\bM_p^{0}$. However we now have additional coloring information, which will translate into a coloring of the trees. The edges carry a color in $\{B,C\}$, but because the maps involved in \eqref{eq:moments-marginals-AB-AC-balanced-MAPS} satisfy   $\Delta_f(\cM) =0$, all the edges incident to the same white vertex share the same color. We color each white vertex with the color of the incident edges. 
We add a vertex in every face, and new edges linking it to the corners of the face, so that the result remains planar, and delete the initial edges and the black vertex. A tree is always bicolored, but here we see that the vertices of the corresponding tree are partitioned into two sets: white vertices, and vertices carrying a color $B$ or $C$, such that edges may only link white vertices to colored vertices. 
\begin{figure}[!ht]
\raisebox{0.7cm}{\includegraphics[scale=0.7]{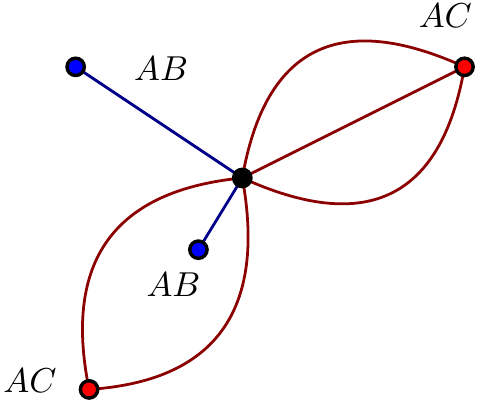}}\hspace{1.2cm}\includegraphics[scale=0.7]{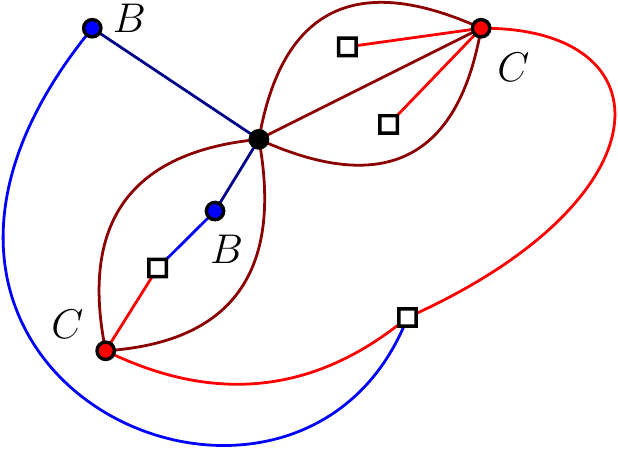}\hspace{1cm}\includegraphics[scale=0.7]{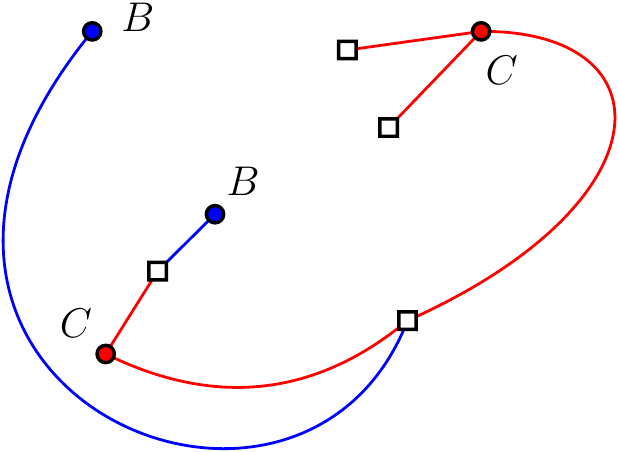}
\caption{\label{fig:Bij2} Bijection between tricolored maps with one black vertex, and tricolored trees in $\bT_{f}$. }
\end{figure}

Furthermore, the labeling of the edges now translates into a labeling of the {\it colored} corners of the tree: starting on corner 1 and following the face around the tree counter-clockwisely, we alternatively encounter corners incident to white vertices, and corners incident to colored vertices, labeled with $p$, then $p-1$, \ldots until returning to corner 1 (the ordering is reversed with respect to the initial bicolored map, because of the dual bijective mapping).

All the labeled colored trees are not images of the colored bicolored maps: the corner corresponding to the edge number $a$ should be incident to a vertex of color $j(a)$. Therefore, starting from corner 1 and going around the tree in the {\it clockwise} direction, we should encounter corners incident to vertices of color $j(1)$, then $j(2)$, then $j(3)$, and so on.   

We denote $\bT_{f}$ the set of tricolored trees with white vertices, and vertices of color $B$ or $C$, and with a such that the colors of the colored vertices encountered in the counterclockwise face are $(j(1), j(p), j(p-1),\ldots j(1))$, which is also the reverse word $f$ (adding color $A$ on every vertex).
In the case where $c=1$, the quantity $\lim_{N \to \infty} \mathbb E (N_A N)^{-p-1} \operatorname{Tr}W_{AB}^{r_q} W_{AC}^{s_q}\cdots W_{AB}^{r_1} W_{AC}^{s_1}$ of Prop.~\ref{prop:ABCD-case-c1} therefore counts the number of tricolored trees in $\bT_{p}$. With the notations of Prop.~\ref{prop:ABCD-case-c1}, we therefore have:
\be
\lvert \bT_{f} \rvert = \sum_{\sigma, \pi \in \mathrm{NC}(q),\, \sigma \leq \pi}\mathrm{Cat}_\sigma(r) \mathrm{Cat}_{\pi^\mathrm{Kr}}(s) \operatorname{Mob}(\sigma, \pi).
\ee
In particular, this is always a non-negative quantity.

Remark that because of the tree structure, we can easily find recursive relations for the quantity $\lvert \bT_{f} \rvert$.\footnote{Note that these relations can also be found from the Schwinger-Dyson equations applied to the matrix formulation of the moments.} In order to write the recursive relations, we rather denote\footnote{For practical reasons,  the labeling is reversed with respect to our usual convention.}
\be
\cW_{u_1,\ldots, u_q}^{d_1,\ldots, d_q} = \lvert \bT_{f} \rvert = \lim_{N \to \infty} (N_A N)^{-p-1} \mathbb E  \operatorname{Tr}W_{AB}^{u_1} W_{AC}^{d_1}\cdots W_{AB}^{u_q} W_{AC}^{d_q}.
\ee 
We find:  
\begin{align}
&\cW_{u_1,\ldots, u_q}^{d_1,\ldots, d_q + 1}=\sum_{k=1}^{q-1}\sum_{s=0}^{d_k-1}\cW_{u_1,\ldots, u_k}^{d_1, \ldots ,s}\cW_{u_{k+1},\ldots, u_q}^{d_{k+1}, \ldots ,d_q+d_k-s} + \sum_{s=0}^{d_q-1}\cW_{u_1,\ldots, u_q}^{d_1, \ldots ,s}  C_{d_q - s}
\\
&\cW_{u_1,\ldots, u_q + 1}^{d_1,\ldots, d_q }=\sum_{k=1}^{q}\sum_{s=0}^{u_k-1}\cW_{u_1,\ldots, u_{k-1},s}^{d_1,\ldots,  d_{k-1}, d_q}\cW_{u_q+u_k - s,u_{k+1},\ldots, u_{q-1}}^{d_k, d_{k+1},\ldots,d_{q-1}} + \sum_{s=0}^{u_q-1}\cW_{u_1,\ldots, s}^{d_1, \ldots ,d_q}  C_{u_q - s}.
\end{align}
However, solving these relations directly is a difficult task, and the solution is found considerably more easily using the techniques of the proof of  Prop.~\ref{prop:ABCD-case-c1}.

\smallskip

\subsection{Comparing with the Fuss-Catalan matrix model}

\

\medskip

In this subsection, we would like to compare the matrix model discussed above in the balanced regime where $c_1=c_4=1$ (\textit{i.e.}~$\mathcal H_A = \mathcal H_B = \mathcal H_C = \mathcal H_D = \mathbb C^N$) with another one having the same asymptotic moments (the Fuss-Catalan numbers), the so-called \emph{free Bessel laws} of parameter $2$ from \cite{banica2011free}. More precisely, the latter matrix model is given by $Q = X_1X_2X_2^*X_1^*$, where $X_{1,2}$ are i.i.d.~$N \times N$ complex Gaussian random matrices. The exact moments of the random matrix $Q$ are given by
\begin{equation}\label{eq:moments-FC2-permutations}
\mathbb E \operatorname{Tr} Q^p = \sum_{\alpha_1, \alpha_2 \in \mathcal S_p} N^{\#(\gamma\alpha_1\alpha_2
) + \#\alpha_1 + \#\alpha_2}.
\end{equation}
Indeed, in our usual representation in maps, the sum is taken over bicolored maps with one black vertex, such that white vertices only have incident edges of the same color. In that sense, white vertices inherit the color 1 or 2 of their incident edges. If $\alpha_1$ (resp.~$\alpha_2$) is the permutation whose cycles encode the vertices of color 1 (resp.~2), then we have one orbit for each white vertex (so $\#\alpha_1 + \#\alpha_2$), and one for each face of the map (so $\#(\gamma\alpha_1\alpha_2)$).
We wish to compare the expression \eqref{eq:moments-FC2-permutations}  with the exact moments \eqref{eq:ExactExpressionMomentsUsing-Delta-g} of the model studied in Section~\ref{sec:ExactMomentsABCD}, which we report here  for $N_A=N_B=N_C=N_D$, and for $P=W_{AB}^{1/2}W_{AC}W_{AB}^{1/2}$: denoting $f_0$ the length $2p$ word $AB, AC, AB, AC, \ldots$ (there are no two consecutive edges of the same color),
\begin{equation}
\label{eq:moments-ABCD-Compare}
\mathbb E \operatorname{Tr} P^p = N^{2(2p+1)}\sum_{\cM \in \bM_{2p}} N^{-2g(\cM)-2g(\cM_{f_0}) -2\Delta_{f_0}(\cM)}.
\end{equation} 
Using that the Euler characteristics of the map writes $F(\cM) + V(\cM) = 2p+2 - 2g(\cM)$, we notice that  \eqref{eq:moments-FC2-permutations} is very similar to \eqref{eq:moments-ABCD-Compare}, provided that we impose the condition 
\begin{equation}
\Delta_{f_0}(\cM) = 0.
\end{equation} 
In that case, $\cM=\cM_{f_0}$, so that $g(\cM_{f_0})=g(\cM)$.
We rewrite
\begin{equation}
\label{eq:moments-FC2-permutations-BIS}
\mathbb E \operatorname{Tr} Q^p = N^{2p+1}\sum_{ \substack{{\cM \in \bM_{2p}}\\{\Delta_{f_0}(\cM) = 0}}} N^{-2g(\cM)}.
\end{equation} 

As a consequence, the asymptotic moments of the two matrix models are identical, but the lower orders are different:
\begin{align*}
\mathbb E N^{-2}\operatorname{Tr}(N^{-4} P) &= 1 + N^{-2} \\
\mathbb E N^{-2}\operatorname{Tr}(N^{-4} P)^2 &= 3+ 8N^{-2} + 8N^{-4} + 5N^{-6}\\
\mathbb E N^{-2}\operatorname{Tr}(N^{-4} P)^3 &= 12+ 54N^{-2} + 135N^{-4} + 278N^{-6}+170 N^{-8} + 71 N^{-10},\\[+1ex]
\mathbb E N^{-1}\operatorname{Tr}(N^{-2}Q) &= 1 \\
\mathbb E N^{-1}\operatorname{Tr}(N^{-2}Q)^2 &= 3+N^{-2} \\
\mathbb E N^{-1}\operatorname{Tr}(N^{-2}Q)^3 &= 12 + 21N^{-2} + 3 N^{-4}.
\end{align*}

\begin{figure}[!ht]
	\centering
	\includegraphics[width=0.9\textwidth]{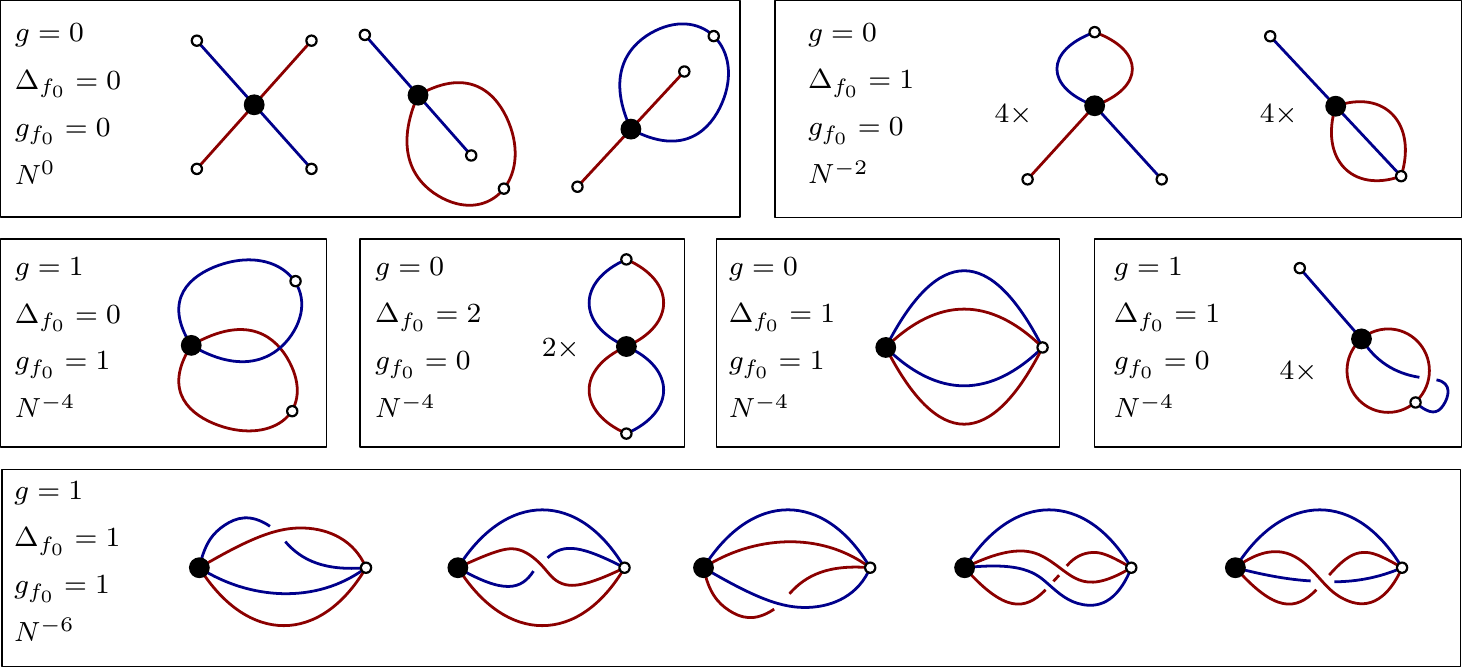}
	\caption{Maps contributing to $\mathbb E N^{-2}\Tr(N^{-4} P)^2$. We have denoted $g_{f_0}=g(\cM_{f_0})$. The edges labeled $1$ is always the upper left one (blue). The only maps contributing to $\mathbb E N^{-1}\Tr(N^{-2}Q)^2 $ are those with $\Delta_{f_0}=0$, however in that case, the contribution in $N$ is only given by the genus $g$ of the map.}
	\label{fig:DiagramsExample}
\end{figure}

The coefficients of the lower orders in $N$ of $P$ are bigger than the ones of $Q$. This originates from the fact that in the $Q$ case, the lower orders come from maps in $\bM_{2p}$ with $\Delta_{f_0}(\cM) =0$ with non-trivial genus, while in the case of $P$, lower orders are obtained either from higher genus combinatorial maps\footnote{It is also important to notice that in the $P$ case the genus appears in two ways in the exponent of $N$ - once for the combinatorial map $\cM$ and once for the combinatorial map $\cM_{f_0}$. This has to be taken into account when comparing the $Q$ case against the $P$ case. } or combinatorial maps containing both white vertices adjacent to both type $AB$ and type $AC$ edges (i.e.~with non-vanishing $\Delta_{f_0}$).
In Figure~\ref{fig:DiagramsExample}, we show all the maps contributing to $\mathbb E N^{-2}\operatorname{Tr}(N^{-4} P)^2$. The values of $g$, $\Delta_{f_0}$ and $g_{f_0} = g(\cM_{f_0})$ are shown. 
For instance  the maps in the upper right box of Fig.~\ref{fig:DiagramsExample} are planar but contain one white vertex adjacent to one edge of type $AB$ and one edge of type $AC$, and  therefore contribute to $\mathbb E N^{-2}\operatorname{Tr}(N^{-4} P)^2$ at order $N^{-2}$.  
The maps contributing to $\mathbb E N^{-1}\operatorname{Tr}(N^{-2}Q)^2$ are those for which $\Delta_{f_0} = 0$. Their contribution to $\mathbb E N^{-1}\operatorname{Tr}(N^{-2}Q)^2$ is $N^{-2g}$, but their contribution to $\mathbb E N^{-2}\operatorname{Tr}(N^{-4} P)^2$ is $N^{-4g}$, because $g(\cM_{f_0})=g(\cM)$. 

\

Keeping in mind the aim of comparing the two matrix models mixed moments with the moments of $P$ we start by defining the following operation on combinatorial maps
 \begin{definition}
 Let $\cM_1,\cM_2 \in \bM(p) \times \bM(p')$ be two combinatorial maps with respectively $p$ and $p'$ edges. Both maps have a labeling of the edges. We define the gluing convolution as 
\begin{align}
 \odot : \ &\bM(p)\times \bM(p')\rightarrow \bM(p+p')\\
 &(\cM_1, \cM_2)\longmapsto \cM 
\end{align} 
where $\cM$ is the empty map if $p\neq p'$ and is otherwise obtained from $\cM_1$ and $\cM_2$ by stacking their black vertices one onto the other, in such  way that the edges of $\cM_2$ slip into the corners of the black vertex of $\cM_1$ and edges of $\cM_1$ and $\cM_2$ alternate around the black vertex of the newly created map $\cM$. In order to select a unique way to perform this operation, we ask that the edge with label $i$ in $\cM_1$ is followed by the edge labeled $i$ in $\cM_2$ when following the ordering of the edges around the black vertex of $\cM$. Graphically one obtains the local construction shown on Fig.~\ref{fig:gluing-convolution}.
\begin{figure}[!ht]
 \begin{center}
{\begin{tabular}{@{}c@{}}  \includegraphics[scale=0.6]{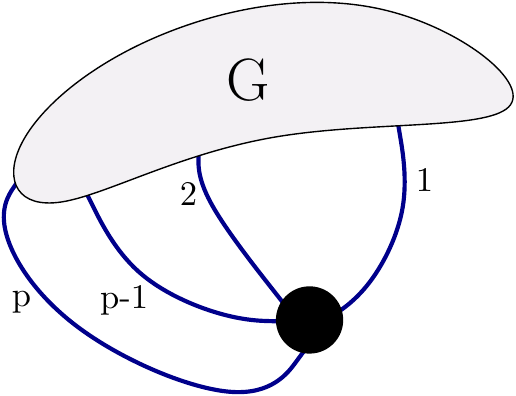} \\
$\cM_1$\end{tabular} }
\raisebox{+0.5ex}{$\odot$}\quad
{\begin{tabular}{@{}c@{}}  \includegraphics[scale=0.6]{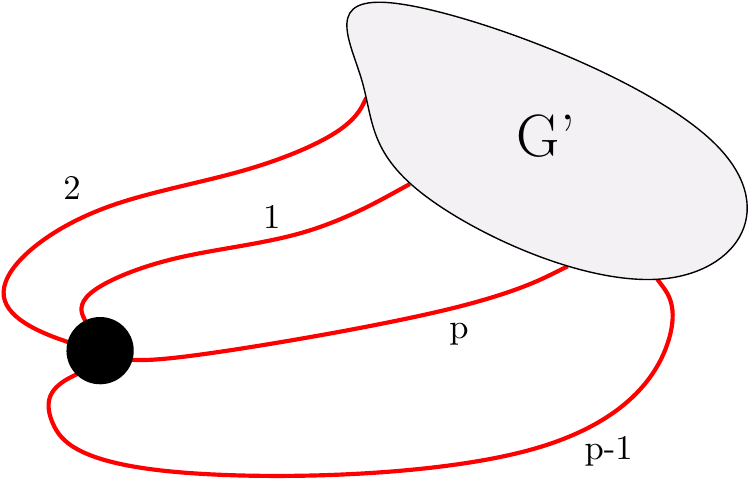} \\
$\cM_2$\end{tabular} }
\raisebox{+0.5ex}{$=$}\quad
{\begin{tabular}{@{}c@{}}  \includegraphics[scale=0.6]{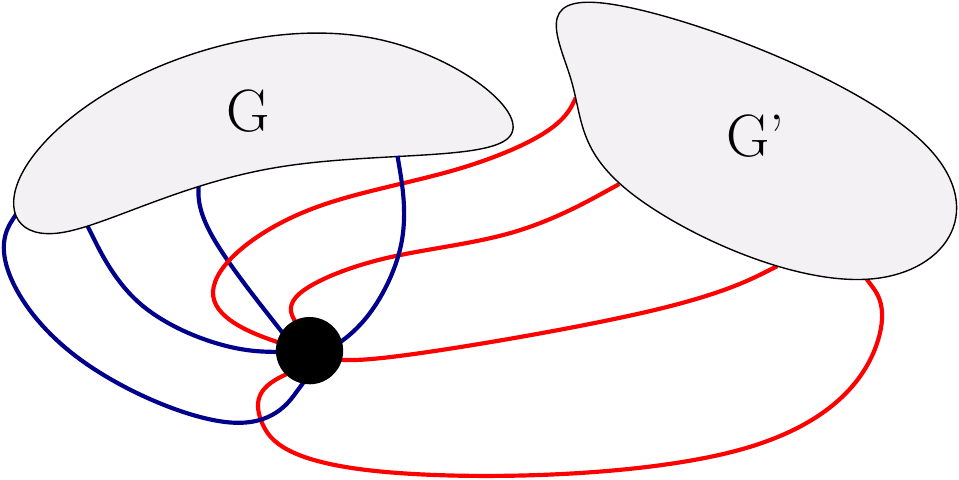} \\
$\cM_1\odot \cM_2$\end{tabular} }
  \caption{Gluing convolution of two maps. The gluing convolution is made locally around the black vertex. The rest of the maps stays untouched.}\label{fig:gluing-convolution}
 \end{center}
\end{figure}
\end{definition}
We also define the splitting of a vertex.
\begin{definition}
 A vertex-splitting is a local move on a vertex with at least two corners of a combinatorial map. It is performed by choosing two corners of the considered vertex and splitting the vertex along a straight line between these two corners.
\end{definition}
\begin{figure}[!ht]
 \includegraphics[scale=0.6]{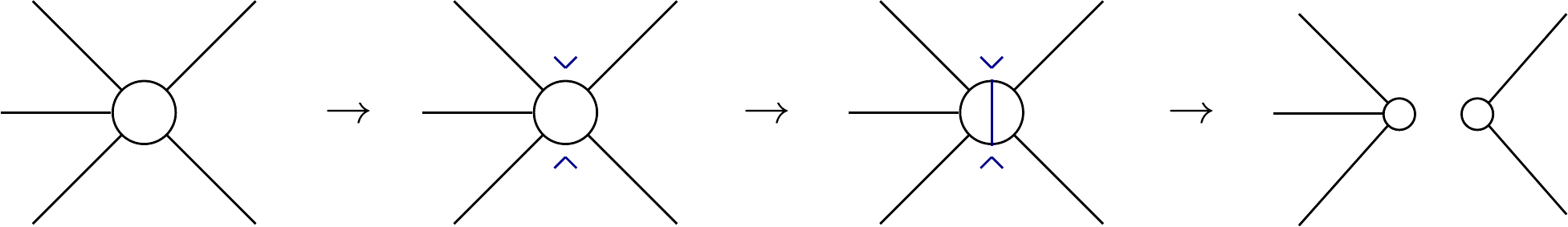}
 \caption{A vertex-splitting move applied to a white vertex.}
 \label{fig:VertexSplitting}
\end{figure}

Notice that the vertex splitting move can be understood in the language of permutations. In this language, a $p$-valent white vertex is a cycle of length $p$, $(\alpha_c)$ of $\alpha=\prod_{c\in \alpha} (\alpha_c)$. Such a cycle writes $(\alpha_c)=(a_1 a_2 \ldots a_p)$. Choosing two corners of a white vertex amounts to picking two elements $a_k, a_{k'}$ in $(\alpha_c)$ such that $a_{k'}\neq a_k$. This corresponds to the choice of the two corners of the white vertex located between edges $a_k,a_{k+1}$ and $a_{k'}, a_{k'+1}$. These elements appear in the cycle $(\alpha_c)=(a_1 a_2 \ldots a_k a_{k+1}\ldots a_{k'} a_{k'+1} a_p)$. The splitting of a white vertex is just the composition of $(\alpha_c)$ with the transposition $(a_k a_{k'})$, as $(\alpha_c)(a_k a_{k'})=(\alpha_{c'})(\alpha_{c''})$ where $(\alpha_{c'})=(a_1 a_2\ldots a_{k}a_{k'+1}\ldots a_p)$ and $(\alpha_{c''})=(a_{k+1}\ldots a_{k'})$ forms the two new white vertices.\\

We consider the partial order on $\bM^0_p$ defined as follows, for $\cM,\cM' \in \bM^0_p$ two planar combinatorial maps, we say that $\cM \le \cM'$ if and only if there exists a finite sequence of maps $\{\cM_i\}_{i=0}^F$ such that $\cM_0=\cM'$, $\cM_F=\cM$, and $\cM_{i+1}$ can be obtained from $\cM_i$ by applying a vertex-splitting move on one white vertex of $\cM_i$.\\

We now define the Tutte dual of a map. It is a particular case of one of the Tutte bijections for bicolored maps \cite{TutteTrinity}, as was the bijection presented in Section~\ref{sec:FirstSec}, Fig.~\ref{fig:Bij1}. Though it can be defined for more general sets of maps we assume $\cM \in \bM_p$,
\begin{definition}
\label{def:TutteDual}
The Tutte dual $\cT(\cM)$ of $\cM \in \bM_p$ is obtained as follows. In every face of $\cM$ we apply the following rules 
\begin{itemize}
 \item We draw a white vertex inside the face under consideration.
 \item We draw a new edge between each corner adjacent to the black vertex inside this face and the newly created white vertex inside this face.
\end{itemize}
Once we have followed this procedure for every face of $\cM$, we erase all initial edges and white vertices.
\end{definition}
We notice that the Tutte dual preserves the genus of the map, that is to say that $g(\cM)=g(\cT(\cM))$. Remark that the only difference with the bijection presented in Section~\ref{sec:FirstSec}, Fig.~\ref{fig:Bij1}, is that the new edges are added between the new vertices and the black vertex, instead of the white vertices. A consequence of this choice is that after the bijection of Fig.~\ref{fig:Bij1}, one no longer has elements of $\bM_p$, while we do in the present case, thus the name ``dual". We report the reader to \cite{TutteTrinity} for the general bijection encoding both cases. We then have the following proposition
\begin{proposition}\label{prop:genus-gluing-convolution}
Let $\cM_1, \cM_2$ be two planar maps in $\bM^0_p$ (\textit{i.e.} $g(\cM_1)=g(\cM_2)=0$). Then $g(\cM_1\odot \cM_2)=0$ if and only if $\cM_2\le \cT(\cM_1)$.
\end{proposition}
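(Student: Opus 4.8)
\emph{Plan.} The starting point is the structural remark that the white vertices of $\cM_1\odot\cM_2$ are exactly the disjoint union of the white vertices of $\cM_1$ and of $\cM_2$ (only the black vertex is modified, by interleaving), the permutation encoding them being $\alpha_1\sqcup\alpha_2$. In particular every vertex-splitting move (Fig.~\ref{fig:VertexSplitting}) performed on a white vertex of $\cM_2$ is at the same time a vertex-splitting move on $\cM_1\odot\cM_2$. Such a move multiplies $\sigma_\circ$ by a transposition whose two letters lie in a common cycle, so it raises $\#\sigma_\circ$ by $1$ and changes $\#(\sigma_\bullet\sigma_\circ)$ by $\pm1$; by the genus relation \eqref{eq:Euler} it therefore lowers the genus by $0$ or $1$ (this is the local content of Prop.~\ref{prop:GenusSubmap}). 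Hence $\cM_2\le\cM_2'$ forces $g(\cM_1\odot\cM_2)\le g(\cM_1\odot\cM_2')$, so that $\cI(\cM_1):=\{\cM_2\in\bM^0_p : g(\cM_1\odot\cM_2)=0\}$ is an order ideal for $\le$. The goal becomes to show that $\cI(\cM_1)$ is the principal ideal generated by $\cT(\cM_1)$. For this I would use the dictionary between $(\bM^0_p,\le)$ and $(\mathrm{NC}(p),\text{refinement})$ coming from the bijection of Fig.~\ref{fig:Bij1} (identifying $\cM=(\gamma,\alpha)$ with $\alpha$; the only non-routine point is that any refinement inside $\mathrm{NC}(p)$ is realized by single splits that stay planar, by peeling off a block which is an interval inside its ambient block), together with the fact — immediate from Def.~\ref{def:TutteDual} and the genus-preservation of $\cT$ — that, after the relabelling ``edge $k$ of $\cT(\cM_1)$ $\leftrightarrow$ the $k$-th corner of the black vertex of $\cM_1$'', the map $\cT(\cM_1)$ becomes $(\gamma,\alpha_1^{\mathrm{Kr}})$, where $\alpha_1^{\mathrm{Kr}}$ is the non-crossing partition dual to $\alpha_1$ recording how the black-vertex corners of $\cM_1$ group into faces. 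Under this dictionary, $\cM_2\le\cT(\cM_1)$ is equivalent to $\alpha_2\le\alpha_1^{\mathrm{Kr}}$.

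Next I would prove the inclusion ``$\supseteq$'', i.e.\ $g(\cM_1\odot\cT(\cM_1))=0$. With the labelling above, the interleaving rule of $\odot$ inserts the dual edge labelled $k$ into precisely the corner of the black vertex of $\cM_1$ that it already occupies in the Tutte construction; hence $\cM_1\odot\cT(\cM_1)$ is nothing but the map obtained from a planar drawing of $\cM_1$ on the sphere by adding, inside each face, a white vertex joined by non-crossing arcs to the black-vertex corners of that face, \emph{without} erasing the original edges and white vertices. This is manifestly a cellular planar drawing: a face of degree $d$ is subdivided into $d$ discs, so the total number of faces equals the number of black-vertex corners, namely $p$, and indeed $\#\sigma_\circ+\#\sigma_\bullet+\#(\sigma_\bullet\sigma_\circ)-2p=(p+1)+1+p-2p=2$, so $g=0$. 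Combined with the order-ideal property from the first paragraph, this already yields that $\cM_2\le\cT(\cM_1)$ implies $g(\cM_1\odot\cM_2)=0$.

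For the reverse inclusion I would argue geometrically. Assume $g(\cM_1\odot\cM_2)=0$ and fix a cellular embedding on $S^2$. Removing the $\cM_2$-edges, and then the now-isolated $\cM_2$-white vertices, merges discs into discs, hence leaves a cellular genus-$0$ embedding of $\cM_1$ whose faces are the connected components of the complement. Each $\cM_2$-edge leaves the black vertex through a corner lying between two consecutive $\cM_1$-edges, hence into one well-defined face $\phi$ of $\cM_1$; since the edge cannot afterwards meet $\partial\phi$, its $\cM_2$-white-vertex endpoint lies in the interior of $\phi$. As distinct faces are disjoint, all edges of a given cycle of $\alpha_2$ must enter the same face of $\cM_1$, i.e.\ every block of $\alpha_2$ lies inside a block of $\alpha_1^{\mathrm{Kr}}$, which says $\cM_2\le\cT(\cM_1)$. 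Combining the two inclusions finishes the proof.

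\emph{Main obstacle.} The delicate part is the last paragraph: making the Jordan-curve/planarity argument fully rigorous inside the combinatorial-maps formalism. I expect the cleanest fix is to replace it by a distance computation in the spirit of Lemma~\ref{lem:L-f-alpha}: writing the black vertex of $\cM_1\odot\cM_2$ as a $2p$-cycle $\hat\gamma_{2p}$ and $\sigma_\circ=\hat\alpha_1\sqcup\hat\alpha_2$, one shows $g(\cM_1\odot\cM_2)=0$ iff $|\hat\gamma_{2p}(\hat\alpha_1\sqcup\hat\alpha_2)|$ is minimal, and then that this minimality is equivalent to $\alpha_2\le\alpha_1^{\mathrm{Kr}}$ via a join estimate in the permutation-distance metric (as in the use of \cite{collins2007second} in Lemma~\ref{lem:L-f-alpha}). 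A secondary, purely bookkeeping obstacle is pinning down the edge-labelling conventions so that $\cM_1\odot\cT(\cM_1)$ really is the overlay map and the Kreweras complement appears without spurious conjugations.
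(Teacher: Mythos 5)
Your proof follows essentially the same route as the paper's: (i) prove $g(\cM_1\odot\cT(\cM_1))=0$ directly, (ii) show that vertex splits on $\cM_2$ inside $\cM_1\odot\cM_2$ cannot raise the genus, giving the ``$\cM_2\le\cT(\cM_1)\Rightarrow g=0$'' direction, and (iii) argue the converse by observing that if some white vertex of $\cM_2$ has edges entering two distinct faces of $\cM_1$, the overlay cannot be planar. The paper states these same three steps, more tersely.

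A few places where your version is actually tighter than the paper's. The paper asserts that a vertex split on a white vertex of $\cM_2$ inside $\cM_1\odot\cM_2$ sends $F\to F-1$ and hence leaves the genus \emph{constant}; as a stand-alone statement this is unjustified (multiplying $\sigma_\circ$ by a transposition joining two points of a common cycle can a priori raise or lower $\#(\sigma_\bullet\sigma_\circ)$). Your weaker but always-true observation — that such a split decreases the genus by $0$ or $1$ — is exactly what is needed, and combined with $g\ge 0$ and the base case $g(\cM_1\odot\cT(\cM_1))=0$ it recovers the paper's claim. Your explicit dictionary $(\bM_p^0,\le)\leftrightarrow(\mathrm{NC}(p),\text{refinement})$, together with the identification $\cT(\cM_1)\leftrightarrow(\gamma,\alpha_1^{\mathrm{Kr}})$, also makes precise what ``$\cM_2\le\cT(\cM_1)$'' means concretely; the paper uses this identification implicitly.

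The ``main obstacle'' you flag in step (iii) is real, but it is not a defect of your attempt relative to the paper: the paper's own proof of the converse direction is the same informal geometric assertion (``edges of $\cM_1$ and $\cM_2$ cross, so $g\neq 0$''), phrased contrapositively, and is no more rigorous than yours. Your suggested replacement — a permutation-distance argument on the $2p$-element set, with $\hat\gamma_{2p}$ the interleaved full cycle, $\sigma_\circ=\hat\alpha_1\sqcup\hat\alpha_2$, and the join inequality of \cite{collins2007second} as in Lemma~\ref{lem:L-f-alpha} — is a sound route to a fully combinatorial proof, and would in fact be an improvement on what is in the paper. The only bookkeeping point to watch, which you already note, is the labelling convention under which $\cT$ corresponds to the Kreweras complement (in the paper's convention one has $\hat\alpha_1^{\mathrm{Kr}}$ rather than $\hat\alpha_1^{-1}\hat\gamma$ for one of the two relabellings, and getting this wrong flips the direction of the inequality $\alpha_2\le\alpha_1^{\mathrm{Kr}}$).
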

\begin{proof}
First notice that by construction of $\cT(\cM_1)$, $g(\cM_1\odot\cM_2)=0$ if $\cM_2=\cT(M_1)$.\\
Then when performing a vertex-splitting move on a white vertex of $\cM_2$ in $\cM_1\odot\cM_2$ the number of vertices is raised by one $V\rightarrow V'=V+1$, the number of edges stays the same, and the number of faces is decreased by one $F\rightarrow F'=F-1$ as a consequence the genus stays constant under such a move. Thus we have $\cM_2\le \cT(\cM_1)\Rightarrow g(\cM_1 \odot \cM_2)=0$.  \\
Assume now that $\cM_2\not \le \cT(M_1)$. Therefore there exists a vertex in $\cM_2$ adjacent to two edges of $\cM_2$ in $\cM_1\odot \cM_2$ whose starting points are at corners that belong to two different faces of $\cM_1$. Thus edges of $\cM_1$ and $\cM_2$ cross in $\cM_1 \odot \cM_2$, which is equivalent to saying that $g(\cM_1 \odot \cM_2)\neq 0$. Thus we have $\cM_2 \le \cT(\cM_1) \Leftarrow g(\cM_1\odot \cM_2)$. 
\end{proof}

If we come back to the definition of the Tutte dual we notice that it is is equivalent to the Kreweras complementation. This similarity is pictured on the Fig.~\ref{fig:TuttevsKreweras}, where the graphical representation of non-crossing partition has been borrowed from \cite{nica2006lectures}. Note however that the Tutte dual is defined for every maps not just the planar ones (the latter corresponding to non-crossing partitions).
\begin{figure}
 \begin{center}
  \includegraphics[scale=0.7]{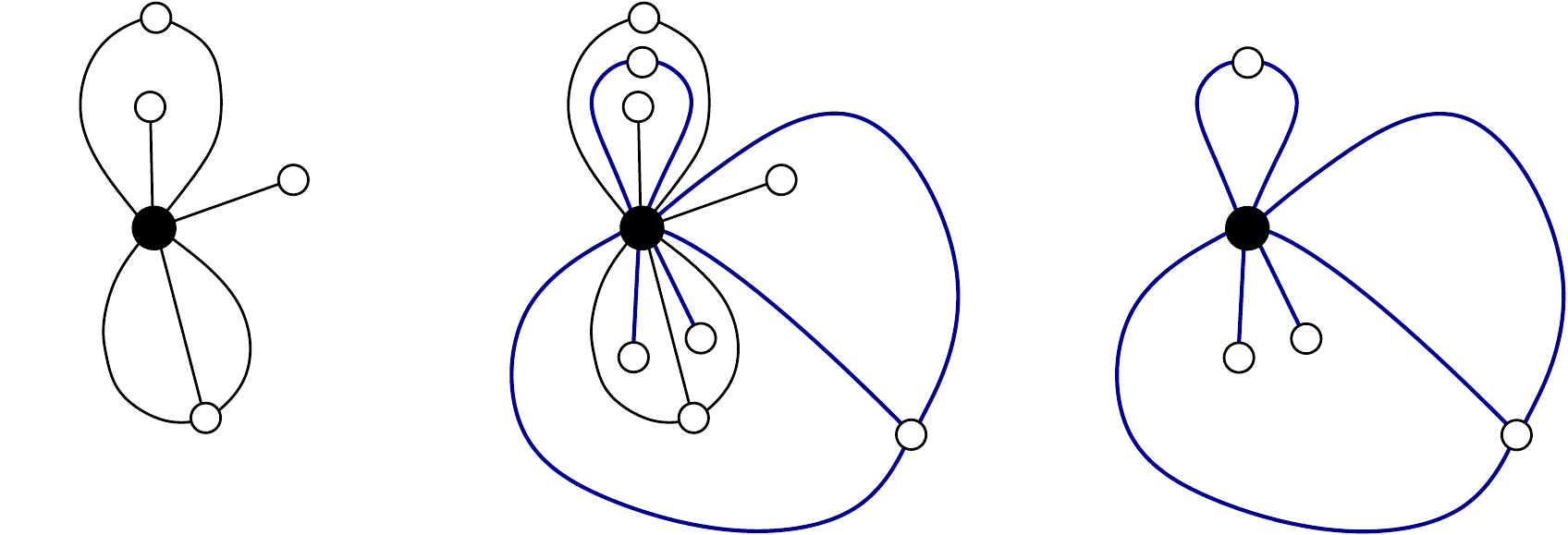}\\
  \vspace{5mm}
  \includegraphics[scale=1.0]{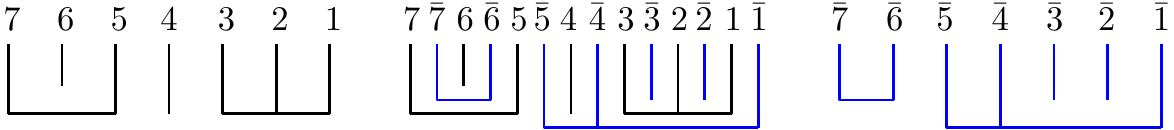}
  \vspace{3mm}
  \caption{
  {Above:} Tutte duality. The initial map $\cM$ on the left, its Tutte dual $\cT(\cM)$ on the right. The middle map represents the intermediate steps of the construction. The labeling of the edges of $\cT(\cM)$ is inherited from the map $\cM$. 
  {Below:} Kreweras complementation map. }\label{fig:TuttevsKreweras}
 \end{center}
\end{figure} 
We can write
\begin{align}
\label{eq:moments-P-maps}\mathbb E \operatorname{Tr} P^p &=N^{4p+2}\sum_{\mathcal M \in \mathbb M_{2p}} N^{-2g(\mathcal M)-2(g(\cM_{f_0}) + \Delta_{f_0}(\cM))}\\
\label{eq:moments-FC2-glue}\mathbb E \operatorname{Tr} Q^p &= N^{2p+1}\sum_{\mathcal M_1, \mathcal M_2 \in \mathbb M(p)} N^{-2g(\mathcal M_1 \odot \mathcal M_2)} \\
\label{eq:moments-FC2-alt}&= N^{2p+1}\sum_{\mathcal M \in \mathbb M_{2p} \, : \, \Delta_{f_0}(\cM) = 0} N^{-2g(\mathcal M)},
\end{align}
The proof of the equations \eqref{eq:moments-FC2-glue}, \eqref{eq:moments-FC2-alt} is straightforward. From \eqref{eq:moments-FC2-glue} we see the connection with the formula \eqref{eq:moments-FC2-permutations}, while from \eqref{eq:moments-FC2-alt} we see why the moments of $P$ are larger than those of $Q$: the terms in the sum for the moments of $Q$ \eqref{eq:moments-FC2-alt} are a subset of the terms in \eqref{eq:moments-P-maps}. Indeed, we only have $\left.\bM_{2p}\right|_{\Delta_{f_0}=0} =\bM_p\odot\bM_p$, and this equality does not hold in the planar case, $\left.\bM^0_{2p}\right|_{\Delta_{f_0}=0} \subsetneq \bM^0_p\odot\bM^0_p$
\footnote{The extension of the $\odot$ operation to sets of maps is understood in the straightforward way as the set of all maps with $2p$ edges that can be obtained by making the gluing convolution of two maps with $p$ edges.}.
In the limit $N\rightarrow \infty$, thanks to Proposition~\ref{prop:genus-gluing-convolution}, the equation \eqref{eq:moments-FC2-glue} rewrites
\begin{align}\label{eq:moments-FC2-FPstyle}
\lim_{N\rightarrow \infty}\frac1{N^{2p+1}}\mathbb E \operatorname{Tr} Q^p &= \sum_{\mathcal M_1\in \mathbb M_p} \Biggl( \sum_{\cM_2 \le \cT(\cM_1)} 1\Biggr),
\end{align}
which mimics, in the language of maps, the expression obtained in \cite[Lecture 14, Theorem 14.4]{nica2006lectures} with cumulants equal to one in the language of permutations. This translates the fact that in the large $N$ limit the moments of $Q$ are the moments of the multiplicative convolution of two Mar\v cenko-Pastur laws.

\smallskip

\

\subsection{The unbalanced asymptotical regime}\label{sec:2-marginals-fixed-BC}

\

\medskip

In this section, we study the asymptotical regime where the Hilbert spaces $\mathcal H_B$ and $\mathcal H_C$ have fixed dimension, while the dimensions of $\mathcal H_A$ and $\mathcal H_D$ grow to infinity. To be more precise, we assume in this section that
\begin{itemize}
\item $\dim \mathcal H_B = \dim \mathcal H_C = m$, for some positive integer constant $m$;
\item $\dim \mathcal H_A = N$, $\dim \mathcal H_D = \lfloor cN \rfloor$, for some constant $c \in (0,\infty)$, where $N \to \infty$.
\end{itemize}

Contrary to the results proven in Section~\ref{subsec:Large-HBC}, in this setting, the random matrices $W_{AB}$ and $W_{AC}$ are no longer asymptotically free. One can understand this fact, stated precisely in the theorem below, by noticing that the ``shared randomness'' between the two random matrices ($\dim \mathcal H_A = N \to \infty$) is much larger than the ``fresh randomness'' ($\dim \mathcal H_B = \dim \mathcal H_C = m$, fixed).

\begin{theorem}
\label{thm:2-marginals-fixed-BC}
In the asymptotical regime described above, the pairs of random matrices
$$\left((mN)^{-1}W_{AB}, (mN)^{-1}W_{AC}\right)$$
converge in distribution, 
as $N \to \infty$, to a pair of non-commutative random variables $(x_{AB},x_{AC})$ having the following free cumulants:
\begin{equation}\label{eq:4-partite-unbalanced-free-cumulants}
\kappa(x_{f(1)}, x_{f(2)}, \ldots, x_{f(p)}) = cm^{-\alt(f)},
\end{equation}
where $f \in \{AB, AC\}^p$ is an arbitrary word in the letters $AB, AC$, and $\alt(f)$ is the number of different consecutive values of $f$, counted cyclically:
$$\alt(f) := |\{a \, : \, f(a) \neq f(a+1)\}|,$$
where $f(p+1):=f(1)$. Equivalently, for any word in the two marginals $f \in \{AB, AC\}^p$, we have
\be 
\mathbb E \operatorname{Tr} W_f = (1+o(1)) (mN)^{p+1} \sum_{\alpha \in \mathrm{NC}(p)} c^{\#\alpha} m^{-\alt(f,\alpha)},
\ee 
where $\alt(f,\alpha)$ has been defined in  \eqref{eq: Definition-Alt} as  
$$ \alt(f,\alpha) = \bigl \lvert \{a \in \llbracket 1, p \rrbracket \, : \, f(a) \neq f(\alpha(a))\} \bigr \rvert. $$
\end{theorem}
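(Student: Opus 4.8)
## Proof Plan

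\noindent\textbf{Overall strategy.} The plan is to start from the exact moment formula \eqref{eq:exact-moments-2-marginals} of Theorem~\ref{thm:moments-fixed-N}, extract the leading order in $N$ using the bound on $L(f,\alpha)$ from Proposition~\ref{prop:BoundOn_L}, simplify the surviving terms using Proposition~\ref{prop:L-and-alt}, and then recognize the resulting combinatorial sum as a moment-cumulant expansion to read off the free cumulants \eqref{eq:4-partite-unbalanced-free-cumulants}. Since $N_A = N$, $N_D \sim cN$ and $N_{B,C} = m$ is fixed, the formula \eqref{eq:exact-moments-2-marginals} reads
$$\mathbb E \operatorname{Tr} W_f = \sum_{\alpha \in \mathcal S_p} N^{\#(\gamma\alpha)} (cN)^{\#\alpha}(1+o(1)) \, m^{L(f,\alpha)} = (1+o(1))\sum_{\alpha \in \mathcal S_p} N^{\#(\gamma\alpha)+\#\alpha}\, c^{\#\alpha}\, m^{L(f,\alpha)}.$$
Now $m$ is a \emph{constant}, so — unlike the balanced case — the exponent $L(f,\alpha)$ does \emph{not} control the order in $N$; instead $\#(\gamma\alpha)+\#\alpha$ does. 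By Lemma~\ref{lem:geodesic-permutations} (the inequality \eqref{eq:BoundCycles}), $\#(\gamma\alpha)+\#\alpha \le p+1$, with equality exactly when $\alpha$ is geodesic, i.e.\ $\alpha\in\mathrm{NC}(p)$. Hence
$$\mathbb E \operatorname{Tr} W_f = (1+o(1))\, N^{p+1}\sum_{\alpha\in\mathrm{NC}(p)} c^{\#\alpha}\, m^{L(f,\alpha)},$$
and for geodesic $\alpha$ we have, by Proposition~\ref{prop:L-and-alt}, $L(f,\alpha) = p+1-\alt(f,\alpha)$. Substituting and factoring out $m^{p+1}$ gives exactly the claimed asymptotic formula
$$\mathbb E \operatorname{Tr} W_f = (1+o(1))(mN)^{p+1}\sum_{\alpha\in\mathrm{NC}(p)} c^{\#\alpha}\, m^{-\alt(f,\alpha)}.$$

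\noindent\textbf{From the moment formula to the free cumulants.} The second task is to show that this is consistent with, and determined by, the free cumulants \eqref{eq:4-partite-unbalanced-free-cumulants}. I would invoke the moment-cumulant formula in free probability \cite[Proposition 11.4]{nica2006lectures}: the (rescaled) limiting moments of the family $(x_{AB},x_{AC})$ are
$$\lim_{N\to\infty}(mN)^{-p-1}\mathbb E\operatorname{Tr} W_f = \sum_{\alpha\in\mathrm{NC}(p)} \prod_{b \text{ block of }\alpha} \kappa_{|b|}\bigl(x_{f(i)} : i\in b\bigr).$$
So it suffices to check that for a non-crossing partition $\alpha$ the factorized product $\prod_{b}\kappa(x_{f(i)}:i\in b)$, with each block-cumulant given by \eqref{eq:4-partite-unbalanced-free-cumulants}, equals $c^{\#\alpha} m^{-\alt(f,\alpha)}$. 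This is a purely combinatorial identity: the number of blocks is $\#\alpha$, producing the factor $c^{\#\alpha}$; and the total number of ``color changes'' summed over the cyclic orderings inside each block of $\alpha$ is precisely $\alt(f,\alpha)$ as defined in \eqref{eq: Definition-Alt}, since $\alpha$ restricted to each of its own blocks cycles through exactly the elements of that block in their $\alpha$-order, so $\sum_b \alt(f\!\restriction_b) = |\{a : f(a)\neq f(\alpha(a))\}|$. This matches the product of the $m^{-\alt}$ factors, establishing the identity. Conversely, since the moment-cumulant map is a bijection, the limiting moments uniquely determine these free cumulants, so \eqref{eq:4-partite-unbalanced-free-cumulants} both holds and characterizes the limit.

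\noindent\textbf{Remaining technical points and the main obstacle.} Two small points need care. First, one must justify the interchange $\mathbb E\operatorname{Tr}W_f = (1+o(1))\cdot(\text{leading term})$ uniformly enough to conclude convergence in moments for \emph{all} words simultaneously — this is routine since for each fixed $p$ the sum over $\mathcal S_p$ is finite and $M_N = \lfloor cN\rfloor$ satisfies $M_N/N\to c$, so each term is $cN$ to a bounded power times $O(1)$. Second, the statement claims convergence ``in distribution'' of the non-commutative pair; this means convergence of all mixed moments, which is exactly what the above establishes, and then the existence of a genuine limiting pair $(x_{AB},x_{AC})$ in some non-commutative probability space follows because the limiting functional on words is positive (it is a limit of the positive functionals $(mN)^{-p-1}\mathbb E\operatorname{Tr}(\cdot)$ on the self-adjoint algebra generated by the Wishart marginals), or alternatively because the limiting free cumulants are those of a well-defined distribution. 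I expect the genuinely delicate step to be the verification that $\sum_b \alt(f\!\restriction_b, \alpha\!\restriction_b) = \alt(f,\alpha)$ and, more importantly, convincing oneself that Proposition~\ref{prop:L-and-alt} is the right tool — i.e.\ that it is legitimate to restrict to geodesic $\alpha$ \emph{before} applying the $L = p+1-\alt$ identity (which is only valid for geodesic $\alpha$). Since the non-geodesic $\alpha$ contribute strictly smaller powers of $N$ and are killed in the limit regardless of their $m$-exponent (as $m$ is bounded and $L(f,\alpha)$ is bounded by $p+1$ anyway), this restriction is harmless; spelling this out cleanly is the only real subtlety.
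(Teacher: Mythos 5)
Your proposal is correct and follows essentially the same route as the paper: extract the leading order in $N$ via the geodesic bound (since $m$ is fixed, only $\#\alpha + \#(\gamma\alpha)$ matters), apply Proposition~\ref{prop:L-and-alt} to rewrite $L(f,\alpha)=p+1-\alt(f,\alpha)$ for non-crossing $\alpha$, and read off the free cumulants from the moment-cumulant formula. The only (welcome) additions relative to the paper's terse proof are that you spell out the additivity $\sum_b \alt(f\!\restriction_b)=\alt(f,\alpha)$ over blocks and the harmlessness of restricting to geodesic $\alpha$ before invoking Proposition~\ref{prop:L-and-alt}, both of which the paper leaves implicit in the phrase ``follows from Speicher's moment-cumulant formula.''
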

\begin{proof}
We begin by analyzing the exact moment formula of Theorem~\ref{thm:moments-fixed-N}:
$$\mathbb E \operatorname{Tr} W_f = \sum_{\alpha \in \mathcal S_p} N^{\#(\gamma\alpha)} \lfloor cN \rfloor^{\#\alpha} m^{L(f,\alpha)}.$$
Note that since $m$ is fixed, the dominating terms correspond to permutations $\alpha$ maximizing the exponent $\#\alpha + \#(\alpha\gamma)$; these permutations have been shown before to be exactly the non-crossing ones, so we have 
$$\mathbb E \operatorname{Tr} W_f = (1+o(1)) N^{p+1} \sum_{\alpha \in \mathrm{NC}(p)} c^{\#\alpha} m^{L(f,\alpha)}.$$
For $\alpha$ non-crossing, we know from Proposition~\ref{prop:L-and-alt} that $L(f,\alpha) = p+1 - \alt(f,\alpha)$, and the conclusion concerning the cumulants follows from Speicher's moment-cumulant formula.
\end{proof}

\begin{remark}
In the degenerate case $m=1$ (i.e.~there are no $B$ and $C$ systems), one has that $W_{AB} = W_{AC}=W_A$ is a Wishart matrix of parameters $(N, \lfloor cN \rfloor)$, and one recovers the result from the classical Mar{\v{c}}enko-Pastur setting, Proposition~\ref{prop:MP}: the free cumulants of $x_{AB} = x_{AC}$ are all equal to $c$ (see equation \eqref{eq:MP-moments-free-cumulants} and the comments following it). 
\end{remark}

\begin{remark}
If, after taking the limit $N \to \infty$ in the theorem above, one takes the limit $m \to \infty$, we recover the result of Theorem~\ref{thm:2-marginals-large-BC}. Indeed, the only free cumulants surviving are the ones with $\alt(f)=0$ (i.e.~mixed cumulants vanish), proving that $x_{AB}$ and $x_{AC}$ are asymptotically free. 
\end{remark}

\begin{remark}
Elaborating on the preceding remarks, we notice that the eigenvalues distribution of the matrix $P=W_{AB}^{1/2}W_{AC}W_{AB}^{1/2}$ for $m=1$ is the distribution of the square of the eigenvalues of a Wishart matrix, the corresponding density, that we denote $\mathrm{d}\rho_{m=1,c}(x)$ writes $\mathrm{d}\rho_{m=1,c}(x)=\mathrm{d}\textrm{MP}_c(\sqrt{x})$, while if $m\rightarrow \infty$ we end up with $\mathrm{d}\rho_{m=\infty,c}(x)= \mathrm{d}\textrm{MP}_c^{\boxtimes 2}(x)$. This fact does not depend on the rate at which $m$ is sent to infinity with respect to the $N$ limit. Thus the parameter $m$ allows one to canonically interpolate between the two different distributions $\mathrm{d}\textrm{MP}_c(\sqrt{x})$ and $\mathrm{d}\textrm{MP}_c^{\boxtimes 2}(x)$, the former being the distribution of the square of eigenvalues of a Wishart random matrix, while the latter is the free multiplicative convolution of two Mar\v cenko-Pastur distribution. 
Notice also, that one can also consider the (non-selfadjoint) matrix $\tilde{P}=W_{AB}W_{AC}$ which has the same moments and  eigenvalues to reach the same conclusion. Finally, we plot Monte-Carlo simulations of the eigenvalues of $P$ versus the square (resp.~the free multiplicative square) of a Mar\v cenko-Pastur distribution in Figure~\ref{fig:P-vs-MPsquare}.
\end{remark}
\begin{figure}
 \begin{center}
  \includegraphics[scale=0.4]{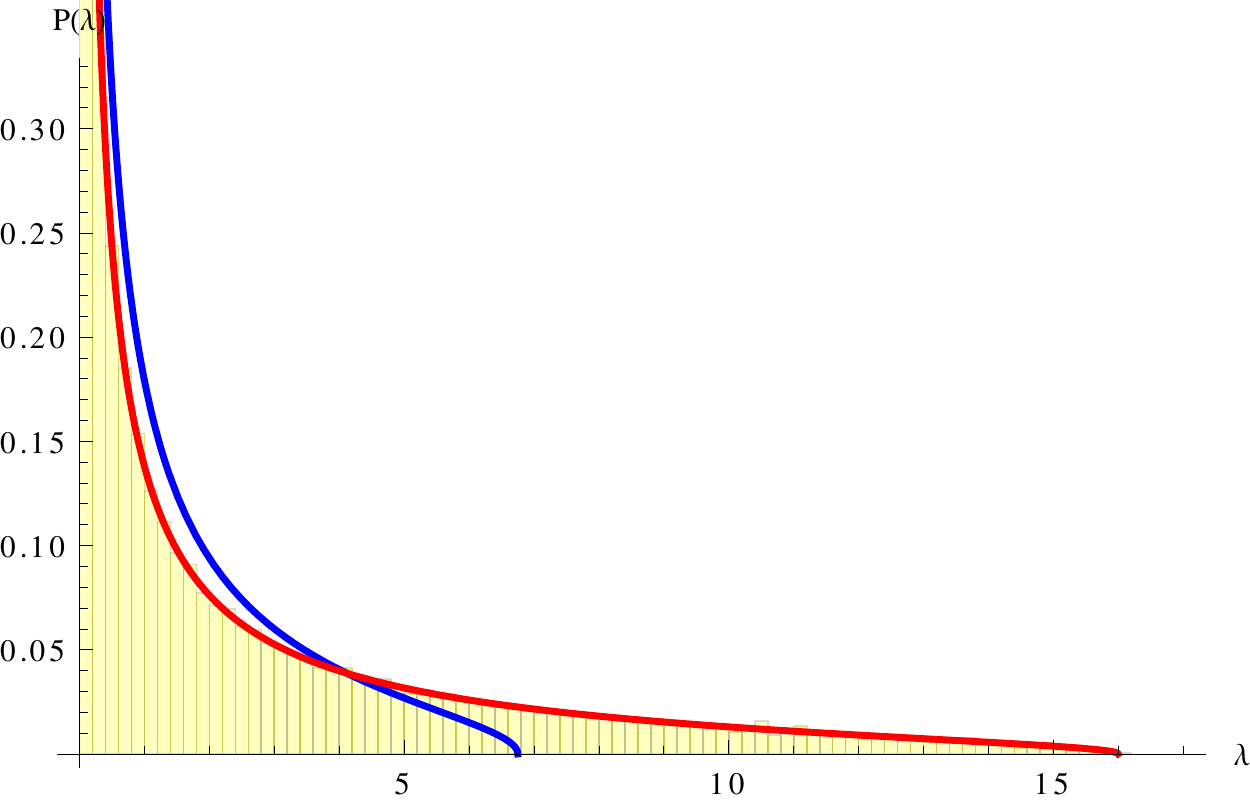}\quad 
    \includegraphics[scale=0.4]{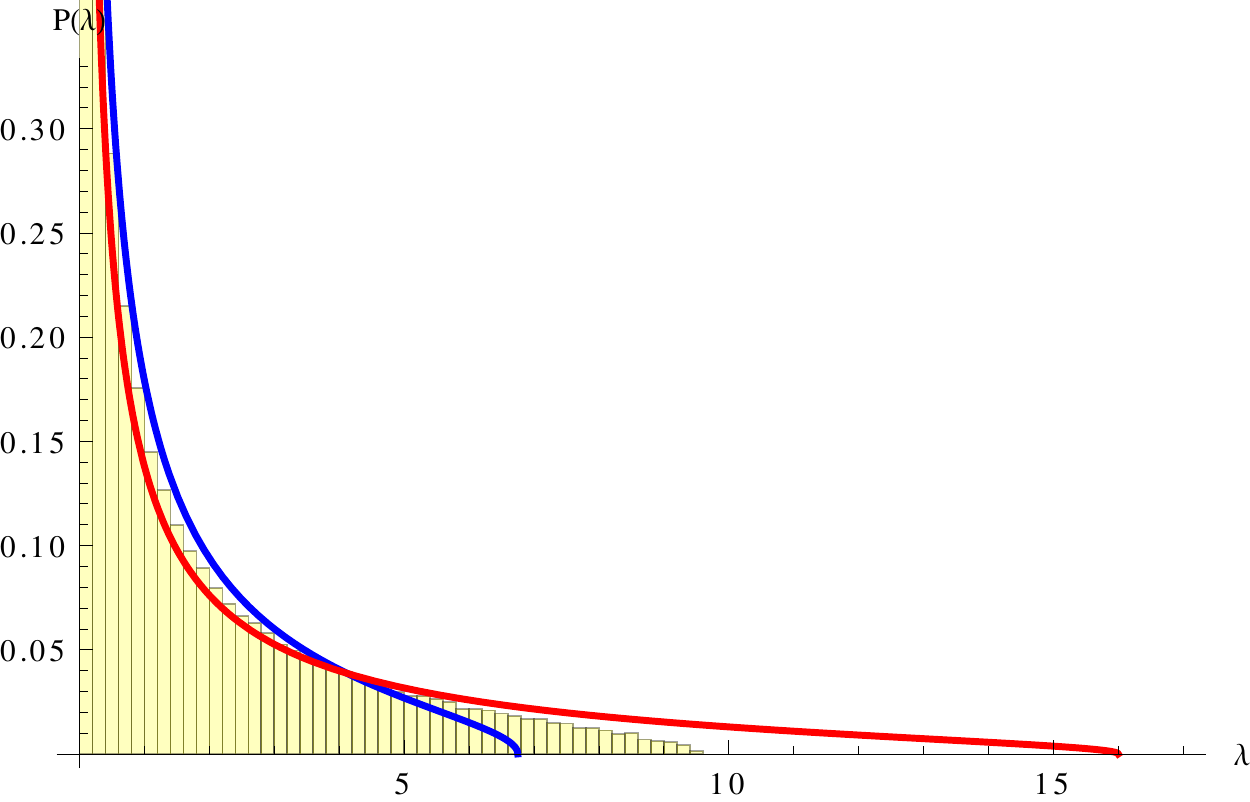}\quad 
      \includegraphics[scale=0.4]{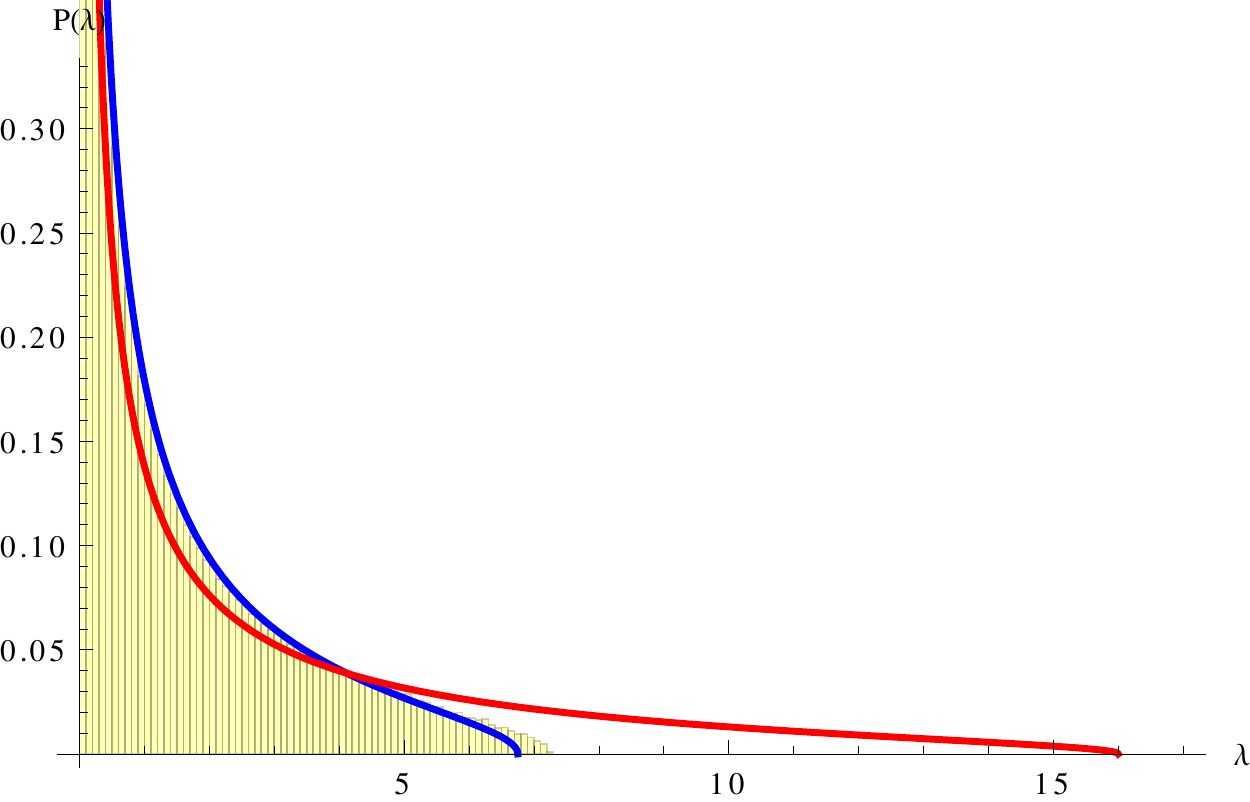}\\
   \includegraphics[scale=0.4]{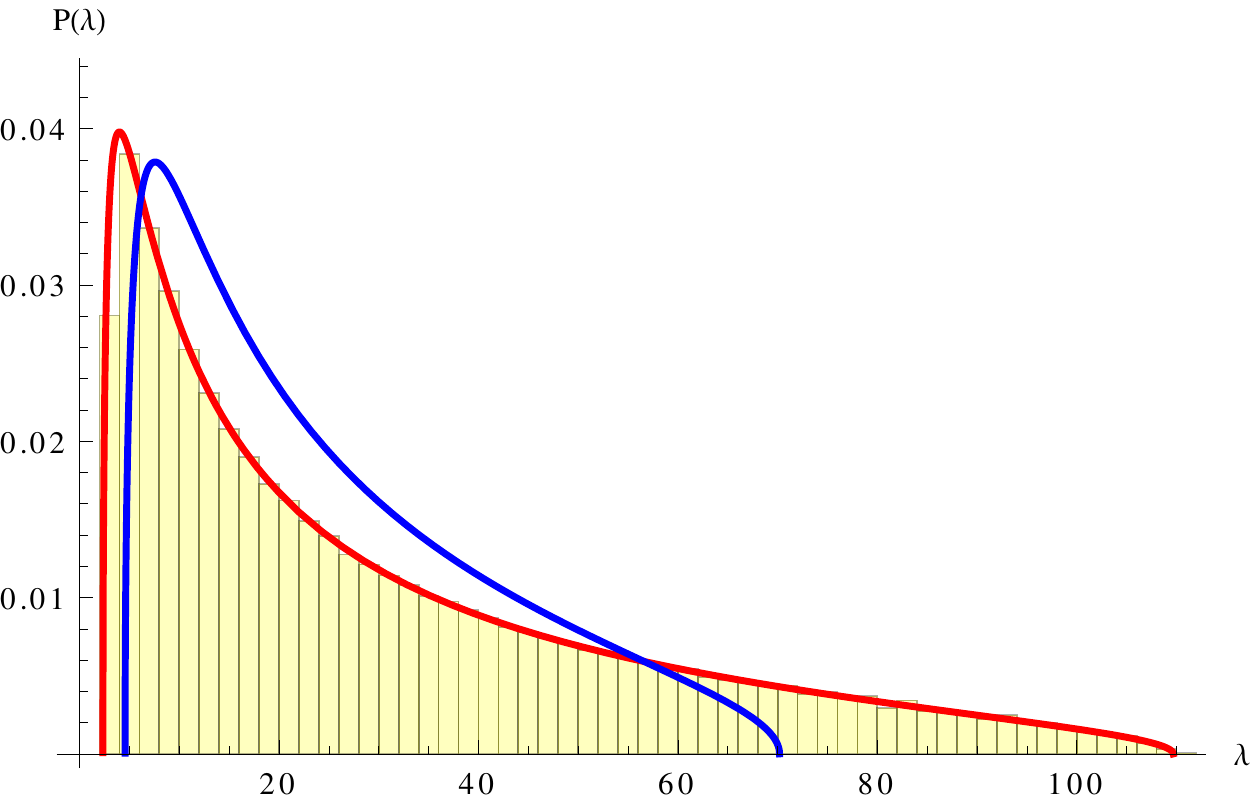}\quad 
    \includegraphics[scale=0.4]{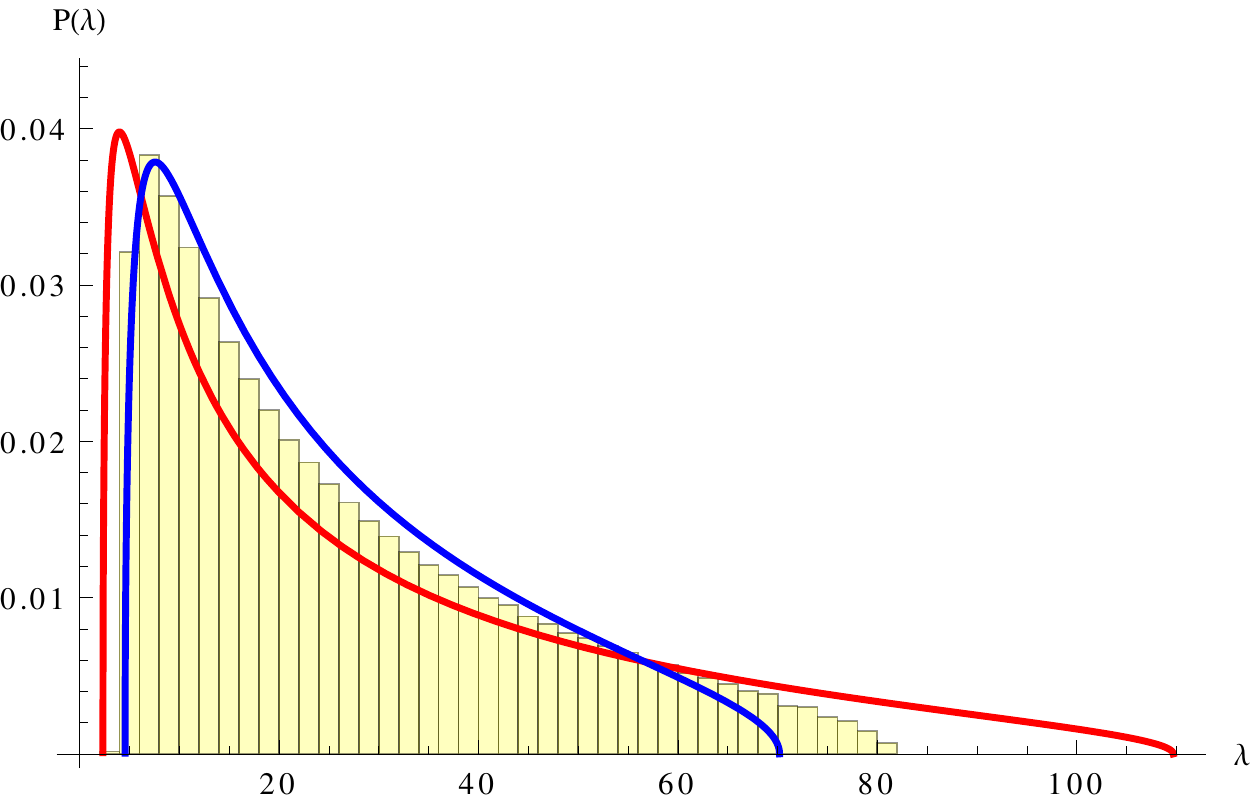}\quad 
      \includegraphics[scale=0.4]{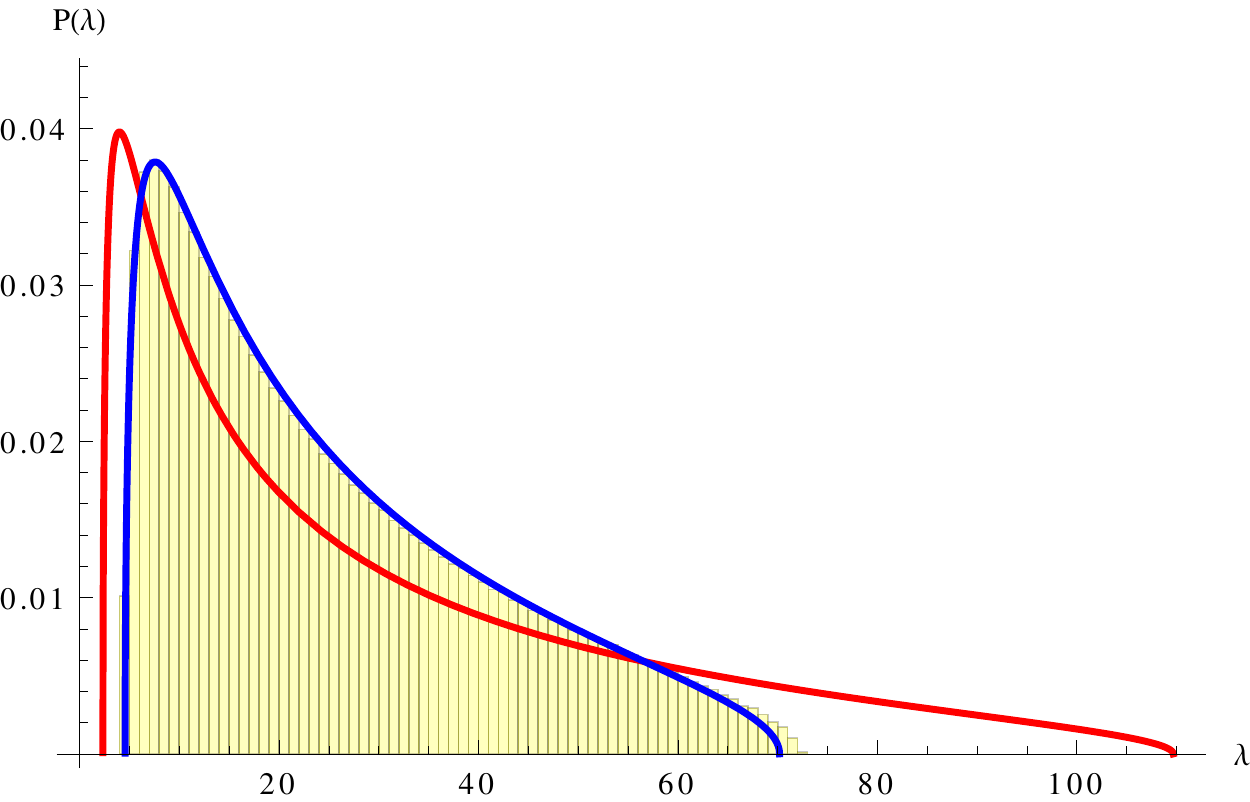}   
  \vspace{3mm}
  \caption{Plots of Monte-Carlo simulations of the eigenvalues of $P=W_{AB}^{1/2}W_{AC}W_{AB}^{1/2}$ (yellow histogram) versus the square of a Mar\v cenko-Pastur distribution (red curve) and the free multiplicative square of the same Mar\v cenko-Pastur distribution (blue curve). On the top row, we have $N=600$, $c=1$ and $m=1,2,5$, while on the bottom row we have $N=600$, $c=5$, $m=1,2,5$.}
  \label{fig:P-vs-MPsquare}
 \end{center}
\end{figure} 
We record below some low mixed moments in the variables $x_{AB}$, $x_{AC}$ (we denote by $\operatorname{tr}$ the expectation in the non-commutative probability space where these variables live):
\begin{align*}
\operatorname{tr}(x_{AB}x_{AC}) &= c^2+\frac{c}{m^2}\\
\operatorname{tr}(x_{AB}x_{AB}x_{AC}) &= c^3+c^2+\frac{2 c^2}{m^2}+\frac{c}{m^2}\\
\operatorname{tr}(x_{AB}x_{AB}x_{AC}x_{AC}) &= c^4+2 c^3+c^2+\frac{4 c^3}{m^2}+\frac{4 c^2}{m^2}+\frac{c}{m^2}+\frac{c^2}{m^4}\\
\operatorname{tr}(x_{AB}x_{AC}x_{AB}x_{AC}) &= c^4+2 c^3+\frac{4 c^3}{m^2}+\frac{4 c^2}{m^2}+\frac{2 c^2}{m^4}+\frac{c}{m^4}.
\end{align*}

\medskip

Let us now explore the consequences of the formulas for the mixed free cumulants from this section to quantum information theory. As in Section \ref{sec:2-marginals-large-BC}, the two quantum marginals $\rho_{AB}$ and $\rho_{AC}$, when properly rescaled, converge in moments, jointly, to a pair of non-commutative random variables having free cumulants as in \eqref{eq:4-partite-unbalanced-free-cumulants}: $N_DN(\rho_{AB},\rho_{AC}) \to (x_{AB},x_{AC})$. This allows one to compute the asymptotic value of any correlation function involving $\rho_{AB}$ and $\rho_{AC}$. For example, the rescaled overlap between the matrices converges to 
	$$\lim_{N \to \infty} N_A N \langle \rho_{AB} , \rho_{AC} \rangle = \frac{1}{c^2} \kappa(x_{AB}, x_{AC}) = 1+ \frac{1}{cm^2}.$$
The computation above should be compared with the similar limit from the balanced regime of Section \ref{sec:2-marginals-large-BC}, where the two marginals were uncorrelated: $\lim_{N \to \infty} N_A N \langle \rho_{AB} , \rho_{AC} \rangle =0$.

\smallskip

\section{The general multipartite case}
\label{sec:general-multipartite}

In this section we consider the more general situation of a random Wishart tensor defined on a Hilbert space which is factorized in an arbitrary number of factors.  The section consists of three parts: we first derive the general, non-asymptotic mixed moment formula, and then consider two asymptotic regimes: the balanced regime, where all tensor factors have the same dimension, and the unbalanced regime, where some of the tensor factors (the ones corresponding to the ``moving legs'') are being kept fixed. In the balanced case, we prove that the marginals are asymptotically free (Proposition~\ref{prop:LargeNinTheLarge1nRegime} in the Wishart setting and Theorem~\ref{thm:asymptotic-freeness-balanced-general} in quantum information language), while in the unbalanced case, we show in an example that it is not possible to factorize the expression of the mixed cumulant functions over the cycles of the non-crossing partitions. Indeed they depend more finely on the structure of the non-crossing partitions. This implies that we cannot give an expression for the mixed free cumulants. However, we expect that this situation can be dealt with in the framework of free probability with amalgamation. Such results will be presented in a following paper.

We consider complex tensor $X$ of size $N_1\times N_2\times \cdots \times N_n$, an un-normalized quantum state in the said Hilbert space $\mathbb C^{N_1} \otimes \cdots \otimes \mathbb C^{N_2}$. The density matrix of the corresponding pure state is $X\otimes X^*$, the (un-normalized) unit rank projection on the space $\mathbb C X$.
For a given set $I\subset\{1,\ldots, n\}$, we denote $\widehat I = \{1,\ldots,n\}\setminus I $, and define the \emph{reduced density matrix} as the tensor $X._{\widehat I}\bar X$ obtained by summing, for each $i\in \widehat I$, the index of position $i$ of $X$ with the index of position $i$ of $\bar X$, $X._{\widehat I}\bar X =[\mathrm{id}_I \otimes \mathrm{Tr}_{\widehat I}](XX^*)$. This partial contraction of two tensors can also be understood as the matrix $[X._{\widehat I}\bar X]$, whose first (resp.~second) sub-index of position $j\in I$ is the free-index of position $j$ of $X$ (resp.~$\bar X$). For instance, for $n=4$, choosing $\widehat I=\{3,4\}$,
\be
[X\cdot_{ 3,4}\bar X]_{i_1,i_2\, ;\, i'_1,i'_2} = \sum_{i_3=1}^{N_3}\sum_{i_4=1}^{N_4}
X_{i_1,i_2,i_3,i_4} \bar X_{i'_1,i'_2, i_3,i_4}.
\ee

There is a canonical one-to-one correspondence which maps $I$ to $\{1,\ldots,\lvert I \rvert\}$ while preserving the ordering of natural integers. We denote $\cS_{\lvert I \rvert}$ the set of permutations of $\lvert I \rvert$ elements. In the following, we implicitly make use of these canonical bijections when saying that a permutation $\sigma\in\cS_{\lvert I \rvert}$ acts on $I$ and has $I'$ of same cardinality as an image. For instance if $I=\{A,B,C\}$ and $I'=\{A,C,E\}$, the identity $\operatorname{id}:I\rightarrow I'$ is understood as the map $A\rightarrow A, B\rightarrow C, C\rightarrow E $.

Given $\cI\in\bN$, a permutation $\sigma\in\cS_\cI$, and two matrices $P$ and $Q$, whose two indices have $\cI$ sub-indices, we define the  product $P\cdot_\sigma Q$ as the twisted contraction
\be
\bigl(P\cdot_\sigma Q \bigr)_{i_1,\cdots,i_\cI\, ;\, i'_1,\cdots, i'_\cI} = \sum_{\substack{
{j_1, \cdots, j_\cI}\\ 
{j'_1, \cdots, j'_\cI}}} \prod_{b=1}^\cI \delta_{j'_b}^{j_{\sigma(b)}} P_{i_1,\cdots,i_\cI\, ;\, j_1,\cdots j_\cI}Q_{j'_1,\cdots, j'_{\cI}\, ;\, i'_1,\cdots,i'_\cI  }.
\ee
We define the associated trace $\Tr\cdot_{\sigma}$ accordingly. 
We are interested in computing expectations of the form 
\be
\bE \Tr\cdot_{\sigma_p} [X._{\widehat I_p}\bar X]\cdot _{\sigma_{p-1}}\,\ldots\,  \cdot _{\sigma_3} [X._{\widehat I_3}\bar X] \cdot _{\sigma_2} [X._{\widehat I_2}\bar X] \cdot_{\sigma_1}[X._{\widehat I_1}\bar X], 
\ee
for some integer $p$, some non-necessarily distinct sets $I_a$ which all have the same number of elements $\lvert I_a\rvert = \cI$, and some permutations $\sigma_a\in\cS_\cI$, (with our convention, $\sigma_a:I_a \rightarrow I_{a+1}$, and $I_{p+1}=I_1$). We denote $W_{I} = [X._{\widehat I}\bar X]$, so that the objects under focus are rewritten as 
\be
\label{eq:ExpectationGenSigma}
\bE \Tr_{\fsig} W_{\bff} (\{N_i\})  =
\bE \Tr\cdot_{\sigma_p} W_{I_p}\cdot _{\sigma_{p-1}}\,\ldots\,   \cdot _{\sigma_3} W_{I_3} \cdot _{\sigma_2} W_{I_2} \cdot_{\sigma_1}W_{I_1},
\ee
where we respectively denoted $\bff$ and $\fsig$ the ordered lists $\bff=[I_1,\ldots, I_p]$ of  $p$  subsets of $\llbracket 1,n \rrbracket$, and $\fsig=[\sigma_1,\ldots, \sigma_p]$ of  $p$ permutations in $\cS_\cI$.
Note that depending on the $\{I_a\}$ and the $\{\sigma_a\}$, all choices are not possible for $\{N_1, \ldots, N_n\}$. In general the above defined object is always meaningful when $N_1=\cdots=N_n$, but  interesting cases can be considered for specific $\{I_a\}$ and $\{\sigma_a\}$.

To highlight this, we separate for a given $\bff$, the colors which are traced for every $W_{I_a} = [X._{\widehat I_a}\bar X]$, which, without loss of generality, we can suppose to be the colors from $r$ to $n$,
\be
\{r,\ldots, n\} = \bigcap_{a=1}^p \widehat I_a.
\ee
We also suppose that the colors from 1 to $l<r$ are the common fixed points of all the $\{\sigma_a\}$,
\be 
\forall i \in \llbracket 1, l\rrbracket,\ \forall a \in \llbracket 1, p\rrbracket,\quad  \sigma_a(i) = i,
\ee 
where we use $\llbracket \cdot, \cdot \rrbracket$ to denote integer intervals, see \eqref{eq:def-integer-interval}. For each $a\in \llbracket 1,p\rrbracket$, among the colors $\llbracket l+1 , r-1 \rrbracket$, the colors $\widehat I_a \setminus \llbracket r, n\rrbracket  = \widehat I_a \cap \llbracket l+1,r-1\rrbracket$
are traced, and $\sigma_a$ only acts non-trivially on the colors   
\be 
J_a =  I_a \setminus \llbracket 1, l\rrbracket  = I_a \cap \llbracket l+1,r-1\rrbracket,
\ee
whose cardinal we denote 
\be
k = \cI - l.
\ee
Note that the sets $J_a$ generalize the color $j(a)$ of Section~\ref{sec:ExactMomentsABCD}. In the 4-partite case $ABCD$ we considered in Section~\ref{sec:ExactMomentsABCD}, we had $l=1$ (which corresponded to color $A$), $r=4$ (which corresponded to color $D$), and $k=1$. The only non-trivial action of the permutations $\sigma$ was either $J_a = \{2\} \equiv \{B\}$ or $J_a = \{3\} \equiv \{C\}$, color which was denoted by $j(a)$. 
	The notations in the general case are illustrated in Fig.~\ref{fig:BoxGeneral}. 

\begin{figure}[!ht]
\includegraphics[scale=0.7]{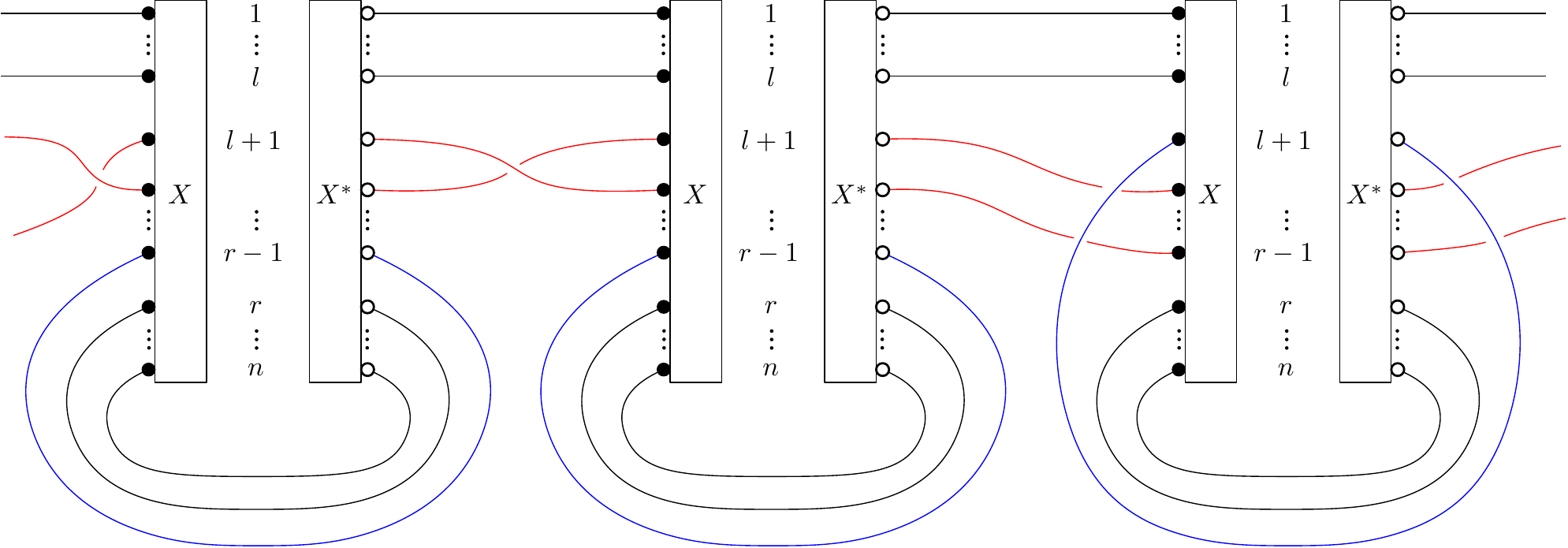}
\caption{A typical sample of an expectation \eqref{eq:ExpectationGenSigma}. The colors linked by red edges belong to the corresponding $J_a$s, while the blue edges link colors in the $I_a \setminus \llbracket 1, l\rrbracket $. }
\label{fig:BoxGeneral}
\end{figure}

Before moving on to the moment computation, let us point out that the data defining the moment $(\fsig,\bff)$ could be replaced by a single list of \emph{ordered} subsets of $\llbracket 1, n \rrbracket$.

\subsection{Exact expression for the moments}

\

\medskip
 
\begin{proposition}
\label{prop:Express1GenCase}
We suppose that  $N_{l+1} = \cdots = N_{r-1} = \NJ$\footnote{We put independent $N_i$'s for the colors $i\in\llbracket 1,n\rrbracket$ which are fixed points of all the initial wirings or which are always traced, and put a common $\NJ$ for the others. We stress however that more general cases might possibly be considered, if the colors can be separated into two sets if $\llbracket l+1,r-1\rrbracket = K_1\sqcup K_2$, such that the support of any orbit  is either included in $K_1$, or in $K_2$.  We may then choose a different $N_{J_1}$ and $N_{J_2}$ for colors in $K_1$ and $K_2$.}.  
Then, with the previous notations,
\be 
\bE \Tr_{\fsig} W_{\bff} (\{N_i\}) = \sum_{\alpha \in \cS_p}\, \prod_{i=1}^l N_i^{\#(\gamma\alpha)}\, \prod_{j=r}^n N_j^{\#\alpha}\, \NJ^{L(\bff, \fsig, \alpha )}, 
\ee 
in which $\gamma=(12 \cdots p)$, and $L(\bff, \fsig, \alpha )$ is a combinatorial function defined in \eqref{eq:def-L}.
\end{proposition}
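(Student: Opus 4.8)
The plan is to rerun, in this more general setting, the graphical Wick computation already carried out in the proofs of Proposition~\ref{prop:MP} and Theorem~\ref{thm:moments-fixed-N}; nothing conceptually new happens, only the bookkeeping grows. First I would apply the graphical Gaussian calculus of \cite{collins2011gaussianization}: the expression \eqref{eq:ExpectationGenSigma} contains $p$ boxes $X$ and $p$ boxes $\bar X$, so its expectation is a sum over $\alpha \in \cS_p$ of the diagrams $\mathcal D_\alpha$ obtained by deleting all the boxes and reconnecting the decorations of the $i$-th $\bar X$-box to those of the $\alpha(i)$-th $X$-box, color by color (the Wick wiring never permutes the colors). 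Each $\mathcal D_\alpha$ is a disjoint union of loops, every loop carrying a single color $i \in \llbracket 1, n\rrbracket$, so that its scalar value is $\prod_{i=1}^n N_i^{\ell_i(\alpha)}$, where $\ell_i(\alpha)$ is the number of loops of color $i$. It then remains to count $\ell_i(\alpha)$ for each of the three families of colors.

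For the colors $i \in \llbracket 1, l\rrbracket$ the situation is identical to that of the color $A$ in Section~\ref{sec:ABCD}: these colors are matrix indices in every $W_{I_a}$ and are fixed points of every $\sigma_a$, hence the initial wiring restricted to the color-$i$ decorations is exactly the full cycle $\gamma = (1\,2\cdots p)$ coming from the (untwisted, on these colors) matrix product in the trace; composing with the Wick wiring $\alpha$ gives $\ell_i(\alpha) = \#(\gamma\alpha)$. For the colors $j \in \llbracket r, n\rrbracket$ the situation is identical to that of the color $D$: these colors are traced inside every box, so the initial wiring restricted to the color-$j$ decorations is the identity, and $\ell_j(\alpha) = \#\alpha$. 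This produces the first two products in the statement.

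The substance of the proof is the count of the loops carried by the remaining colors $\llbracket l+1, r-1\rrbracket$, which generalizes the function $L(f,\alpha)$ of Theorem~\ref{thm:moments-fixed-N}. Exactly as in the $4$-partite case I would enlarge the label set to the decorations $(a,i)$ with $a \in \llbracket 1, p\rrbracket$ and $i \in \llbracket l+1, r-1\rrbracket$, and introduce two permutations of this set: a permutation $\hat\gamma_{\bff,\fsig}$ encoding the initial wiring — within a box $a$ it connects the $X$- and $\bar X$-decorations of the colors traced in $W_{I_a}$, i.e.\ those in $\widehat I_a \cap \llbracket l+1,r-1\rrbracket$, and across consecutive boxes it connects the matrix-index decorations through the twist $\sigma_a$ — and a permutation $\hat\alpha$ encoding the color-preserving Wick wiring, $\hat\alpha(a,i) = (\alpha(a),i)$, in direct analogy with \eqref{eq:hat-alpha}. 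Since all colors in $\llbracket l+1, r-1\rrbracket$ carry the common dimension $\NJ$ (which is precisely why this hypothesis is imposed: a single loop may visit decorations of several of these colors, because the $\sigma_a$ mix them), the total contribution of this family is $\NJ^{\#(\hat\gamma_{\bff,\fsig}\,\hat\alpha)}$, and one defines $L(\bff,\fsig,\alpha) := \#(\hat\gamma_{\bff,\fsig}\,\hat\alpha)$, which is the content of \eqref{eq:def-L}.

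The main obstacle — and essentially the only place where care is genuinely required — is writing down $\hat\gamma_{\bff,\fsig}$ correctly: one must treat simultaneously the colors traced inside a given box, the colors permuted among themselves by the $\sigma_a$, and the fact that the active sets $I_a$ and $I_{a+1}$ may differ, so that a given color can switch between being a matrix index and being traced as one passes from box to box. Once this wiring is set up, verifying that the loops of $\mathcal D_\alpha$ restricted to the colors $\llbracket l+1, r-1\rrbracket$ are exactly the cycles of $\hat\gamma_{\bff,\fsig}\,\hat\alpha$ is routine diagram-chasing, and one checks that the whole construction specializes to equations \eqref{eq:hat-f-B}--\eqref{eq:hat-alpha} when $n=4$, $l=1$, $r=4$, recovering Theorem~\ref{thm:moments-fixed-N}.
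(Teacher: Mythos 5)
Your proof is correct in substance and follows the same overall strategy as the paper --- graphical Wick calculus, then counting orbits by color --- but the formalization of $L$ is genuinely different, and the comparison is worth making. You work with a single permutation $\hat\gamma_{\bff,\fsig}$ on the \emph{full} set of $p(r-l-1)$ colored decorations $(a,i)$, with self-loops $\hat\gamma_{\bff,\fsig}(a,i)=(a,i)$ at the traced colors $i\in\widehat I_a\cap\llbracket l+1,r-1\rrbracket$ and $\hat\gamma_{\bff,\fsig}(a,i)=(a+1,\sigma_a(i))$ at the active ones $i\in J_a$, and you set $L=\#(\hat\gamma_{\bff,\fsig}\,\hat\alpha)$. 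This is the literal generalization of \eqref{eq:hat-f-B}--\eqref{eq:hat-alpha}, and it does compute the same quantity. The paper instead builds the \emph{unfolded map} $\cM_{\bff,\fsig}$ on the strictly smaller set of $pk$ active pairs $(a,i)$, $i\in J_a$, identifies the orbits that traverse edges with the faces of $\cM_{\bff,\fsig}$, and then separately counts the orbits confined to a white vertex of $\cM$ (those whose color never appears in any adjacent $J_a$), yielding the decomposition $L=F(\cM_{\bff,\fsig})+(r-l-1)(V(\cM)-1)-(V(\cM_{\bff,\fsig})-V_\bullet(\cM_{\bff,\fsig}))$ of \eqref{eq:def-L}. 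Both count the same orbits, and your approach is arguably more economical for proving the displayed moment formula. What the paper's formulation buys is that it makes $L$ a function of the \emph{map} $\cM_{\bff,\fsig}$, with a genus, a vertex count, and a connectivity defect $\Sigma$; this is precisely what is exploited downstream, when \eqref{eq:def-L} is rewritten via Euler's formula in Theorem~\ref{thm:TheoremGen} as $kp+V_\bullet+(r-l-2k-1)(V(\cM)-1)-2(g(\cM_{\bff,\fsig})+\Delta_{\bff,\fsig}(\cM)+\Sigma(\cM_{\bff,\fsig}))$, and bounded through Lemma~\ref{lemma:IneqFreeGen} and Proposition~\ref{prop:ShorterFreeMap}. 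In your permutation picture those bounds are not visible without in effect reconstructing $\cM_{\bff,\fsig}$ by discarding the fixed points. One last remark: you assert that $\#(\hat\gamma_{\bff,\fsig}\,\hat\alpha)$ ``is the content of \eqref{eq:def-L}'', which is an identity that must be proved, not just invoked; the proof is exactly the orbit bookkeeping the paper carries out (faces of $\cM_{\bff,\fsig}$ versus orbits stuck inside white vertices), so it is fair to flag it as part of the ``routine diagram-chasing'' but it should not be silently elided.
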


\begin{proof} 

As in Section~\ref{sec:ExactMomentsABCD}, the moments will be expressed as a sum over Wick wirings $\alpha\in \cS_p$ -- or equivalently over maps in $\bM_p$ -- of some weight. Each loop in the box representation contributes to this weight with a factor  $N_i$. We want to describe these loops as {\it orbits} as was done in Def.~\ref{def:Orbits}, \textit{i.e.} in terms of the cycles of some permutations acting on the product of $r-l-1$ copies of $\llbracket 1, p\rrbracket$, of the form $[p]^i=\{(1,i), \ldots, (p,i)\}$ for each color $i$ in $\llbracket l+1, r-1\rrbracket$.

As detailed several times along this paper, to every permutation $\alpha\in \cS_p$ corresponds a combinatorial map $\cM$ with one black vertex and $\#\alpha$ white vertices, and whose $p$ labeled edges are disposed from 1 to $p$ counterclockwise around the black vertex and correspond to matrices $W_{I_a} = [X._{\widehat I_a}\bar X]$. The edge labeled $a$ therefore carries the set $I_a$ of $\cI$ colors, $l$ of which belong to every edge.

Let us take a closer look at the orbits in the map formulation.  As was previously the case in Section~\ref{sec:ExactMomentsABCD}, an orbit which has color $i$, when it arrives on a white vertex, leaves this vertex on the next edge also carrying the color $i$, counterclockwise. When an orbit of color $i$ arrives on a black vertex from the edge labeled $a$, it goes to the following edge around that vertex counterclockwise, labeled $a+1$, but changes color to $\sigma_a(i)$.  

 For the colors $1$ to $l$, these are actually the usual faces of the map, $F(\cM)$, as was the case for color $A$ in Section~\ref{sec:ExactMomentsABCD}. The colors $\llbracket 1,l \rrbracket$ therefore contribute with a factor $\prod_{i=1}^l N_i^{F(\cM)} = \prod_{i=1}^l N_i^{\#(\gamma\alpha)}$. {\it We may therefore as well forget these colors, and label the edges with the sets $J_a$ instead of $I_a$} (as was done for $j(a)$ in Sec.~\ref{sec:ExactMomentsABCD}).\\

The colors which belong to the sets $\widehat I_a$ will be taken care of further, and  we now focus on the remaining colors, which belong to at least one set $J_a$. In this general case, their behavior might differ from the particular case previously treated in Section~\ref{sec:ExactMomentsABCD}. Indeed, for colors in $\llbracket r+1, l-1 \rrbracket$, a given edge might appear several times on the same orbit (at most $k$ times). Therefore the orbits cannot in general be defined as the cycles of a permutation of the $p$ edges, or equivalently the faces of a combinatorial map without colors, which was the key point in Section~\ref{sec:ExactMomentsABCD}. In order to bypass this difficulty, we define a labeling for $pk$ copies of the edges, one per each couple $(a,i)$, where $a\in \llbracket 1, p\rrbracket$ labels the edge, and $i \in J_a$ is a color which is neither traced, nor a fixed point of all the $\sigma_b$. On these $pk$ elements, we define the following permutation
\be \label{eq:def-Gamma-f-sigma}
\Gamma_{\bff, \fsig} : (a, i) \mapsto (a+1, \sigma_a(i)).
\ee

We label each cycle of the permutation starting from the smallest color of the edge of smallest label in the cycle. For instance, if $J_1=\{B,C\}$, $J_2=\{B,D\}$, $J_3=\{C,D\}$,  $\sigma_1=\mathrm{id}$, $\sigma_2=\mathrm{id}$, and $\sigma_3$ is the transposition in $\cS_2$, then 
\be 
\label{eq:Ex1Gen}
\Gamma_{\bff, \fsig}=\Bigl( (1,B), (2,B), (3, C), (1, C), (2,D), (3,D) \Bigr),
\ee 
and if $J_1=\{B,C\}$, $J_2=\{B,D\}$, $J_3=\{B,C\}$, $J_4=\{C,D\}$, $\sigma_1=\mathrm{id}$, $\sigma_2=\mathrm{id}$, and  $\sigma_3$ and $\sigma_4$ are the transposition in $\cS_2$, then 
\be 
\label{eq:Ex2Gen}
\Gamma_{\bff, \fsig}=\Bigl( (1,B), (2,B), (3, B), (4, D)\Bigr)\Bigl((1,C), (2,D), (3,C) , (4,C) \Bigr).
\ee 

For colors in $\llbracket l+1, r-1 \rrbracket$, the behavior of the faces around white vertices is similar to Section~\ref{sec:ExactMomentsABCD}, and the duplication operation of Figure~\ref{fig:M23} generalizes locally:  performing the following duplication operation (illustrated for $\cI=3$) on every white vertex does not change locally the incident external orbit, and removes the color conditions on the white vertices.  
\begin{figure}[!ht]
\raisebox{
0.4cm}{\includegraphics[scale=0.9]{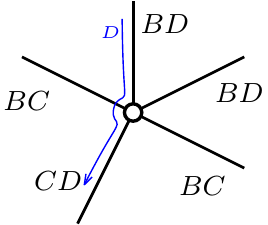}}\hspace{1cm}\raisebox{
1.4cm}{$\rightarrow$}\hspace{1cm}\includegraphics[scale=0.9]{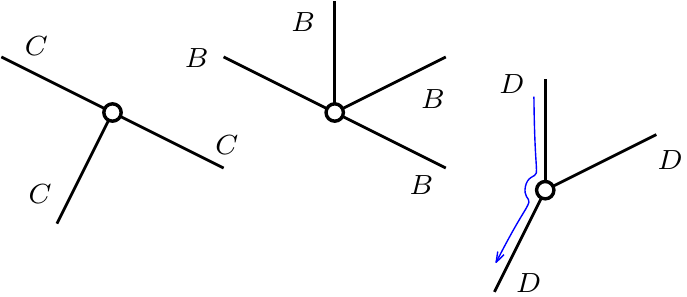}
\caption{\label{fig:M23-gen} Local duplication of white vertices. }
\end{figure}

This would precisely create one copy of an edge for each one of the $kp$ couples $(a,i)$. More precisely, if $i\in J_a$, denoting  $\alpha_i(a)$ the first edge following $a$ around the white endpoint of $a$ and containing color $i$, 
\be 
\alpha_i(a) = \alpha^q(a),\quad \text{where} \quad q=\min \{s\in\bN^\ast \mid i \in J_{\alpha^s(a)}\},
\ee 
($q$ depends on $\alpha$, $i$, and $a$) then the permutation defining the resulting white vertices is 
\be 
\alpha_{\bff} : (a, i) \mapsto (\alpha_i(a), i).
\ee 

We can now define a (non-necessarily connected) combinatorial map $\cM_{\bff, \fsig} = (\Gamma_{\bff, \fsig}, \alpha_{\bff})$ from the two permutations $\Gamma_{\bff, \fsig}$ and $\alpha_{\bff}$,  acting on  $\llbracket 1, p\rrbracket \times \llbracket 1, k\rrbracket $. See the examples in Fig.~\ref{fig:Ex1NewMap} and \ref{fig:ExsNewMap}. 
\begin{figure}[!ht]
\raisebox{
0.2cm}{\includegraphics[scale=0.7]{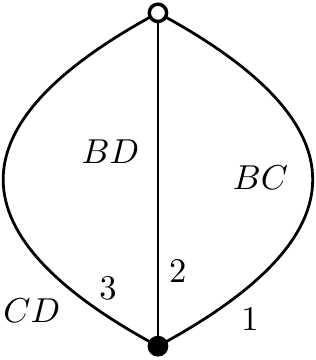}}\hspace{0.7cm}\raisebox{
1.4cm}{$\rightarrow$}\hspace{0.6cm}\includegraphics[scale=0.8]{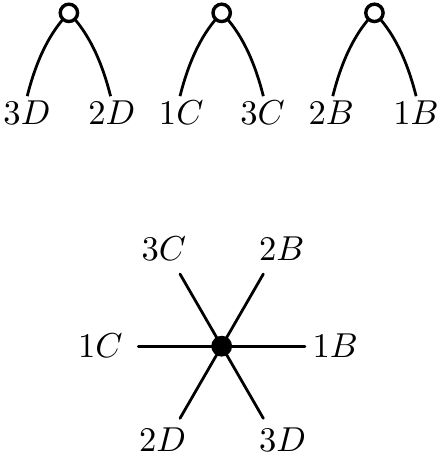}\hspace{0.7cm}\raisebox{
1.4cm}{$\rightarrow$}\hspace{0.6cm}\includegraphics[scale=0.7]{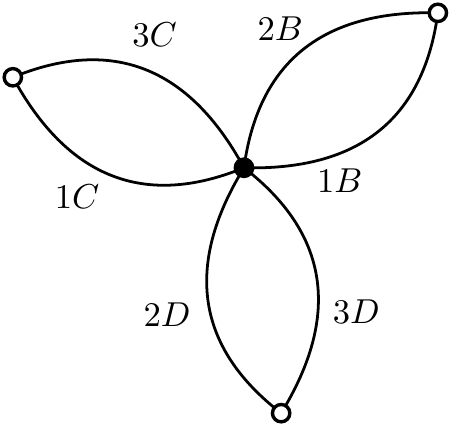}
\caption{\label{fig:Ex1NewMap}  The map $\cM_{\bff, \fsig}$ for an example in the case \eqref{eq:Ex1Gen}. The white vertex in the original map $\cM$ is tripled into 3 white vertices having half-edges of the same color attached. The order of the half-edges around the black vertex is given by the permutation $\Gamma_{\bff, \fsig}$.}
\end{figure}
\begin{figure}
\raisebox{
0.8cm}{\includegraphics[scale=0.7]{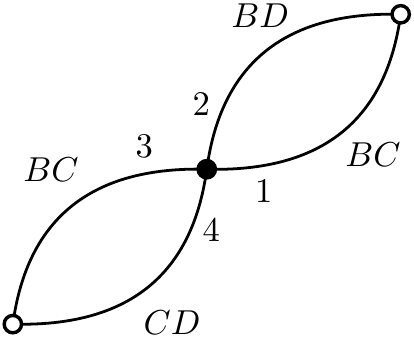}}\hspace{0.7cm}\raisebox{
2cm}{$\rightarrow$}\hspace{0.55cm}\includegraphics[scale=0.7]{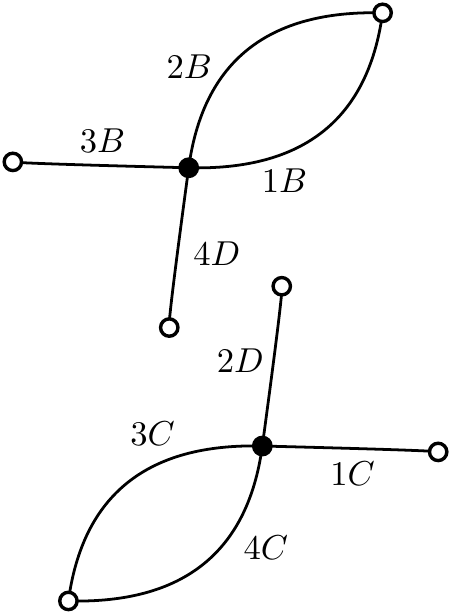}
\hspace{1.5cm}
\raisebox{
0.8cm}{\includegraphics[scale=0.7]{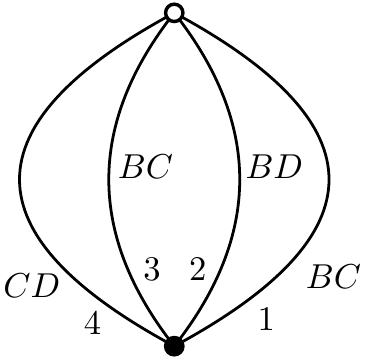}}\hspace{0.7cm}\raisebox{
2cm}{$\rightarrow$}\hspace{0.6cm}\includegraphics[scale=0.8]{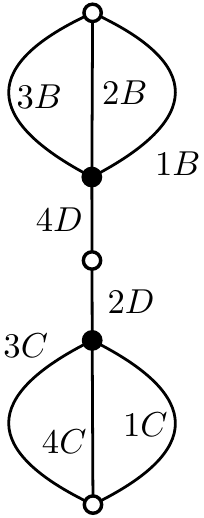}
\caption{\label{fig:ExsNewMap} The maps $\cM_{\bff, \fsig}$ for two examples in the case \eqref{eq:Ex2Gen}. }
\end{figure}

Among the orbits which act on colors in $\llbracket l+1, n \rrbracket$, one has to take into account the orbits which are entirely included in the white vertices of $\cM$. The number of such orbits contained in a given vertex is $l$, plus the number of colors in $\llbracket l+1, r-1\rrbracket$ which do not appear on any edge. The total number of the latter can be expressed as 
\be 
(r-l-1)\bigl(V(\cM) - 1\bigr) - \bigl( V(\cM_{\bff, \fsig}) - V_\bullet\bigr(\cM_{\bff, \fsig})),
\ee 
where we have denoted the number of black vertices of $\cM_{\bff, \fsig}$ as
$$
V_\bullet\bigr(\cM_{\bff, \fsig}) =V_\bullet(\bff, \fsig) = \#\Gamma_{\bff, \fsig}.
$$ 
This is an integer between 1 and $k$ {\it which does not depend on $\alpha$}.
These orbits therefore contribute with a factor $$ \prod_{i=r}^n N_i^{V(\cM) - 1} \NJ^{(r-l-1)(V(\cM) - 1) - ( V(\cM_{\bff, \fsig}) - V_\bullet(\bff, \fsig))},$$ where we recall that\footnote{Notice also  that if one wants to make explicit the analogy between $\cM$ and $\cM_{\bff, \fsig}$ in this expression, one can rewrite it as $\prod_{i=r}^n N_i^{V(\cM) - 1} \NJ^{(r-l-1)(V(\cM) - V_\bullet(\cM)) - ( V(\cM_{\bff, \fsig}) - V_\bullet(\bff, \fsig))}$ or   $\prod_{i=r}^n N_i^{V(\cM) - 1} \NJ^{(r-l-1)(V(\cM) - \#\gamma) - ( V(\cM_{\bff, \fsig}) - \#\Gamma_{\bff, \fsig})}$} $V(\cM) - 1 = \#\alpha$.\\

The remaining orbits run along at least one edge. Because the permutations  $\Gamma_{\bff,\fsig}$ and $\alpha_{\bff}$ encode precisely the way they behave locally around the black and the white vertices respectively, their total number is precisely given by the number of faces of the map $\cM_{\bff, \fsig}$, 
$$F(\cM_{\bff, \fsig}) = \#(\Gamma_{\bff,\fsig}\, \alpha_{\bff}),$$
hence contributing with a factor $\NJ^{F(\cM_{\bff, \fsig}) }$.
Putting all of this together, we have shown that 
\be 
\bE \Tr_{\fsig} W_{\bff} (\{N_i\}) = \sum_{\cM \in \bM_p}\, \prod_{i=1}^l N_i^{F(\cM)}\, \prod_{j=r}^n N_j^{V(\cM)-1}\, \NJ^{L(\bff, \fsig, \cM )}, 
\ee 
where we have defined 
\be\label{eq:def-L}
L(\bff, \fsig, \cM ) := F(\cM_{\bff, \fsig}) + (r-l-1)\bigl(V(\cM) - 1\bigr) - \bigl( V(\cM_{\bff, \fsig}) - V_\bullet\bigr(\cM_{\bff, \fsig})).
\ee 
Identifying $L(\bff, \fsig, \alpha ) = L(\bff, \fsig, (\gamma, \alpha) )$, this concludes the proof. \end{proof}

\
In the above proof we defined the companion map $\cM_{\bff,\fsig}$ to $\cM$. This companion map is a generalization of the previous companion map $\cM_f$ introduced in section \ref{sec:ExactMomentsABCD}. Since this is an important object, we recall its construction in the following definition
\begin{definition}
For all integer $p$, we associate to the triplet $(\cM,\bff, \fsig)$, with $\cM$ a combinatorial map in $\bM_p$, $\bff$ the list of colors of the edges of $\cM$ and $\fsig$ the list of permutations labeling the corners of the black vertex of $\cM$, a combinatorial map $\cM_{\bff, \fsig}$ called \emph{unfolded map} from the following data:
\begin{itemize}
    \item The set $E_{\bff}:=\bigl\{(a,i)\bigr\}_{a \in \llbracket 1,p \rrbracket, \, i\in J_a}$ is the set of edges.
    \item The permutations $\alpha_{\bff}$ and $\Gamma_{\bff, \fsig}$, where $\alpha_{\bff}$ is the permutation that defines the white vertices after local duplication, and $\Gamma_{\bff, \fsig}$ defines the black vertices of $\cM_{\bff, \fsig}$. Denoting
    \begin{equation}
         \alpha_i(a) = \alpha^q(a),\quad \text{where} \quad q=\min \{s\in\bN^\ast \mid i \in J_{\alpha^s(a)}\}, \nonumber
    \end{equation}
    the permutation $\alpha_{\bff}$ is defined as
    \begin{align}
     \alpha_{\bff}:& \ E_{\bff} \rightarrow E_{\bff} \nonumber \\
     & (a,i) \mapsto (\alpha_i(a),i), \nonumber
    \end{align}
    and $\Gamma_{\bff, \fsig}$ is defined as
    \begin{align}
        \Gamma_{\bff, \fsig}:& \ E_{\bff} \rightarrow E_{\bff} \nonumber\\
        &(a,i)\mapsto (a+1,\sigma_a(i)). \nonumber
    \end{align}
\end{itemize}
\end{definition}
If we compare with Definition~\ref{def:comb-maps} then we notice that we do not require that the group $\langle\alpha_{\bff}, \Gamma_{\bff,\fsig} \rangle$ acts transitively on $E_{\bff}$, this is because $\cM_{\bff, \fsig}$ can be disconnected. 
Remark also that each white vertex of $\cM$ is duplicated into $k=|J_a|$ white vertices of $\cM_{\bff, \fsig}$ if all the incident edges are labeled by the same set $J_a$; and into strictly more than $k$ white vertices if at most $k-1$ colors are common to all the edges. We therefore define the quantity
\be \label{eq:def-Delta}
\Delta_{\bff, \fsig}(\cM) := V(\cM_{\bff, \fsig}) - V_\bullet(\bff, \fsig)  - k(V(\cM) - 1),
\ee 
which vanishes if and only if all the edges incident to a given  white vertex of $\cM$ share the same color set $J_a$, and is positive otherwise.\\

Moreover, if the number of black vertices is $V_\bullet(\bff, \fsig)$, the number of connected components $\Kc$ of a given map $\cM_{\bff, \fsig}$ is an integer in $\{1, \ldots, V_\bullet(\bff, \fsig)\}$, so that we define 
\be 
\Sigma(\cM_{\bff, \fsig}) = V_\bullet(\bff, \fsig) - \Kc(\cM_{\bff, \fsig}),
\ee 
which is an integer between 0 and $V_\bullet(\bff, \fsig)-1$. 

\begin{theorem}
\label{thm:TheoremGen}
With the previous notations, the mixed moments of the marginals $\{W_{I_a}\}$ are expressed exactly using a sum over combinatorial maps
 \begin{equation}
 \label{eq:TheoremGenEqn}
 \bE \Tr_{\fsig} W_{\bff}  = \sum_{\cM \in \bM_p}\, \prod_{i=1}^l N_i^{2+p -V(\cM) - 2g(\cM)} \prod_{j=r}^n N_j^{V(\cM)-1} \NJ^{kp + V_\bullet(\bff, \fsig) + (r-l-2k-1)(V(\cM)-1)-\tilde L ( \bff, \fsig, \cM)}, 
 \end{equation}
 in which we have denoted
 \be \label{eq:def-tilde-L}
 \tilde L ( \bff, \fsig, \cM) = 2\bigl(g(\cM_{\bff, \fsig})+ \Delta_{\bff, \fsig}(\cM) + \Sigma(\cM_{\bff, \fsig})\bigr), 
 \ee
 where $g(\cM_{\bff, \fsig})\ge0$ is the genus of the map $\cM_{\bff, \fsig}$, and $\Delta_{\bff, \fsig}(\cM)\ge0$ and $\Sigma(\cM_{\bff, \fsig})\ge0$ have been defined above.
\end{theorem}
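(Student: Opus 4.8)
The plan is to obtain \eqref{eq:TheoremGenEqn} as a bookkeeping rewriting of Proposition~\ref{prop:Express1GenCase}, whose proof already produces the expansion $\bE \Tr_{\fsig} W_{\bff} = \sum_{\cM \in \bM_p} \prod_{i=1}^l N_i^{F(\cM)}\, \prod_{j=r}^n N_j^{V(\cM)-1}\, \NJ^{L(\bff, \fsig, \cM )}$, with $L(\bff,\fsig,\cM)$ given by \eqref{eq:def-L}. All that is left is to re-express the exponents $F(\cM)$ and $L(\bff,\fsig,\cM)$ in terms of the genera $g(\cM)$, $g(\cM_{\bff,\fsig})$ and the integers $\Delta_{\bff,\fsig}(\cM)$, $\Sigma(\cM_{\bff,\fsig})$, using two applications of Euler's relation.

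First I would handle the exponents of $N_i$ for $i \in \llbracket 1,l\rrbracket$. Since $\cM = (\gamma,\alpha) \in \bM_p$ is connected with a single black vertex, $V(\cM)$ vertices and $p$ edges, the Euler relation \eqref{eq:Euler} reads $V(\cM) + F(\cM) - p = 2 - 2g(\cM)$, so $F(\cM) = p + 2 - V(\cM) - 2g(\cM)$, which is exactly the exponent appearing in \eqref{eq:TheoremGenEqn}; the exponent $V(\cM)-1$ of $N_j$ for $j \geq r$ is already in the desired form.

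Next I would rewrite $L(\bff,\fsig,\cM)$. The unfolded map $\cM_{\bff,\fsig}$ has edge set $E_{\bff} = \{(a,i) : a \in \llbracket 1,p\rrbracket,\ i \in J_a\}$ of cardinality $pk$ (here I use that $\lvert J_a\rvert = k$ for every $a$, which follows from $\lvert I_a\rvert = \cI$ and the colors $1,\dots,l$ lying in every $I_a$), but it may be disconnected into $\Kc(\cM_{\bff,\fsig})$ components. Applying Euler's relation to each component and summing gives $V(\cM_{\bff,\fsig}) + F(\cM_{\bff,\fsig}) - pk = 2\Kc(\cM_{\bff,\fsig}) - 2g(\cM_{\bff,\fsig})$, hence $F(\cM_{\bff,\fsig}) = pk + 2\Kc(\cM_{\bff,\fsig}) - 2g(\cM_{\bff,\fsig}) - V(\cM_{\bff,\fsig})$. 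Substituting this into \eqref{eq:def-L}, and then eliminating $\Kc(\cM_{\bff,\fsig})$ and $V(\cM_{\bff,\fsig})$ through the defining identities $\Sigma(\cM_{\bff,\fsig}) = V_\bullet(\bff,\fsig) - \Kc(\cM_{\bff,\fsig})$ and $\Delta_{\bff,\fsig}(\cM) = V(\cM_{\bff,\fsig}) - V_\bullet(\bff,\fsig) - k(V(\cM)-1)$, a short computation collects the $(V(\cM)-1)$-terms into $(r-l-2k-1)(V(\cM)-1)$, the $V_\bullet(\bff,\fsig)$-terms into $V_\bullet(\bff,\fsig)$, and the remainder into $-2\bigl(g(\cM_{\bff,\fsig}) + \Delta_{\bff,\fsig}(\cM) + \Sigma(\cM_{\bff,\fsig})\bigr) = -\tilde L(\bff,\fsig,\cM)$. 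This yields $L(\bff,\fsig,\cM) = kp + V_\bullet(\bff,\fsig) + (r-l-2k-1)(V(\cM)-1) - \tilde L(\bff,\fsig,\cM)$, which is the $\NJ$-exponent in \eqref{eq:TheoremGenEqn}. The non-negativity of $g(\cM_{\bff,\fsig})$ follows from \eqref{eq:Euler} applied componentwise, while $\Delta_{\bff,\fsig}(\cM) \geq 0$ and $\Sigma(\cM_{\bff,\fsig}) \geq 0$ hold by definition, so \eqref{eq:def-tilde-L} has the claimed sign properties.

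The only genuinely delicate point is the second Euler computation: one must count the edges of $\cM_{\bff,\fsig}$ as $pk$ (not $p$), which is where $\lvert J_a\rvert = k$ being independent of $a$ enters, and one must use the disconnected version of Euler's relation, carefully tracking $\Kc(\cM_{\bff,\fsig})$ separately from the number of black vertices $V_\bullet(\bff,\fsig) = \#\Gamma_{\bff,\fsig}$ — the difference between the two being exactly $\Sigma(\cM_{\bff,\fsig})$. Everything else is the substitution and term collection described above.
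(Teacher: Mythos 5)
Your proposal is correct and follows essentially the same route as the paper's own proof: start from Proposition~\ref{prop:Express1GenCase}, apply Euler's relation once to $\cM$ to rewrite the $N_i$-exponent and once (in the disconnected form, counting $pk$ edges and $\Kc(\cM_{\bff,\fsig})$ components) to $\cM_{\bff,\fsig}$, and then eliminate $\Kc(\cM_{\bff,\fsig})$ and $V(\cM_{\bff,\fsig})$ via the definitions of $\Sigma$ and $\Delta$; the paper's proof is just a terser statement of exactly this bookkeeping, and your explicit term-collection is a faithful filling-in of the omitted algebra.
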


Before starting with the proof, notice that the quantity $\Delta_{\bff, \fsig}(\cM)$ seems to penalize the mixed cumulants in the large $N_J$ regime, if we trust the heuristic relating monochromatic white vertices (i.e.~white vertices incident to edges of the same color) to non-mixed cumulants. This is an indication for freeness at large $N_J$. 

\begin{proof} The only thing we need to prove is that $L(\bff, \fsig, \alpha ) = kp + V_\bullet(\bff, \fsig) + (r-l-2k-1)(V(\cM)-1)-\tilde L ( \bff, \fsig, \cM)$, which follows from the Euler characteristics of the map $\cM_{\bff, \fsig}$, 
\be 
2\Kc(\cM_{\bff, \fsig}) - 2g(\cM_{\bff, \fsig}) = F(\cM_{\bff, \fsig}) - kp + V(\cM_{\bff, \fsig}).
\ee 
We conclude using Proposition~\ref{prop:Express1GenCase} and the definitions of $\Delta_{\bff, \fsig}(\cM)$ and $\Sigma(\cM_{\bff, \fsig})$. \end{proof}

\begin{remark}
In the case where $k=0$, the result above degenerates, and we recover the classical Mar{\v{c}}enko-Pastur result from Proposition~\ref{prop:MP} (see also equation \eqref{eq:MP-limit-with-comb-maps} for the combinatorial map approach). Indeed, in this case $r=l+1$, and the factor with $\NJ$ is trivially equal to 1; moreover, the quantities $\fsig$ and $\bff$ are trivial and do not play any role. The result reads
$$\bE \Tr W_n  = \sum_{\cM \in \bM_p}\, \prod_{i=1}^l N_i^{2+p -V(\cM) - 2g(\cM)} \prod_{j=r}^n N_j^{V(\cM)-1}.$$
If we denote $N_{tot}:=\prod_{i=1}^l N_i$ and consider the scaling where $N'_{tot}:= \prod_{j=r}^n N_j \sim c N_{tot}$, we obtain
$$\bE \Tr W_n  = (1+o(1)) N_{tot}^{1+p}\sum_{\cM \in \bM_p} N_{tot}^{-2g(\cM)}c^{V(\cM)-1}.$$
Assuming moreover $N_{tot} \to \infty$, the limit selects planar maps, and we obtain \eqref{eq:MP-limit-with-comb-maps}:
$$\lim_{N_{tot} \to \infty}\bE \frac{1}{N_{tot}}\Tr \left(\frac{W_n}{N_{tot}}\right)^p = \sum_{\cM \in \bM_p^0}c^{V(\cM)-1}.$$
\end{remark}

\begin{remark}
The first non-trivial case corresponds to $k=1$, where just one of the $n$ legs of the tensors can ``move around''. In this case, it is clear that the permutations $\fsig$ do not play any role, and $\bff$ is just a function from $\llbracket 1, p\rrbracket$ to the set of tensor legs, selecting for each tensor the leg which ``moves around''.
We have $\bff:\llbracket 1, p\rrbracket \to \llbracket r \rrbracket]$, and, since $\fsig$ is trivial, $V_\bullet(\bff)=1$. Assuming, to keep things simple, that all the vector spaces have dimension $N$, the result of Theorem~\ref{thm:TheoremGen} reads
\begin{align*}
\bE \Tr W_\bff &= \sum_{\cM \in \bM_p} N^{(r-1)[2+p-V(\cM)-2g(\cM)]}N^{V(\cM)-1}N^{p + V_\bullet(\bff) + (r-2)(V(\cM)-1)-\tilde L ( \bff,\cM)}\\
&= N^{r(p+1)}\sum_{\cM \in \bM_p} N^{-2(r-1)g(\cM) - \tilde L(\bff,\cM)}.
\end{align*}
In order to find the dominating contributions, we have to identify the maps $\cM \in \bM_p$ which cancel both $g(\cM)$ and $\tilde L(\bff,\cM)$. The first condition corresponds to $\cM$ being planar, while the second one is equivalent to the cancellation of the three quantities $g(\cM_{\bff})$ ,$\Delta_{\bff}(\cM)$, $\Sigma(\cM_{\bff})$ from \eqref{eq:def-tilde-L}. We focus on the $\Delta$ quantity: it follows from \eqref{eq:def-Delta} that this quantity is zero iff all the edges incident to a white vertex in $\cM$ have the same color given by $f$; in other words, the partition introduced by the white vertices of $\cM$ on $\llbracket 1, p\rrbracket$ has to be smaller than the partition $\bff$ introduces on the same set. 
 We claim now that the conditions $g(\cM) = 0$ and $\Delta_{\bff}(\cM)=0$ imply that the other two quantities appearing with a negative sign in the exponent of $N$ cancel. Indeed, $\Sigma(\cM_{\bff})=0$, since $V_\bullet(\bff)=1$. Since $\Delta_{\bff}(\cM)=0$, it also follows that $\cM_{\bff}=\cM$, and thus $g(\cM_{\bff})=0$. To summarize, we reach the same conclusion as in Theorem~\ref{thm:2-marginals-large-BC}, see equation \eqref{eq:moments-marginals-AB-AC-balanced-MAPS}:
$$\lim_{N \to \infty} N^{-r(p+1)} \bE \Tr W_\bff = |\bM_p^{0,\bff}|,$$
where
\begin{align*}
\bM_p^{0,\bff} := \{ \cM \in \bM_p \, : \, &\cM \text{ is planar and $\bff$ assigns the same color to the edges}\\
& \text{incident to the white vertices of $\cM$}\}.
\end{align*}
\end{remark}

\

\subsection{The balanced asymptotical regime}
\label{sec:BalancedGeneral}

\

\medskip

As a first step, we focus on the limit where $N_1, \ldots, N_n$ all grow to infinity, while the ratio between $N_1\ldots N_l$ and $N_r \ldots N_n$ converges to a fixed constant at infinity.
More precisely, we can consider for instance $\forall i \in \llbracket 1,l \rrbracket, N_i=\NJ=N$ while $\forall j \in [\![r,n]\!], \ \textrm{lim}_{N\rightarrow \infty} N_j/N = c^{\frac 1 {n-r+1}}$ or pick $j \in [\![r,n]\!]$ such that $N_j \sim cN$ at infinity, while $N_i=N$ for $i\neq j$.

In that case, Theorem~\ref{thm:TheoremGen} writes 
 \begin{equation}
 \label{eq:MomentsGenLargeRegime}
 \bE \Tr_{\fsig} W_{\bff}  = N^{(k+l)(p-2) + l + n  + V_\bullet(\bff, \fsig)}\, \sum_{\cM \in \bM_p}\, h(N)^{V(\cM)-1}  N^{ - 2lg(\cM) -\tilde L ( \bff, \fsig, \cM) -(2k+2l - n)(V(\cM) - 2)},
  \end{equation}
  where $h(N)$ is an arbitrary function  such that $h(N) \rightarrow c$ when $N\rightarrow + \infty$. Notice that the factor $N^{V_\bullet(\bff, \fsig)}$ is out of the sum since, as already emphasized earlier, it only depends on $\bff,\fsig$. The terms $g(\cM)$, $\tilde L ( \bff, \fsig, \cM)$ and 
  $V(\cM) - 2$ are all non-negative, so we have three cases, depending on the sign of $2k+2l - n$. 
  
  \
  
  If $2k+2l - n<0$, the maps $\cM$ which survive in the large $N$ limit are those which satisfy
  \be
  g(\cM)=0\ (\text{if } l\neq0),\quad  \tilde L ( \bff, \fsig, \cM)=0,\quad \text{and}\quad V(\cM) = p + 1.
  \ee
  Since there is a unique solution to this system, given by the only tree in $\bM_p$, in that case, in the large $N$ limit,
    \be
   \lim_{N\rightarrow\infty} \frac 1 {N^{(n-k-l)p + l + V_\bullet(\bff, \fsig) }}\, \bE \Tr_{\fsig} W_{\bff}  =   c^{p}.
  \ee
  This case is therefore trivial, and we do not consider it. In the following, we first consider the case $2k+2l - n=0$ in Subsection~\ref{subsub:HalfColorsTraced}, and then the case $2k+2l - n>0$ in Subsection~\ref{subsub:LessHalfColorsTraced}.
  
  \smallskip

\subsubsection{\underline{Case where half the colors are traced}}
\label{subsub:HalfColorsTraced}

\

\medskip

In this section we study the case where $2k+2l - n$ vanishes, or equivalently  
\be 
\cI = l + k = \frac n 2,
\ee 
the number of colors $n$ is even, and for each edge, precisely half the colors are traced. The equation \eqref{eq:MomentsGenLargeRegime} rewrites as
 \begin{equation}
 \bE \Tr_{\fsig} W_{\bff}  = N^{(k+l)p + l + V_\bullet(\bff, \fsig) }\sum_{\cM \in \bM_p}\,   h(N)^{V(\cM)-1}  N^{ - 2lg(\cM) -\tilde L ( \bff, \fsig, \cM)},
 \end{equation}
where here $ h(N)$ is an arbitrary function whose limit at infinity is $ c$.\\ 
 
\

\noindent{\bf Large $N$ limit, case $l\neq 0$.}
In the large $N$ limit, the sum restricts to planar maps, which are such that edges incident to a given white vertex have the same set of colors $I_a$, and for which the map $\cM_{\bff, \fsig}$ is planar and has as many connected components as black vertices.  We define the set of ``free" maps 
\be
\label{eq:FreeMaps}
\bM_{\bff, \fsig}^{(0) \text{free}} := \bigl\{\cM\in \bM_p \mid g(\cM) = 0, g(\cM_{\bff, \fsig}) = 0, \Delta_{\bff, \fsig}(\cM)=0, \text{ and }\Sigma(\cM_{\bff, \fsig}) = 0\bigr\}.
\ee
The following result is just a consequence of Theorem~\ref{thm:TheoremGen}.
\begin{proposition}
\label{prop:LargeNinTheLarge1nRegime}
In the large $N_1,\ldots, N_n$ regime, if $ l + k = \frac n 2$ and with the previous notations,
 \begin{equation}
 \lim_{N\rightarrow\infty} \frac 1 {N^{\frac n 2 p + l + V_\bullet(\bff, \fsig) }}\, \bE \Tr_{\fsig} W_{\bff}  = \sum_{\cM \in \bM_{\bff, \fsig}^{(0) \text{free}}} c^{\, V(\cM) - 1}. 
 \end{equation}
\end{proposition}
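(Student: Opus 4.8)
The plan is to obtain the statement as a direct degeneration of the exact moment formula of Theorem~\ref{thm:TheoremGen}, exactly in the spirit of the remarks that follow it. First I would substitute into \eqref{eq:TheoremGenEqn} the balanced scaling of this subsection — say $N_i = \NJ = N$ for $i \in \llbracket 1, l\rrbracket$ and $N_j \sim c^{1/(n-r+1)} N$ for $j \in \llbracket r, n\rrbracket$ (equivalently, $N_{j_0}\sim cN$ for one distinguished index and $N_j = N$ otherwise, which produces the same product $\prod_{j=r}^{n} N_j \sim c\,N^{n-r+1}$). Collecting powers of $N$ and writing $h(N)$ for the resulting slowly varying prefactor, with $h(N)\to c$, this is precisely equation \eqref{eq:MomentsGenLargeRegime}. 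Then, using the hypothesis $\cI = l+k = n/2$, i.e.\ $2k+2l-n = 0$, the term proportional to $(V(\cM)-2)$ drops out of the exponent and the overall power of $N$ in front simplifies via $(k+l)(p-2)+l+n = \tfrac n2 p + l$, giving
\[
\frac{1}{N^{\frac n2 p + l + V_\bullet(\bff,\fsig)}}\, \bE \Tr_{\fsig} W_{\bff} \;=\; \sum_{\cM \in \bM_p} h(N)^{V(\cM)-1}\, N^{-2l\, g(\cM) - \tilde L(\bff,\fsig,\cM)}.
\]

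The second step is to examine the exponent of $N$ term by term. By \eqref{eq:def-tilde-L} one has $-2l\,g(\cM) - \tilde L(\bff,\fsig,\cM) = -2l\, g(\cM) - 2\bigl(g(\cM_{\bff,\fsig}) + \Delta_{\bff,\fsig}(\cM) + \Sigma(\cM_{\bff,\fsig})\bigr)$, and each of the four quantities appearing here is a non-negative integer: $g(\cM)$ and $g(\cM_{\bff,\fsig})$ because the genus of a (possibly disconnected) combinatorial map is non-negative; $\Delta_{\bff,\fsig}(\cM)\ge 0$ by its definition \eqref{eq:def-Delta}, vanishing precisely when all edges incident to a common white vertex of $\cM$ carry the same colour set; and $\Sigma(\cM_{\bff,\fsig})\ge 0$ since $\Kc(\cM_{\bff,\fsig}) \le V_\bullet(\bff,\fsig)$. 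Because we are in the case $l\ge 1$, the coefficient of $g(\cM)$ is strictly negative, so the exponent is $\le 0$, with equality if and only if $g(\cM) = g(\cM_{\bff,\fsig}) = \Delta_{\bff,\fsig}(\cM) = \Sigma(\cM_{\bff,\fsig}) = 0$, i.e.\ if and only if $\cM\in\bM_{\bff,\fsig}^{(0)\text{free}}$ as defined in \eqref{eq:FreeMaps}. Hence $N^{-2l\, g(\cM) - \tilde L(\bff,\fsig,\cM)} \to \mathbf 1_{\{\cM\in\bM_{\bff,\fsig}^{(0)\text{free}}\}}$ as $N\to\infty$.

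Finally I would pass to the limit under the sum: $\bM_p$ is finite (one map per element of $\cS_p$) and the exponents $V(\cM)-1 \le p$ are uniformly bounded, so $h(N)^{V(\cM)-1}\to c^{V(\cM)-1}$ and the limit can be taken term by term, yielding $\lim_{N\to\infty} N^{-(\frac n2 p + l + V_\bullet(\bff,\fsig))}\, \bE \Tr_{\fsig} W_{\bff} = \sum_{\cM\in\bM_{\bff,\fsig}^{(0)\text{free}}} c^{V(\cM)-1}$, as claimed. There is essentially no obstacle left at this stage: all the genuine work — identifying the orbits running along edges with the faces of the unfolded companion map $\cM_{\bff,\fsig}$, the Euler-characteristic bookkeeping, and the non-negativity of $g(\cM_{\bff,\fsig})$, $\Delta_{\bff,\fsig}(\cM)$, $\Sigma(\cM_{\bff,\fsig})$ — has already been done in Proposition~\ref{prop:Express1GenCase} and Theorem~\ref{thm:TheoremGen}. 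The only place where the hypotheses of this subsection are actually used is $l\ne 0$, which is what makes the factor $N^{-2l\,g(\cM)}$ suppress the non-planar $\cM$; the case $l = 0$ has to be treated separately, since then planarity of $\cM$ itself can only be enforced indirectly, through the contribution of $g(\cM_{\bff,\fsig})$ to $\tilde L$.
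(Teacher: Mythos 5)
Your proposal is correct and follows essentially the same route as the paper: the proposition is stated immediately after equation \eqref{eq:MomentsGenLargeRegime} under the paragraph heading ``Large $N$ limit, case $l\neq 0$'', and the paper's one-line proof (``just a consequence of Theorem~\ref{thm:TheoremGen}'') is exactly the computation you spell out, namely that $2k+2l-n=0$ kills the $V(\cM)-2$ term and reduces the prefactor to $N^{\frac n2 p + l + V_\bullet(\bff,\fsig)}$, after which the non-negativity of $g(\cM)$, $g(\cM_{\bff,\fsig})$, $\Delta_{\bff,\fsig}(\cM)$, and $\Sigma(\cM_{\bff,\fsig})$ forces the surviving maps to be precisely $\bM_{\bff,\fsig}^{(0)\text{free}}$. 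You also correctly identify that the proposition as proven needs $l\geq 1$ to suppress non-planar $\cM$ through the $N^{-2l\,g(\cM)}$ factor, and that the $l=0$ case requires the separate observation that $g(\cM_{\bff,\fsig})=0$ together with $\Delta_{\bff,\fsig}(\cM)=0$ already forces $g(\cM)=0$ --- which is exactly Proposition~\ref{prop:ShorterFreeMap} in the paragraph that follows.
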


Note that since $V_\bullet(\bff, \fsig)$ might be smaller than $k$, the moments corresponding to $\fsig$ and  $\bff$ might have a weaker scaling in $N$ than the usual $N^{\frac n 2 (p + 1)}$.

\

\smallskip

\noindent{\bf General case including $l=0$.} In the case where $l=0$, the map $\cM$ is a priori non necessarily planar in the large $N$ limit. In this paragraph, we show that maps with vanishing $\tilde L$ are in fact necessarily planar. As a consequence, the system defining the ``free" maps in \eqref{eq:FreeMaps} can in fact be reduced to
\be
\label{eq:FreeMaps2}
\bM_{\bff, \fsig}^{(0) \text{free}} = \bigl\{\cM\in \bM_p \mid  g(\cM_{\bff, \fsig}) = 0, \Delta_{\bff, \fsig}(\cM)=0, \text{ and }\Sigma(\cM_{\bff, \fsig}) = 0\bigr\},
\ee
thus extending the preceding result to the case $l=0$.
 Indeed, we have the following result 
\begin{proposition}
\label{prop:ShorterFreeMap}
A map $\cM\in\bM_p$ in a triplet $(\cM, \bff, \fsig)$ which satisfies 
\be 
\label{eq:CondLib}
g(\cM_{\bff, \fsig}) = 0 \quad \text{and} \quad \Delta_{\bff, \fsig}(\cM)=0,
\ee 
is necessarily planar: $g(\cM) = 0$. 
\end{proposition}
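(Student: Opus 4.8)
The plan is to exploit the forgetful map $\pi:E_{\bff}\to\llbracket 1,p\rrbracket$, $(a,i)\mapsto a$, which realizes $\cM_{\bff,\fsig}$ as a sort of degree‑$k$ branched cover of $\cM$, and then to play the Euler relation \eqref{eq:Euler} for $\cM_{\bff,\fsig}$ against the one for $\cM$. First I would observe that, \emph{precisely because} $\Delta_{\bff,\fsig}(\cM)=0$, the sets $J_a$ are constant along each cycle of $\alpha$, so that $\alpha_i(a)=\alpha(a)$ for $i\in J_a$; consequently $\alpha_{\bff}(a,i)=(\alpha(a),i)$, $\Gamma_{\bff,\fsig}(a,i)=(a+1,\sigma_a(i))$ and $\Gamma_{\bff,\fsig}\alpha_{\bff}(a,i)=(\gamma\alpha(a),\sigma_{\alpha(a)}(i))$, i.e.\ $\pi$ intertwines $\alpha_{\bff}$ with $\alpha$, $\Gamma_{\bff,\fsig}$ with $\gamma$, and $\Gamma_{\bff,\fsig}\alpha_{\bff}$ with $\gamma\alpha$. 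Since every fibre of $\pi$ has exactly $k=|J_a|$ elements and $\alpha_{\bff}$ keeps the colour fixed, the white side is \emph{unbranched}: over each cycle of $\alpha$ there sit exactly $k$ cycles of $\alpha_{\bff}$, whence $\#\alpha_{\bff}=k\,\#\alpha$ and $|E_{\bff}|=kp$ (here $k\ge 1$, as is implicit in this subsection).

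Next I would record the elementary counting fact: if a permutation $f$ of a finite set projects equivariantly, with all fibres of size $k$, onto a single $n$‑cycle, then $f$ has at most $k$ cycles, each of length a multiple of $n$ (an $f$‑orbit surjects onto the $n$‑cycle, hence has length divisible by $n$ and at least $n$, so there are at most $kn/n=k$ of them). Applied to $\Gamma_{\bff,\fsig}$ over the $p$‑cycle $\gamma$ this gives $V_\bullet(\bff,\fsig)=\#\Gamma_{\bff,\fsig}\le k$; applied to $\Gamma_{\bff,\fsig}\alpha_{\bff}$ over each cycle $\phi$ of $\gamma\alpha$ separately (its fibres again all of size $k$), it shows that at most $k$ faces of $\cM_{\bff,\fsig}$ lie over $\phi$, hence $F(\cM_{\bff,\fsig})\le k\,F(\cM)=k\,\#(\gamma\alpha)$. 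Together with $\Kc(\cM_{\bff,\fsig})\ge 1$, these are the only soft inputs needed.

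Finally I would combine the Euler relations. For $\cM$, \eqref{eq:Euler} gives $\#(\gamma\alpha)=p+1-\#\alpha-2g(\cM)$; for the possibly disconnected map $\cM_{\bff,\fsig}$, using $g(\cM_{\bff,\fsig})=0$ together with $\#\alpha_{\bff}=k\#\alpha$ and $|E_{\bff}|=kp$, one gets $V_\bullet(\bff,\fsig)+k\#\alpha+F(\cM_{\bff,\fsig})-kp=2\Kc(\cM_{\bff,\fsig})$. Substituting the face bound and the expression for $\#(\gamma\alpha)$ yields $2k\,g(\cM)\le V_\bullet(\bff,\fsig)-2\Kc(\cM_{\bff,\fsig})+k\le k-2+k=2k-2$, so $g(\cM)\le 1-1/k<1$; as $g(\cM)\in\mathbb Z_{\ge 0}$ this forces $g(\cM)=0$.

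The main obstacle I anticipate is the bookkeeping around the branching, and in particular getting the three counts with inequalities pointing the right way. One must check that $\Delta_{\bff,\fsig}(\cM)=0$ is exactly the assertion that the white vertices of $\cM$ are unbranched in $\cM_{\bff,\fsig}$, each lifting to precisely $k$ copies, so that $\#\alpha_{\bff}=k\#\alpha$ with \emph{no} correction term, whereas the branching that is allowed sits only at the black vertices (controlled by $\#\Gamma_{\bff,\fsig}\le k$) and at the faces (controlled by $F(\cM_{\bff,\fsig})\le k\,F(\cM)$). Verifying these bounds carefully, rather than the reverse inequalities one would get from a naive vertex‑splitting argument (which only yields the vacuous $g(\cM)\ge 0$), is where the proof is won or lost.
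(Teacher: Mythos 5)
Your proof is correct and follows essentially the same route as the paper's (which proves this via Lemma~\ref{lemma:IneqFreeGen}): establish the face bound $F(\cM_{\bff,\fsig})\le k\,F(\cM)$ under $\Delta_{\bff,\fsig}(\cM)=0$, combine the Euler relations for $\cM$ and $\cM_{\bff,\fsig}$, and conclude with $\Kc(\cM_{\bff,\fsig})\ge 1$ and $V_\bullet(\bff,\fsig)\le k$. Your covering-map reformulation of the face bound (observing that $\pi$ intertwines $\Gamma_{\bff,\fsig}\alpha_{\bff}$ with $\gamma\alpha$ once $\Delta_{\bff,\fsig}=0$, and then applying the counting fact about permutations projecting onto a cycle) is a cleaner and more rigorous rendering of the paper's orbit-tracking argument, and it derives $V_\bullet\le k$ from the same fact rather than asserting it separately, but the underlying proof structure is identical.
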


\proof Relying on the following Lemma~\ref{lemma:IneqFreeGen}, the proposition is proven easily by noticing that, if $g(\cM_{\bff, \fsig}) = 0$, the genus $g(\cM)$ is a non-negative integer which is smaller than one. \qed

\begin{lemma} 
\label{lemma:IneqFreeGen}
If $\Delta_{\bff, \fsig}(\cM)=0$, we have the following bound for  the number of faces  of the combinatorial map $\cM_{\bff, \fsig}$, 
\be 
\label{IneqFreeGen}
F(\cM_{\bff, \fsig})\le k F(\cM), \quad \text{ and } \quad  g(\cM)  \le \frac {g(\cM_{\bff, \fsig})}{k} + \frac{k-1 }{k}.
\ee 
\end{lemma}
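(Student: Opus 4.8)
The plan is to compare the two maps $\cM$ and $\cM_{\bff,\fsig}$ face by face, exploiting the hypothesis $\Delta_{\bff,\fsig}(\cM)=0$, which says that every white vertex of $\cM$ has all its incident edges carrying the \emph{same} color set $J_a$. Under this assumption the duplication operation of Figure~\ref{fig:M23-gen} is particularly simple: a white vertex $v_\circ$ of $\cM$ with common color set $J$ (where $|J|=k$) is replaced by exactly $k$ white vertices of $\cM_{\bff,\fsig}$, one per color $i\in J$, each being a ``copy'' of $v_\circ$ carrying the half-edges of color $i$. Thus $\cM_{\bff,\fsig}$ is built from $\cM$ by (i) replacing the single black vertex of $\cM$ by the $V_\bullet(\bff,\fsig)=\#\Gamma_{\bff,\fsig}$ black vertices prescribed by $\Gamma_{\bff,\fsig}$, and (ii) replacing each white vertex by $k$ monochromatic copies. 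The key point is that this construction is, up to relabelling, the same as taking $k$ ``parallel copies'' of the underlying structure of $\cM$ and then regluing the black ends according to $\Gamma_{\bff,\fsig}$; each face of $\cM$ lifts to at most $k$ faces of $\cM_{\bff,\fsig}$, since a face of $\cM$ is a cyclic sequence of corners and each corner at a white vertex now exists in $k$ colored copies, while each corner at the black vertex of $\cM$ is split among the $V_\bullet$ black vertices. Following an orbit (face) of $\cM_{\bff,\fsig}$ and projecting it to $\cM$ (forget the color and the splitting of the black vertex) yields a face of $\cM$, and the fiber over a given face of $\cM$ has size at most $k$ (one for each color that can be consistently carried around that face). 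This gives the first inequality $F(\cM_{\bff,\fsig})\le k\,F(\cM)$.

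The second inequality then follows by combining this bound with the two Euler relations. Writing the genus formula \eqref{eq:Euler} for $\cM$ (with one black vertex, $V(\cM)-1$ white vertices, $p$ edges) gives
\begin{equation*}
2-2g(\cM)=1+(V(\cM)-1)+F(\cM)-p,
\end{equation*}
and for $\cM_{\bff,\fsig}$ (with $\Kc(\cM_{\bff,\fsig})$ components, $V_\bullet(\bff,\fsig)$ black vertices, $k(V(\cM)-1)$ white vertices by $\Delta_{\bff,\fsig}(\cM)=0$, and $kp$ edges) gives
\begin{equation*}
2\Kc(\cM_{\bff,\fsig})-2g(\cM_{\bff,\fsig})=V_\bullet(\bff,\fsig)+k(V(\cM)-1)+F(\cM_{\bff,\fsig})-kp.
\end{equation*}
Multiply the first relation by $k$ and subtract the second; using $F(\cM_{\bff,\fsig})\le k\,F(\cM)$ to control the face terms, and using $\Kc(\cM_{\bff,\fsig})\ge 1$, $V_\bullet(\bff,\fsig)\ge 1$, one isolates $2kg(\cM)$ on one side and is left with $2g(\cM_{\bff,\fsig})$ plus a bounded correction coming from $k(\Kc(\cM_{\bff,\fsig})-1)\ge 0$ and from $k-V_\bullet(\bff,\fsig)\ge k-k=0$ being replaced by its worst case. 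A short bookkeeping then yields $kg(\cM)\le g(\cM_{\bff,\fsig})+k-1$, i.e. the claimed $g(\cM)\le g(\cM_{\bff,\fsig})/k+(k-1)/k$.

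The main obstacle is making the face-counting bound $F(\cM_{\bff,\fsig})\le k\,F(\cM)$ fully rigorous: one must check that, under $\Delta_{\bff,\fsig}(\cM)=0$, the ``projection'' map from faces of $\cM_{\bff,\fsig}$ to faces of $\cM$ is well defined and has fibers of size $\le k$. Concretely, a face of $\cM_{\bff,\fsig}$ is a cycle of $\Gamma_{\bff,\fsig}\alpha_{\bff}$; forgetting the color component $i$ of each $(a,i)$ and recording only the edge labels $a$ traversed, one obtains a closed walk in $\cM$, and because all white vertices are monochromatic the color is locally determined along white corners and only gets permuted by $\sigma_a$ at black corners — so the underlying edge-walk is exactly a face-orbit of $\cM$, and distinct lifts of the same $\cM$-face differ only by the choice of starting color, of which there are at most $k$. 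I would present this as the one genuinely combinatorial step and treat the Euler-characteristic manipulation as routine algebra; the inequalities $\Kc(\cM_{\bff,\fsig})\ge 1$ and $V_\bullet(\bff,\fsig)\ge 1$ supply exactly the slack that produces the additive $(k-1)/k$ term.
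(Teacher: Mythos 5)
Your proof takes essentially the same route as the paper's: the first inequality $F(\cM_{\bff,\fsig})\le k\,F(\cM)$ is established by projecting each face (orbit) of the unfolded map to a face of $\cM$ --- which is well defined precisely because $\Delta_{\bff,\fsig}(\cM)=0$ makes all white-vertex corners color-compatible --- and noting the fiber over each face of $\cM$ has size at most $k$; the paper phrases this as each orbit being a ``multiple'' of a face of $\cM$ and observing each side of each edge is used once per color, but it is the same counting. The second inequality is then pure Euler-formula bookkeeping in both.

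One small but substantive slip in your write-up: you say the slack in the second step comes from ``$\Kc(\cM_{\bff,\fsig})\ge 1$, $V_\bullet(\bff,\fsig)\ge 1$,'' but the lower bound on $V_\bullet$ points the wrong way. After substituting the two Euler relations and the face bound, one arrives at
$2k\,g(\cM)\le k+2g(\cM_{\bff,\fsig})-2\Kc(\cM_{\bff,\fsig})+V_\bullet(\bff,\fsig)$,
and to conclude $g(\cM)\le g(\cM_{\bff,\fsig})/k+(k-1)/k$ one needs $\Kc(\cM_{\bff,\fsig})\ge 1$ together with the \emph{upper} bound $V_\bullet(\bff,\fsig)\le k$ (which holds since $V_\bullet(\bff,\fsig)=\#\Gamma_{\bff,\fsig}$ partitions at most $k$ colors). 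You do in fact appeal to $k-V_\bullet(\bff,\fsig)\ge 0$ in the next clause, so you have the right inequality in hand; just fix the earlier statement so the two are consistent.
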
 

\begin{proof} Because $\Delta_{\bff, \fsig}(\cM)=0$, the orbits, though different from the faces, cover precisely the faces of $\cM$. The difference is that if a face starts on an edge $a\in \llbracket 1,p \rrbracket$ with a color $i \in J_a$, after going around $\cM$, it might come back for the first time on the same side of the edge $a$ for a color $j\neq i$. In that case, it would follow the face of $\cM$ again a certain number of times, until it reaches again the same side of the edge $a$, for the same color $i$. An orbit, which is projected to a face of $\cM_{\bff, \fsig}$, is therefore a multiple of a face of $\cM$. Each side of each edge $a$ is used exactly once per color in $J_a$. If all the orbits go exactly once around the map, then each face of $\cM$ corresponds to $k$ orbits (and thus faces of $\cM_{\bff, \fsig}$), as they never pass twice on the same side of an edge, and an edge has $k$ colors which are not traced. If not, there are less orbits, so that the first inequality is proven. 

To prove the second inequality, we consider the Euler characteristics of $\cM$, $2g(\cM) = 2 - V(\cM) + p - F(\cM)$. Since $\Delta_{\bff, \fsig}(\cM)=0$, we replace $V(\cM_{\bff, \fsig}) - V_\bullet(\bff, \fsig)= k(V(\cM) - 1)$, 
\begin{align} 
2kg(\cM) &= 2k - kV(\cM) + kp - kF(\cM) \\
& = k - \Bigl(V(\cM_{\bff, \fsig}) - V_\bullet(\bff, \fsig)\Bigr)+ kp - kF(\cM).
\end{align} 
Now using the Euler characteristics of $\cM_{\bff, \fsig}$, $  kp - V(\cM_{\bff, \fsig})  =   2g(\cM_{\bff, \fsig}) + F(\cM_{\bff, \fsig}) - 2K(\cM_{\bff, \fsig})$, we find that 
\begin{equation} 
2kg(\cM) = k + 2g(\cM_{\bff, \fsig}) + F(\cM_{\bff, \fsig}) - 2K(\cM_{\bff, \fsig}) + V_\bullet(\bff, \fsig)- kF(\cM).
\end{equation}  
Using the first inequality of \eqref{IneqFreeGen}, we are left with 
\begin{equation} 
2kg(\cM) \le  k + 2g(\cM_{\bff, \fsig})  - 2K(\cM_{\bff, \fsig}) + V_\bullet(\bff, \fsig).
\end{equation}  
We conclude using that $K(\cM_{\bff, \fsig}) \ge 1$ and $V_\bullet(\bff, \fsig) \le k$. 
\end{proof}

\begin{remark} A consequence of Proposition~\ref{prop:ShorterFreeMap} is that keeping the  $N_i$ finite for $i\in\llbracket 1,l\rrbracket$ does not lead to more interesting behavior. Indeed, a map with $\tilde L ( \bff, \fsig, \cM)=0$ also has vanishing genus, so that the large $N$ limit also selects all the maps  in $\bM_{\bff, \fsig}^{(0) \text{free}}$, and we recover the results of the present case.

\end{remark}

\smallskip

\
 
\noindent{\bf Application to the asymptotic freeness of permuted marginals of random Gaussian tensors.}
We now consider an application of Theorem~\ref{thm:TheoremGen} to a new situation, where we consider the set of all possible marginals of the tensor $X\otimes X^*$, with all possible permutations acting on half of the tensor legs. More precisely, let $n$ be even, and put, for an $(n/2)$-subset $I$ of $\llbracket 1,n\rrbracket$, $W_I = [\mathrm{id}_{I} \otimes \mathrm{Tr}_{\llbracket 1,n\rrbracket \setminus I}](XX^*)$, where $X \in (\mathbb C^N)^{\otimes n}$ is a Gaussian tensor; note that we assume here, for the sake of simplicity, that all the Hilbert space dimensions are $N$. The general case of different Hilbert space dimensions, say $N_i \sim c_i N$ for some constants $c_i >0$ and $N \to \infty$, can be easily obtained.

Let us introduce, for every permutation $\pi \in \mathcal S_{n/2}$,
\be 
\label{eq:WPi}
W_{I,\pi} = P_\pi W_I P_\pi^{-1} \in \mathcal M_{N^{n/2}}(\mathbb C)
\ee
the matrix obtained by permuting the tensor legs of $W_I$ according to the permutation $\pi$. For example, in the case $n=4$, there are 12 possible matrices $W_{I,\pi}$. We present in Figure~\ref{fig:W-1-2-vs-W-12} two examples: $W_{\{1,2\},(1)(2)} = W_{\{1,2\}}$ and $W_{\{1,3\},(12)} = F W_{\{1,3\}} F$, where $F \in \mathcal U_{d^2}$ is the \emph{flip operator}, acting on simple tensors by $F(x \otimes y) = y \otimes x$. 
\begin{figure}[!ht]
\includegraphics[scale=1]{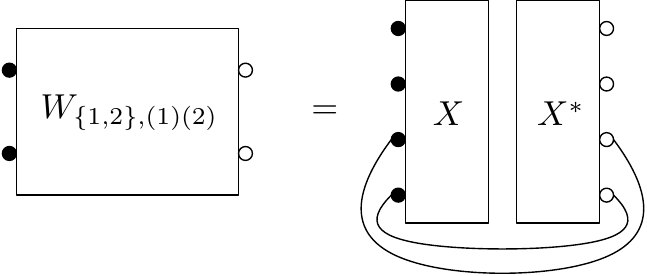} \qquad \includegraphics[scale=1]{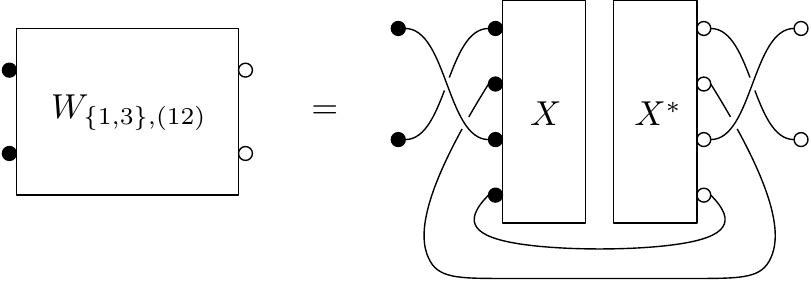}
\caption{Diagrams for the permuted marginals $W_{\{1,2\},(1)(2)}$ and $W_{\{1,3\},(12)}$.}
\label{fig:W-1-2-vs-W-12}
\end{figure}

This is therefore a particular case of the large $N_1,\ldots, N_n$ regime, for specific permutations $\sigma_a\in\fsig$, which factorize as 
\be
\label{eq:FactorizeSigma}
\forall a\in\llbracket 1,p\rrbracket, \qquad \sigma_a = \pi_{a+1}^{-1}\pi_a.
\ee
Note that the permutations $\sigma_a$ might share some fixed points (recall that we denote by $l$ the number of these common fixed points, and we also put $k=n/2-l$). Proposition~\ref{prop:LargeNinTheLarge1nRegime} applies, and the maps that contribute to the large $N$ limit are a subset of $\bM_{\bff, \fsig}^{(0) \text{free}}$: in particular, they are planar, and the edges that are attached to the same white vertices all have the same set of colors $J_a$.  Moreover, the hypothesis \eqref{eq:WPi} has stronger  consequences. Indeed, we have the following lemmas.

\begin{lemma} 
\label{lemma:BlackVerticesForWPi}
If the permutations in $\fsig$ factorize as in \eqref{eq:FactorizeSigma}, for any $\cM\in\bM_p$, the map $\cM_{\bff, \fsig}$ has precisely $k$ black vertices. As a consequence, if $\Sigma(\cM_{\bff, \fsig})=0$, the maps $\cM_{\bff, \fsig}$ have $k$ connected components.
\end{lemma}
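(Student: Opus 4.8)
The plan is to compute directly the cycle structure of the permutation $\Gamma_{\bff,\fsig}$ governing the black vertices of $\cM_{\bff,\fsig}$, using the factorization hypothesis $\sigma_a=\pi_{a+1}^{-1}\pi_a$ to conjugate it into a product over colors of a single full cycle. Recall that $\Gamma_{\bff,\fsig}$ acts on the set $E_{\bff}=\{(a,i): a\in\llbracket 1,p\rrbracket,\ i\in J_a\}$ by $(a,i)\mapsto(a+1,\sigma_a(i))$, and that $V_\bullet(\bff,\fsig)=\#\Gamma_{\bff,\fsig}$ is by definition the number of black vertices of the unfolded map; this number does not depend on $\cM$ (i.e.\ on $\alpha$). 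So the claim $V_\bullet(\bff,\fsig)=k$ is purely about $\Gamma_{\bff,\fsig}$, and the statement about connected components is then the immediate consequence of $\Sigma(\cM_{\bff,\fsig})=V_\bullet(\bff,\fsig)-\Kc(\cM_{\bff,\fsig})=0$.

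The key step is a change of variables. Since the $\sigma_a$ factorize through the $\pi_a$, define the bijection $\Phi: E_{\bff}\to E'$, $(a,i)\mapsto(a,\pi_a(i))$, where $E'=\{(a,j): a\in\llbracket 1,p\rrbracket,\ j\in\pi_a(J_a)\}$. I would first check that $\pi_a(J_a)$ is in fact independent of $a$: writing $\bar J:=\pi_1(J_1)$, the relation $\sigma_a=\pi_{a+1}^{-1}\pi_a$ combined with $J_{a+1}=\sigma_a(J_a)$ (the domain on which $\sigma_a$ acts non-trivially travels with the word, cf.\ the convention $\sigma_a:I_a\to I_{a+1}$) gives $\pi_{a+1}(J_{a+1})=\pi_{a+1}\sigma_a(J_a)=\pi_a(J_a)$, so by induction all $\pi_a(J_a)$ equal $\bar J$, a set of size $k$. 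Under $\Phi$, the permutation $\Gamma_{\bff,\fsig}$ is conjugated to $\Phi\,\Gamma_{\bff,\fsig}\,\Phi^{-1}:(a,j)\mapsto \Phi(a+1,\sigma_a(\pi_a^{-1}(j)))=(a+1,\pi_{a+1}\sigma_a\pi_a^{-1}(j))=(a+1,j)$, using exactly the factorization. Hence in the new coordinates $\Gamma_{\bff,\fsig}$ becomes $(a,j)\mapsto(a+1,j)$ on $\llbracket 1,p\rrbracket\times\bar J$, which is a product of $k=|\bar J|$ disjoint $p$-cycles, one for each colour $j\in\bar J$. Therefore $\#\Gamma_{\bff,\fsig}=k$, i.e.\ $\cM_{\bff,\fsig}$ has exactly $k$ black vertices.

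For the consequence: $\Sigma(\cM_{\bff,\fsig})=V_\bullet(\bff,\fsig)-\Kc(\cM_{\bff,\fsig})=k-\Kc(\cM_{\bff,\fsig})$, so $\Sigma(\cM_{\bff,\fsig})=0$ forces $\Kc(\cM_{\bff,\fsig})=k$, i.e.\ the unfolded map has $k$ connected components. The main (and essentially only) obstacle is making the bookkeeping of the travelling colour sets $J_a$ under the twisted contractions airtight --- in particular checking carefully that the convention $\sigma_a:I_a\to I_{a+1}$ (hence $J_{a+1}=\sigma_a(J_a)$) is compatible with $\sigma_a=\pi_{a+1}^{-1}\pi_a$ in the sense needed for $\pi_a(J_a)$ to be constant; once that is verified the conjugation computation is mechanical. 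A minor point to address is the cyclic index: since $\sigma_p:I_p\to I_1$ and $\pi_{p+1}:=\pi_1$, the factorization $\sigma_p=\pi_1^{-1}\pi_p$ closes the cycle consistently, so no edge case arises at $a=p$.
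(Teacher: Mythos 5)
Your proof is correct and takes essentially the same approach as the paper's: the paper iterates $\Gamma_{\bff,\fsig}$ directly and uses the telescoping $\pi_{a+r}^{-1}\pi_{a+r-1}\cdots\pi_{a+1}^{-1}\pi_a=\pi_{a+r}^{-1}\pi_a$ to show every orbit has length $p$ (hence $pk/p=k$ cycles), while you package the same telescoping as a conjugation by $(a,i)\mapsto(a,\pi_a(i))$ that reduces $\Gamma_{\bff,\fsig}$ to the pure shift $(a,j)\mapsto(a+1,j)$. The intermediate observation that $\pi_a(J_a)$ is constant in $a$ is a nice addition that makes the well-definedness of the change of variables explicit.
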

\begin{proof}
In the combinatorial maps setting, the picture is the following: when arriving to an edge,
the color of an orbit changes by multiplying with $\pi_a^{-1}$, and when leaving the edge, one multiplies with the inverse  permutation $\pi_a$. Formally, since the black vertices of $\cM_{\bff, \fsig}$ correspond to the cycles of the permutation $\Gamma_{\bff, \fsig}$ from \eqref{eq:def-Gamma-f-sigma}, we show that $\Gamma_{\bff, \fsig}$ has precisely $k$ cycles. Indeed, we have
$$(a,i) \xrightarrow{\Gamma_{\bff, \fsig}} (a+1, \pi_{a+1}^{-1}\pi_a(i)) \xrightarrow{\Gamma_{\bff, \fsig}} (a+2, \underbrace{\pi_{a+2}^{-1}\pi_{a+1}\pi_{a+1}^{-1}\pi_a(i)}_{\pi_{a+2}^{-1}\pi_a(i)})\xrightarrow{\Gamma_{\bff, \fsig}}  \cdots \xrightarrow{\Gamma_{\bff, \fsig}} (\underbrace{a+p}_a, \underbrace{\pi_{a+p}^{-1}\pi_a(i)}_i),$$
for all $i \in J_a$, proving the claim (the addition on the first coordinate is done modulo $p$). 
\end{proof}

\begin{lemma}\label{lem:free-implies-pi-constant}
If  $\cM$ is such that $\Delta_{\bff, \fsig}(\cM)=\Sigma(\cM_{\bff, \fsig})=0$, 
and the permutations in $\fsig$ factorize as in  \eqref{eq:FactorizeSigma}, the edges incident to a given white vertex are all labeled by the same permutation $\pi$. 
\end{lemma}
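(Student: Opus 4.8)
The plan is to push the two vanishing hypotheses through the construction of the unfolded map $\cM_{\bff, \fsig}$, and then to read the conclusion off the explicit description of the black vertices of $\cM_{\bff, \fsig}$ obtained in the proof of Lemma~\ref{lemma:BlackVerticesForWPi}. Fix a white vertex $w$ of $\cM$ and let $a_1, \dots, a_d$ be the edges incident to it, listed in cyclic order. Since $\Delta_{\bff, \fsig}(\cM) = 0$, all of these edges carry the same set of colours, say $J^{(w)}$; note that $\llbracket 1, l \rrbracket \subseteq I_{a_m}$ for each $m$ (a colour must belong to the domain $I_{a_m}$ of $\sigma_{a_m}$ in order to be a common fixed point), so $J^{(w)} = I_{a_m} \setminus \llbracket 1, l \rrbracket$ has cardinality $k$ and is disjoint from $\llbracket 1, l \rrbracket$. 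Because every edge at $w$ contains every colour of $J^{(w)}$, for $i \in J^{(w)}$ and $a$ incident to $w$ one has $\alpha_i(a) = \alpha(a)$; hence $w$ unfolds in $\cM_{\bff, \fsig}$ into exactly $k$ white vertices $w^{(i)}$, $i \in J^{(w)}$, the vertex $w^{(i)}$ being the cycle of $\alpha_{\bff}$ through the copies $(a_1, i), \dots, (a_d, i)$.

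Next I would invoke Lemma~\ref{lemma:BlackVerticesForWPi}: under the factorisation \eqref{eq:FactorizeSigma} the map $\cM_{\bff, \fsig}$ has exactly $k = V_\bullet(\bff, \fsig)$ black vertices, which we index by the colour $c \in J_1$ they carry on the edge labelled $1$; the computation of the cycles of $\Gamma_{\bff, \fsig}$ in that proof shows that a copy $(a, i)$ sits in the black vertex labelled $\pi_1^{-1} \pi_a(i)$. Now I would use $\Sigma(\cM_{\bff, \fsig}) = 0$: since $\Sigma = V_\bullet - \Kc$ and always $\Kc \le V_\bullet$, this forces $\cM_{\bff, \fsig}$ to have $k$ connected components, hence exactly one black vertex per component, so every white vertex of $\cM_{\bff, \fsig}$ is incident to a single black vertex. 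Applying this to $w^{(i)}$, whose incident edges are $(a_1, i), \dots, (a_d, i)$, yields $\pi_1^{-1} \pi_{a_m}(i) = \pi_1^{-1} \pi_{a_{m'}}(i)$, i.e. $\pi_{a_m}(i) = \pi_{a_{m'}}(i)$, for all $m, m'$ and all $i \in J^{(w)}$.

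It remains to compare the $\pi_{a_m}$ on $\llbracket 1, l \rrbracket$. The colours $1, \dots, l$ being common fixed points of all the $\sigma_a = \pi_{a+1}^{-1} \pi_a$, one has $\pi_a(i) = \pi_{a+1}(i)$ for every $i \le l$ and every $a$, so all the $\pi_a$ — in particular $\pi_{a_1}, \dots, \pi_{a_d}$ — agree on $\llbracket 1, l \rrbracket$. Combining the two previous paragraphs, $\pi_{a_1}, \dots, \pi_{a_d}$ coincide on $J^{(w)} \sqcup \llbracket 1, l \rrbracket$, a set of cardinality $k + l = \cI$, hence they coincide everywhere; this is the claim.

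The only genuinely delicate point is the identification, via the proof of Lemma~\ref{lemma:BlackVerticesForWPi}, of the black vertex containing a copy $(a,i)$ as $\pi_1^{-1}\pi_a(i)$; once this is in hand the rest is bookkeeping. One should also note in passing that the canonical re-indexings of the colour sets $J_a$ onto $\llbracket l+1, \cI \rrbracket$ are compatible along the word — which is precisely what makes $\sigma_a = \pi_{a+1}^{-1}\pi_a$ carry $J_a$ bijectively onto $J_{a+1}$, a consequence of each $\sigma_a$ fixing $\llbracket 1, l\rrbracket$ pointwise and mapping $I_a$ onto $I_{a+1}$ — but this is routine and does not affect the argument above.
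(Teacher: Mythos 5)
Your proof is correct and follows essentially the same route as the paper's: unfold the white vertex using $\Delta_{\bff,\fsig}=0$, identify the black vertex containing $(a,i)$ via the cycle computation in Lemma~\ref{lemma:BlackVerticesForWPi}, and use $\Sigma(\cM_{\bff,\fsig})=0$ to force $\pi_{a_m}(i)=\pi_{a_{m'}}(i)$ for all $i\in J^{(w)}$. The only difference is that you spell out explicitly the agreement of the $\pi_a$ on the common fixed points $\llbracket 1,l\rrbracket$ and the cardinality count $k+l=\cI$ needed to conclude $\pi_{a_m}=\pi_{a_{m'}}$ — a step the paper leaves implicit when it writes ``for all $i$, which is the claim.''
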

\begin{proof}
Consider two edges $a$ and $b$ incident to the same white vertex in $\cM$. As $\Delta_{\bff, \fsig}(\cM)=0$, they have the same set of colors $J_a=J_b$. Take a color $i\in J_a$. In the unfolded map $\cM_{\bff, \fsig}$, the white vertex corresponding to the color $i \in J_a$ has attached to it, among others, the edges $(a,i)$ and $(b,i)$. The edge $(a,i)$ has its other end attached to the black vertex corresponding to the cycle containing $(a,i)$ in the permutation $\Gamma_{\bff, \fsig}$. In order for the map $\cM_{\bff, \fsig}$ to have exactly $k$ connected components (see Lemma~\ref{lemma:BlackVerticesForWPi}), the edge $(b,i)$ must be connected to the same black vertex (otherwise, the aforementioned white vertex would be connected to two different black vertices, decreasing the number of connected components). However, from the proof of Lemma~\ref{lemma:BlackVerticesForWPi}, we see that the black vertex has attached to it the following edges: $\{(c,\pi_c^{-1}\pi_a(i))\}_{c \in \llbracket 1,p\rrbracket}$. Hence, we must have $\pi_b^{-1}\pi_a(i) = i$ for all $i$, which is the claim.
\end{proof}

\begin{lemma}\label{lem:rCopiesMap}
If  $\cM$ is such that $\Delta_{\bff, \fsig}(\cM)=\Sigma(\cM_{\bff, \fsig})=0$,  and the permutations in $\fsig$ factorize as in  \eqref{eq:FactorizeSigma}, the unfolded map $\cM_{\bff, \fsig}$ consists of $k$ copies of the original map $\cM$ (when discarding the edge coloring). 
\end{lemma}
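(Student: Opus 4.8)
The plan is to show that, under the hypotheses $\Delta_{\bff,\fsig}(\cM)=\Sigma(\cM_{\bff,\fsig})=0$ together with the factorization \eqref{eq:FactorizeSigma}, the unfolded map $\cM_{\bff,\fsig}$ decomposes into $k$ connected components, each of which is canonically isomorphic (as an uncolored bicolored map) to $\cM$ itself. First I would recall what the previous two lemmas have already given us: by Lemma~\ref{lemma:BlackVerticesForWPi}, $\cM_{\bff,\fsig}$ has exactly $k$ black vertices, and since $\Sigma(\cM_{\bff,\fsig})=0$ it has exactly $k$ connected components, hence exactly one black vertex per component. By Lemma~\ref{lem:free-implies-pi-constant}, the edges incident to a common white vertex of $\cM$ all carry the same permutation label $\pi$, so in particular they carry the same color set $J_a$ (this is just $\Delta_{\bff,\fsig}(\cM)=0$ restated), and $\alpha_{\bff}$ restricted to each color $i$ acts on the set of edges $a$ with $i\in J_a$ exactly as $\alpha$ acts on the corresponding white-vertex cycle. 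The goal is then to exhibit, for each ``reference color'' indexing a connected component, a bijection between the edges $(a,i)$ of that component and the edges $a$ of $\cM$ that intertwines $\Gamma_{\bff,\fsig}$ with $\gamma$ and $\alpha_{\bff}$ with $\alpha$.

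Concretely, I would fix a cyclic labeling of the $k$ cycles of $\Gamma_{\bff,\fsig}$. From the computation in the proof of Lemma~\ref{lemma:BlackVerticesForWPi}, the cycle of $\Gamma_{\bff,\fsig}$ through $(1,i)$ is exactly $\bigl\{(a,\pi_a^{-1}\pi_1(i))\bigr\}_{a\in\llbracket 1,p\rrbracket}$, and as $a$ runs over $\llbracket 1,p\rrbracket$ the first coordinate runs over all of $\llbracket 1,p\rrbracket$; so the map $\varphi_i\colon (a,\pi_a^{-1}\pi_1(i))\mapsto a$ is a bijection from the black-vertex cycle of the component containing $(1,i)$ onto $\llbracket 1,p\rrbracket$, and it is immediate from the formula for $\Gamma_{\bff,\fsig}$ that $\varphi_i\circ\Gamma_{\bff,\fsig}=\gamma\circ\varphi_i$ on that component. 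The remaining point is that $\varphi_i$ also conjugates $\alpha_{\bff}$ to $\alpha$ on the white-vertex side: an edge $(a,j)$ in the component, with $j=\pi_a^{-1}\pi_1(i)$, is sent by $\alpha_{\bff}$ to $(\alpha_j(a),j)$ where $\alpha_j(a)=\alpha^q(a)$ with $q$ the first return of the $\alpha$-orbit to a label whose color-set contains $j$; but by Lemma~\ref{lem:free-implies-pi-constant} all edges at the white vertex of $a$ carry the same $J$-set, so $q=1$ and $\alpha_j(a)=\alpha(a)$. Hence $\varphi_i(\alpha_{\bff}(a,j)) = \alpha(a) = \alpha(\varphi_i(a,j))$. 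Therefore $\varphi_i$ is an isomorphism of bicolored maps from the connected component of $(1,i)$ onto $\cM=(\gamma,\alpha)$, and letting $i$ range over a transversal of the $k$ components we obtain the claimed decomposition of $\cM_{\bff,\fsig}$ into $k$ disjoint copies of $\cM$.

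I would then close with two small consistency checks, since they make the statement airtight: the $k$ components are genuinely disjoint (their black-vertex cycles partition $E_{\bff}$ by Lemma~\ref{lemma:BlackVerticesForWPi}, and a white vertex of $\cM_{\bff,\fsig}$, being attached to edges of a single color by construction of $\alpha_{\bff}$, lies in a single component), and the isomorphisms $\varphi_i$ respect the face structure automatically, being compatible with both permutations. The main obstacle — really the only subtle point — is making sure that the genericity we are allowed to assume ($\Delta=0$, so that $\alpha_j(a)=\alpha(a)$, and $\Sigma=0$, so that the components do not get glued) is exactly what kills the two ways in which $\cM_{\bff,\fsig}$ could fail to be $k$ disjoint copies of $\cM$: a white vertex of $\cM$ splitting into \emph{more} than $k$ pieces (prevented by $\Delta_{\bff,\fsig}(\cM)=0$ via Lemma~\ref{lem:free-implies-pi-constant}) and two distinct color-components sharing a vertex (prevented by $\Sigma(\cM_{\bff,\fsig})=0$ via Lemma~\ref{lemma:BlackVerticesForWPi}). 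Everything else is the bookkeeping of chasing the two explicit permutation formulas \eqref{eq:def-Gamma-f-sigma} and the definition of $\alpha_{\bff}$, which is routine once the bijection $\varphi_i$ is written down.
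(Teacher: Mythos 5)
Your proof is correct and takes essentially the same approach as the paper's: identify the $k$ connected components via Lemma~\ref{lemma:BlackVerticesForWPi} together with $\Sigma=0$, use $\Delta_{\bff,\fsig}(\cM)=0$ to get $\alpha_i(a)=\alpha(a)$ so that the white-vertex permutation of each component is $\alpha$, and observe that each black vertex carries the full cycle $\gamma$. The only difference is presentational: you make the isomorphism explicit through the bijections $\varphi_i$, whereas the paper argues informally that, discarding colors, each component is given by the pair of permutations $(\gamma,\alpha)$; both are valid.
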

\begin{proof}
 From the proof of Lemma~\ref{lemma:BlackVerticesForWPi}, $\cM_{\bff, \fsig}$ has $k$ connected components, and each one of them has one black vertex with $p$ incident edges, ordered from $1$ to $p$. Discarding the edge coloring,
 the permutation defining the black vertex of each one of the connected components is therefore $\gamma$. These $p$ edges are attached to white vertices in $\cM_{\bff, \fsig}$.
Now since all the edges $a$ incident to some white vertex $v$ of $\cM$ have the same sets $J_a$, each white vertex $v$ of $\cM$ corresponds to $k=|J_a|$ white vertices $\{v^i\}_{i \in J_a}$ in the unfolded map $\cM_{\bff, \fsig}$, with the same edges $(a, \alpha(a), \alpha^2(a), \cdots)$ attached to it, in the same order. Therefore, discarding the edge coloring,
the permutation defining the white vertices of a given connected component of $\cM_{\bff, \fsig}$ is just $\alpha$. This proves that each connected component of  $\cM_{\bff, \fsig}$ is isomorphic to $\cM$.
\end{proof}

We arrive now at one of the main results of this paper, the asymptotic freeness of \emph{all} the (permuted) balanced marginals of a random multipartite quantum state.

\begin{theorem}\label{thm:asymptotic-freeness-balanced-general}
In the asymptotic setting described above, the family of random matrices 
$$(W_{f,\pi})_{f \in \binom{\llbracket 1,n \rrbracket}{n/2},\, \pi \in \mathcal S_{n/2}}$$ converges, as $N \to \infty$, to a family of $\binom{n}{n/2}(n/2)!$ free random variables, each having a Mar{\v{c}}enko-Pastur distribution of parameter $1$. At the level of moments, this reads
\begin{equation}\label{eq:moment-formula-Wishart-permuted}
\lim_{N \to \infty} \bE N^{-n(p+1)/2} \Tr W_{\boldsymbol{f},\boldsymbol{\pi}} = \sum_{\cM \in \bM^{bal}_{\boldsymbol{f},\boldsymbol{\pi}}} 1,
\end{equation}
where $\boldsymbol{f}$ is a word of $p$ $(n/2)$-subsets of $\llbracket 1,n \rrbracket$, $\boldsymbol{\pi}$ is a word of $p$ permutations and
$$\bM^{bal}_{\boldsymbol{f},\boldsymbol{\pi}} := \{ \cM \in \bM_p \, : \, \cM \text{ is planar and $\boldsymbol{f},\boldsymbol{\pi}$ are constant on the white vertices of $\cM$}\}.$$
\end{theorem}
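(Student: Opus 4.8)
The plan is to combine the general moment formula of Theorem~\ref{thm:TheoremGen} with the structural lemmas \ref{lemma:BlackVerticesForWPi}--\ref{lem:rCopiesMap} to isolate the surviving maps in the large $N$ limit, and then to invoke the moment-cumulant machinery to read off freeness. First I would specialize Theorem~\ref{thm:TheoremGen} to the present setting: all Hilbert space dimensions equal $N$, $l=0$ (we keep all legs ``movable'', so $k=n/2$), and the permutations $\fsig$ factorize as $\sigma_a=\pi_{a+1}^{-1}\pi_a$ as in \eqref{eq:FactorizeSigma}. By Lemma~\ref{lemma:BlackVerticesForWPi}, $V_\bullet(\bff,\fsig)=k=n/2$ is constant (independent of $\cM$), so the prefactor in \eqref{eq:TheoremGenEqn} becomes $N^{n(p+1)/2}$ after collecting exponents, and the sum over $\cM\in\bM_p$ carries the factor $N^{-2lg(\cM)-\tilde L(\bff,\fsig,\cM)}=N^{-\tilde L(\bff,\fsig,\cM)}$ since $l=0$. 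Thus the large $N$ limit picks out exactly the maps $\cM$ with $\tilde L(\bff,\fsig,\cM)=0$, i.e.\ with $g(\cM_{\bff,\fsig})=\Delta_{\bff,\fsig}(\cM)=\Sigma(\cM_{\bff,\fsig})=0$.

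Next I would characterize this set of maps. By Proposition~\ref{prop:ShorterFreeMap}, $g(\cM_{\bff,\fsig})=\Delta_{\bff,\fsig}(\cM)=0$ already forces $\cM$ planar; and $\Sigma(\cM_{\bff,\fsig})=0$ holds automatically once $\Delta_{\bff,\fsig}(\cM)=0$ in the factorized case, by Lemma~\ref{lemma:BlackVerticesForWPi} together with the counting in the proof of Proposition~\ref{prop:ShorterFreeMap}'s Lemma. So the surviving maps are precisely the planar maps on which $\bff$ is constant on each white vertex (i.e.\ $\Delta_{\bff,\fsig}(\cM)=0$) and, by Lemma~\ref{lem:free-implies-pi-constant}, on which $\boldsymbol\pi$ is also constant on each white vertex. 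This is exactly the set $\bM^{bal}_{\boldsymbol f,\boldsymbol\pi}$ in the statement, so each surviving map contributes $c^{V(\cM)-1}=1^{V(\cM)-1}=1$ (here $c=1$ since all dimensions equal $N$), giving the moment formula \eqref{eq:moment-formula-Wishart-permuted}.

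Finally I would translate the moment formula into the freeness statement. Following the end of Section~\ref{sec:FirstSec} (the ``free cumulants and maps'' discussion) and the argument in the proof of Theorem~\ref{thm:2-marginals-large-BC}, the sum over $\cM\in\bM_p^0$ reorganizes as a sum over non-crossing partitions: a planar map $\cM$ with white vertices encoding $\alpha\in\mathrm{NC}(p)$, and the constraint that $(\boldsymbol f,\boldsymbol\pi)$ be constant on each block of $\alpha$. Hence
\begin{equation*}
\lim_{N\to\infty}\bE N^{-n(p+1)/2}\Tr W_{\boldsymbol f,\boldsymbol\pi}=\sum_{\alpha\in\mathrm{NC}(p)}\ \prod_{b\text{ block of }\alpha}\mathbf 1_{(\boldsymbol f,\boldsymbol\pi)\text{ constant on }b},
\end{equation*}
and by the moment-cumulant formula \cite[Proposition 11.4]{nica2006lectures} the limiting joint free cumulant of a word $(W_{f(1),\pi(1)},\ldots,W_{f(p),\pi(p)})$ equals $1$ if all the pairs $(f(i),\pi(i))$ coincide and $0$ otherwise. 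In particular mixed free cumulants vanish, which by \cite[Theorem 11.16]{nica2006lectures} is equivalent to asymptotic freeness of the family $(W_{f,\pi})$, and the common marginal law has all free cumulants equal to $1$, i.e.\ it is $\mathrm{MP}_1$. I expect the main obstacle to be the bookkeeping in the first step: verifying carefully that, with $l=0$ and the factorized $\fsig$, the exponent of $N$ in \eqref{eq:TheoremGenEqn} collapses to $n(p+1)/2 - \tilde L(\bff,\fsig,\cM)$ uniformly in $\cM$ (using $V_\bullet(\bff,\fsig)=k$ from Lemma~\ref{lemma:BlackVerticesForWPi}, $r=n/2+1$, and $2k+2l-n=0$), and checking that no lower-order term in the $h(N)\to c=1$ expansion spoils the identification of $\bM^{bal}_{\boldsymbol f,\boldsymbol\pi}$ as the exact index set.
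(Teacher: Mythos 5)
Your high-level strategy matches the paper's: apply Theorem~\ref{thm:TheoremGen} (in the form of Proposition~\ref{prop:LargeNinTheLarge1nRegime}), identify the surviving maps as those with $\tilde L(\bff,\fsig,\cM)=0$, check that this set coincides with $\bM^{bal}_{\boldsymbol f,\boldsymbol\pi}$, and then read off freeness from the moment-cumulant formula. However, there is a genuine gap in the middle step, together with a false auxiliary claim that masks it.

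The false claim is that ``$\Sigma(\cM_{\bff,\fsig})=0$ holds automatically once $\Delta_{\bff,\fsig}(\cM)=0$ in the factorized case.'' This is not true: $\Delta=0$ only says that all edges meeting a given white vertex of $\cM$ carry the same color set $J_a$, but it says nothing about the permutation labels. If two edges $(a,i)$ and $(b,i)$ are incident to the same white vertex of $\cM_{\bff,\fsig}$ but $\pi_a\neq\pi_b$ on some color $i$, they are wired to \emph{different} black vertices (recall from Lemma~\ref{lemma:BlackVerticesForWPi} that the black vertex containing $(a,i)$ is $\{(c,\pi_c^{-1}\pi_a(i))\}_{c}$), which merges those black vertices into a single connected component and makes $\Sigma>0$. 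Having $\Sigma=0$ is therefore a genuine extra constraint, not a consequence of $\Delta=0$.

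This makes the set equality $\bM_{\bff,\fsig}^{(0)\text{free}}=\bM^{bal}_{\boldsymbol f,\boldsymbol\pi}$ underjustified in your write-up. The forward inclusion $\bM_{\bff,\fsig}^{(0)\text{free}}\subseteq\bM^{bal}$ you essentially have (you start from $\tilde L=0$, i.e.\ $g(\cM_{\bff,\fsig})=\Delta=\Sigma=0$, then Proposition~\ref{prop:ShorterFreeMap} gives planarity and Lemma~\ref{lem:free-implies-pi-constant} gives constancy of $\boldsymbol\pi$). But the reverse inclusion --- starting from a planar $\cM$ on which $\boldsymbol f$ and $\boldsymbol\pi$ are constant on white vertices, and deducing $g(\cM_{\bff,\fsig})=\Sigma(\cM_{\bff,\fsig})=0$ --- is missing. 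It is precisely here that Lemma~\ref{lem:rCopiesMap} is needed: from $\Delta=\Sigma=0$ and the factorization \eqref{eq:FactorizeSigma} one shows $\cM_{\bff,\fsig}$ consists of $k$ copies of $\cM$, and since $\cM$ is planar so is each copy, giving $g(\cM_{\bff,\fsig})=0$; and $\Sigma=0$ is obtained first, from constancy of $\boldsymbol\pi$ via the black-vertex structure. Without this converse, the moment formula \eqref{eq:moment-formula-Wishart-permuted} is only proved as an inequality (the limit is at most $|\bM^{bal}_{\boldsymbol f,\boldsymbol\pi}|$), not an equality.

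A minor point: the assumption $l=0$ is not warranted in general --- $l$ is the number of common fixed points of the $\sigma_a=\pi_{a+1}^{-1}\pi_a$, which depends on the word $(\boldsymbol f,\boldsymbol\pi)$ and can be positive. The normalization still comes out to $N^{n(p+1)/2}$ because $V_\bullet(\bff,\fsig)=k$ (Lemma~\ref{lemma:BlackVerticesForWPi}) and $l+k=n/2$, but one should keep $l$ general and use this identity rather than setting $l=0$.
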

\begin{remark}
The above result can be easily generalized to the case where the dimensions of the Hilbert spaces are not identical: one has to replace the ``1'' in the right hand side of \eqref{eq:moment-formula-Wishart-permuted} with the appropriate product of constants $c_i$, where the dimensions of the Hilbert spaces scale as $N_i \sim c_i N$, with $N \to \infty$.
\end{remark}
\begin{proof}
The statement about freeness follows easily from the moment formula, using the correspondence between planar maps and non-crossing partitions and the fact that a map $\cM$ belongs to $\bM^{bal}_{\boldsymbol{f},\boldsymbol{\pi}}$ iff its corresponding non-crossing partition $\alpha$ satisfies $\alpha \leq (\ker \boldsymbol{f} \wedge \ker{\boldsymbol{\pi}})$ (see also the end of the proof of Theorem~\ref{thm:2-marginals-large-BC}). 

We now show the moment formula \eqref{eq:moment-formula-Wishart-permuted}. Note that we are actually in the setting of Theorem~\ref{thm:TheoremGen} and of Proposition~\ref{prop:LargeNinTheLarge1nRegime}, with the permutations $\fsig$ being given by $\sigma_a = \pi_{a+1}^{-1}\pi_a$, $\forall a \in \llbracket 1, p\rrbracket$.  Applying now Proposition~\ref{prop:LargeNinTheLarge1nRegime} (remember that all Hilbert space dimensions are $N$ in our current setting), we get
$$\lim_{N \to \infty} \bE N^{-n(p +1)/2  } \Tr W_{\boldsymbol{f},\boldsymbol{\pi}} = \sum_{\cM \in \bM_{\bff, \fsig}^{(0) \text{free}}} 1,$$
where we recall that (see \eqref{eq:FreeMaps2}) $\bM_{\bff, \fsig}^{(0) \text{free}} = \{\cM \, : \, \tilde L ( \bff, \fsig, \cM) = 0\}$, the functional $\tilde L$ being defined in \eqref{eq:def-tilde-L}. From Lemma~\ref{lemma:BlackVerticesForWPi}, we get that for all maps $\cM$, $V_\bullet(\boldsymbol{f},\boldsymbol{\sigma}) = k$. It is now enough to show that $\bM_{\bff, \fsig}^{(0) \text{free}} = \bM^{bal}_{\boldsymbol{f},\boldsymbol{\pi}}$. Let us start with the inclusion $\bM_{\bff, \fsig}^{(0) \text{free}} \subseteq \bM^{bal}_{\boldsymbol{f},\boldsymbol{\pi}}$. First, from $\Delta_{\bff, \fsig}(\cM) = 0$, we get that $\boldsymbol f$ is constant on the white vertices of $\cM$, see \eqref{eq:def-Delta}. The fact that $\boldsymbol \pi$ is also constant on the white vertices of $\cM$ follows form Lemma~\ref{lem:free-implies-pi-constant}, while the planarity of $\cM$ follows from Proposition~\ref{prop:ShorterFreeMap}. 

For the other inclusion, consider a map $\cM \in \bM^{bal}_{\boldsymbol{f},\boldsymbol{\pi}}$. Since $\boldsymbol f$ is constant on the white vertices of $\cM$, we have $\Delta_{\bff, \fsig}(\cM) = 0$. Also, recall from the proof of Lemma~\ref{lem:free-implies-pi-constant} that, for any edge $a$ and color $i \in J_a$, the black vertex to which $(a,i)$ is connected in $\cM_{\bff, \fsig}$ has the following incident edges: $\{(c,\pi_c^{-1}\pi_a(i))\}_{c \in \llbracket 1, p\rrbracket}$. In particular, using the fact that $\boldsymbol{\pi}$ is constant on the white vertices of $\cM$, all edges $(b,i)$ incident to the same white vertex as $(a,i)$ in $\cM_{\bff, \fsig}$ are connected to the same black vertex, and thus $\cM_{\bff, \fsig}$ has as many connected components as black vertices (\textit{i.e.}~$k$), so $\Sigma(\cM_{\bff, \fsig}) = 0$. Finally, from Lemma~\ref{lem:rCopiesMap}, the map $\cM_{\bff, \fsig}$ consists of $k$ copies of $\cM$, so that if $\cM$ is planar, $\cM_{\bff, \fsig}$ is too. This concludes the proof. 
\end{proof}
\begin{remark}
The result above has a simple interpretation in the case of two marginals $W_{f,\mathrm{id}}$ and $W_{\widehat f, \mathrm{id}}$, for complementary sets $f, \widehat f$: $f \sqcup \widehat f = \llbracket 1,n \rrbracket$. Indeed, for all $N$, these two marginals have exactly the same eigenvalues, and their eigenvector unitary operators are independent (this follows from the fact that they act on non-overlapping tensor factors). Free independence is in this case a consequence of Voiculescu's classical result \cite{voiculescu1990circular}. The discussion above generalizes easily to an arbitrary number of \emph{non-overlapping} marginals.
\end{remark}
  
  \smallskip
  
\subsubsection{\underline{Case where less than half the colors are traced}}
\label{subsub:LessHalfColorsTraced}

\medskip

\begin{proposition}
\label{prop:LargeNinTheLarge1nRegimeLessHalf}
In the large $N$ regime, if $ l + k > \frac n 2$ and with the previous notations,
 \begin{equation}
 \lim_{N\rightarrow\infty} \frac 1 {N^{n+(k+l)(p-2) + l   + V_\bullet(\bff, \fsig)-\mu}}\, \bE \Tr_{\fsig} W_{\bff}  = \sum_{\cM \in \bM_{\bff, \fsig}^{>}}\,  c^{\, V(\cM) - 1}, 
  \end{equation}
where 
	$$\mu=\min\{2lg(\cM) + \tilde L ( \bff, \fsig, \cM) + (2k+2l - n)(V(\cM) - 2) \, : \, \cM \in \bM_p\} \geq 0$$  
	and
	$$ \bM_{\bff, \fsig}^{>} := \{ \cM \in \bM_p \, : \, 2lg(\cM) + \tilde L ( \bff, \fsig, \cM) + (2k+2l - n)(V(\cM) - 2) = \mu \}.$$
\end{proposition}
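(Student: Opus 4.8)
The statement is essentially a bookkeeping consequence of Theorem~\ref{thm:TheoremGen}, with no new combinatorial insight required; the work lies entirely in extracting the dominant power of $N$ from the exact formula \eqref{eq:TheoremGenEqn}. The plan is as follows. First, I would start from \eqref{eq:MomentsGenLargeRegime}, valid in the balanced regime, which writes
\begin{equation*}
\bE \Tr_{\fsig} W_{\bff} = N^{(k+l)(p-2) + l + n + V_\bullet(\bff, \fsig)}\, \sum_{\cM \in \bM_p} h(N)^{V(\cM)-1}\, N^{-2lg(\cM) - \tilde L(\bff,\fsig,\cM) - (2k+2l-n)(V(\cM)-2)},
\end{equation*}
with $h(N) \to c$. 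Since we are in the case $2k+2l-n>0$, \emph{all three} summands $g(\cM)$, $\tilde L(\bff,\fsig,\cM)$ and $V(\cM)-2$ in the exponent of $N$ inside the sum are non-negative (the first by the classical genus inequality \eqref{eq:Euler}, the second because $\tilde L = 2(g(\cM_{\bff,\fsig}) + \Delta_{\bff,\fsig}(\cM) + \Sigma(\cM_{\bff,\fsig}))$ is a sum of non-negative quantities as recorded after \eqref{eq:def-Delta}, and the third because a connected map in $\bM_p$ has at least two vertices). Hence the quantity $\mu$ in the statement is well-defined, finite, and $\geq 0$, being a minimum over the finite set $\bM_p$ of a sum of non-negative integers.

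Second, I would split the sum over $\bM_p$ according to the value of the exponent $\phi(\cM) := 2lg(\cM) + \tilde L(\bff,\fsig,\cM) + (2k+2l-n)(V(\cM)-2)$. Writing $\phi_{\min} = \mu$, the maps achieving the minimum are exactly those in $\bM_{\bff,\fsig}^{>}$, and for these the $N$-power inside the sum is $N^{-\mu}$, while every other map contributes a strictly smaller power of $N$. Factoring out $N^{-\mu}$ and the prefactor, we get
\begin{equation*}
\bE \Tr_{\fsig} W_{\bff} = N^{(k+l)(p-2) + l + n + V_\bullet(\bff,\fsig) - \mu} \left( \sum_{\cM \in \bM_{\bff,\fsig}^{>}} h(N)^{V(\cM)-1} + o(1) \right),
\end{equation*}
where the $o(1)$ collects the finitely many maps with $\phi(\cM) > \mu$, each contributing a term of order $N^{-(\phi(\cM)-\mu)} \to 0$, and where I also absorb into $o(1)$ the discrepancy between $h(N)^{V(\cM)-1}$ and $c^{V(\cM)-1}$ for the (finitely many) surviving maps, using $h(N) \to c$ and boundedness of $V(\cM)-1 \leq p$. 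Dividing by $N^{n + (k+l)(p-2) + l + V_\bullet(\bff,\fsig) - \mu}$ and taking $N \to \infty$ yields the claimed limit $\sum_{\cM \in \bM_{\bff,\fsig}^{>}} c^{V(\cM)-1}$.

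The only genuinely substantive point — and the one I would treat most carefully — is the verification that each of $g(\cM)$, $g(\cM_{\bff,\fsig})$, $\Delta_{\bff,\fsig}(\cM)$, $\Sigma(\cM_{\bff,\fsig})$, and $V(\cM)-2$ is non-negative, so that the infimum defining $\mu$ is attained at a finite non-negative value and no cancellation among the three terms of $\phi$ can produce a larger power of $N$ than the putative leading one; this is what legitimates pulling $N^{-\mu}$ out as the true leading behaviour. All of these non-negativity facts are already established in the excerpt (genus of a connected bicolored map is $\geq 0$; $\Delta_{\bff,\fsig}(\cM) \geq 0$ and $\Sigma(\cM_{\bff,\fsig}) \geq 0$ by their definitions \eqref{eq:def-Delta} and the line following; $g(\cM_{\bff,\fsig}) \geq 0$ since $\cM_{\bff,\fsig}$ is a disjoint union of maps), so no new lemma is needed — the proof is a short assembly of \eqref{eq:MomentsGenLargeRegime} with these observations. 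I do not expect any serious obstacle; the main care is simply to keep track of the prefactor exponent $(k+l)(p-2) + l + n + V_\bullet(\bff,\fsig)$ and confirm it matches $n + (k+l)(p-2) + l + V_\bullet(\bff,\fsig)$ in the normalization, which it does verbatim.
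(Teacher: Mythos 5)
Your proposal is correct and follows essentially the same route as the paper: both start from the exact formula (the paper re-derives \eqref{eq:MomentsGenLargeRegime} from Theorem~\ref{thm:TheoremGen}, which you invoke directly), both rewrite the exponent of $N$ as a fixed prefactor minus $\phi(\cM) := 2lg(\cM) + \tilde L(\bff,\fsig,\cM) + (2k+2l-n)(V(\cM)-2)$, and both conclude by minimizing $\phi$ over the finite set $\bM_p$ using non-negativity of $g$, $\tilde L$, and $V-2$. Your more explicit handling of the $o(1)$ terms and the $h(N)\to c$ convergence is fine and merely spells out what the paper leaves implicit.
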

\begin{proof} It follows from \eqref{eq:MomentsGenLargeRegime}, but let us detail the computations starting from Theorem~\ref{thm:TheoremGen}.
	Under the usual balanced scaling, the exponent of $N$ from Theorem~\ref{thm:TheoremGen} can be bounded as follows
		\begin{align*}
		l(2+p-&V(\cM)-2g(\cM)) + (n-r+1)(V(\cM)-1)+kp+V_\bullet(\boldsymbol{f},\boldsymbol{\sigma})+\\
		&\qquad\qquad\qquad\qquad+(r-l-2k-1)(V(\cM)-1) - \tilde L ( \bff, \fsig, \cM) \\
		&= (n-2l-2k)(V(\cM)-1) + (k+l)p+l-2lg(\cM)+ V_\bullet(\boldsymbol{f},\boldsymbol{\sigma})-\tilde L ( \bff, \fsig, \cM)\\
		&=  n+(k+l)(p-2)+l+ V_\bullet(\boldsymbol{f},\boldsymbol{\sigma}) -\\
		&\qquad\qquad\qquad\qquad- [2lg(\cM) + \tilde L ( \bff, \fsig, \cM) + (2k+2l - n)(V(\cM) - 2)]\\
		&\leq n+(k+l)(p-2)+l+ V_\bullet(\boldsymbol{f},\boldsymbol{\sigma})-\mu.
		\end{align*}
		The fact that $\mu \geq 0$ follows from the hypothesis $n-2l-2k<0$, and from the bounds $V(\cM)\geq 2$, $g(\cM)\geq 0$, $ \tilde L ( \bff, \fsig, \cM)\geq 0$.
\end{proof}

If all the edges have the same set of colors, the minimum $\mu$ is achieved by the unique planar map in $\bM_p$ having two vertices (one black and one white). However we stress that in the general case, this map does not have a vanishing $ \tilde L ( \bff, \fsig, \cM)\geq 0$, and therefore the minimization problem seems highly non-trivial.

Indeed, depending on the values taken by the coefficient of $V(\cM)-2$, two maps with different $g$, $\tilde L$, and $V$ might have the same minimal value of $2lg + \tilde L  + (2k+2l - n)(V - 2)$. We illustrate this with a very simple example in the case $l=1$ (color $A$), $k=2$ (colors $B,C$), and $n$ is kept as a parameter. We take $\sigma_a=\pi_{a+1}^{-1}\pi_a$, $\pi_1=(12)$ (the transposition) and $\pi_2=(1)(2)$ (the identity), so that both $\sigma_1$ and $\sigma_2$ are transpositions. In that case, 
$$
	\includegraphics[width=0.55\textwidth]{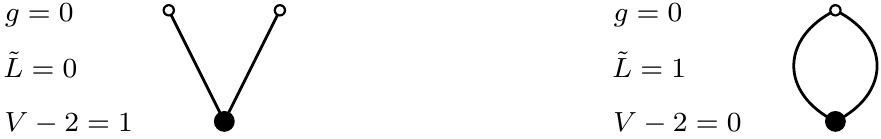}
$$
\begin{itemize}
    \item if $n=3$, $2k+2l - n = 3$, and the only map in $\bM_{\bff, \fsig}^{>}$ is that with two vertices,
    \item if $n=5,6$, $2k+2l - n = 1,0$ and the only map in $\bM_{\bff, \fsig}^{>}$ is the tree,
    \item if $n=4$, $2k+2l - n = 2$ and the two maps belong to $\bM_{\bff, \fsig}^{>}$.
    \end{itemize}

\subsection{The unbalanced asymptotical regime}

We consider in this section the asymptotical regime where the dimensions of the Hilbert spaces that ``move around'' are fixed; in this respect, the setting here generalized the $4$-partite situation considered in Section \ref{sec:2-marginals-fixed-BC}. In this case,  we fix $N_J = m$, and set $\forall i \in \llbracket 1,l \rrbracket, N_i=N$ and $\forall j \in \llbracket r,n\rrbracket, N_j \sim c^{\frac 1 {n-r+1}}N$ as $N\rightarrow \infty$. Theorem~\ref{thm:TheoremGen} writes
 \begin{align}
\bE \Tr_{\fsig} W_{\bff}  &= N^{pl + n-r+1}
 \sum_{\cM \in \bM_p} h(N)^{V(\cM)-1}  N^{ - 2lg(\cM) -  (l+r-n-1)(V(\cM)-2)} \\
  \nonumber&\hspace{4cm}\times m^{kp + V_\bullet(\bff, \fsig) + (r-l-2k-1)(V(\cM)-1) -\tilde L ( \bff, \fsig, \cM)},
 \end{align}
where $h$ is an arbitrary function whose limit at infinity is $c$. Again  we have different behaviors depending on the factor $l+r-n-1 = 2l+k-n$ in front of $V(\cM)-2$:
\begin{itemize}
    \item If $l+r-n-1<0$, the large $N$ limit is trivial, the only surviving map being the one with $p+1$ vertices (1 black and $p$ white). This situation was detailed in the beginning of Sec.~\ref{sec:BalancedGeneral}.
    \item If $l+r-n-1>0$, then $l >0$, and the large $N$ limit selects the only planar map with two vertices. The limit is also trivial. 
\end{itemize}

\

Besides the two trivial cases above, the only remaining situation is $l+r-n-1=0$ (or, equivalently $2l+k=n$) and we therefore assume it in the following.  In this case, Theorem~\ref{thm:TheoremGen} writes
 \begin{equation}
 \bE \Tr_{\fsig} W_{\bff}  = N^{l(p+1)}\sum_{\cM \in \bM_p}\, h(N)^{V(\cM)-1}   N^{ - 2lg(\cM)} m^{kp + V_\bullet(\bff, \fsig) + (r-l-2k-1)(V(\cM)-1) -\tilde L ( \bff, \fsig, \cM)}, 
 \end{equation}
where  $(r-l-2k-1)(V(\cM)-1) -\tilde L ( \bff, \fsig, \cM)$ is a generalization of $``\alt"$  which we defined in Def.~\ref{eq: Definition-Alt}, Section~\ref{sec:ABCD} for $n=4$. We recall that $\bM_p^{(0)}$ is the subset of planar elements of $\bM_p$. 
\begin{proposition}
\label{prop:lastreg}
In this regime, with the previous notations, 
 \begin{equation}
 \lim_{N\rightarrow\infty} \frac 1 {N^{l(p+1)}}\, \bE \Tr_{\fsig} W_{\bff}  = \sum_{\cM \in \bM_p^{(0)}}\, c^{\, V(\cM) - 1} m^{kp + V_\bullet(\bff, \fsig) + (r-l-2k-1)(V(\cM)-1) -\tilde L ( \bff, \fsig, \cM)}. 
 \end{equation}
\end{proposition}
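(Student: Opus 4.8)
The plan is to derive Proposition~\ref{prop:lastreg} as a direct corollary of the exact map formula in Theorem~\ref{thm:TheoremGen}, exactly as Proposition~\ref{prop:LargeNinTheLarge1nRegimeLessHalf} and Theorem~\ref{thm:2-marginals-fixed-BC} were derived. First I would start from equation \eqref{eq:TheoremGenEqn}, specialise the dimensions to $N_i = N$ for $i \in \llbracket 1, l\rrbracket$, $N_j \sim c^{1/(n-r+1)}N$ for $j \in \llbracket r,n\rrbracket$, and $N_J = m$ fixed, and collect the exponent of $N$. Using $\prod_{i=1}^l N_i^{2+p-V(\cM)-2g(\cM)} \prod_{j=r}^n N_j^{V(\cM)-1}$ and writing $h(N) \to c$ for the product of the ratios $\prod_j (N_j/N)$, the exponent of $N$ becomes $l(2+p-V(\cM)-2g(\cM)) + (n-r+1)(V(\cM)-1)$, which equals $l(p+1) - 2lg(\cM) - (l+r-n-1)(V(\cM)-2)$ after an elementary rearrangement (this is precisely the manipulation already carried out inside the proof of Proposition~\ref{prop:LargeNinTheLarge1nRegimeLessHalf}). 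The factor of $m$ is simply $m^{kp + V_\bullet(\bff,\fsig) + (r-l-2k-1)(V(\cM)-1) - \tilde L(\bff,\fsig,\cM)}$, which stays bounded since $m$ is a fixed constant.

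Next I would invoke the standing hypothesis of this subsection, $l+r-n-1 = 0$ (equivalently $2l+k = n$), which kills the $(V(\cM)-2)$ term and leaves the exponent of $N$ equal to $l(p+1) - 2lg(\cM)$. Since $g(\cM) \geq 0$ with equality precisely for planar maps, dividing by $N^{l(p+1)}$ and letting $N \to \infty$ retains exactly the maps $\cM \in \bM_p^{(0)}$, each contributing $\lim_{N\to\infty} h(N)^{V(\cM)-1} \cdot m^{kp + V_\bullet(\bff,\fsig) + (r-l-2k-1)(V(\cM)-1) - \tilde L(\bff,\fsig,\cM)} = c^{V(\cM)-1} m^{kp + V_\bullet(\bff,\fsig) + (r-l-2k-1)(V(\cM)-1) - \tilde L(\bff,\fsig,\cM)}$. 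One should note here that $\tilde L(\bff,\fsig,\cM) = 2(g(\cM_{\bff,\fsig}) + \Delta_{\bff,\fsig}(\cM) + \Sigma(\cM_{\bff,\fsig}))$ is a non-negative integer bounded uniformly in $N$ (it depends only on the combinatorics of $\cM$, not on the dimensions), so the finitely many summands over $\bM_p$ can be interchanged with the limit without any subtlety, and in particular nothing forces $\tilde L$ to vanish in this regime — this is exactly why the limiting answer does not factorise over cycles of the non-crossing partitions, as announced in the introduction of Section~\ref{sec:general-multipartite}.

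The one genuinely delicate point — more bookkeeping than obstacle — is that when $l = 0$ the exponent of $N$ no longer contains the genus term $-2lg(\cM)$, so planarity of $\cM$ is not automatically enforced by the large-$N$ limit; however the constraint $2l+k=n$ with $l=0$ forces $k=n$, i.e.\ no colors are traced at all, and then one must check directly (or cite the mechanism of Proposition~\ref{prop:ShorterFreeMap} / Lemma~\ref{lemma:IneqFreeGen}) that the summand is still $O(N^{l(p+1)})$ and that the surviving maps form $\bM_p^{(0)}$; in the generic case $l \geq 1$ this is immediate. I would therefore either add the short remark that $l\geq1$ in the cases of interest, or observe that when $l=0$ the power of $N$ is constant and one must then argue that $\bM_p^{(0)}$ is still the correct index set because the $N$-dependence has completely dropped out and the claimed formula sums over all of $\bM_p$, contradicting the statement as written — so in fact the proposition as stated implicitly uses $l \geq 1$, and I would make that hypothesis explicit. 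With that caveat the proof is a two-line specialisation of Theorem~\ref{thm:TheoremGen} followed by the positivity of $g(\cM)$, entirely parallel to the proofs of Theorem~\ref{thm:2-marginals-fixed-BC} and Proposition~\ref{prop:LargeNinTheLarge1nRegimeLessHalf}.
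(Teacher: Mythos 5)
Your proof is correct and takes the same route the paper itself takes: the paper simply rewrites Theorem~\ref{thm:TheoremGen} under the unbalanced specialisation (with the constraint $l+r-n-1=0$, equivalently $n-r+1=l$, so the $N$-exponent collapses to $l(p+1)-2l\,g(\cM)$), and the proposition then follows at once from the positivity of the genus and from $h(N)\to c$ with $m$ fixed. Your caveat about $l=0$ is a legitimate observation about an implicit hypothesis in the statement rather than a gap in the proof: under $l+r-n-1=0$, the case $l=0$ forces $r=n+1$, so no Hilbert-space dimension grows with $N$, the ``unbalanced'' regime is degenerate (there is no limit to take), and the formula as written --- which restricts the sum to planar maps --- would not match the exact sum over all of $\bM_p$; so yes, $l\geq 1$ is being tacitly assumed. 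One minor remark for your own bookkeeping: you quote the paper's parenthetical ``equivalently $2l+k=n$'', but this is only equivalent to $l+r-n-1=0$ when $r-1=l+k$, which need not hold (in the paper's own $ABCD$ example with $l=k=1$, $r=4$, $n=4$ one has $l+r-n-1=0$ but $2l+k=3\neq n$); the constraint actually used in the computation, and the one your rearrangement relies on, is $l+r-n-1=0$, so your argument stands.
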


Note that if for $a\in\llbracket 1, p \rrbracket$  we assume $V_\bullet(\bff, \fsig)=k$, and we define $\tilde W_{f(a)} = \frac 1{N^l m^k} W_{f(a)}$, as well as $\tilde c = cm^{r-l-2k-1}$, this rewrites as
 \begin{equation}
 \label{eq:lastreg}
 \lim_{N\rightarrow\infty} \frac 1 {N^lm^k}\, \bE \Tr_{\fsig} \tilde W_{\bff}  = \sum_{\cM \in \bM_p^{(0)}}\, \tilde c^{V(\cM) - 1} m^{ -\tilde L ( \bff, \fsig, \cM)}. 
 \end{equation}

\smallskip

\ 

\noindent {\bf An example for $n=5$.}
We now focus on the quantity $\tilde L ( \bff, \fsig, \cM)$, in the following interesting case of ``ABCDE": $n=5$, $l=1$ (color $A$), $r=5$ (color $E$ 
is always traced), and $k=2$ (the edges carry two colors among $\{B,C,D\}$); the colors $A$ and $E$ correspond to spaces of dimension growing to infinity, while the colors $B,C,D$ correspond to $\bC^m$. As $l+r - n - 1=0$, Prop.~\ref{prop:lastreg} and Eq.~\eqref{eq:lastreg} apply. The study of the properties of $\tilde L$ in this case shows that the corresponding cumulant functions do not factorize over the cycles of non-crossing partitions, but depend more subtly on these non-crossing partitions. In what follows, we study the special case of a map with two vertices (one black, one white), and we show, by the means of an example, that in the general case the factorization property does not hold.

To start, we recall the expression of $\tilde L ( \bff, \fsig, \cM)$
\begin{equation}
    \tilde L ( \bff, \fsig, \cM)=2(g(\cM_{\bff,\fsig})+\Delta_{\bff,\fsig}(\cM)+\Sigma(\cM_{\bff,\fsig})). 
\end{equation}
Any permutation $\sigma\in \cS_2$\footnote{In the following we omit color $A$ from the support and image of the $\sigma_a$'s as it is a common fixed point.  } defines a partial permutation on $\llbracket l+1, r-1\rrbracket = \llbracket 2, 4\rrbracket$, which we complete canonically to have a permutation $\Phi(\sigma)$ of $\llbracket 2, 4\rrbracket$. Indeed, there is only one missing color in the support and image of each permutation $\sigma_a$, so we just pair them (\textit{e.g.} if $\sigma$ is $B\rightarrow B,  C\rightarrow D$, then $\Phi(\sigma)$  is $B\rightarrow B,  C\rightarrow D, D\rightarrow C$, and if $\sigma$ is $B\rightarrow C,  C\rightarrow D$, then $\Phi(\sigma)$  is $B\rightarrow C,  C\rightarrow D, D\rightarrow B$). We recall that $\lvert\tilde\sigma\rvert$ is the length of $\tilde \sigma$, i.e.~its number of inversions.

\begin{proposition}\label{prop:L-tilde-V=2} In the $n=5$ example
described above, if $g(\cM)=0$, $V_\bullet(\bff, \fsig)=k=2$, and $V(\cM)=2$, then 
\be 
\label{eq:Gen-Alt-n6}
\tilde L ( \bff, \fsig, \cM) = \sum_{a=1}^p\, \lvert\Phi(\sigma_a)\rvert
\ee
\end{proposition}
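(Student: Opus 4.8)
\emph{Outline of the plan.} I would reduce $\tilde L(\bff,\fsig,\cM)$ to a cycle-counting identity for the two permutations underlying $\cM_{\bff,\fsig}$, and then evaluate that identity by comparing $\cM_{\bff,\fsig}$ with an auxiliary ``completed'' map on $3p$ edges.

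\emph{Reduction.} Since $V(\cM)=2$ and $g(\cM)=0$, the single white vertex of $\cM$ is the unique $p$-cycle with $\gamma\alpha=\operatorname{id}$, i.e.\ $\alpha=\gamma^{-1}$ and $\cM$ is the ``flower'' map. Feeding $V(\cM)=2$, $k=2$ and the hypothesis $V_\bullet(\bff,\fsig)=\#\Gamma_{\bff,\fsig}=2$ into \eqref{eq:def-Delta}--\eqref{eq:def-tilde-L} and into the Euler relation $\#\alpha_{\bff}+\#\Gamma_{\bff,\fsig}+\#(\Gamma_{\bff,\fsig}\alpha_{\bff})-2p=2\Kc(\cM_{\bff,\fsig})-2g(\cM_{\bff,\fsig})$, all occurrences of $\Kc(\cM_{\bff,\fsig})$ cancel and one is left with the compact identity
\be
\tilde L(\bff,\fsig,\cM)=2p-2+\#\alpha_{\bff}-\#(\Gamma_{\bff,\fsig}\alpha_{\bff}).
\ee
Since $|\Phi(\sigma_a)|=3-\#\Phi(\sigma_a)$ in $\cS_{\{B,C,D\}}$ (this is the convention of Lemma~\ref{lem:geodesic-permutations}), proving \eqref{eq:Gen-Alt-n6} is equivalent to proving $\#(\Gamma_{\bff,\fsig}\alpha_{\bff})=\sum_{a=1}^{p}\#\Phi(\sigma_a)+\#\alpha_{\bff}-p-2$.

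\emph{Two easy inputs.} First, because $\alpha=\gamma^{-1}$, the permutation $\alpha_{\bff}$ preserves the color $i$ of each edge $(a,i)$ and acts on the copies of color $i$ as the cyclic predecessor on $S_i:=\{a:i\in J_a\}$; hence $\#\alpha_{\bff}$ is the number of colors occurring among the $J_a$ (so $\#\alpha_{\bff}\in\{2,3\}$). Second, let $m_a\in\{B,C,D\}\setminus J_a$ be the color missing from edge $a$. By the very definition of the completion $\Phi$ one has $\Phi(\sigma_a)(m_a)=m_{a+1}$; combined with the observation that $\#\Gamma_{\bff,\fsig}=2$ is equivalent to the monodromy $\sigma_p\cdots\sigma_1$ being the identity of $J_1$, this shows that the hypothesis $V_\bullet(\bff,\fsig)=k=2$ is \emph{equivalent} to the closing-up relation $\Phi(\sigma_p)\cdots\Phi(\sigma_1)=\operatorname{id}$.

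\emph{The completed map and the main obstacle.} Let $\widehat\cM$ be the map on the $3p$ pairs $(a,i)$, $a\in\llbracket 1,p\rrbracket$, $i\in\{B,C,D\}$, with white rotation $\widehat\alpha(a,i)=(a-1,i)$ and black rotation $\widehat\Gamma(a,i)=(a+1,\Phi(\sigma_a)(i))$. By the previous paragraph $\widehat\Gamma$ and $\widehat\alpha$ each have $3$ cycles, and the face permutation $\widehat\Gamma\widehat\alpha:(a,i)\mapsto(a,\Phi(\sigma_{a-1})(i))$ fixes the position $a$, whence $F(\widehat\cM)=\sum_a\#\Phi(\sigma_a)$. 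On the $2p$ ``present'' pairs ($i\in J_a$) one checks that $\widehat\Gamma$ restricts to $\Gamma_{\bff,\fsig}$ and that the first-return map of $\widehat\alpha$ is $\alpha_{\bff}$; hence $\cM_{\bff,\fsig}$ is obtained from $\widehat\cM$ by deleting the $p$ ``missing'' edges $(a,m_a)$ and erasing the vertices left isolated (the unique missing-color black vertex, and the $3-\#\alpha_{\bff}$ white vertices of colors that never occur). Tracking $F$ through these moves---an edge deletion changes $F$ by $-1$ unless the edge is a cut-edge (then $+1$), and erasing an isolated vertex changes $F$ by $-1$---gives $\#(\Gamma_{\bff,\fsig}\alpha_{\bff})=F(\widehat\cM)-p+2b-(4-\#\alpha_{\bff})$, where $b$ is the number of cut-edges encountered. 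The crux, and the step I expect to be the main obstacle, is to prove that $b=1$: exactly one of the $p$ deleted edges is a cut-edge at the instant it is removed, regardless of the deletion order. This should follow from the fact that all $p$ missing edges issue from a single black vertex of $\widehat\cM$ while, by $\Phi(\sigma_p)\cdots\Phi(\sigma_1)=\operatorname{id}$, this ``missing thread'' attaches to the rest of $\widehat\cM$ in a tree-like fashion, but a careful case distinction is needed (the borderline case being a constant missing color, where the missing thread is a bundle disjoint from the rest). Granting $b=1$, one obtains $\#(\Gamma_{\bff,\fsig}\alpha_{\bff})=\sum_a\#\Phi(\sigma_a)+\#\alpha_{\bff}-p-2$, and substituting into the reduction identity yields $\tilde L=3p-\sum_a\#\Phi(\sigma_a)=\sum_a|\Phi(\sigma_a)|$, as claimed. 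An alternative would be induction on $p$: when some $\Phi(\sigma_{a_0})=\operatorname{id}$ one deletes position $a_0$ without changing $\tilde L$ or $\sum_a|\Phi(\sigma_a)|$, reducing to the case where every $\Phi(\sigma_a)$ is a transposition or a $3$-cycle; however that residual case does not obviously reduce further, so I would favour the completed-map route.
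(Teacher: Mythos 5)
Your reduction of $\tilde L$ to the cycle identity $\tilde L = 2p-2+\#\alpha_{\bff}-\#(\Gamma_{\bff,\fsig}\alpha_{\bff})$ is correct (I checked the Euler bookkeeping with $V(\cM)=2$, $k=V_\bullet=2$), your two "easy inputs" are both right, and the construction of the completed map $\widehat\cM$ together with the identification $F(\widehat\cM)=\sum_a\#\Phi(\sigma_a)$ and the realization of $\cM_{\bff,\fsig}$ as $\widehat\cM$ minus the $p$ missing edges and the $4-\#\alpha_{\bff}$ isolated vertices is a genuinely different and appealing route from the paper's proof. The paper instead runs a hands-on reduction on the word $\fsig$ itself: first collapse identity corners, reduce to transpositions, then ``flip'' adjacent edges to peel off two inversions at a time when $\Delta_{\bff,\fsig}=0$, and, when $\Delta_{\bff,\fsig}=1$, replace $(\sigma_a,\sigma_{a+1})$ so as to make $B$ a common fixed point, verifying in each of a finite list of cases that $\tilde L$ tracks the change in $\sum_a|\Phi(\sigma_a)|$. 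Your approach replaces that case analysis with a single global comparison; that is a real gain in conceptual clarity.

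However, the proposal is not a proof: you yourself flag that the crux step, $b=1$, is left unproven, and in fact the whole proposition is exactly equivalent to that single assertion (substituting back, $b=1$ is not merely sufficient, it is necessary). So the burden of proof has been relocated, not discharged. There is also a technical inaccuracy that needs repair before the reduction is even well-posed: the rule ``deleting an edge changes $F$ by $-1$ unless it is a cut-edge, in which case $+1$'' is correct only in planar maps. The right dichotomy is by whether the two sides of the edge see distinct faces: $F$ drops by one if they do, and rises by one if they do not (the latter covering both cut-edges and non-separating edges that decrease genus). And $\widehat\cM$ is \emph{not} planar in general — e.g.\ for $p=3$ with all $\Phi(\sigma_a)$ equal to the $3$-cycle $(BCD)$ one gets $V=6,E=9,F=3$, a connected genus-one map — so you will unavoidably encounter one-face non-bridge deletions. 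The correct statement of the crux is therefore ``exactly one of the $p$ deletions is a one-face deletion'' (equivalently, $\Kc-g$ increases by exactly $1$ over the whole deletion sequence), and that is what actually has to be proved. Your heuristic about the ``missing thread'' attaching tree-like is plausible, but as you note it needs a careful argument (the constant-missing-colour case where the thread is an isolated multi-edge dipole, versus the case where the thread winds through white vertices that also carry present edges), and until that is supplied the proposal has a genuine gap.
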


\begin{proof}To begin, note that the white vertex of $\cM$ can be duplicated into $2$ or $3$ white vertices in the unfolded map $\cM_{\bff, \fsig}$, depending on whether the edges share the same set of colors or not; hence, $\Delta_{\bff, \fsig}$ is either $0$ or $1$. We first assume that $\Delta_{\bff, \fsig}=0$. This means that e.g.~$B$ is not in any of the color sets $J_a$, the $\sigma_a$ are either the identity, either the transposition, on $\{C,D\}$, and that $\Phi(\sigma_a)(B) = B$ for all $a$. We can assume that there are only transpositions in the corners, because when one has the identity permutation, one can collapse the two neighboring edges into a single one without changing the genus of $\cM$ or $\cM_{\bff, \fsig}$, nor $\Sigma(\cM_{\bff, \fsig})$. Here one can make a simple recurrence, which is quite similar to the one appearing in the proof of Proposition~\ref{prop:L-and-alt} in terms of maps: pick any $a$, and replace $\sigma_a$ and $\sigma_{a+1}$ by two identities. The number of inversions decreases by two. In $\cM_{\bff, \fsig}$, this amounts to ``flipping" the two edges, as shown in the figure below (the half-edges incident to the white vertices remain the same, but the endpoints incident to the black vertices are exchanged\footnote{Indeed changing $\sigma_a$ and $\sigma_{a+1}$ changes the cycles of $\Gamma_{\bff, \fsig}$ while keeping $\hat \alpha$ invariant}). The flip clearly decreases the genus or the number of connected components. To see this, note that we can decompose the flip in edge-deletions and  creations. The edges have (the same) two faces incident each. Delete one of them, and the other gets a single face visiting both sides. Delete the second edge: if it is a bridge the number of connected components increases (and $\Sigma_{\bff, \fsig}$ goes from $1$ to $0$) and the genus is unchanged, so $\tilde L$ decreases by two. If not, $\Sigma_{\bff, \fsig}$ is unchanged, and the genus decreases by one, so $\tilde L$ decreases by two, and we get the formula, after creating two new edges to complete the flip (creating the two new edges does not change the number of connected components nor the genus).
\begin{figure}[!ht]
\includegraphics[scale=1]{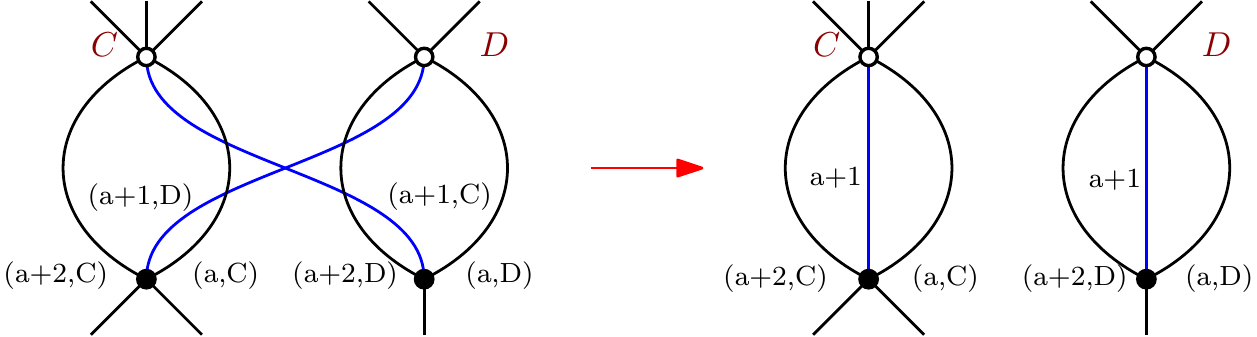} 
\end{figure}

We now assume that $\Delta_{\bff, \fsig}=1$. The strategy is to pick some $a$ such that $\sigma_{a+1}$ contains $B$ in its image (we can always find one, since $\Delta_{\bff, \fsig}\neq 0$), and replace $\sigma_a$ and $\sigma_{a+1}$ so that $B$ is a fixed point of $\sigma_{a+1}$. Then do the same for  $\sigma_{a-1}$ and $\sigma_{a}$, and so on.  At each step, one should be careful that the variation in the number of inversions is the same as $\tilde L$. In the end, $B$ is a common fixed point of all the $\sigma_a$, so we are in the $\Delta_{\bff, \fsig}=0$  case treated above, and we can conclude. There are two cases, depending whether $B$ is in the pre-image of $\sigma_{a+1}$ or not: 
\begin{enumerate}
 \item If $B$ is in the pre-image of $\sigma_{a+1}$ (and is not a fixed point of $\sigma_{a+1}$, in which case we go to the next step right away), we exchange it with the other color 
in the pre-image of $\sigma_{a+1}$, both in  $\sigma_a$ and $\sigma_{a+1}$:
\be 
\begin{tabular}{rcccl} &$\sigma_a$ & &$\sigma_{a+1}$& \\ $c_a$&$\mapsto$ & $c_{a+1}$ &$\mapsto$ & $B$ \\$c'_a$&$\mapsto$ & $B$ &$\mapsto$ & $c'_{a+2}$\end{tabular}
\raisebox{-1.5ex}{$\quad\leadsto \quad$}
\begin{tabular}{rcccl} & & && \\ $c_a$&$\mapsto$ & $B$ &$\mapsto$ & $B$ \\$c'_a$&$\mapsto$ & $c_{a+1}$ &$\mapsto$ & $c'_{a+2}$\end{tabular}.
\ee 
There are two sub-cases: either the number of inversions decreases by two, in which case we verify that $\tilde L$ decreases by two (6 possibilities), or the number of inversions is constant,  in which case we  verify that $\tilde L$ is also constant (4 possibilities).\\

 \item If $B$ is not in the pre-image of $\sigma_{a+1}$, we just exchange the pre-image of $B$ with $B$, and leave the other color in the pre-image of $\sigma_{a+1}$ untouched. Again, either the number of inversions decreases by two, in which case we verify that $\tilde L$ decreases by two (12 possibilities), or the number of inversions is constant,  in which case we  verify that $\tilde L$ is also constant (8 possibilities). 
\end{enumerate}
 This concludes the proof
\end{proof}

\begin{figure}[!ht]
    \centering
    \includegraphics[scale=0.75]{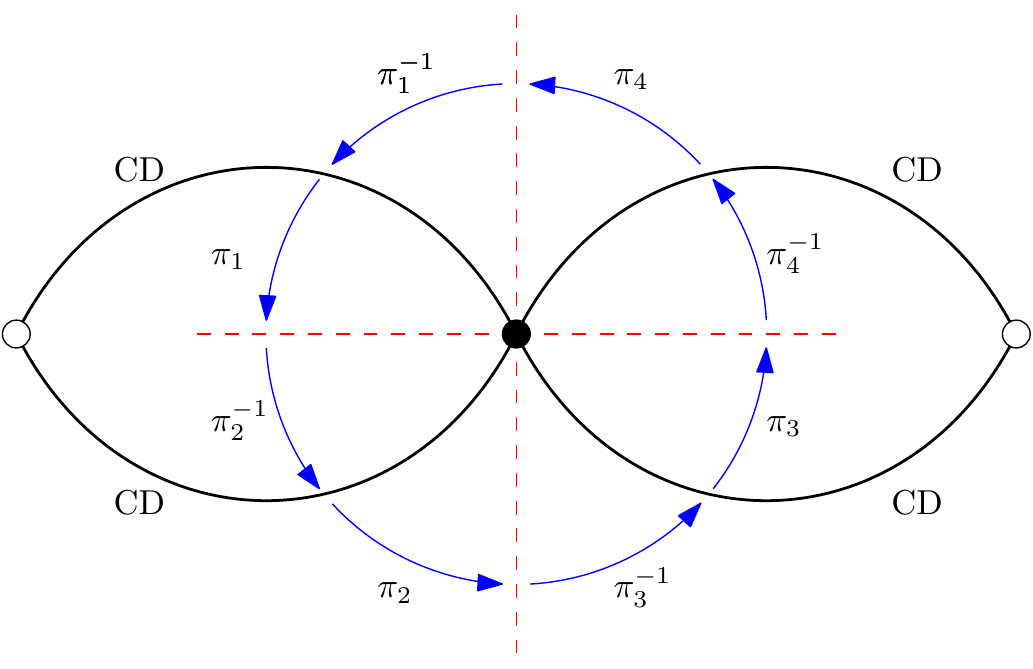}
    \caption{For $\pi_1=(1)(2)$, $\pi_2=(12)$, $\pi_3=(1)(2)$, $\pi_4=(12)$, one has $\tilde L(\bff,\fsig, \cM)=2$, while the value of $\tilde L$ for both submaps with one white vertex is $2$, so the sum over the one white vertex submaps of $\tilde L$ does not match $\tilde L(\bff,\fsig, \cM)$. However, if $\pi_1=(1)(2)$, $\pi_2=(1)(2)$, $\pi_3=(1)(2)$, $\pi_4=(1)(2)$, then $\tilde L(\bff,\fsig, \cM)=0$ and the one white vertex submaps also have $\tilde L=0$. This behavior indicates that there is no factorization over the cycles of $\alpha$.}
    \label{fig:non_factorization_example}
\end{figure}
When trying to generalize the formula of the result above to $V(\cM)>2$, we face the following difficulty: around a white vertex, there is at least one corner for which we do not have a $\sigma$ (the white corner which does not face a black corner). On the example of Fig.~\ref{fig:non_factorization_example}, it is easily seen that the value of $\tilde L$ for the full map is not given as the sum of the value $\tilde L$ for the two submaps with one white vertex. This indicates that there is no factorization of $\tilde L$ over the cycles of $\alpha$. It is also possible to display a formula for the case of maps whose edges are only of color $CD$. This formula shows that there is no factorization over the cycles of $\alpha$. Thus it is not possible to write the free cumulants in the context of (scalar) free probability. It seems that the right framework needed to tackle this unbalanced scaling is the one of free probability with amalgamation over an algebra $\cB$ that is contained in  $\un_A\otimes \mathcal{M}_m(\bC)\otimes \mathcal{M}_m(\bC)\supset \cB$. In order to limit the length of this paper, we postpone the exploration and the exposition of such results to future work.

\bibliographystyle{alpha}
\bibliography{tensor-QIT}

\end{document}